\newcommand{\Ad}{\mathop{\mathrm {Ad}}\nolimits}
\newcommand{\sgn}{\mathop{\mathrm {sgn}}\nolimits}
\newcommand{\supp}{\mathop{\mathrm {supp}}\nolimits}
\newcommand{\sI}{\sqrt{-1}}
\newcommand{\cA}{\mathcal{A}}
\newcommand{\cC}{\mathcal{C}}
\newcommand{\cD}{\mathcal{D}}
\newcommand{\cF}{\mathcal{F}}
\newcommand{\cH}{\mathcal{H}}
\newcommand{\cJ}{\mathcal{J}}
\newcommand{\cM}{\mathcal{M}}
\newcommand{\cP}{\mathcal{P}}
\newcommand{\cQ}{\mathcal{Q}}
\newcommand{\cS}{\mathcal{S}}
\newcommand{\cU}{\mathcal{U}}
\newcommand{\ra}{\mathrm{a}}
\newcommand{\rk}{\mathrm{k}}
\newcommand{\ru}{\mathrm{u}}
\newcommand{\bC}{\mathbb{C}}
\newcommand{\bP}{\mathbb{P}}
\newcommand{\bQ}{\mathbb{Q}}
\newcommand{\bR}{\mathbb{R}}
\newcommand{\bZ}{\mathbb{Z}}
\newcommand{\me}{\mathbf{e}}
\newcommand{\ms}{\mathbf{s}}
\newcommand{\gD}{\mathfrak{D}}
\newcommand{\gH}{\mathfrak{H}}
\newcommand{\g }{\mathfrak{g}}
\newcommand{\gl}{\mathfrak{l}}
\newcommand{\gs}{\mathfrak{s}}
\newtheorem{thm}{Theorem}[section]
\newtheorem{lem}[thm]{Lemma}
\newtheorem{prop}[thm]{Proposition}
\newtheorem{cor}[thm]{Corollary}
\newtheorem{rem}[thm]{Remark}
\numberwithin{equation}{section}
\theoremstyle{remark}
\begin{document}

\title{Automorphic pairs of distributions on $\bR$, 
and Maass forms of real weights}

\author{Tadashi Miyazaki\footnote{Department of Mathematics, College of Liberal Arts and Sciences, Kitasato University, 1-15-1 Kitasato, Minamiku, Sagamihara, Kanagawa, 252-0373, Japan. 
E-mail: \texttt{miyaza@kitasato-u.ac.jp}}}

\maketitle

\allowdisplaybreaks 

\begin{abstract}
We give a correspondence between automorphic pairs 
of distributions on $\bR$ and Dirichlet series 
satisfying functional equations and 
some additional analytic conditions. 
Moreover, we show that the notion of automorphic pairs 
of distributions on $\bR$ can be regarded as 
a generalization of automorphic distributions on 
smooth principal series representations of 
the universal covering group of $SL(2,\bR)$. 
As an application, we prove Weil type 
converse theorems for automorphic distributions 
and Maass forms of real weights. 
\end{abstract}

\section{Introduction}
\label{sec:intro}

Our goal is to give a detailed exposition 
of the theory of automorphic pairs of distributions on $\bR$. 
Automorphic pairs of distributions on $Sym(n,\bR )$ 
were first introduced by Suzuki \cite{Suzuki_001} 
(in the name of ``distribution with automorphy''), 
and extended by Tamura \cite{Tamura_001} to 
more general prehomogeneous vector spaces. 
They proved that the Dirichlet series 
associated with automorphic pairs of distributions 
have the meromorphic continuations to the whole complex plane, 
and satisfy the functional equations. 
Their method is based on the idea in the theory of 
prehomogeneous vector spaces (namely, the combination of the summation formula  and the local functional equation), and it is powerful 
for the study of the analytic properties of Dirichlet series. 
For an exploration of the possibility of 
automorphic pairs of distributions, 
we study the simplest case precisely in this paper.  
As an application, we give 
Weil type converse theorems for automorphic distributions 
and Maass forms of real weights, which are 
generalizations of the results of the previous paper \cite{preMSSU}.

Let us recall Suzuki's result in \cite{Suzuki_001} 
for the simplest case $Sym(1,\bR )=\bR$ with slight modification. 
Let $L_i=u_{i1}+u_{i2}\bZ$ with 
$u_{i1}\in \bR$ and $u_{i2}\in \bR^\times$ for $i=1,2$.  
We call such $L_i$ a shifted lattice in $\bR$ in this paper. 
Let $\mu$ and $\nu $ be complex numbers. 
An automorphic pair 
of distributions on $\bR$ for $({L}_1,{L}_2)$ 
with the automorphic factor 
$J_{\mu,\nu }(x)=e^{-\sgn (x)\pi \sI \mu /2} |x|^{-2\nu -1}$ 
($x\in \bR^\times$) 
is a pair $(T_{\alpha_1},T_{\alpha_2})$ of distributions on $\bR$ 
defined by Fourier expansions  
\begin{align}
\label{eqn:intro_fourier}
&T_{\alpha_i}(f)=\sum_{l\in L_i}\alpha_i (l)\cF (f)(l),&
&(f\in C^\infty_0(\bR),\ i=1,2)
\end{align}
such that the coefficient $\alpha_i$ is a polynomial growth function on $L_i$ 
and the following equality holds: 
\begin{align}
\label{eqn:intro_001}
&T_{\alpha_1}(f)=T_{\alpha_2}(f_{\mu ,\nu,\infty})&
&(f\in C^\infty_0(\bR )\text{ with support in $\bR^\times$}).
\end{align}
Here $C^\infty_0(\bR )$ is the space of smooth functions on $\bR$ 
with compact support, 
$\cF$ denotes the Fourier transformation, 
and 
$f_{\mu ,\nu,\infty}$ is a function in $C^\infty_0(\bR )$ 
characterized by 
\begin{align*}
&f_{\mu ,\nu,\infty} (x)
=J_{\mu,\nu }(x)f(-1/x)&
&(x\in \bR^\times ).
\end{align*}
Let 
$\mathcal{A} ({L}_1,{L}_2;J_{\mu ,\nu})$ be 
the space of automorphic pairs of distributions on $\bR$ 
for $({L}_1,{L}_2)$ with the automorphic factor $J_{\mu ,\nu}$. 
Suzuki proved that, 
if $(T_{\alpha_1},T_{\alpha_2})\in \mathcal{A} ({L}_1,{L}_2;J_{\mu ,\nu})$, 
then Dirichlet series 
\begin{align*}
&\xi_+(\alpha_i ;s)=\sum_{0<l\in L_i}
\frac{\alpha_i (l)}{l^s},&
&\xi_-(\alpha_i ;s)=\sum_{0>l\in L_i}
\frac{\alpha_i (l)}{|l|^s}&
&(\mathrm{Re}(s)\gg 0,\ i=1,2)
\end{align*}
have the meromorphic continuations 
to the whole $s$-plane and satisfy 
the functional equation 
\begin{align}
\label{eqn:intro_002}
{\mathrm{E}}(s)\left(\begin{array}{c}
\Xi_+(\alpha_{1};s)\\
\Xi_-(\alpha_{1};s)
\end{array}
\right)
=\Sigma_\mu {\mathrm{E}}(-s-2\nu +1)
\left(\begin{array}{c}
\Xi_+(\alpha_{2};-s-2\nu +1)\\
\Xi_-(\alpha_{2};-s-2\nu +1)
\end{array}
\right),
\end{align}
where $\Xi_\pm (\alpha_i;s)=(2\pi )^{-s}\Gamma (s)\xi_\pm (\alpha_i;s)$ and  
\begin{align*}
&{\mathrm{E}} (s)=\left(\!\begin{array}{cc}
e^{\pi \sI s/2}&e^{-\pi \sI s/2}\\
e^{-\pi \sI s/2}&e^{\pi \sI s/2}
\end{array}\!\right)\!,&
&\Sigma_\mu =\left(\!\begin{array}{cc}
0&e^{\pi \sI \mu /2}\\
e^{-\pi \sI \mu /2}&0
\end{array}\!\right). 
\end{align*}

In this paper, we have three purposes related to automorphic pairs of 
distributions on $\bR$. 
The first purpose is to make Suzuki's result more precise. 
In Theorem \ref{thm:DS}, 
we give a correspondence between automorphic pairs 
of distributions on $\bR$ and Dirichlet series 
with nice properties. 
More precisely, 
for polynomial growth functions $\alpha_1$ and $\alpha_2$ 
respectively on $L_1$ and $L_2$, 
a pair $(T_{\alpha_1},T_{\alpha_2})$ 
defined by (\ref{eqn:intro_fourier}) 
is in $\mathcal{A} ({L}_1,{L}_2;J_{\mu ,\nu})$ 
if and only if the Dirichlet series 
$\xi_\pm (\alpha_1 ;s)$ and $\xi_\pm (\alpha_2 ;s)$ 
satisfy the following conditions (1) and (2):
\begin{enumerate}
\item[{\normalfont (1)}]
The functions $\Xi_\pm (\alpha_{1};s)$ have 
the meromorphic continuations to the whole $s$-plane, 
and the functional equation (\ref{eqn:intro_002}) holds. 
Moreover, 
\begin{align}
\label{eqn:intro_003}
& {\mathrm{E}}(s)\left(\begin{array}{c}
\Xi_+(\alpha_{1};s)\\
\Xi_-(\alpha_{1};s)
\end{array}
\right)+
\left(\frac{\alpha_{1}(0)}{s}1_2
-\frac{\alpha_{2}(0)}{s+2\nu -1}\Sigma_\mu \right)
\left(\begin{array}{c}
1\\
1
\end{array}
\right)
\end{align}
is entire. 
Here, for $i=1,2$, we understand $\alpha_i(0)=0$ if $0\not\in L_i$. 

\item[{\normalfont (2)}]
For any $\sigma_1,\sigma_2\in \bR$ such that $\sigma_1<\sigma_2$, 
there is $c_0\in \bR_{>0}$ such that 
\begin{align*}
&\Xi_\pm (\alpha_{1};s)=O(e^{|s|^{c_0}})&&(|s|\to \infty)&
&\text{uniformly on \ $\sigma_1\leq \mathrm{Re}(s)\leq \sigma_2$}. 
\end{align*}
\end{enumerate}
Our proof is based on Suzuki's idea in \cite{Suzuki_001}, 
that is, the combination of the summation formula (\ref{eqn:intro_001}) 
and the local functional equation. 
By a careful argument using certain test functions, 
we derive from (\ref{eqn:intro_001}) 
not only the functional equation (\ref{eqn:intro_002}) 
but also the entireness of (\ref{eqn:intro_003}) and 
the estimate in (2). 
Conversely, 
we derive (\ref{eqn:intro_001}) assuming (1) and 
(2) by using the Mellin inversion formula, 
similar to the proof of the converse theorem for 
holomorphic modular forms.  

The second purpose of this paper is to clarify 
the relation between automorphic pairs 
of distributions on $\bR$ and automorphic distributions 
on smooth principal series representations of 
the universal covering group $\widetilde{G}$ of $G=SL(2,\bR)$. 
Automorphic distributions are distributional realizations of 
automorphic forms, and those theory is developed by 
Miller and Schmid (\cite{MR2079889}, \cite{MR2036579}, \cite{MR2369497}). 
Let us explain briefly our results for the second purpose, 
which are given in \S \ref{subsec:principal series}, \S \ref{subsec:Fexp_ILL}, 
\S \ref{subsec:quasi_auto} and \S \ref{subsec:quasi_auto2}. 
Let $(\rho ,I_{\mu,\nu}^\infty)$ be 
a smooth principal series representation of $\widetilde{G}$, 
and we call a continuous functional on $I_{\mu,\nu}^\infty$ 
a distribution on $I_{\mu,\nu}^\infty$ in this paper. 
For the shifted lattice $L_i=u_{i1}+u_{i2}\bZ$ ($i=1,2$), 
we define the dual lattice $L_i^\vee$ of $L_i$ by 
$L_i^\vee =u_{i2}^{-1}\bZ$, and 
define a character $\omega_{L_i}\colon L_i^\vee \to \bC^\times $ 
by $\omega_{L_i}(t)=e^{2\pi \sI u_{i1}t}\ \ (t\in L_i^\vee)$. 
A distribution $\lambda $ on $I_{\mu,\nu}^\infty$ is said to be 
automorphic for $(L_1,L_2)$ if $\lambda$ satisfies 
\begin{align*}
&\lambda (\rho (\tilde{\ru}(t_1))F)=\omega_{L_1}(t_1)\lambda (F)&
&\lambda (\rho (\tilde{w}\tilde{\ru}(t_2)\tilde{w}^{-1})F)
=\omega_{L_2}(t_2)\lambda (F)
\end{align*}
for $t_1\in L_1^\vee $, $t_2\in L_2^\vee$ and $F\in I^\infty_{\mu,\nu}$. 
Here $\tilde{\ru}(x)$ and $\tilde{w}$ are the lifts of 
$\ru (x)=\left( \begin{smallmatrix} 1 & x \\ 0 & 1 \end{smallmatrix}\right)$ 
and $w=\left( \begin{smallmatrix} 0&-1 \\ 1&0 \end{smallmatrix}\right)$ 
to $\widetilde{G}$, respectively ({\it cf.} \S \ref{subsec:univ_cover}). 
For a distribution $\lambda$ on $I_{\mu,\nu}^\infty$, 
we define maps $T^{\lambda}$ and $T^{\lambda_\infty}$ from 
$C_0^\infty (\bR)$ to $\bC$ by 
\begin{align*}
&T^{\lambda}(f)=
\lambda (\iota_{\mu,\nu}(f)),&
&T^{\lambda_\infty}(f)=e^{\pi \sI \mu /2}
\lambda (\rho (\tilde{w})\iota_{\mu,\nu}(f))&
&(f\in C_0^\infty (\bR)). 
\end{align*}
Here $\iota_{\mu,\nu}(f)$ is the function in $I_{\mu,\nu}^\infty$ 
characterized by $\iota_{\mu,\nu}(f)(\tilde{w}\tilde{u}(-x))=f(x)$ 
($x\in \bR$). Then $\lambda \mapsto 
(T^\lambda ,T^{\lambda_\infty})$ defines an 
injective map from the space of automorphic distributions 
on $I_{\mu,\nu}^\infty$ for $(L_1,L_2)$ 
to $\mathcal{A} ({L}_1,{L}_2;J_{\mu ,\nu})$. 
Moreover, for $(T_{\alpha_1},T_{\alpha_2})\in 
\mathcal{A} ({L}_1,{L}_2;J_{\mu ,\nu})$, 
there is an automorphic distribution $\lambda$ on 
$I_{\mu,\nu}^\infty$ for $(L_1,L_2)$ such that 
$(T^\lambda ,T^{\lambda_\infty})=(T_{\alpha_1},T_{\alpha_2})$ 
if and only if the associated Dirichlet series 
$\xi_\pm (\alpha_1 ;s)$ and $\xi_\pm (\alpha_2 ;s)$ 
satisfy the following condition (3):
\begin{enumerate}
\item[{\normalfont (3)}]For $i=1,2$, 
the functions $(s-1)(s+2\nu -1)\xi_\pm (\alpha_{i};s)$ are entire 
if $0\in L_{3-i}$, and 
the functions $\xi_\pm (\alpha_{i};s)$ are entire 
if $0\not\in L_{3-i}$. 
\end{enumerate}
As an remarkable fact, 
``Mittag--Leffler'' theorem of Knopp 
\cite[Theorem 2]{Knopp_001} gives Dirichlet series 
which satisfy the conditions (1), (2) and do not satisfy the condition (3). 
Hence, we know that there are automorphic pairs of distributions on $\bR$ 
not coming from automorphic distributions on $I_{\mu,\nu}^\infty$. 
This fact is indicated by Professor Fumihiro Sato.

The third purpose of this paper is 
to give Weil type converse theorems 
for automorphic distributions and Maass forms of real weights. 
As an analogue of the converse theorem of Weil \cite{MR0207658} 
for holomorphic modular forms of level $N$, 
some mathematicians studied 
converse theorems for Maass forms of level $N$. 
Diamantis--Goldfeld \cite{MR2823866} gave a converse theorem 
characterizing linear combinations of 
metaplectic Eisenstein series, 
by considering the twists of 
Dirichlet series by Dirichlet characters 
including imprimitive characters. 
Neururer--Oliver \cite{preNO} 
gave a converse theorem 
characterizing Maass forms of weight $0$, 
and succeeded to avoid the twists by imprimitive characters. 
In the previous paper \cite{preMSSU}, 
following the approach of Diamantis-Goldfeld, 
we gave converse theorems characterizing 
automorphic distributions and Maass forms of integral 
and half-integral weights. 
In this paper, we generalize the result of \cite{preMSSU} 
to real weights. 

Let us explain our converse theorems, more precisely. 
Until the end of the introduction, we assume $\mu \in \bR$. 
Let $N$ be a positive integer, and 
let $\widetilde{\Gamma}_0(N)=\varpi^{-1}(\Gamma_0(N))$ with 
the covering map $\varpi \colon \widetilde{G}\to G$. 
Let $v$ be a multiplier system on $\Gamma_0(N)$ of weight $\mu$, 
and we denote by $\tilde{\chi}_v$ the character of 
$\widetilde{\Gamma}_0(N)$ corresponding to $v$ 
({\it cf.} \S \ref{sec:AD_discrete}). 
A distribution $\lambda$ on $I^{\infty}_{\mu,\nu}$ is said to be 
automorphic for $\widetilde{\Gamma}_0(N)$ with 
character $\tilde{\chi}_v$ if $\lambda$ satisfies 
\begin{align*}
&\lambda (\rho (\tilde{\gamma})F)
=\tilde{\chi}_v(\tilde{\gamma})\lambda (F)&
&(\tilde{\gamma}\in \widetilde{\Gamma}_0(N),\ 
F\in I^{\infty}_{\mu,\nu}). 
\end{align*}
We define shifted lattices $L$ and $\hat{L}$ by 
$L=u+\bZ$ and $\hat{L}=N^{-1}(\hat{u}+\bZ )$ with 
$0\leq u,\hat{u}<1$ determined by 
$v(\ru(1))=e^{2\pi \sI u}$ and 
$v(w\,\ru (N)w^{-1})=e^{2\pi \sI \hat{u}}$. 
Then an automorphic distribution on $I^{\infty}_{\mu,\nu}$ 
for $\widetilde{\Gamma}_0(N)$ with $\tilde{\chi}_v$ 
is automorphic for $(L,\hat{L})$. 
Our converse theorem (Theorem \ref{thm:Weil_converse} (ii)) 
gives a condition for 
automorphic distributions on $I^{\infty}_{\mu,\nu}$ for $(L,\hat{L})$ 
to be automorphic for $\widetilde{\Gamma}_0(N)$ with $\tilde{\chi}_v$ 
in terms of twists of the 
associated Dirichlet series by Dirichlet characters. 
Our strategy of the proof is slightly different from 
that in the previous paper \cite{preMSSU}. 
In \cite{preMSSU}, we investigate the relation between automorphic 
distributions and Dirichlet series directly, generalizing 
the local functional equation to the line model of 
smooth principal series representations. 
In this paper, we investigate the relation through 
the intermediary of automorphic pairs of distributions on $\bR$, 
and do not use the generalized local functional equation. 
Since the Poisson transform of an autormorphic distribution is a Maass form, 
we also obtain a converse theorem for Maass forms of level $N$.

\subsection*{Acknowledgments}
The author would like to thank Professor Fumihiro Sato 
for valuable advices and introducing 
``Mittag--Leffler'' theorem of Knopp. 
The author would also like to thank Professor Kazunari Sugiyama 
for lectures on the converse theorems for autmorphic distributions 
and Maass forms. 
This work was supported by JSPS KAKENHI Grant Number JP18K03252.

\subsection*{Notation}

We denote by $\bZ$, $\bQ$, $\bR$, and $\bC$ 
the ring of rational integers, 
the rational number field, the real number field, and 
the complex number field, respectively. 
For a subset $S$ of $\bR$ and $t\in \bR$, 
we denote by $S_{\geq t}$, $S_{>t}$, $S_{\leq t}$, and $S_{<t}$ 
the subsets of $S$ consisting of all numbers $r$ satisfying 
$r\geq t$, $r> t$, $r\leq t$, and $r< t$, respectively. 
For $x\in \bR^\times$, we set $\sgn (x)=x/|x|$. 

The real part, the imaginary part, 
the complex conjugate, and the absolute value of a complex number $z$ are 
denoted by $\mathrm{Re}(z)$, $\mathrm{Im}(z)$, $\bar{z}$, and $|z|$, 
respectively. 
For $z\in \bC^\times$, 
we denote by $\arg z$ the principal argument of $z$, namely, 
the argument of $z$ satisfying $-\pi <\arg z\leq \pi$. 
Moreover, we set 
$z^s=|z|^se^{\sI s\arg z}$ for $z\in \bC^\times$ and $s\in \bC$.

For an open subset $S$ of $\bR$, 
we denote by $C(S)$ and $C^\infty (S)$ 
the spaces of continuous functions and smooth functions on $S$, 
respectively. 
We denote by $C_{0}(S)$ the subspace of $C(S)$ 
consisting of all functions $f$ with compact support, 
and let $C^\infty_{0}(S)=C^\infty (S)\cap C_{0}(S)$.  
Moreover, $C^n(S)$ denotes the subspace of $C(S)$ consisting of 
all $n$ times continuously-differentiable functions on $S$ 
for $n\in \bZ_{\geq 0}$. 
We denote by $\cS (\bR )$ the space of 
rapidly decreasing functions on $\bR$. 
We denote by $L^1(\bR )$ 
the space of integrable functions on $\bR$ 
with respect to the usual Lebesgue measure. 
For $f\in C(S)$, we denote by $\supp (f)$ the support of $f$. 
For $f\in C^n(S)$, 
we denote by $f^{(n)}$ the $n$-th derivative of $f$. 
Here the $0$-th derivative $f^{(0)}$ is 
just the original function $f$. 
We denote simply by $f'$ and $f''$ the first derivative $f^{(1)}$ 
and the second derivative $f^{(2)}$ of $f$, respectively. 
In this paper, 
we regard $f\in C(\bR^\times )$ as an element of $C(\bR)$ 
by setting $\displaystyle f(0)=\lim_{x\to 0}f(x)$ 
if the limit $\displaystyle \lim_{x\to 0}f(x)$ exists.

For a set $S$, we denote by $S^2$ and $M_2(S)$ 
the sets of $2\times 1$- and $2\times 2$-matrices 
whose entries in $S$, respectively. 
The unit matrix of size $2$ is denoted by $1_2$.

\section{Main results}
\label{sec:auto_dist}

For the sake of readability, 
we summarize the main theorems and 
important propositions in this section. 
Most of the proofs are given in later sections.

\subsection{Periodic distributions on $\bR$}
\label{subsec:distribution}

A $\bC$-linear map $T\colon C_0^\infty (\bR )\to \bC$ is called 
a {\it distribution} on $\bR$ if and only if, for any $u>0$, 
there are $m\in \bZ_{\geq 0}$ and $c>0$ such that 
\begin{align*}
\begin{split}
&|T(f)|\leq c\sum_{i=0}^{m}
\sup_{x\in \bR}\left|f^{(i)}(x)\right|\\
&(f\in C^\infty_0(\bR)\ \text{with}\ 
\supp (f)\subset \{x\in \bR \mid |x|\leq u\}).
\end{split}
\end{align*}
We denote by $\cD'(\bR)$ the space of distributions on $\bR$. 

For $f\in L^1(\bR )$, we define 
the Fourier transform $\cF (f)$ of $f$ by 
\begin{align*}
\cF ({f})(y)=\int_{-\infty}^{\infty}f(x)
{e}^{2\pi \sI xy}\, dx&
&(y\in \bR ).
\end{align*}

Let ${L}$ be a subset of $\bR$ of the form 
${L}=u_1+u_2\bZ $ with $u_1\in \bR$ and $u_2\in \bR^\times$. 
We call such $L$ a {\it shifted lattice} in $\bR$. 
We define the dual lattice ${L}^\vee $ of $L$ by 
\[
{L}^\vee =\{t\in \bR \mid (l_1-l_2)t\in \bZ \quad (l_1,l_2\in L)\}
=u_2^{-1}\bZ, 
\]
and define a character $\omega_L\colon L^\vee \to \bC^\times $ 
by $\omega_L(t)=e^{2\pi \sI lt}\ \ (t\in L^\vee)$ with $l\in L$. 
Here the definition of $\omega_{L}$ does not depend on the choice of $l$. 
We note that $L$ is a lattice in $\bR$ if and only if $0\in L$, 
and the condition $0\in L$ implies that $\omega_L$ is the trivial character. 
We define a subspace 
$\cD'({L}^\vee \backslash \bR ;\omega_L)$ of $\cD'(\bR)$ by 
\begin{align*}
&\cD'({L}^\vee \backslash \bR ;\omega_L)
=\{T\in \cD'(\bR )\mid 
T(\ms_t(f))=\omega_L(-t)T(f)\ \ 
(t\in {L}^\vee ,\ f\in C^\infty_0(\bR))\}, 
\end{align*}
where $\ms_t(f)(x)=f(x+t)\ (x\in \bR)$. 
Here we note that $\cD'({L}^\vee \backslash \bR ;\omega_L)$ is the space of 
periodic distributions on $\bR$ if $0\in L$. 

For $r\in \bR$, 
let ${\mathfrak{M}}_r({L})$ be the space of 
functions $\alpha \colon {L} \to \bC$ satisfying 
$\alpha (l)=O(|l|^{r})\ (|l| \to \infty)$. 
Let ${\mathfrak{M}} ({L})
=\bigcup_{r\in \bR}{\mathfrak{M}}_r({L})$. 
For $\alpha \in {\mathfrak{M}}({L})$, it is convenient 
to set $\alpha (0)=0$ if $0\not\in {L}$. 
For $\alpha \in {\mathfrak{M}} (L)$, 
we define $T_\alpha \in \cD'({L}^\vee \backslash \bR ;\omega_L)$ by 
\begin{align*}
&T_\alpha (f)=
\sum_{l\in {L}}\alpha (l)\cF (f)(l)&&(f\in C^\infty_0(\bR)).
\end{align*}
\begin{prop}
\label{prop:periodic_dist}
Let $L$ be a shifted lattice in $\bR$. 
Then the $\bC$-linear map $\mathfrak{M}(L)\ni \alpha \mapsto 
T_\alpha \in \cD'({L}^\vee \backslash \bR ;\omega_L)$ is bijective. 
\end{prop}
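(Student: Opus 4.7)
The plan is to establish injectivity by constructing test functions whose Fourier transforms concentrate near a single lattice point, and surjectivity by ``de-twisting'' $T$ into a genuinely periodic distribution and invoking the classical Fourier expansion theorem. For injectivity, suppose $T_\alpha = 0$ with $\alpha \in \mathfrak{M}_r(L)$ for some $r$. Fix $l_0 \in L$ and pick $\chi \in C_0^\infty(\bR)$ with $\cF(\chi)(0) \neq 0$, and for $\epsilon > 0$ set $f_\epsilon(x) = e^{-2\pi \sI l_0 x}\,\chi(\epsilon x) \in C_0^\infty(\bR)$. A change of variables gives $\cF(f_\epsilon)(y) = \epsilon^{-1}\cF(\chi)((y-l_0)/\epsilon)$, so
\[
\epsilon\,T_\alpha(f_\epsilon) = \alpha(l_0)\,\cF(\chi)(0) + \sum_{l \in L,\ l \neq l_0} \alpha(l)\,\cF(\chi)\!\left(\tfrac{l - l_0}{\epsilon}\right).
\]
Consecutive points of $L$ are at distance $|u_2|>0$, so the Schwartz decay of $\cF(\chi)$ beats the polynomial growth of $\alpha$ and the tail sum is $O(\epsilon^N)$ for every $N$ as $\epsilon \to 0^+$. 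Hence $\alpha(l_0)\cF(\chi)(0) = 0$, forcing $\alpha(l_0) = 0$; since $l_0$ was arbitrary, $\alpha \equiv 0$.

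For surjectivity, given $T \in \cD'(L^\vee \backslash \bR;\omega_L)$, define $\tilde T \in \cD'(\bR)$ by $\tilde T(f) = T(e^{-2\pi \sI u_1 x}\, f)$. Using $\omega_L(-u_2^{-1}) = e^{-2\pi \sI u_1/u_2}$ together with the quasi-periodicity $T(\ms_t f) = \omega_L(-t)T(f)$, a direct computation shows $\tilde T(\ms_{u_2^{-1}} f) = \tilde T(f)$, so $\tilde T$ is a genuinely $|u_2|^{-1}$-periodic distribution, i.e.~a distribution on the compact torus $\bR/u_2^{-1}\bZ$. By the classical Fourier theorem for periodic distributions, there is a unique sequence $(c_k)_{k \in \bZ}$ of polynomial growth such that $\tilde T = \sum_{k \in \bZ} c_k\,e^{2\pi \sI u_2 k x}$ in $\cD'(\bR)$. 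Setting $\alpha(u_1 + u_2 k) := c_k$ yields $\alpha \in \mathfrak{M}(L)$, and multiplying the expansion back by $e^{2\pi \sI u_1 x}$ gives $T = \sum_{l \in L} \alpha(l)\,e^{2\pi \sI l x}$ in $\cD'(\bR)$, which evaluated on $f \in C_0^\infty(\bR)$ is exactly $T_\alpha(f)$.

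The only delicate ingredient is the Fourier-series theorem for periodic distributions with polynomially bounded coefficients; this is classical (distributions on a compact manifold have finite order, and the Fourier coefficients of a finite-order distribution grow polynomially), but should be invoked carefully for our normalization of $\cF$. Everything else amounts to bookkeeping with the shifted-lattice parameters $u_1, u_2$ and the character $\omega_L$.
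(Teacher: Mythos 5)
Your proof is correct, but it takes a genuinely different route from the paper's. The paper's proof is purely formal: it factors the map $\alpha \mapsto T_\alpha$ as the composite of three explicit bijections — a coefficient transfer $\mathfrak{M}(L)\to\mathfrak{M}(\bZ)$, the classical bijection $\mathfrak{M}(\bZ)\to\cD'(\bZ\backslash\bR;\omega_\bZ)$ (Friedlander--Joshi), and a transfer of distributions $\cD'(\bZ\backslash\bR;\omega_\bZ)\to\cD'(L^\vee\backslash\bR;\omega_L)$ given by pre-composition with $i_{(u_1,u_2)}(f)(x)=e^{2\pi\sI u_1x/u_2}f(x/u_2)$, which combines the character twist with a dilation so as to land exactly on $\bZ$-periodic distributions. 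You instead prove injectivity from scratch, via a concentration argument with $f_\epsilon(x)=e^{-2\pi\sI l_0x}\chi(\epsilon x)$ that isolates the coefficient $\alpha(l_0)$ (this works: with $|\cF(\chi)(y)|\leq C_M(1+|y|)^{-M}$ and $M>r+1+N$, the tail is $O(\epsilon^N)$ using $|l-l_0|\geq|u_2|$). For surjectivity you de-twist only by the character $e^{-2\pi\sI u_1x}$, arriving at a $u_2^{-1}\bZ$-periodic distribution, and invoke the classical Fourier-series theorem directly with period $|u_2|^{-1}$ rather than rescaling to $\bZ$. Both arguments hinge on the same classical ingredient (periodic distributions have polynomially bounded Fourier coefficients, and conversely), so neither is more elementary in substance; what your version buys is a self-contained, hands-on proof of injectivity that does not pass through the classical theorem, at the cost of proving injectivity twice (once directly, and once implicitly inside the classical bijectivity). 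One small bookkeeping point worth making explicit in a write-up: when $u_2<0$ the period of $\tilde T$ is $|u_2|^{-1}$ and the natural Fourier frequencies are $|u_2|\bZ$, but since $u_2\bZ=|u_2|\bZ$ as subsets of $\bR$ the re-indexing $\alpha(u_1+u_2k):=c_k$ is still well-defined and gives $\alpha\in\mathfrak{M}(L)$ as claimed.
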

\begin{proof}
Let $L=u_1+u_2\bZ $ with $u_1\in \bR$ and $u_2\in \bR^\times$. 
We define a bijective map 
\begin{align}
\label{eqn:pf_q_p_dist001}
\mathfrak{M}(L)\ni \alpha \mapsto 
\alpha_{(u_1,u_2)}\in \mathfrak{M}(\bZ)
\end{align}
by $\displaystyle 
\alpha_{(u_1,u_2)}(n)
=\alpha (u_1+u_2n)/u_2$\ \ $(n\in \bZ )$. 
Since $\cD'(\bZ \backslash \bR ;\omega_{\bZ})$ is 
the space of periodic distributions, 
it is known that 
\begin{align}
\label{eqn:pf_q_p_dist002}
\mathfrak{M}(\bZ)\ni \alpha \mapsto 
T_\alpha \in \cD'(\bZ \backslash \bR ;\omega_{\bZ})
\end{align}
is bijective 
(see, for example, Friedlander and Joshi \cite[\S 8.5]{Friedlander_001}). 
We define a bijective map 
\begin{align}
\label{eqn:pf_q_p_dist003}
\cD'(\bZ \backslash \bR ;\omega_{\bZ})\ni T\mapsto 
T\circ i_{(u_1,u_2)}\in \cD'(L^\vee \backslash \bR ;\omega_{L})
\end{align}
by $i_{(u_1,u_2)}(f)(x)=e^{2\pi \sI {u_1}x/{u_2}}f(x/{u}_2)$\ \  
$(x\in \bR,\ f\in C^\infty_0(\bR))$.

Since 
$\mathfrak{M}(L)\ni \alpha \mapsto 
T_\alpha \in \cD'({L}^\vee \backslash \bR ;\omega_L)$ is 
the composite of the bijective maps (\ref{eqn:pf_q_p_dist001}), 
(\ref{eqn:pf_q_p_dist002}) and (\ref{eqn:pf_q_p_dist003}), 
we obtain the assertion. 
\end{proof}

For $T\in \cD'({L}^\vee \backslash \bR ;\omega_L)$, 
there is a unique $\alpha \in {\mathfrak{M}} (L)$ such that
\begin{align}
\label{eqn:def_Fexp}
T(f)
&=\sum_{l\in {L}}\alpha (l)\cF (f)(l)
&&(f\in C^\infty_0(\bR))
\end{align}
by Proposition \ref{prop:periodic_dist}. 
We call the expression (\ref{eqn:def_Fexp}) 
the {\it Fourier expansion} of $T$.

\subsection{Automorphic pairs of distributions on $\bR$}
\label{subsec:def_auto_pair}

Let $\mu ,\nu \in \bC$. 
We define a function $J_{\mu ,\nu }$ on $\bR^\times$ by 
\begin{align*}
&J_{\mu ,\nu}(x)=e^{-\sgn (x)\pi \sI \mu /2} 
|x|^{-2\nu -1}&&(x\in \bR^\times )
\end{align*}
with $\sgn (x)=x/|x|$. 
We define a map 
$C(\bR^\times )\ni f\mapsto 
f_{\mu ,\nu,\infty}\in C(\bR^\times )$ by 
\begin{align*}
&f_{\mu ,\nu,\infty} (x)=
J_{\mu ,\nu}(x)f(-1/x)&&(x\in \bR^\times ).
\end{align*}
Here we note that the following equalities hold:
\begin{align}
\label{eqn:basic_f_infty}
&(f_{\mu ,\nu,\infty})_{\mu ,\nu,\infty} =f,&
&f_{\mu +2,\nu,\infty}=-f_{\mu ,\nu,\infty}&
&(f\in C(\bR^\times )). 
\end{align}

In this paper, if $\displaystyle \lim_{x\to 0}f(x)$ exists, 
we regard $f\in C(\bR^\times )$ as an element of $C(\bR)$ 
by setting $\displaystyle f(0)=\lim_{x\to 0}f(x)$. 
In particular, we regard $f\in C^\infty_0(\bR^\times )$ as 
an element of $C^\infty_0(\bR)$ by setting $f(0)=0$. 
Let $\mathcal{A} (J_{\mu ,\nu})$ be 
the space of pairs $(T_1,T_2)$ of distributions on $\bR$ 
such that 
\begin{align}
\label{eqn:autom_pair_def}
&T_1(f)=T_2(f_{\mu ,\nu,\infty})&&(f\in C^\infty_0(\bR^\times )). 
\end{align}
For two shifted lattices ${L}_1$ and ${L}_2$ in $\bR$, 
we define a space $\mathcal{A} ({L}_1,{L}_2;J_{\mu ,\nu})$ by 
\begin{align*}
\mathcal{A} ({L}_1,{L}_2;J_{\mu ,\nu})
&=\mathcal{A}(J_{\mu ,\nu})\cap 
\bigl(\cD'({L}_1^\vee \backslash \bR;\omega_{L_1})\times 
\cD'({L}_2^\vee \backslash \bR;\omega_{L_2})\bigr)\\
&=\{(T_{\alpha_1},T_{\alpha_2})\in \mathcal{A}(J_{\mu ,\nu})\mid 
\alpha_1\in \mathfrak{M}(L_1),\ \alpha_2\in \mathfrak{M}(L_2)\}.
\end{align*}
Here the second expression follows from 
Proposition \ref{prop:periodic_dist}. 
We call an element of $\mathcal{A} ({L}_1,{L}_2;J_{\mu ,\nu})$ 
an {\it automorphic pair of distributions} on $\bR$ 
for $({L}_1,{L}_2)$ with the automorphic factor $J_{\mu ,\nu}$. 

Because of (\ref{eqn:basic_f_infty}), 
we note that 
$(T_2,T_1)\in \mathcal{A} (J_{\mu ,\nu})$ and 
$(T_1,-T_2)\in \mathcal{A} (J_{\mu \pm 2,\nu})$ hold for 
$(T_1,T_2)\in \mathcal{A} (J_{\mu ,\nu})$. 
Moreover, we have 
$(T_2,T_1)\in \mathcal{A} ({L}_2,{L}_1;J_{\mu ,\nu})$ and 
$(T_1,-T_2)\in \mathcal{A} ({L}_1,{L}_2;J_{\mu \pm 2,\nu})$ 
for $(T_1,T_2)\in \mathcal{A} ({L}_1,{L}_2;J_{\mu ,\nu})$.

\subsection{Dirichlet series associated with automorphic pairs}
\label{subsec:DS_AP}

Let $r\in \bR$. 
Let ${L}$ be a shifted lattice in $\bR$. 
For $\alpha \in {\mathfrak{M}}_r({L})$, 
we define Dirichlet series $\xi_+(\alpha ;s)$ 
and $\xi_-(\alpha ;s)$ by 
\begin{align}
\label{eqn:def_DS}
&\xi_+(\alpha ;s)=\sum_{0<l\in {L}}
\frac{\alpha (l)}{l^s},&
&\xi_-(\alpha ;s)=\sum_{0>l\in {L}}
\frac{\alpha (l)}{|l|^s}.
\end{align}
It is easy to see that these series converge absolutely 
and are holomorphic functions on $\mathrm{Re}(s)>r+1$. 
We set 
\begin{align}
\label{eqn:def_cDS}
&\Xi_+(\alpha ;s)=(2\pi )^{-s}\Gamma (s)\xi_+(\alpha ;s),&
&\Xi_-(\alpha ;s)=(2\pi )^{-s}\Gamma (s)\xi_-(\alpha ;s).
\end{align}
For $s\in \bC$ and $\mu \in \bC$, 
we define matrices ${\mathrm{E}} (s)$ and $\Sigma_\mu$ by 
\begin{align}
\label{eqn:def_matrix_gamma}
&{\mathrm{E}} (s)=\left(\begin{array}{cc}
e^{\pi \sI s/2}&e^{-\pi \sI s/2}\\
e^{-\pi \sI s/2}&e^{\pi \sI s/2}
\end{array}\right),\\
\label{eqn:def_matrix_sigma}
&\Sigma_\mu =\left(\begin{array}{cc}
0&e^{\pi \sI \mu /2}\\
e^{-\pi \sI \mu /2}&0
\end{array}\right).
\end{align}

Let ${L}_1$ and ${L}_2$ be two shifted lattices in $\bR$. 
Let $\alpha_1\in \mathfrak{M}(L_1)$, 
$\alpha_2\in \mathfrak{M}(L_2)$ and $\mu ,\nu \in \bC$. 
We consider the following condition 
{\normalfont [D1]} on $\Xi_\pm (\alpha_1;s)$ and $\Xi_\pm (\alpha_2;s)$: 
\begin{description}
\item[{\normalfont [D1]}]
The functions $\Xi_\pm (\alpha_{1};s)$ have 
the meromorphic continuations to the whole $s$-plane, and 
the $\bC^2$-valued function  
\begin{align}
\label{eqn:DS_entire}
& {\mathrm{E}}(s)\left(\begin{array}{c}
\Xi_+(\alpha_{1};s)\\
\Xi_-(\alpha_{1};s)
\end{array}
\right)+
\left(\frac{\alpha_{1}(0)}{s}1_2
-\frac{\alpha_{2}(0)}{s+2\nu -1}\Sigma_\mu \right)
\left(\begin{array}{c}
1\\
1
\end{array}
\right)
\end{align}
is entire. 
Here, for $j\in \{1,2\}$, we understand $\alpha_j(0)=0$ if $0\not\in L_j$. 
Moreover, $\Xi_\pm (\alpha_1;s)$ and $\Xi_\pm (\alpha_2;s)$ 
satisfy the functional equation 
\begin{align*}
{\mathrm{E}}(s)\left(\begin{array}{c}
\Xi_+(\alpha_{1};s)\\
\Xi_-(\alpha_{1};s)
\end{array}
\right)
=\Sigma_\mu {\mathrm{E}}(-s-2\nu +1)
\left(\begin{array}{c}
\Xi_+(\alpha_{2};-s-2\nu +1)\\
\Xi_-(\alpha_{2};-s-2\nu +1)
\end{array}
\right).
\end{align*}
\end{description}
When $\Xi_\pm (\alpha_1;s)$ have 
the meromorphic continuations to the whole $s$-plane, 
we consider the following conditions 
{\normalfont [D2-1]} and {\normalfont [D2-2]} on $\Xi_\pm (\alpha_1;s)$: 
\begin{description}
\item[{\normalfont [D2-1]}]
For any $\sigma_1,\sigma_2\in \bR$ such that $\sigma_1<\sigma_2$, 
the functions $\Xi_\pm (\alpha_{1};s)$ satisfy 
\begin{align*}
&\Xi_{\pm}(\alpha_{1};s)=
O\left({e}^{-\pi |\mathrm{Im}(s)|/2
+\sqrt{|\mathrm{Im}(s)|}}\right)&&(|s|\to \infty )
\end{align*}
uniformly on \ $\sigma_1\leq \mathrm{Re}(s)\leq \sigma_2$.

\item[{\normalfont [D2-2]}]
For any $\sigma_1,\sigma_2\in \bR$ such that $\sigma_1<\sigma_2$, 
there is $c_0\in \bR_{>0}$ such that 
\begin{align*}
&\Xi_\pm (\alpha_{1};s)=O(e^{|s|^{c_0}})&&(|s|\to \infty)&
&\text{uniformly on \ $\sigma_1\leq \mathrm{Re}(s)\leq \sigma_2$}. 
\end{align*}

\end{description}
Here we note that the condition {\normalfont [D2-1]} is stronger 
than {\normalfont [D2-2]}.

\begin{thm}
\label{thm:DS}
Let ${L}_1$ and ${L}_2$ be two shifted lattices in $\bR$. 
Let $\mu ,\nu \in \bC$. \\[1mm]
(i) Let $(T_{\alpha_1},T_{\alpha_2})\in 
\cA (L_1,L_2;J_{\mu ,\nu})$ with 
$\alpha_1\in \mathfrak{M}(L_1)$ and $\alpha_2\in \mathfrak{M}(L_2)$. 
Then $\Xi_\pm (\alpha_1;s)$ and $\Xi_\pm (\alpha_2;s)$ satisfy 
the conditions {\normalfont [D1]} and {\normalfont [D2-1]}. \\[1mm]
(ii) Let $\alpha_1\in \mathfrak{M}(L_1)$ and $\alpha_2\in \mathfrak{M}(L_2)$ 
such that $\Xi_\pm (\alpha_1;s)$ and $\Xi_\pm (\alpha_2;s)$ satisfy 
the conditions {\normalfont [D1]} and {\normalfont [D2-2]}. 
Then $(T_{\alpha_1},T_{\alpha_2})\in \cA (L_1,L_2;J_{\mu ,\nu})$. 
\end{thm}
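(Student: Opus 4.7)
The plan for part (i) is to extract the Dirichlet series and their functional equation from the distributional identity (\ref{eqn:autom_pair_def}) through a Mellin-transform procedure. I would introduce the generating series $\phi_i^\pm(y)=\sum_{\pm l>0,\ l\in L_i}\alpha_i(l)e^{-2\pi|l|y}$ for $y>0$; polynomial growth of $\alpha_i$ guarantees convergence, and the formal identity $\int_0^\infty \phi_i^\pm(y)\, y^{s}\, dy/y=\Xi_\pm(\alpha_i;s)$ is valid for $\mathrm{Re}(s)$ sufficiently large. To link these to the distributions $T_{\alpha_i}$, I would test the automorphic relation against a parameter family $f_y^\pm\in C_0^\infty(\bR^\times)$ approximating the sharp exponentials $e^{-2\pi y|x|}\chi_{\pm x>0}$; heuristically $T_{\alpha_1}(f_y^\pm)$ becomes $\phi_1^\pm(y)$, while $T_{\alpha_2}((f_y^\pm)_{\mu,\nu,\infty})$, after the substitution $x\mapsto -1/x$, is a combination of $\phi_2^\pm(1/y)$ weighted by the factor $J_{\mu,\nu}$.

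Taking the Mellin transform in $y$ of the resulting identity turns the inversion $y\leftrightarrow 1/y$ into the reflection $s\leftrightarrow -s-2\nu+1$, and the phases $e^{\pm\pi\sI\mu/2}$ together with the sign factors assemble exactly into the matrices $\mathrm{E}(s)$ and $\Sigma_\mu$ appearing in [D1]. The singular terms in (\ref{eqn:DS_entire}) must arise from the fact that the approximants $f_y^\pm$ are not honestly supported away from $0$: the zero-frequency contributions $\mathcal{F}(f_y^\pm)(0)=\int f_y^\pm$ and the analogous contribution from $(f_y^\pm)_{\mu,\nu,\infty}$ produce, after Mellin transformation, the polar terms $\alpha_1(0)/s$ and $-\alpha_2(0)/(s+2\nu-1)\,\Sigma_\mu$ respectively. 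The estimate [D2-1] is then obtained by combining the standard Stirling asymptotics of $\Gamma(s)$ with Phragm\'en--Lindel\"of interpolation between the right half-plane of absolute convergence and its reflection under the functional equation.

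For part (ii), the strategy is to invert the above procedure using Mellin inversion, along the lines of the classical Hecke--Weil converse theorem. For $f\in C_0^\infty(\bR^\times)$, the function $\mathcal{F}(f)$ is Schwartz on $\bR$, and each value $\mathcal{F}(f)(l)$ can be represented via the inverse Mellin transform of its restriction to a half-line as a contour integral along a vertical line $\mathrm{Re}(s)=\sigma\gg 0$ whose integrand involves $|l|^{-s}$ and the Mellin transforms of $f$ on the positive and negative half-lines. Summing over $l\in L_1$ and exchanging the sum with the contour integral reproduces $T_{\alpha_1}(f)$ as an integral whose integrand is a linear combination of $\Xi_\pm(\alpha_1;s)$ against Mellin transforms of $f$. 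Because $f$ is supported away from $0$, these Mellin transforms decay faster than any polynomial in $|\mathrm{Im}(s)|$, which absorbs the subexponential growth permitted by [D2-2] and legitimizes shifting the contour to $\mathrm{Re}(s)=-\sigma-2\nu+1$; the poles at $s=0$ and $s=-2\nu+1$ contribute nothing because the Mellin transforms of $f$ vanish to sufficient order there (equivalently, the polar correction in (\ref{eqn:DS_entire}) is cancelled by the Mellin transforms of the cutoff functions, which is how it was produced in part (i)). Applying the functional equation [D1] on the shifted contour and re-expanding as a Fourier sum yields $T_{\alpha_2}(f_{\mu,\nu,\infty})$.

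The main obstacle in part (i) is the careful bookkeeping required to produce precisely the polar terms in (\ref{eqn:DS_entire}) and no others: the approximants $f_y^\pm$ are not compactly supported away from $0$, so one must analyse their behaviour near the origin and near infinity in detail to match the coefficients $\alpha_1(0)/s$ and $\alpha_2(0)/(s+2\nu-1)$ exactly. The parallel difficulty in part (ii) is the rigorous justification of the contour shift: the bound $e^{|s|^{c_0}}$ from [D2-2] is far weaker than polynomial, and one must exploit the super-polynomial decay of the Mellin transform of a smooth function compactly supported in $\bR^\times$ together with the exponential decay of $\Gamma(s)$ along vertical lines to secure absolute convergence throughout the shift.
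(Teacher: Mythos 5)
Your broad outline---Mellin transform of a theta-like series, functional equation from the inversion $y\leftrightarrow 1/y$, Mellin inversion plus contour shift for the converse direction---is indeed the philosophy of the paper's argument, which runs through zeta integrals $Z(\alpha,\cF(f);s)=\int_0^\infty\bigl(\sum_{0\neq l\in L_1}\alpha_1(l)\cF(f)(tl)\bigr)t^{s-1}dt$ (Lemmas \ref{lem:Po_sum_modoki}, \ref{lem:AC_global_zeta}). But there are several concrete gaps. First, your testing identity is wrong: by definition $T_{\alpha_1}(f)=\sum_{l\in L_1}\alpha_1(l)\cF(f)(l)$, so if $f_y^+$ approximates $e^{-2\pi y|x|}\chi_{x>0}$ then $T_{\alpha_1}(f_y^+)\approx\sum_l \alpha_1(l)/(2\pi(y-\sI l))$, not $\phi_1^+(y)$; you would instead need $f_y^\pm$ to approximate the Fourier \emph{dual}, roughly $1/(2\pi(y\pm\sI x))$, and then the involution $(f_y^\pm)_{\mu,\nu,\infty}$ no longer has the clean exponential shape your heuristic relies on. Second, and more seriously, your claim that [D2-1] follows by ``Stirling plus Phragm\'en--Lindel\"of interpolation'' is circular: Phragm\'en--Lindel\"of requires an a priori bound of the form $O(e^{|s|^{c_0}})$ in the critical strip before interpolating, and with arbitrary polynomial-growth coefficients and no Euler product there is nothing in your setup that supplies such a bound. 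The paper's route is quite different: it uses the special non-compactly-supported test function $\varphi_{\varepsilon,\sigma}$ (from \S\ref{subsec:test_ftn}, a Shintani-type construction via a contour integral) together with the explicit \emph{lower} bound (\ref{eqn:est_LZ_test}) on $\Phi_1(\varphi_{1,\sigma};-s+1)$, and then [D2-1] comes from the identity (\ref{eqn:pf_AP_to_DS005}) by dividing a uniformly bounded $Z$ by this lower bound and by $\mathrm{E}(s)^{-1}$. That lower bound is the source of the $e^{\sqrt{|\mathrm{Im}(s)|}}$ factor in [D2-1], and there is no way to obtain it by interpolation alone.

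The support issue you flag is real but you have not addressed it: the automorphic relation (\ref{eqn:autom_pair_def}) holds only for $f\in C_0^\infty(\bR^\times)$, and the approximants you propose (and the ones the paper uses) are \emph{not} in this class. The paper resolves this by quantitative estimates (Propositions \ref{prop:test_Shintani1} and \ref{prop:test_Shintani2}) showing that the error incurred by truncating $\varphi_{\varepsilon,\sigma}$ to $\varphi_{\varepsilon,\sigma,u}\in C_0^\infty(\bR^\times)$ vanishes as $u\to\infty$; without this the whole argument collapses. Finally, in part (ii) your assertion that ``the poles at $s=0$ and $s=-2\nu+1$ contribute nothing because the Mellin transforms of $f$ vanish to sufficient order there'' is false: for $f\in C_0^\infty(\bR^\times)$ the local zeta functions $\Phi_{\pm1}(f;s)$ are entire with no prescribed zeros, so $\cF(f)(0)$ and $\cF(f_{\mu,\nu,\infty})(0)$ are generically nonzero, the zeta integral $Z(\alpha_1,\cF(f);s)$ genuinely has simple poles at $s=0$ and $s=-2\nu+1$, and their residues---see (\ref{eqn:pf_cthm_zeta_001})---produce exactly the constant terms $-\alpha_1(0)\cF(f)(0)$ and $\alpha_2(0)\cF(f_{\mu,\nu,\infty})(0)$ that are indispensable for recovering $T_{\alpha_1}(f)=T_{\alpha_2}(f_{\mu,\nu,\infty})$. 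The Phragm\'en--Lindel\"of step does appear in the paper, but only in part (ii), applied to $\Theta(s)=s(s+2\nu-1)Z(\alpha_1,\cF(f);s)$, and only after the rapid decay of $\Phi_{\pm1}(f;-s+1)$ on vertical lines has been established via Lemma \ref{lem:cthm_junbi}.
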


A proof of Theorem \ref{thm:DS} is given in \S \ref{sec:DS}. 
The statement (i) is proved in \S \ref{subsec:DS_FE} and 
\S \ref{subsec:est_DS}. 
The statement (ii) is proved in \S \ref{subsec:converse}.

\subsection{The universal covering group of $SL(2,\bR)$}
\label{subsec:univ_cover}

Let $G=SL(2,\bR)$, and we fix 
an Iwasawa decomposition $G=UAK$, where 
$U=\{\ru (x)\mid x\in \bR\}$, $A=\{\ra (y)\mid y\in \bR_{>0}\}$ 
and $K=\{\rk (\theta )\mid \theta \in \bR\}$ with 
\begin{align*}
&\ru (x)=\left(\begin{array}{cc}
1&x\\0&1\end{array}\right),&
&\ra (y)
=\left(\begin{array}{cc}
\sqrt{y}&0\\0&1/\sqrt{y}\end{array}\right),&
&\rk (\theta )
=\left(\begin{array}{cc}
\cos \theta &\sin \theta \\
-\sin \theta &\cos \theta \end{array}\right).&
\end{align*}

Let $\gH$ be the upper half plane 
$\{z=x+\sI y \mid x\in \bR,\ y\in \bR_{>0}\}$. 
We set 
\begin{align*}
&gz=\frac{az+b}{cz+d},&
&J(g,z)=cz+d&
&\left(\ g=\left(\begin{array}{cc}a&b\\ c&d
\end{array}\right)\in G,\ z\in \gH \ \right).
\end{align*}
Since $K=\{g\in G\mid g\sI =\sI\}$ and 
$\ru (x)\ra (y)\sI =z$ \ $(z=x+\sI y\in \gH )$, 
we have $G/K\simeq \cH $ via the map induced from 
$G\ni g\mapsto g\sI \in \gH$. It is easy to see that 
the following cocycle condition holds:
\begin{align*}
&J(g_1g_2,z)=J(g_1,g_2z)J(g_2,z)&
&(g_1,g_2\in G,\ z\in \gH). 
\end{align*}

The universal covering group of $G$ is constructed in 
\cite[\S 2.2]{Bruggeman_001}. 
For convenience, we introduce a slightly modified 
construction here. 
Let \[
\widetilde{G}=
\bigl\{(g,\theta )\in G\times \bR
\mid J(g,\sI )e^{\sI \theta }=|J(g,\sI )|\ \bigr\}, 
\]
and regard $\widetilde{G}$ as a group by the operation 
\begin{align}
\label{eqn:op_tilde_G}
(g_1,\theta_1)\, (g_2,\theta_2)=
\left(g_1g_2,\ \theta_1+\theta_2-
\arg \frac{J(g_1,g_2\sI )}{J(g_1,\sI )} \right)
\end{align}
for $(g_1,\theta_1),(g_2,\theta_2)\in \widetilde{G}$. 
Then $\widetilde{G}$ is the universal covering group of 
$G$ with the covering map 
$\varpi \colon \widetilde{G}\ni (g,\theta )\mapsto g\in G$. 
We fix a section $G\ni g\mapsto {}^s\!g\in \widetilde{G}$ by 
${}^{s}\!g=(g,-\arg J(g,\sI ))$. 

We define subgroups $\widetilde{U}$, $\widetilde{A}$ and 
$\widetilde{K}$ of $\widetilde{G}$ by 
\begin{align*}
&\widetilde{U}=\{\tilde{\ru}(x)\mid x\in \bR\},&
&\widetilde{A}=\{\tilde{\ra}(y)\mid y\in \bR_{>0}\},&
&\widetilde{K}=\{\tilde{\rk}(\theta )\mid \theta \in \bR\}
\end{align*}
with 
$\tilde{\ru}(x)=(\ru (x),\, 0)$, 
$\tilde{\ra}(y)=(\ra (y),\, 0)$ and 
$\tilde{\rk }(\theta )=(\rk (\theta ),\, \theta )$. 
Then we have an Iwasawa decomposition 
$\widetilde{G}=\widetilde{U}\widetilde{A}\widetilde{K}$. 
More precisely, for $\tilde{g}=(g,\theta )\in \widetilde{G}$, 
we have 
\begin{align}
\label{eqn:Iwasawa_g}
\tilde{g}&=
\tilde{\ru}(\mathrm{Re}(g\sI ))
\tilde{\ra}(\mathrm{Im}(g\sI ))
\tilde{\rk}(\theta).
\end{align}
The center of $\widetilde{G}$ is given by 
$\widetilde{M}=\{\tilde{\rk}(m\pi )\mid m\in \bZ \}$. 
Let $\widetilde{P}=\widetilde{U}\widetilde{A}\widetilde{M}$ 
and 
\begin{align*}
&\tilde{w}=\tilde{\rk}(-\pi /2)
=(w,\, -\pi /2),&
&w=\left(\begin{array}{cc}
0&-1\\1&0\end{array}\right).
\end{align*}
Then we have a Bruhat decomposition 
$\widetilde{G}=\widetilde{P}\tilde{w}\widetilde{U}\sqcup \widetilde{P}$. 
More precisely, 
for $\tilde{g}=(g,\theta )\in \widetilde{G}$ with 
$g=\left(\begin{array}{cc}
a&b\\
c&d
\end{array}\right)\in G$, 
we have 
\begin{align}
\label{eqn:Bruhat_g}
\tilde{g}&=\left\{\begin{array}{ll}
\displaystyle 
\tilde{\ru}(a/c)\tilde{\ra}(1/c^2)
\tilde{\rk}\!\bigl(\theta +\arg ({J(g,\sI)}/{c})\bigr)
\tilde{w}\,\tilde{\ru}(d/c)
&\text{ if }\ c\neq 0,\\[1mm]
\displaystyle 
\tilde{\ru}(b/d)\tilde{\ra}(a/d)
\tilde{\rk}(\theta)
&\text{ if }\ c=0.
\end{array}
\right.
\end{align}
Here we note that 
$\theta +\arg ({J(g,\sI)}/{c})\in \pi \bZ$ if $c\neq 0$, 
and $\theta \in \pi \bZ$ if $c=0$. 
For $x\in \bR$, we set 
\begin{align*}
&\tilde{\overline{\ru}}(x)
=\tilde{w}\tilde{\ru}(-x)\tilde{w}^{-1}
=(\overline{\ru}(x),\, -\arg (1+\sI x)),&
&\overline{\ru}(x)=\left(\begin{array}{cc}
1&0\\x&1\end{array}\right).
\end{align*}

We denote by $\g$ the Lie algebra $\gs \gl (2,\bR )$ of $G=SL(2,\bR)$ and 
by $[\cdot ,\cdot ]$ the bracket product on $\g$. 
Let $\{H,E_+,E_-\}$ be the standard basis of $\g$ defined by 
\begin{align*}
&H=\left(\begin{array}{cc}1&0\\0&-1\end{array}\right),&
&E_+=\left(\begin{array}{cc}0&1\\0&0\end{array}\right),&
&E_-=\left(\begin{array}{cc}0&0\\1&0\end{array}\right).
\end{align*}
We denote by $\g_\bC$ the complexification 
$\gs \gl (2,\bC)$ of $\g$, 
and by $\cU(\g_\bC)$ the universal enveloping algebra of $\g_\bC$. 
The Lie algebra $\g$ can be regarded as the Lie algebra of $\widetilde{G}$. 
We denote by $\widetilde{\exp}$ the exponential map 
from $\g$ to $\widetilde{G}$.

\subsection{Distributions on principal series representations}
\label{subsec:principal series}

We denote by $C (\widetilde{G})$ and $C^\infty (\widetilde{G})$ 
the spaces of continuous functions and smooth functions 
on $\widetilde{G}$, respectively. 
We define the action $\rho$ of $\widetilde{G}$ on $C(\widetilde{G})$ by 
\begin{align*}
&(\rho (\tilde{g})F)(\tilde{h})=F(\tilde{h}\tilde{g})& 
&(\tilde{g},\tilde{h}\in \widetilde{G},\ F\in C(\widetilde{G})). 
\end{align*}
We define a seminorm $|\cdot |_K$ on $C(\widetilde{G})$ by 
\begin{align*}
&|F|_K=\sup_{k\in K}|F({\mathstrut}^s\!k)|
=\sup_{-\pi \leq \theta <\pi }|F(\tilde{\rk}(\theta))|&
&(F\in C(\widetilde{G})).
\end{align*}
We define the action $\rho$ of $\g$ on $C^\infty (\widetilde{G})$ by 
\begin{align*}
&(\rho(X)F)(\tilde{g})
=\frac{d}{dt}\bigg|_{t=0}F(\tilde{g}\,\widetilde{\exp}(tX))&
&(X\in \g ,\ \tilde{g}\in \widetilde{G},\ F\in C^\infty (\widetilde{G})), 
\end{align*}
and extend this action to $\cU(\g_\bC)$ in the usual way. 
For $X \in \cU (\g_\bC )$, 
we define a seminorm $\cQ_{X}$ on $C^\infty (\widetilde{G})$ by 
$\cQ_{X}(F)=|\rho (X )F|_K$ $(F\in C^\infty (\widetilde{G}))$.

Let $\mu ,\nu \in \bC$. 
Let $I_{\mu,\nu }$ be 
the subspace of $C(\widetilde{G})$ consisting of 
all functions $F$ such that 
\begin{align*}
&F(\tilde{\ru}(x)\tilde{\ra}(y)\tilde{\rk}(m\pi )\tilde{g})
=e^{\pi m \sI \mu }y^{\nu +1/2}F(\tilde{g})&
&(x\in \bR ,\ y\in \bR_{>0},\ m\in \bZ ,\ \tilde{g}\in \widetilde{G}).
\end{align*}

By the decomposition (\ref{eqn:Iwasawa_g}), 
for $\tilde{g}=(g,\theta )\in \widetilde{G}$ and $F\in I_{\mu,\nu }$, 
we have 
\begin{align}
\label{eqn:ps_red_section}
F(\tilde{g})
=e^{\sI \mu (\theta +\arg J(g,\sI))}
\mathrm{Im}(g\sI )^{\nu +1/2}
F(\tilde{\rk}(-\arg J(g,\sI))).
\end{align}
Since $\tilde{\rk}(-\arg J(g,\sI))\in 
{\mathstrut}^s\!K$, 
we know that $F=0$ 
if and only if $|F|_K=0$ for $F\in I_{\mu,\nu }$. 
Hence $|\cdot |_K$ is a norm on $I_{\mu,\nu }$. 
We give $I_{\mu,\nu }$ the topology induced by the norm $|\cdot |_K$. 
Then it is easy to see that $I_{\mu,\nu }$ is complete.

We set $I^\infty_{\mu,\nu }=
I_{\mu,\nu }\cap C^\infty (\widetilde{G})$, and 
give $I_{\mu,\nu }^\infty $ the topology induced by 
the seminorms $\cQ_{X}$ $(X\in U(\g_\bC ))$. 
Then we know that $I_{\mu,\nu }^\infty $ is a Fr\'echet space 
by the standard argument (quite similar to 
the proof of \cite[Lemma 1.6.4 (1)]{Wallach_001}). 
We call $(\rho ,I^\infty_{\mu,\nu })$ 
a {\it smooth principal series representation} of $\widetilde{G}$. 

We define an involution $I_{\mu,\nu }^\infty \ni F
\mapsto F_\infty \in I_{\mu,\nu }^\infty$ by 
$F_\infty =e^{\pi \sI \mu /2}
\rho (\tilde{w})F$. 
Here we note that, for $F\in I_{\mu,\nu }^\infty$, 
the substitution $\mu\to \mu+2$ causes the substitution 
$F_\infty \to -F_\infty$ although 
$I_{\mu,\nu }^\infty =I_{\mu +2,\nu }^\infty$ as a $\widetilde{G}$-module.

According to the decomposition (\ref{eqn:Bruhat_g}), 
we define an injective $\bC$-linear map 
$\iota_{\mu ,\nu}\colon C^\infty_0(\bR )
\to I_{\mu,\nu }^\infty$ by 
\begin{align}
\label{eqn:embedding1}
&\iota_{\mu,\nu }(f)(\tilde{g})=
\left\{\begin{array}{ll}
\displaystyle 
e^{\sI \mu \left(\theta +\arg \left(J(g,\sI)/c\right)\right)}
|c|^{-2\nu -1}f\!\left(-d/c\right)&\text{ if }\ c\neq 0,\\[2mm]
\displaystyle 
0&\text{ if }\ c=0
\end{array}\right.
\end{align}
for $f\in C^\infty_0(\bR )$ and 
$\tilde{g}=(g,\theta )\in \widetilde{G}$ 
with $g=\left(\begin{array}{cc}
a&b\\
c&d
\end{array}\right)\in G$. 
By the definition of $I^\infty_{\mu,\nu}$ and 
the density of $\widetilde{P}\tilde{w}\widetilde{U}$ in $\widetilde{G}$, 
for $f\in C^\infty_0(\bR)$, 
we know that $\iota_{\mu,\nu}(f)$ is the function 
in $I^\infty_{\mu,\nu}$ characterized by the equality 
\begin{align}
\label{eqn:iota_characterization}
&\iota_{\mu,\nu }(f)(\tilde{w}\tilde{\ru}(-x))=f(x)&
&(x\in \bR ).
\end{align}
That is, for $F\in I^\infty_{\mu,\nu}$ and $f\in C^\infty_0(\bR)$, 
we have 
\begin{align}
\label{eqn:iota_iff}
&\text{$F=\iota_{\mu,\nu}(f)$\quad  if and only if \quad 
$F(\tilde{w}\tilde{\ru}(-x))=f(x)$ ($x\in \bR$)}.
\end{align} 
For $f\in C^\infty_0(\bR)$ and $x\in \bR$, we have 
\begin{align}
\label{eqn:pre_infty_change}
(\iota_{\mu,\nu }(f))_\infty (\tilde{w}\tilde{\ru}(-x))
=e^{\pi \sI \mu /2}
\iota_{\mu,\nu }(f)(\tilde{\overline{\ru}}(x)\tilde{\rk}(-\pi))
&=f_{\mu ,\nu ,\infty}(x). 
\end{align}
Hence, by the characterization (\ref{eqn:iota_iff}), we have  
\begin{align}
\label{eqn:infty_change}
&\iota_{\mu,\nu }(f_{\mu ,\nu ,\infty})
=(\iota_{\mu,\nu }(f))_\infty &
&(f\in C^\infty_0(\bR^\times )). 
\end{align}

\begin{lem}
\label{lem:NA_act_iota_infty}
Let $\mu,\nu \in \bC$, $t\in \bR$ and $y\in \bR_{>0}$. \\
(i) For $f\in C_0^\infty (\bR)$, 
the following equalities hold: 
\begin{align*}
&\rho (\tilde{\ru} (t))\iota_{\mu,\nu}(f)=\iota_{\mu,\nu}(\ms_{-t}(f)),&
&\rho (\tilde{\ra} (y))\iota_{\mu,\nu}(f)
=y^{-\nu -1/2}\iota_{\mu,\nu}(f_{[y^{-1}]}),
\end{align*}
where $f_{[y^{-1}]}(x)=f(y^{-1}x)$ $(x\in \bR)$. \\[1mm]
(ii) For $F\in I_{\mu,\nu}^\infty$, 
the following equalities hold: 
\begin{align*}
&(\rho (\tilde{\overline{\ru}} (t))F)_\infty 
=\rho (\tilde{\ru} (-t))F_\infty ,&
&(\rho (\tilde{\ru} (t))F)_\infty 
=\rho (\tilde{\overline{\ru}} (-t))F_\infty ,\\
&(\rho (\tilde{\ra} (y))F)_\infty 
=\rho (\tilde{\ra} (1/y))F_\infty .
\end{align*}
\end{lem}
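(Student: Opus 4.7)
The plan is to reduce every identity to the level of elementary manipulations in $\widetilde{G}$, using the characterization \eqref{eqn:iota_iff} for part (i) and the definition $F_\infty = e^{\pi\sI\mu/2}\rho(\tilde{w})F$ together with the homomorphism property $\rho(\tilde{g}_1)\rho(\tilde{g}_2) = \rho(\tilde{g}_1\tilde{g}_2)$ for part (ii). Throughout, the only non-trivial computations live inside $\widetilde{G}$: (a) the commutation $\tilde{\ru}(-x)\tilde{\ra}(y) = \tilde{\ra}(y)\tilde{\ru}(-x/y)$, (b) the commutation $\tilde{w}\tilde{\ra}(y) = \tilde{\ra}(y^{-1})\tilde{w}$, and (c) the identity $\tilde{w}^2 = \tilde{\rk}(-\pi) \in \widetilde{M}$. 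Each of these follows from the group law \eqref{eqn:op_tilde_G} once one checks that the $\arg(J(g_1,g_2\sI)/J(g_1,\sI))$ correction vanishes, which is immediate because the relevant matrices have $c = 0$ or $J(w,z) = z$.

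For part (i), I would evaluate both sides at $\tilde{w}\tilde{\ru}(-x)$ and compare. The translation identity is then trivial: the left becomes $\iota_{\mu,\nu}(f)(\tilde{w}\tilde{\ru}(t-x)) = f(x-t)$ by \eqref{eqn:iota_characterization}, and the right is $\ms_{-t}(f)(x) = f(x-t)$. For the dilation, I rewrite
\[
\tilde{w}\tilde{\ru}(-x)\tilde{\ra}(y) = \tilde{w}\tilde{\ra}(y)\tilde{\ru}(-x/y) = \tilde{\ra}(y^{-1})\tilde{w}\tilde{\ru}(-x/y)
\]
using commutations (a) and (b); applying the $\widetilde{U}\widetilde{A}\widetilde{M}$-equivariance of $I_{\mu,\nu}$ and then \eqref{eqn:iota_characterization} pulls out the factor $y^{-\nu-1/2}$ and leaves $f(x/y) = f_{[y^{-1}]}(x)$, matching the right-hand side.

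For part (ii), each identity reduces to a commutation of the form $\tilde{w}\tilde{g} = \tilde{g}'\tilde{w}$ in $\widetilde{G}$ after unfolding the definition of $(\cdot)_\infty$. The second identity is immediate from $\tilde{\overline{\ru}}(-t) = \tilde{w}\tilde{\ru}(t)\tilde{w}^{-1}$, and the third from commutation (b) above. The first identity requires one extra step: expanding $\tilde{w}\tilde{\overline{\ru}}(t) = \tilde{w}^2\tilde{\ru}(-t)\tilde{w}^{-1}$, substituting $\tilde{w}^2 = \tilde{\rk}(-\pi)$ by (c), and using centrality of $\tilde{\rk}(-\pi) \in \widetilde{M}$ to slide it past $\tilde{\ru}(-t)$ and combine it with $\tilde{w}^{-1} = \tilde{\rk}(\pi/2)$ to produce the single factor $\tilde{w}$ on the right.

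The main obstacle is none of these items individually but rather the bookkeeping of the angular coordinate $\theta$ in $\widetilde{G}$. The cocycle term $\arg(J(g_1,g_2\sI)/J(g_1,\sI))$ in \eqref{eqn:op_tilde_G} must be checked to be zero in each of the commutations; once the centrality of $\widetilde{M}$ and the clean form $\tilde{w}^2 = \tilde{\rk}(-\pi)$ are established, the rest of the argument is a routine but careful bookkeeping exercise with no additional analytic input.
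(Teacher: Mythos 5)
Your proposal is correct and follows essentially the same route as the paper: for (i), evaluate both sides at $\tilde{w}\tilde{\ru}(-x)$ and invoke the characterization (\ref{eqn:iota_iff}), and for (ii), reduce to the commutation identities $\tilde{w}\tilde{\overline{\ru}}(t)=\tilde{\ru}(-t)\tilde{w}$, $\tilde{w}\tilde{\ru}(t)=\tilde{\overline{\ru}}(-t)\tilde{w}$, $\tilde{w}\tilde{\ra}(y)=\tilde{\ra}(1/y)\tilde{w}$. The only difference is presentational: the paper states these commutations outright (and performs the single combined step $\tilde{w}\tilde{\ru}(-x)\tilde{\ra}(y)=\tilde{\ra}(1/y)\tilde{w}\tilde{\ru}(-x/y)$ in (i)), whereas you derive them explicitly from the cocycle formula and $\tilde{w}^2=\tilde{\rk}(-\pi)$, which is fine.
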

\begin{proof}
The statement (i) follows from the characterization (\ref{eqn:iota_iff}) and 
\begin{align*}
&(\rho (\tilde{\ru} (t))\iota_{\mu,\nu}(f))(\tilde{w}\tilde{\ru}(-x))
=\iota_{\mu,\nu}(f)(\tilde{w}\tilde{\ru}(-x+t))=f(x-t)=\ms_{-t}(f)(x),\\
&(\rho (\tilde{\ra} (y))\iota_{\mu,\nu}(f))(\tilde{w}\tilde{\ru}(-x))
=\iota_{\mu,\nu}(f)(\tilde{w}\tilde{\ru}(-x)\tilde{\ra}(y))\\
&\phantom{====}
=\iota_{\mu,\nu}(f)(\tilde{\ra}(1/y)\tilde{w}\tilde{\ru}(-x/y))
=y^{-\nu -1/2}f(x/y)=y^{-\nu -1/2}f_{[y^{-1}]}(x)
\end{align*}
for $f\in C^\infty_0(\bR)$ and $x\in \bR$. 
The statement (ii) follows from the equalities 
$\tilde{w}\tilde{\overline{\ru}} (t)=\tilde{\ru}(-t)\tilde{w}$, 
$\tilde{w}\tilde{\ru}(t)=\tilde{\overline{\ru}}(-t)\tilde{w}$ and 
$\tilde{w}\tilde{\ra}(y)=\tilde{\ra} (1/y)\tilde{w}$. 
\end{proof}

In this paper, we call a continuous $\bC$-linear map 
$\lambda \colon I^\infty_{\mu,\nu}\to \bC$ a {\it distribution} 
on $I^\infty_{\mu,\nu}$, and denote by $I^{-\infty}_{\mu,\nu}$ 
the space of distributions on $I^\infty_{\mu,\nu}$. 
Since the topology of $I^\infty_{\mu,\nu }$ is induced by 
the seminorms $\cQ_{X}$ $(X\in U(\g_\bC ))$, 
we note that a $\bC$-linear map $\lambda \colon I^\infty_{\mu,\nu}\to \bC$ 
is continuous if and only if 
there exist $c>0$, $m\in \bZ_{>0}$ and 
$X_1,X_2, \cdots ,X_m\in \cU (\g_{\bC})$ such that 
\begin{align}
\label{eqn:cdn_conti_ps}
&|\lambda (F)|
\leq c\sum_{i=1}^m
\cQ_{X_i}(F)&&(F\in I^\infty_{\mu,\nu}).
\end{align}
For $\lambda \in I^{-\infty}_{\mu,\nu}$, 
we define $\bC$-linear maps 
$T^\lambda \colon C^\infty_0(\bR)\to \bC$ and 
$\lambda_\infty \colon I^\infty_{\mu,\nu}\to \bC$ by 
\begin{align*}
&T^\lambda (f)=\lambda (\iota_{\mu,\nu}(f))\quad 
(f\in C^\infty_0(\bR)),&
&\lambda_\infty (F)=\lambda (F_\infty)\quad 
(F\in I^\infty_{\mu,\nu}).
\end{align*}

For $F\in I^\infty_{\mu,\nu}$, 
we call a pair $(f_1,f_2)$ a {\it partition} of $F$ if and only if 
$f_1$ and $f_2$ are functions in $C_0^\infty (\bR )$ such that 
$\iota_{\mu,\nu }(f_1)+(\iota_{\mu,\nu }(f_2))_\infty =F$. 
It is easy to see that 
any function $F$ in $I_{\mu,\nu}^\infty$ 
has a partition (\textit{cf}. Lemma \ref{lem:partition_exist}), 
although a partition is not unique. 
For $(T_1,T_2)\in \cA (J_{\mu,\nu})$ and 
$F\in I^\infty_{\mu,\nu}$, we set 
\begin{align}
\label{eqn:pair_to_fctl}
\Lambda (T_1,T_2)(F)=T_1(f_1)+T_2(f_2),
\end{align}
where $(f_1,f_2)$ is a partition of $F$. 
The following proposition is proved in \S \ref{subsec:pf_PFprop}.

\begin{prop}
\label{prop:pair_to_functionl}
Let $\mu,\nu \in \bC$. \\[1mm]
(i) $T^\lambda \in \cD'(\bR)$, 
$\lambda_\infty \in I^{-\infty}_{\mu,\nu}$ and 
$(\lambda_\infty)_\infty =\lambda$ for 
$\lambda \in I^{-\infty}_{\mu,\nu}$. \\[1mm]
(ii) The definition (\ref{eqn:pair_to_fctl}) 
does not depend on the choice of a partition $(f_1,f_2)$ of $F$. 
Moreover, $\Lambda (T_1,T_2)\in I^{-\infty}_{\mu,\nu}$ 
for $(T_1,T_2)\in \cA (J_{\mu,\nu})$. \\[1mm]
(iii) The $\bC$-linear map 
$I^{-\infty}_{\mu,\nu}\ni \lambda \mapsto 
(T^\lambda ,T^{\lambda_\infty} )\in \cA (J_{\mu,\nu})$ 
is bijective, and its inverse map is given 
by $(T_1,T_2)\mapsto \Lambda (T_1,T_2)$. 
\end{prop}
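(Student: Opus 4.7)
The plan is to establish the three parts in order, with the continuity of $\Lambda(T_1,T_2)$ in (ii) as the main obstacle.

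For (i), the crucial technical ingredient is the continuity of $\iota_{\mu,\nu}\colon C^\infty_0(\bR)\to I^\infty_{\mu,\nu}$. Using the Iwasawa decomposition (\ref{eqn:Iwasawa_g}), the characterization (\ref{eqn:iota_characterization}), and the reduction (\ref{eqn:ps_red_section}), for each $X\in \cU(\g_\bC)$ one can bound $\cQ_X(\iota_{\mu,\nu}(f))$ by a finite sum $\sum_{i=0}^N\sup_x|f^{(i)}(x)|$, with $N$ and the implied constants independent of $f$ once $\supp(f)$ is confined to a fixed compact set. Composed with (\ref{eqn:cdn_conti_ps}) this yields $T^\lambda\in \cD'(\bR)$. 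Next, $\lambda_\infty\in I^{-\infty}_{\mu,\nu}$ follows from continuity of $F\mapsto F_\infty=e^{\pi \sI \mu/2}\rho(\tilde{w})F$ on $I^\infty_{\mu,\nu}$. Finally, $(\lambda_\infty)_\infty=\lambda$ reduces to $(F_\infty)_\infty=F$: since $\tilde{w}^{2}=\tilde{\rk}(-\pi)$ is central in $\widetilde{G}$, the transformation rule of $I_{\mu,\nu}$ gives $\rho(\tilde{\rk}(-\pi))F=e^{-\pi \sI \mu}F$, whence $(F_\infty)_\infty=e^{\pi \sI \mu}\rho(\tilde{w}^{2})F=F$.

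For the well-definedness in (ii), suppose $(f_1,f_2)$ and $(f_1',f_2')$ are two partitions of $F$, and set $g_i=f_i-f_i'$. Then $\iota_{\mu,\nu}(g_1)+(\iota_{\mu,\nu}(g_2))_\infty=0$ in $I^\infty_{\mu,\nu}$, and evaluating at $\tilde{w}\tilde{\ru}(-x)$ via (\ref{eqn:iota_characterization}) and (\ref{eqn:pre_infty_change}) gives $g_1(x)=-(g_2)_{\mu,\nu,\infty}(x)$ for $x\in \bR^\times$. Since $g_2$ has compact support, $(g_2)_{\mu,\nu,\infty}$ vanishes near $0$, hence $g_1\in C^\infty_0(\bR^\times)$; applying the involution $(\,\cdot\,)_\infty$ to the same relation and repeating the argument puts $g_2\in C^\infty_0(\bR^\times)$ as well. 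Then (\ref{eqn:autom_pair_def}) and (\ref{eqn:basic_f_infty}) give $T_1(g_1)=T_2((g_1)_{\mu,\nu,\infty})=-T_2(g_2)$, so $T_1(f_1)+T_2(f_2)=T_1(f_1')+T_2(f_2')$.

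For the continuity in (ii), I would use a partition depending linearly on $F$. Fix a cutoff $\chi\in C^\infty_0(\bR)$ with $\chi\equiv 1$ on $[-1,1]$ and $\supp(\chi)\subset [-2,2]$, set $\phi_F(x)=F(\tilde{w}\tilde{\ru}(-x))$, put $f_1=\chi\phi_F$, and let $f_2\in C^\infty_0(\bR)$ be characterized by $\iota_{\mu,\nu}(f_2)=(F-\iota_{\mu,\nu}(f_1))_\infty$ (that this $f_2$ is indeed smooth and compactly supported, necessarily in $[-1,1]$, is essentially Lemma \ref{lem:partition_exist}). The identities $\phi_F^{(j)}(x)=(-1)^j(\rho(E_+^j)F)(\tilde{w}\tilde{\ru}(-x))$ and $f_2(x)=e^{-\pi \sI \mu/2}(F-\iota_{\mu,\nu}(f_1))(\tilde{\overline{\ru}}(x))$, combined with $\tilde{\overline{\ru}}(x)=\widetilde{\exp}(xE_-)$ and the Iwasawa comparison bound $|G(\tilde{w}\tilde{\ru}(-x))|+|G(\tilde{\overline{\ru}}(x))|\le C|G|_K$ for $|x|\le 2$ and $G\in I^\infty_{\mu,\nu}$, bound $\sup|f_1^{(j)}|$ and $\sup|f_2^{(j)}|$ by finite combinations of $\cQ_X(F)$. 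Feeding these into the distribution estimates for $T_1,T_2$ then yields (\ref{eqn:cdn_conti_ps}) for $\Lambda(T_1,T_2)$. Part (iii) is a direct verification: for any partition $(f_1,f_2)$ of $F$, linearity of $\lambda$ and the definition of $\lambda_\infty$ give $\Lambda(T^\lambda,T^{\lambda_\infty})(F)=\lambda(\iota_{\mu,\nu}(f_1))+\lambda((\iota_{\mu,\nu}(f_2))_\infty)=\lambda(F)$, while the trivial partitions $(f,0)$ of $\iota_{\mu,\nu}(f)$ and $(0,f)$ of $(\iota_{\mu,\nu}(f))_\infty$ identify $T^{\Lambda(T_1,T_2)}=T_1$ and $T^{\Lambda(T_1,T_2)_\infty}=T_2$. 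The hardest step is the seminorm chasing in (ii): the algebraic statements are formal consequences of the defining relations, but converting sup-norm bounds on $f_1^{(j)},f_2^{(j)}$ into the Lie-algebraic seminorms $\cQ_X$ on $I^\infty_{\mu,\nu}$ requires the explicit Iwasawa analysis together with the specific linear partition used above.
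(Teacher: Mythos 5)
Your proof mirrors the paper's approach at each step: part (i) via the continuity of $\iota_{\mu,\nu}$ (Lemma~\ref{lem:seminorm_F_leq_f}) and of $F\mapsto F_\infty$ (Lemma~\ref{lem:seminorm_Finfty_leq_F}); well-definedness in (ii) via the same cancellation through (\ref{eqn:autom_pair_def}) (Lemma~\ref{lem:aimai_partitions}); continuity of $\Lambda(T_1,T_2)$ via a canonical partition of $F$ whose derivatives are controlled by $\cQ_X$-seminorms (Lemma~\ref{lem:seminorm_f_leq_F}); and (iii) by direct verification. The only cosmetic difference is that the paper defines $f_2(x)=F_\infty(\tilde{w}\tilde{\ru}(-x))\Delta(x)$ explicitly, exploiting the special cutoff with $\Delta(t)+\Delta(-1/t)=1$, so that $\sup|f_2^{(j)}|$ is bounded by $\cQ_{(E_+)^j}(F_\infty)$ in a single step, whereas your implicitly characterized $f_2$ unfolds to $e^{-\pi\sI\mu/2}\bigl(F-\iota_{\mu,\nu}(f_1)\bigr)(\tilde{\overline{\ru}}(x))$ and hence needs an extra pass through Lemma~\ref{lem:seminorm_F_leq_f} to control the $\iota_{\mu,\nu}(f_1)$ contribution.
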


Let $L$ be a shifted lattice in $\bR$. 
We define subspaces $(I^{-\infty}_{\mu ,\nu})_{L}$ and 
$(I^{-\infty}_{\mu ,\nu})_{L}^{\rm quasi}$ of 
$I^{-\infty}_{\mu ,\nu}$ by 
\begin{align*}
&(I^{-\infty}_{\mu ,\nu})_{L}=\{\lambda \in I^{-\infty}_{\mu ,\nu}
\mid \lambda (\rho (\tilde{\ru}(t))F)=\omega_L(t)\lambda (F)\quad 
(t\in L^\vee ,\, F\in I^\infty_{\mu,\nu})\},\\ 
&(I^{-\infty}_{\mu ,\nu})_{L}^{\rm quasi}
=\{\lambda \in I^{-\infty}_{\mu ,\nu}
\mid T^\lambda \in \cD'({L}^\vee \backslash \bR ;\omega_L)\}.
\end{align*}
By Lemma \ref{lem:NA_act_iota_infty} (i) and 
Proposition \ref{prop:pair_to_functionl} (i), we have 
$T^{\lambda}\in \cD'({L}^\vee \backslash \bR ;\omega_L)$ 
for $\lambda \in (I^{-\infty}_{\mu ,\nu})_{L}$. 
Hence, $(I^{-\infty}_{\mu ,\nu})_{L}$ is a subspace of 
$(I^{-\infty}_{\mu ,\nu})_{L}^{\rm quasi}$.

\subsection{The Jacquet integrals and the Fourier expansions}
\label{subsec:Fexp_ILL}

In this subsection, we introduce the Jacquet integral, 
and the Fourier expansions in terms of them. 
Let $\mu \in \bC$, $y\in \bR$ and $f\in C_0^\infty (\bR )$. 
It is easy to see that $f_{\mu,\nu,\infty}$ is integrable 
and $\cF(f_{\mu,\nu,\infty})(y)$ is a holomorphic function 
of $\nu$ on $\mathrm{Re}(\nu)>0$. 
For $m\in \bZ_{\geq 0}$, we denote by $\delta^{(m)}$ 
the $m$-th derivative of the Dirac delta distribution, 
that is, $\delta^{(m)}(f)=(-1)^mf^{(m)}(0)$ 
($f\in C_0^\infty (\bR)$). 

\begin{lem}
\label{lem:pre_twist_Ftrans}
Let $\mu \in \bC$, $y\in \bR^\times$ and $f\in C_0^\infty (\bR )$. 
\vspace{0.5mm}

\noindent (i) As a function of $\nu$, $\cF(f_{\mu,\nu,\infty})(y)$ has 
the holomorphic continuation to the whole $\nu$-plane. \vspace{0.5mm}

\noindent (ii) As a function of $\nu$, $\cF(f_{\mu,\nu,\infty})(0)$ has 
the meromorphic continuation to the whole $\nu$-plane. 
Moreover, 
\begin{align*}
&\cF(f_{\mu,\nu ,\infty})(0)
-\sum_{i=0}^{n}
(\sI)^i\frac{2\cos \bigl(\frac{\pi (i+\mu )}{2}\bigr)}
{i!(2\nu +i)}\delta^{(i)}(f)
\end{align*} 
is holomorphic on $\mathrm{Re}(\nu)>-(n+1)/2$ 
for any $n\in \bZ_{\geq 0}$. 
\end{lem}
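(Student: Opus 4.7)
The plan is to reduce $\cF(f_{\mu,\nu,\infty})(y)$ to two Mellin-type integrals via the substitution $u = -1/x$, and then to treat the cases $y \in \bR^\times$ and $y = 0$ by separate analytic-continuation arguments. Splitting the defining Fourier integral according to $\sgn(x)$ and substituting $u = -1/x$ in each piece, one obtains, for $\mathrm{Re}(\nu)>0$,
\[
\cF(f_{\mu,\nu,\infty})(y)
= e^{-\pi\sI\mu/2}\int_{-\infty}^{0} |u|^{2\nu-1} f(u)\, e^{-2\pi\sI y/u}\,du
+ e^{\pi\sI\mu/2}\int_{0}^{\infty} u^{2\nu-1} f(u)\, e^{-2\pi\sI y/u}\,du.
\]
Call the two integrals $J_-(y,\nu)$ and $J_+(y,\nu)$; compact support of $f$ truncates each at $u\to\pm\infty$, so the only delicate point is $u\to 0$.

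For (i), with $y\in\bR^\times$, I would exploit the identity $e^{-2\pi\sI y/u} = \frac{u^2}{2\pi\sI y}\frac{d}{du}e^{-2\pi\sI y/u}$ and integrate by parts. A single step turns $J_+(y,\nu)$ into
\[
J_+(y,\nu) = \frac{-1}{2\pi\sI y}\int_0^\infty \bigl[(2\nu+1)u^{2\nu}f(u)+u^{2\nu+1}f'(u)\bigr] e^{-2\pi\sI y/u}\,du,
\]
with the boundary contributions vanishing (at $\infty$ by compact support of $f$, at $0$ thanks to the factor $u^{2\nu+1}$) once $\mathrm{Re}(\nu)>-1/2$. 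The new integrand is absolutely integrable and holomorphic in $\nu$ on this enlarged half-plane and agrees with $J_+(y,\nu)$ where both sides converge. Iterating $k$ times extends holomorphy to $\mathrm{Re}(\nu)>-k/2$, hence to all of $\bC$; the same argument for $J_-(y,\nu)$ completes (i).

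For (ii), with $y=0$, the oscillation disappears and, after $u\mapsto -u$ in $J_-$,
\[
\cF(f_{\mu,\nu,\infty})(0) = e^{-\pi\sI\mu/2}\int_0^\infty u^{2\nu-1}f(-u)\,du + e^{\pi\sI\mu/2}\int_0^\infty u^{2\nu-1}f(u)\,du.
\]
Each of these is a standard Mellin transform. Choosing a cutoff $\chi\in C_0^\infty(\bR)$ with $\chi\equiv 1$ near $0$, the $(1-\chi)$ piece is entire in $\nu$; on the $\chi$ piece I substitute $f(\pm u)=\sum_{i=0}^n \frac{(\pm 1)^i f^{(i)}(0)}{i!}u^i + u^{n+1}r_{n,\pm}(u)$ and integrate termwise, which produces the poles $\frac{(\pm 1)^i f^{(i)}(0)}{i!(2\nu+i)}$ modulo an entire function, together with a remainder integral holomorphic on $\mathrm{Re}(\nu)>-(n+1)/2$. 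Assembling the two signs yields
\[
\cF(f_{\mu,\nu,\infty})(0) = \sum_{i=0}^n \frac{f^{(i)}(0)}{i!(2\nu+i)}\bigl(e^{\pi\sI\mu/2}+(-1)^i e^{-\pi\sI\mu/2}\bigr) + H_n(\nu),
\]
with $H_n$ holomorphic on $\mathrm{Re}(\nu)>-(n+1)/2$. A short computation separating $i$ even and odd verifies $e^{\pi\sI\mu/2}+(-1)^i e^{-\pi\sI\mu/2} = 2(-\sI)^i\cos(\pi(i+\mu)/2)$, and, combined with $\delta^{(i)}(f)=(-1)^if^{(i)}(0)$, this rewrites the pole contribution into exactly $(\sI)^i\frac{2\cos(\pi(i+\mu)/2)}{i!(2\nu+i)}\delta^{(i)}(f)$, as required.

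The main technical nuisance is (i): at each iteration of integration by parts one has to verify carefully that the boundary terms really vanish and that the new integrand's behavior at $u=0$ genuinely furnishes a strictly larger half-plane of absolute convergence, so a clean inductive bookkeeping is essential. In (ii) the analytic continuation is routine, and the only delicate point is the sign accounting in the trigonometric identity that matches the normalization $(\sI)^i\cdot 2\cos(\pi(i+\mu)/2)$ precisely.
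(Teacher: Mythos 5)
Your proof is correct, and the underlying idea is the same as the paper's: transfer derivatives off the oscillatory kernel to gain powers of the integration variable near zero, thereby enlarging the half-plane of convergence step by step. The mechanics differ slightly. For (i), the paper stays in the $x$-variable and observes the identity $(f_{\mu,\nu,\infty})'' = (D_\nu f)_{\mu,\nu+1,\infty}$ with the second-order operator $D_\nu$ from (\ref{eqn:def_Dnu}), then applies Lemma \ref{lem:fourier_diff} twice to get $\cF(f_{\mu,\nu,\infty})(y) = (2\pi\sI y)^{-2}\cF((D_\nu f)_{\mu,\nu+1,\infty})(y)$, shifting $\nu\mapsto\nu+1$ at each step; you instead change variables to $u=-1/x$ and integrate by parts against the kernel $e^{-2\pi\sI y/u}$, shifting the half-plane by $1/2$ at each step. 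These are dual formulations of the same maneuver, and your bookkeeping (boundary terms vanish by compact support at $\infty$ and by $u^{2\nu+1}$ at $0$ once $\mathrm{Re}(\nu)>-1/2$; the new integrand is holomorphic on the larger half-plane) is sound. For (ii), the paper simply rewrites $\cF(f_{\mu,\nu,\infty})(0)=e^{\pi\sI\mu/2}\Phi_1(f;2\nu)+e^{-\pi\sI\mu/2}\Phi_{-1}(f;2\nu)$ (equation (\ref{eqn:twist_fourier0})) and invokes the analytic continuation and pole data of the local zeta functions already established in \S\ref{subsec:AC_local_zeta}; you reprove the same statement in place with a Taylor expansion and cutoff, and your sign computation $e^{\pi\sI\mu/2}+(-1)^ie^{-\pi\sI\mu/2}=2(-\sI)^i\cos(\pi(i+\mu)/2)$ together with $\delta^{(i)}(f)=(-1)^if^{(i)}(0)$ produces exactly the coefficient $(\sI)^i\,2\cos(\pi(i+\mu)/2)/(i!(2\nu+i))$. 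The paper's route for (ii) is preferable only because it reuses machinery already set up; yours is self-contained and equally valid.
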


A proof of Lemma \ref{lem:pre_twist_Ftrans} is given 
in \S \ref{subsec:AC_fourier}. 
For $\mu,\nu \in \bC$, $y\in \bR$ and $f\in C_0^\infty (\bR )$, 
we define the twisted Fourier transform 
$\cF_{\mu,\nu ,\infty}(f)(y)$ of $f$ by 
\begin{align*}
&\cF_{\mu,\nu ,\infty}(f)(y)
=\left\{\!\begin{array}{l}
\cF(f_{\mu,\nu ,\infty})(y)\hspace{3.3cm}
\text{if $y\neq 0$ or $-2\nu \not\in \bZ_{\geq 0}$},\\[3mm]
\displaystyle 
\lim_{s \to -n/2}
\left(\cF(f_{\mu,s,\infty})(0)
-(\sI)^n\frac{2\cos \bigl(\frac{\pi (n+\mu )}{2}\bigr)}
{n!(2s +n)}\delta^{(n)}(f)\right)\\[4mm]
\hspace{2.4cm}
\text{if $y=0$ and $-2\nu =n$ with some $n\in \bZ_{\geq 0}$}
\end{array}
\right.
\end{align*}
with the meromorphic continuation 
in Lemma \ref{lem:pre_twist_Ftrans}. 
The following lemma is proved in \S \ref{subsec:AC_fourier}.

\begin{lem}
\label{lem:twist_fourier_0}
Let $\mu,\nu \in \bC$ and $y\in \bR$. 
A map 
$C_0^\infty(\bR)\ni f\mapsto \cF_{\mu,\nu,\infty}(f)(y)\in \bC$ 
is a distribution on $\bR$. 
Moreover, we have 
\begin{align}
\label{eqn:fourier_infty}
&\cF_{\mu,\nu,\infty}(f)(y)=\cF (f_{\mu,\nu,\infty})(y)&
&(f\in C_0^\infty (\bR^\times )). 
\end{align}
\end{lem}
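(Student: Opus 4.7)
The plan is to leverage the explicit meromorphic continuation provided by Lemma~\ref{lem:pre_twist_Ftrans} to extract both the distribution estimate and the equality~(\ref{eqn:fourier_infty}).

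I first dispose of the \emph{moreover} statement. For $f\in C_0^\infty(\bR^\times)$, the function $f_{\mu,\nu,\infty}(x)=J_{\mu,\nu}(x)f(-1/x)$ is supported in $\{x\in\bR^\times\mid -1/x\in \supp f\}$, a compact subset of $\bR^\times$. Hence $f_{\mu,\nu,\infty}$ extends by $0$ to a member of $C_0^\infty(\bR)$ and the ordinary Fourier integral defines $\cF(f_{\mu,\nu,\infty})(y)$ as an entire function of $\nu$ for each $y\in\bR$; by uniqueness of meromorphic continuation this agrees with the continuation in Lemma~\ref{lem:pre_twist_Ftrans}. Since $\delta^{(i)}(f)=(-1)^if^{(i)}(0)=0$ for every $i$, the correction sum in the definition of $\cF_{\mu,\nu,\infty}(f)(0)$ vanishes and the limit at $-2\nu=n$ reduces to $\cF(f_{\mu,-n/2,\infty})(0)$, proving~(\ref{eqn:fourier_infty}).

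For the distribution estimate, fix $u>0$ and restrict to $f$ with $\supp f\subset[-u,u]$. The substitution $t=-1/x$ rewrites, for $\mathrm{Re}(\nu)>0$,
\[
\cF(f_{\mu,\nu,\infty})(y)=\int_0^\infty\bigl(
e^{\pi\sI\mu/2}f(t)e^{-2\pi\sI y/t}+
e^{-\pi\sI\mu/2}f(-t)e^{2\pi\sI y/t}\bigr)t^{2\nu-1}\,dt.
\]
For arbitrary $\nu$ I would pick $n\in\bZ_{\ge 0}$ with $\mathrm{Re}(\nu)>-(n+1)/2$, Taylor-expand $f(\pm t)$ at $t=0$ to order $n$, and split the integral at $t=1$. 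The polynomial pieces reduce to $\int_0^1 t^{2\nu-1+i}e^{\mp 2\pi\sI y/t}\,dt$, which is entire in $\nu$ when $y\ne 0$ (repeated integration by parts against the oscillatory factor furnishes both convergence and polynomial growth in $\nu$) and equals $1/(2\nu+i)$ when $y=0$; the Taylor remainder converges absolutely on the chosen half-plane and is bounded by a multiple of $\sup_x|f^{(n+1)}(x)|$. This is the explicit continuation underlying Lemma~\ref{lem:pre_twist_Ftrans}.

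Reading this formula off case-by-case yields the required bound. When $y\ne 0$, or $y=0$ with $-2\nu\notin\bZ_{\ge 0}$, every coefficient is finite and the estimate $|\cF_{\mu,\nu,\infty}(f)(y)|\le c\sum_{i=0}^{n+1}\sup_x|f^{(i)}(x)|$ follows immediately. When $y=0$ and $-2\nu=n\in\bZ_{\ge 0}$, I would decompose $\cF(f_{\mu,s,\infty})(0)=\frac{(\sI)^n 2\cos(\pi(n+\mu)/2)}{n!(2s+n)}\delta^{(n)}(f)+H(f;s)$ with $H(f;s)$ holomorphic in $s$ near $-n/2$ by Lemma~\ref{lem:pre_twist_Ftrans}(ii), so by definition $\cF_{\mu,-n/2,\infty}(f)(0)=H(f;-n/2)$, which the same Taylor-subtraction formula bounds in terms of $|f^{(i)}(0)|$ ($i\le n$) and $\sup_x|f^{(n+1)}(x)|$. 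The main obstacle is the bookkeeping in this Taylor-subtraction near the poles $2\nu=-i$: handling the precise cancellation that produces $H(f;-n/2)$ and keeping constants explicit enough to read off the seminorm bound is where the technical effort concentrates, though no new analytic input beyond Lemma~\ref{lem:pre_twist_Ftrans} is needed.
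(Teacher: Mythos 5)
Your argument is correct and, for the case $y\neq 0$, takes a genuinely different route from the paper's. The paper splits the two cases: for $y=0$ it invokes the expression $\cF(f_{\mu,\nu,\infty})(0)=e^{\pi\sI\mu/2}\Phi_1(f;2\nu)+e^{-\pi\sI\mu/2}\Phi_{-1}(f;2\nu)$ (equation (\ref{eqn:twist_fourier0})) together with the local-zeta distribution estimates from \S\ref{subsec:dist_local_zeta}, which are themselves obtained by the Taylor/Maclaurin subtraction $(\ref{eqn:AC_Phi})$ — essentially what you do at $y=0$; for $y\neq 0$, however, the paper uses Lemma~\ref{lem:twist_fourier}, which rests on the algebraic identity $(f_{\mu,\nu,\infty})''=(D_{\nu}f)_{\mu,\nu+1,\infty}$ and Fourier differentiation to shift $\nu$ by integer steps, trading powers of $(2\pi\sI y)^{-2}$ for applications of the second-order operator $D_\nu$. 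You instead carry the Taylor subtraction uniformly and, for $y\neq 0$, handle the monomial pieces $\int_0^1 t^{2\nu-1+i}e^{\mp 2\pi\sI y/t}\,dt$ by repeated integration by parts in the oscillatory phase (after $s=1/t$, this is $\int_1^\infty s^{-2\nu-1-i}e^{\mp 2\pi\sI ys}\,ds$), which indeed continues each monomial piece entirely in $\nu$. Both approaches produce valid seminorm bounds; the paper's $D_\nu$ recursion is tighter-packaged and reusable (it is also the engine behind Lemma~\ref{lem:pre_twist_Ftrans}), while your route is more elementary and unified across $y=0$ and $y\neq 0$, at the cost of the pole-bookkeeping you acknowledge near $2\nu\in\bZ_{\le 0}$. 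One small inaccuracy: you describe your formula as ``the explicit continuation underlying Lemma~\ref{lem:pre_twist_Ftrans},'' but the paper actually proves that lemma via the $D_\nu$ recursion (\ref{eqn:twist_fourier}), not via Taylor subtraction — they agree by uniqueness of analytic continuation, but they are not the same derivation.
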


Let $\mu ,\nu \in \bC$ and $l\in \bR$. 
When $\mathrm{Re}(\nu)>0$, 
we define the Jacquet integral 
$\cJ_{l}\in I^{-\infty}_{\mu,\nu}$ by 
\begin{align*}
&\cJ_{l}(F)=\int_{-\infty}^\infty F(\tilde{w}\tilde{\ru}(-x))
e^{2\pi \sI lx}dx&&(F\in I^\infty_{\mu,\nu}).
\end{align*}
By (\ref{eqn:iota_characterization}) and (\ref{eqn:pre_infty_change}), we have 
\begin{align}
\label{eqn:jacquet_ext}
&\cJ_{l}(F)=\cF (f_1)(l)+\cF_{\mu,\nu,\infty}(f_2)(l)&
&(F\in I^\infty_{\mu,\nu}), 
\end{align}
where $(f_1,f_2)$ is a partition of $F$. 
By Lemma \ref{lem:twist_fourier_0}, we note that 
a pair of $f\mapsto \cF (f)(l)$ and 
$f\mapsto \cF_{\mu,\nu,\infty}(f)(l)$ is 
in $\cA (J_{\mu,\nu})$. Hence, 
we can extend the definition of 
$\cJ_l\in I^{-\infty}_{\mu,\nu}$ to general $\nu \in \bC$ 
by the expression (\ref{eqn:jacquet_ext}) and 
Proposition \ref{prop:pair_to_functionl}. 
Here this extension of the Jacquet integral $\cJ_l$ 
is essentially same as the extension of the Fourier transformation 
in \cite[\S 2,1 and \S 2.3]{preMSSU} except for 
the case of $l=0$ and $-2\nu \in \bZ_{\geq 0}$.

For $m\in \bZ_{\geq 0}$, 
we define $\delta^{(m)}_{\infty} \in I^{-\infty}_{\mu,\nu}$ by 
\begin{align}
\label{eqn:def_delta_infty}
&\delta^{(m)}_{\infty}(F)
=(\rho (E_+)^{m}F_\infty)(\tilde{w})&
&(F\in I_{\mu,\nu}^\infty ). 
\end{align}
We denote by ${\mathfrak{N}}(\bZ_{\geq 0})$ the space of 
functions $\beta \colon \bZ_{\geq 0} \to \bC$ 
such that  $\beta (m)=0$ for all but finitely many $m\in \bZ_{\geq 0}$. 
For $\alpha \in {\mathfrak{M}} (L)$ and 
$\beta \in {\mathfrak{N}}(\bZ_{\geq 0})$, 
we define a map $\lambda_{\alpha,\beta}\colon 
I_{\mu,\nu}^\infty \to \bC$  by 
\begin{align*}
&\lambda_{\alpha,\beta}(F)=
\sum_{l\in {L}}\alpha (l)\cJ_l (F)
+\sum_{m=0}^\infty \beta (m)\delta^{(m)}_{\infty}(F)
&&(F\in I^\infty_{\mu,\nu}).
\end{align*}

\begin{prop}
\label{prop:QAD_Fourier}
Let $L$ be a shifted lattice in $\bR$. Let $\mu,\nu \in \bC$. 
Then, for  $\alpha \in {\mathfrak{M}} (L)$ and 
$\beta \in {\mathfrak{N}}(\bZ_{\geq 0})$, 
a map $\lambda_{\alpha,\beta}$ is a distribution in 
$(I^{-\infty}_{\mu ,\nu})_{L}^{\rm quasi}$ such that 
$T^{\lambda_{\alpha,\beta}}=T_\alpha$. 
Moreover, the $\bC$-linear map 
\begin{align}
\label{eqn:QAD_Fexp_map}
\mathfrak{M}(L)\times {\mathfrak{N}}(\bZ_{\geq 0})\ni 
(\alpha,\beta) \mapsto \lambda_{\alpha,\beta}
\in (I^{-\infty}_{\mu ,\nu})_{L}^{\rm quasi}
\end{align}
is bijective. 
\end{prop}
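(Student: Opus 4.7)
The plan is to construct $\lambda_{\alpha,\beta}$ as a continuous functional on $I^\infty_{\mu,\nu}$, identify its image under the bijection of Proposition~\ref{prop:pair_to_functionl} as the pair $(T_\alpha,T^{(\lambda_{\alpha,\beta})_\infty})$, and extract both membership in $(I^{-\infty}_{\mu,\nu})_L^{\mathrm{quasi}}$ and the asserted bijectivity from that identification. To verify $\lambda_{\alpha,\beta}\in I^{-\infty}_{\mu,\nu}$, fix $F\in I^\infty_{\mu,\nu}$ with a partition $(f_1,f_2)$; formula (\ref{eqn:jacquet_ext}) gives $\cJ_l(F)=\cF(f_1)(l)+\cF_{\mu,\nu,\infty}(f_2)(l)$. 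The first term is rapidly decreasing in $l$, and for the second, Lemma~\ref{lem:twist_fourier_0} identifies it with $\cF(f_{2,\mu,\nu,\infty})(l)$ when $l\neq 0$; integration by parts (justified first for $\mathrm{Re}(\nu)\ll 0$ and then propagated by the holomorphic continuation of Lemma~\ref{lem:pre_twist_Ftrans}) yields rapid decay in $l$ depending continuously on $f_2$. Since $\alpha\in\mathfrak{M}(L)$ is of polynomial growth, $\sum_{l\in L}\alpha(l)\cJ_l(F)$ converges absolutely and continuously in $F$, and adding the finite $\beta$-sum gives a continuous functional.

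To compute $T^{\lambda_{\alpha,\beta}}$ and $T^{(\lambda_{\alpha,\beta})_\infty}$, note that $(f,0)$ and $(0,f)$ are partitions of $\iota_{\mu,\nu}(f)$ and $(\iota_{\mu,\nu}(f))_\infty$ respectively, so (\ref{eqn:jacquet_ext}) yields $\cJ_l(\iota_{\mu,\nu}(f))=\cF(f)(l)$ and $\cJ_l((\iota_{\mu,\nu}(f))_\infty)=\cF_{\mu,\nu,\infty}(f)(l)$. For the delta contributions, (\ref{eqn:pre_infty_change}) shows that $x\mapsto(\iota_{\mu,\nu}(f))_\infty(\tilde{w}\tilde{\ru}(-x))$ agrees with $f_{\mu,\nu,\infty}(x)$ on $\bR^\times$ and vanishes near $x=0$ because $f$ has compact support, so $\delta^{(m)}_\infty(\iota_{\mu,\nu}(f))=0$; conversely, using the involutivity $F_{\infty\infty}=F$, one gets $(\delta^{(m)}_\infty)_\infty(\iota_{\mu,\nu}(f))=(\rho(E_+)^m\iota_{\mu,\nu}(f))(\tilde{w})=\delta^{(m)}(f)$. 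Summing, $T^{\lambda_{\alpha,\beta}}=T_\alpha\in\cD'(L^\vee\backslash\bR;\omega_L)$ by Proposition~\ref{prop:periodic_dist}, hence $\lambda_{\alpha,\beta}\in(I^{-\infty}_{\mu,\nu})_L^{\mathrm{quasi}}$.

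Bijectivity of (\ref{eqn:QAD_Fexp_map}) then follows via Proposition~\ref{prop:pair_to_functionl}. For injectivity, $\lambda_{\alpha,\beta}=\lambda_{\alpha',\beta'}$ forces $T_\alpha=T_{\alpha'}$, hence $\alpha=\alpha'$ by Proposition~\ref{prop:periodic_dist}, and equating the $T^{(\cdot)_\infty}$ components yields $\sum_m(\beta(m)-\beta'(m))\delta^{(m)}=0$, so $\beta=\beta'$ by the unique representation of distributions supported at $\{0\}$. For surjectivity, given $\lambda\in(I^{-\infty}_{\mu,\nu})_L^{\mathrm{quasi}}$, write $T^\lambda=T_\alpha$ and set $\lambda'=\lambda-\sum_{l\in L}\alpha(l)\cJ_l$, so $T^{\lambda'}=0$; by Proposition~\ref{prop:pair_to_functionl}, $\lambda'$ corresponds to a pair $(0,T_2)\in\cA(J_{\mu,\nu})$, and the defining identity (\ref{eqn:autom_pair_def}) forces $T_2$ to vanish on $C_0^\infty(\bR^\times)$, so $T_2=\sum_m\beta(m)\delta^{(m)}$ for some $\beta\in\mathfrak{N}(\bZ_{\geq 0})$. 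Both $\lambda'$ and $\sum_m\beta(m)\delta^{(m)}_\infty$ then have image $(0,\sum_m\beta(m)\delta^{(m)})$ under the bijection of Proposition~\ref{prop:pair_to_functionl}, so they coincide and $\lambda=\lambda_{\alpha,\beta}$.

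The hardest step will be the uniform rapid decay in $l$ of $\cF_{\mu,\nu,\infty}(f_2)(l)$ at arbitrary $\nu\in\bC$, which underpins the convergence and continuity of the series defining $\lambda_{\alpha,\beta}$; once that is secured, the remainder is bookkeeping with partitions together with Propositions~\ref{prop:periodic_dist} and~\ref{prop:pair_to_functionl}.
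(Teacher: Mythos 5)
Your proposal is correct and follows essentially the same route as the paper: the paper packages $\sum_{l\in L}\alpha(l)\cJ_l$ as $\Lambda\bigl(T_\alpha,(T_\alpha)_{\mu,\nu,\infty}\bigr)$ via Proposition~\ref{prop:pair_to_functionl} and deduces continuity, $T^{\lambda_{\alpha,\beta}}=T_\alpha$, and bijectivity from the same ingredients (the decay of the twisted Fourier transform, the structure theorem for distributions supported at a point, and the bijection $\Lambda$) that you use. Two small corrections: the rapid decay of $\cF_{\mu,\nu,\infty}(f_2)(l)$ that you flag as the hard step is already furnished by Lemma~\ref{lem:twist_fourier}, so there is no need to re-derive it; and your integration-by-parts argument should start at $\mathrm{Re}(\nu)\gg 0$, not $\mathrm{Re}(\nu)\ll 0$, since it is the large-$\mathrm{Re}(\nu)$ regime in which $f_{2,\mu,\nu,\infty}$ and its derivatives are integrable (and the identification $\cF_{\mu,\nu,\infty}(f_2)(l)=\cF(f_{2,\mu,\nu,\infty})(l)$ for $l\neq 0$ comes from the definition via holomorphic continuation, not from Lemma~\ref{lem:twist_fourier_0}, which only applies when $f_2\in C_0^\infty(\bR^\times)$).
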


A proof of Proposition \ref{prop:QAD_Fourier} is given 
in \S \ref{subsec:Jacquet_integral}. 
By Proposition \ref{prop:QAD_Fourier}, 
for a distribution $\lambda $ in $(I^{-\infty}_{\mu ,\nu})_{L}^{\rm quasi}$, 
there is a unique $(\alpha,\beta)\in 
\mathfrak{M}(L)\times {\mathfrak{N}}(\bZ_{\geq 0})$ such that
\begin{align}
\label{eqn:def_Fexp_ps}
&\lambda (F)=
\sum_{l\in {L}}\alpha (l)\cJ_l (F)
+\sum_{m=0}^\infty \beta (m)\delta^{(m)}_{\infty}(F)
&&(F\in I^\infty_{\mu,\nu}).
\end{align}
We call the expression (\ref{eqn:def_Fexp_ps}) 
the {\it Fourier expansion} of $\lambda$. 

For a subset $S$ of $\bZ_{\geq 0}$, 
we denote by ${\mathfrak{N}}(S)$ a subspace of 
${\mathfrak{N}}(\bZ_{\geq 0})$ consisting of all functions $\beta$ 
such that $\beta (m)=0$ unless $m\in S$. 
Then, for $\alpha \in {\mathfrak{M}} (L)$ and $\beta \in {\mathfrak{N}}(S)$, 
the Fourier expansion of $\lambda_{\alpha,\beta}$ is given as follows: 
\begin{align*}
&\lambda_{\alpha,\beta}(F)=
\sum_{l\in {L}}\alpha (l)\cJ_l (F)
+\sum_{m\in S} \beta (m)\delta^{(m)}_{\infty}(F)
&&(F\in I^\infty_{\mu,\nu}). 
\end{align*}
Let $\mathfrak{M}(L)_{\mu,\nu}^0$ be the subset 
of $\mathfrak{M}(L)$ consisting of all functions $\alpha$ 
satisfying 
\[
\text{( $\alpha (0)=0$ if $0\in L$, $-2\nu \in \bZ_{>0}$ 
and $\mu -2\nu -1\not\in 2\bZ$ )}.
\]
We define 
a subset $S_{\nu}(L)$ of $\bZ_{\geq 0}$ by 
\begin{align*}
&S_{\nu}(L)=\!\left\{\!\begin{array}{ll}
\emptyset &\text{if\, $0\not\in L$},\\
\{0,-2\nu \}\cap \bZ_{\geq 0}&\text{if\, $0\in L$}.
\end{array}\!\right.
\end{align*}
The following proposition is proved in \S \ref{subsec:Jacquet_integral}. 

\begin{prop}
\label{prop:Fexp_functional2}
Let $L$ be a shifted lattice in $\bR$. 
Let $\mu ,\nu \in \bC$. Let 
$\alpha \in {\mathfrak{M}} (L)$ and 
$\beta \in {\mathfrak{N}}(\bZ_{\geq 0})$. Then 
we have 
$\lambda_{\alpha ,\beta} \in (I^{-\infty}_{\mu ,\nu})_{L}$
if and only if $\alpha \in {\mathfrak{M}} (L)^0_{\mu,\nu}$ and 
$\beta \in {\mathfrak{N}}(S_{\nu}(L))$.
\end{prop}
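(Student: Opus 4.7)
The plan is to substitute the Fourier expansion \eqref{eqn:def_Fexp_ps} into $\lambda_{\alpha,\beta}(\rho(\tilde{\ru}(t))F)-\omega_L(t)\lambda_{\alpha,\beta}(F)=0$, group by the basis vectors $\delta^{(j)}_\infty(F)$, and read off conditions on $(\alpha,\beta)$ from polynomial identities in $t\in L^\vee$. For each $l\in L$ with $l\neq 0$ (or $l=0$ with $-2\nu\notin\bZ_{\geq 0}$) a change of variables in the defining integral of $\cJ_l$ valid for $\mathrm{Re}(\nu)\gg 0$, followed by analytic continuation, gives $\cJ_l(\rho(\tilde{\ru}(t))F)=e^{2\pi\sI lt}\cJ_l(F)$; since $e^{2\pi\sI lt}=\omega_L(t)$ for $l\in L,\ t\in L^\vee$, these Jacquet integrals are automatically $\omega_L$-equivariant. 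The substance of the argument lies in the transformation of $\delta^{(m)}_\infty$ and in handling $\cJ_0$ in the regime where its regularization via $\cF_{\mu,\nu,\infty}(\cdot)(0)$ is nontrivial.

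For the $\delta^{(m)}_\infty$-transformation, Lemma~\ref{lem:NA_act_iota_infty}(ii) and \eqref{eqn:def_delta_infty} reduce $\delta^{(m)}_\infty(\rho(\tilde{\ru}(t))F)$ to $\frac{d^m}{ds^m}\big|_{s=0}F_\infty(\tilde{w}\tilde{\ru}(s)\tilde{\overline{\ru}}(-t))$. A direct matrix calculation yields a Bruhat decomposition of $w\ru(s)\overline{\ru}(-t)$ in the large cell with $c=1-st$; for $s$ near $0$ the $\widetilde{K}$-angle lies in $\pi\bZ$ and equals $0$ by continuity from $s=0$. Using $F_\infty(\tilde{w}\tilde{\ru}(-y))=f_2(y)$ for $y$ near $0$ (valid for any partition $(f_1,f_2)$ of $F$, since the $(\iota_{\mu,\nu}(f_1))_\infty$-piece of $F_\infty$ vanishes near $y=0$), the derivative reduces to the Taylor expansion of $(1-st)^{-(2\nu+1)}f_2(-s/(1-st))$ at $s=0$. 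A Vandermonde identity on the resulting double sum of binomial coefficients then gives the closed form
\begin{align}
\label{eqn:proposal_trans}
\delta^{(m)}_\infty(\rho(\tilde{\ru}(t))F)=\sum_{j=0}^{m}\frac{m!}{j!}\binom{2\nu+m}{m-j}t^{m-j}\delta^{(j)}_\infty(F).
\end{align}

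The subtle case is $\cJ_0$ when $0\in L$, $-2\nu=n\in\bZ_{>0}$, and $\cos(\pi(n+\mu)/2)\neq 0$ (equivalently $\mu-2\nu-1\notin 2\bZ$). Here the unregularized integral $\cF(f_1)(0)+\cF((f_2)_{\mu,\nu',\infty})(0)$ is $\tilde{\ru}(t)$-invariant by analytic continuation from $\mathrm{Re}(\nu')\gg 0$, but the partition-dependent regularizing subtraction, proportional to $(2\nu'+n)^{-1}\delta^{(n)}(f_2)$, is not. Applying \eqref{eqn:proposal_trans} to the family $F_{\nu'}=\iota_{\mu,\nu'}(f_1)+(\iota_{\mu,\nu'}(f_2))_\infty$ with fixed partition, and using that $\binom{2\nu'+n}{n-j}$ has a simple zero at $\nu'=-n/2$ for each $j<n$, a L'Hospital-type limit produces
\[
\cJ_0(\rho(\tilde{\ru}(t))F)-\cJ_0(F)=C_n\sum_{j=0}^{n-1}\frac{(-1)^j\,n!}{j!(n-j)}t^{n-j}\delta^{(j)}_\infty(F),
\]
with an explicit nonzero constant $C_n$ proportional to $\cos(\pi(n+\mu)/2)$; in every other case $\cJ_0$ is $\tilde{\ru}(t)$-invariant.

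Substituting the two transformation formulas into $\lambda_{\alpha,\beta}(\rho(\tilde{\ru}(t))F)-\omega_L(t)\lambda_{\alpha,\beta}(F)$ and using linear independence of $\{\delta^{(j)}_\infty\}_{j\geq 0}$ (from the bijectivity in Proposition~\ref{prop:QAD_Fourier}), the equivariance reduces, for each $j\geq 0$ and each $t\in L^\vee$, to
\[
\beta(j)(1-\omega_L(t))+\sum_{m>j}\beta(m)\frac{m!}{j!}\binom{2\nu+m}{m-j}t^{m-j}+\alpha(0)\cdot\mathrm{shadow}_j(t)=0,
\]
the shadow term appearing only in the subtle regime. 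Since $L^\vee$ is infinite and $(1-\omega_L(t))$ is bounded while the polynomial part is unbounded unless identically zero, the polynomial must vanish identically, forcing $\beta(m)\binom{2\nu+m}{m-j}=0$ for all $m>j$; a descending induction on $m$ yields that $\beta$ is supported on $S_\nu(L)$. The residual condition $\beta(j)(1-\omega_L(t))\equiv 0$ then forces $\beta\equiv 0$ when $0\notin L$ (consistent with $S_\nu(L)=\emptyset$), while in the subtle regime matching the coefficient of $t^{n-j}$ via the nonvanishing shadow forces $\alpha(0)=0$, which is exactly the condition $\alpha\in\mathfrak{M}(L)^0_{\mu,\nu}$. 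The converse follows by reversing the bookkeeping. The main obstacle will be the regularization analysis in the subtle case: carefully isolating the partition-dependent subtraction in the definition of $\cF_{\mu,\nu,\infty}(\cdot)(0)$ and computing the L'Hospital limit that produces the shadow, which relies on a delicate interplay between that definition and \eqref{eqn:proposal_trans}.
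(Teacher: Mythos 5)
Your proof is correct and follows the paper's approach essentially step for step: you derive how $\cJ_l$, $\cJ_0$, and $\delta^{(m)}_\infty$ transform under $\rho(\tilde{\ru}(t))$ (the paper's Proposition~\ref{prop:Jacquet_property}), substitute into the equivariance condition, and read off constraints on $\alpha$, $\beta$ via linear independence of the $\delta^{(j)}_\infty$ together with the boundedness-of-$\omega_L$-versus-polynomial-growth argument (the paper does this by evaluating at the test functions $F_0$, $F_n$ in the unnumbered lemma following Proposition~\ref{prop:QAD_Fourier}, which is equivalent). Your formula $\frac{m!}{j!}\binom{2\nu+m}{m-j}$ agrees with the paper's $\binom{m}{j}(-2\nu-m)_{m-j}(-1)^{m-j}$, and your L'Hospital computation of the $\cJ_0$ shadow reproduces Proposition~\ref{prop:Jacquet_property}(iii).
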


\subsection{Automorphic distributions for shifted lattices}
\label{subsec:quasi_auto}

Let $L_1$ and $L_2$ be two shifted lattices in $\bR$. 
Let $\mu,\nu \in \bC$. 
We define subspaces $(I^{-\infty}_{\mu ,\nu})_{L_1,L_2}$ and 
$(I^{-\infty}_{\mu ,\nu})_{L_1,L_2}^{\rm quasi}$ of 
$I^{-\infty}_{\mu ,\nu}$ by 
\begin{align*}
&(I^{-\infty}_{\mu ,\nu})_{L_1,L_2}
=\{\lambda \mid \lambda \in (I^{-\infty}_{\mu ,\nu})_{L_1},\ 
\lambda_\infty \in (I^{-\infty}_{\mu ,\nu})_{L_2}\}\\
&\hspace{7mm}
=\left\{\lambda \in I^{-\infty}_{\mu ,\nu}
\left| \!\begin{array}{ll}
\lambda (\rho (\tilde{\ru}(t_1))F)=\omega_{L_1}(t_1)\lambda (F)&
(t_1\in L_1^\vee ,\ F\in I^\infty_{\mu,\nu}),\\[1mm] 
\lambda (\rho (\tilde{\overline{\ru}}(t_2))F)=\omega_{L_2}(-t_2)\lambda (F)&
(t_2\in L_2^\vee ,\ F\in I^\infty_{\mu,\nu}) 
\end{array}
\!\right.\!\right\},\!\\[1mm]
&(I^{-\infty}_{\mu ,\nu})_{L_1,L_2}^{\rm quasi}
=\{\lambda \in I^{-\infty}_{\mu ,\nu} \mid 
(T^\lambda ,T^{\lambda_\infty})\in \cA (L_1,L_2;J_{\mu,\nu})\}. 
\end{align*}
Here the second expression of $(I^{-\infty}_{\mu ,\nu})_{L_1,L_2}$ 
is given by Lemma \ref{lem:NA_act_iota_infty} (ii). 
For $\lambda \in (I^{-\infty}_{\mu ,\nu})_{L_1,L_2}$, we have 
$(T^\lambda ,T^{\lambda_\infty})\in \cA (L_1,L_2;J_{\mu,\nu})$ 
by Lemma \ref{lem:NA_act_iota_infty} and 
Proposition \ref{prop:pair_to_functionl}. 
Hence, $(I^{-\infty}_{\mu ,\nu})_{L_1,L_2}$ is a subspace of 
$(I^{-\infty}_{\mu ,\nu})_{L_1,L_2}^{\rm quasi}$. 
We call a distribution in $(I^{-\infty}_{\mu ,\nu})_{L_1,L_2}$ 
(resp. $(I^{-\infty}_{\mu ,\nu})_{L_1,L_2}^{\rm quasi}$) 
an {\it automorphic distribution} 
(resp. a {\it quasi-automorphic distribution}) on $I_{\mu,\nu}^\infty$ 
for $(L_1,L_2)$. 
By Proposition \ref{prop:pair_to_functionl}, we note that 
the natural map 
\[
(I^{-\infty}_{\mu ,\nu})_{L_1,L_2}^{\rm quasi}\ni \lambda \mapsto 
(T^\lambda ,T^{\lambda_\infty} )\in \cA (L_1,L_2;J_{\mu,\nu})
\] 
is bijective. Moreover, 
we have $\lambda_\infty \in (I^{-\infty}_{\mu ,\nu})_{L_2,L_1}$ 
for $\lambda \in (I^{-\infty}_{\mu ,\nu})_{L_1,L_2}$, 
and $\lambda_\infty \in (I^{-\infty}_{\mu ,\nu})_{L_2,L_1}^{\rm quasi}$ 
for $\lambda \in (I^{-\infty}_{\mu ,\nu})_{L_1,L_2}^{\rm quasi}$.

For a shifted lattice $L$ in $\bR$, 
we take $\lambda_{\alpha,\beta}$ ($\alpha \in \mathfrak{M}(L)$, 
$\beta \in \mathfrak{N}(\bZ_{\geq 0})$), 
$\mathfrak{M}(L)_{\mu,\nu}^0$ and 
$S_{\nu}(L)$ as in \S \ref{subsec:Fexp_ILL}. 

\begin{thm}
\label{thm:QAD_Fexp}
Let $L_1$ and $L_2$ be two shifted lattices in $\bR$. 
Let $\mu,\nu \in \bC$. \\[1mm]
(i) Let $\lambda \in (I^{-\infty}_{\mu ,\nu})_{L_1,L_2}^{\rm quasi}$. 
There are unique 
$(\alpha_i,\beta_i)\in \mathfrak{M}(L_i)\times 
{\mathfrak{N}}(\bZ_{\geq 0})$ $(i=1,2)$ 
such that $\lambda =\lambda_{\alpha_1,\beta_1}$ and 
$(\lambda_{\alpha_1,\beta_1})_\infty =\lambda_{\alpha_2,\beta_2}$. 
Moreover, if $\lambda \in (I^{-\infty}_{\mu ,\nu})_{L_1,L_2}$, 
then, for $i=1,2$, 
we have $(\alpha_i,\beta_i)\in \mathfrak{M}(L_i)_{\mu,\nu}^0\times 
{\mathfrak{N}}(S_{\nu}(L_i))$ and 
\begin{align*}
&(\ \beta_i (0)=0
\text{ if $-2\nu \in \bZ_{>0}$, $\mu -2\nu -1\in 2\bZ$ 
and $(0\not\in L_{3-i}$ or $-2\nu >1)$}\ ).
\end{align*}
(ii) Let $(\alpha_i,\beta_i)\in \mathfrak{M}(L_i)\times 
{\mathfrak{N}}(\bZ_{\geq 0})$ $(i=1,2)$  such that 
$(\lambda_{\alpha_1,\beta_1})_\infty =\lambda_{\alpha_2,\beta_2}$. 
Then we have 
$\lambda_{\alpha_1,\beta_1} \in 
(I^{-\infty}_{\mu ,\nu})_{L_1,L_2}^{\rm quasi}$. 
Moreover, if $(\alpha_i,\beta_i)\in \mathfrak{M}(L_i)_{\mu,\nu}^0\times 
{\mathfrak{N}}(S_{\nu}(L_i))$ $(i=1,2)$, 
then we have 
$\lambda_{\alpha_1,\beta_1} \in (I^{-\infty}_{\mu ,\nu})_{L_1,L_2}$. 
\end{thm}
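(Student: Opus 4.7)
The plan is to reduce the statements in (i) and (ii) to the bijections of Propositions~\ref{prop:pair_to_functionl}, \ref{prop:QAD_Fourier}, and \ref{prop:Fexp_functional2}, with the single exception of the extra vanishing $\beta_i(0)=0$ in (i), which requires an explicit computation of $T^{\lambda_\infty}$ combined with the pole analysis of Lemma~\ref{lem:pre_twist_Ftrans}.

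For the routine parts, existence and uniqueness of the Fourier pairs $(\alpha_i,\beta_i)$ in (i) follow from two applications of Proposition~\ref{prop:QAD_Fourier}, since $\lambda \in (I^{-\infty}_{\mu,\nu})_{L_1,L_2}^{\mathrm{quasi}}$ places both $\lambda$ and $\lambda_\infty$ into the corresponding quasi-automorphic spaces for $L_1$ and $L_2$. When in addition $\lambda \in (I^{-\infty}_{\mu,\nu})_{L_1,L_2}$, Proposition~\ref{prop:Fexp_functional2} applied to $\lambda = \lambda_{\alpha_1,\beta_1}$ and to $\lambda_\infty = \lambda_{\alpha_2,\beta_2}$ separately yields the support conditions $(\alpha_i,\beta_i)\in \mathfrak{M}(L_i)^0_{\mu,\nu}\times \mathfrak{N}(S_\nu(L_i))$ for $i=1,2$. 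Part~(ii) is formal in the same way: the hypothesis $(\lambda_{\alpha_1,\beta_1})_\infty = \lambda_{\alpha_2,\beta_2}$ combined with Proposition~\ref{prop:QAD_Fourier} gives $T^{\lambda_{\alpha_1,\beta_1}}=T_{\alpha_1}$ and $T^{(\lambda_{\alpha_1,\beta_1})_\infty}=T_{\alpha_2}$, so the pair $(T_{\alpha_1},T_{\alpha_2})$ lies in $\cA(J_{\mu,\nu})$ by Proposition~\ref{prop:pair_to_functionl}~(iii), and since $T_{\alpha_i}\in \cD'(L_i^\vee \backslash \bR;\omega_{L_i})$, one lands in $\cA(L_1,L_2;J_{\mu,\nu})$, i.e.\ $\lambda_{\alpha_1,\beta_1}\in (I^{-\infty}_{\mu,\nu})_{L_1,L_2}^{\mathrm{quasi}}$. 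The moreover clause follows by applying Proposition~\ref{prop:Fexp_functional2} to each $(\alpha_i,\beta_i)$ to upgrade $\lambda_{\alpha_i,\beta_i}$ to $(I^{-\infty}_{\mu,\nu})_{L_i}$.

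The main obstacle is the extra vanishing $\beta_i(0)=0$ in the degenerate case $-2\nu = n\in \bZ_{>0}$, $\mu-n-1\in 2\bZ$. My strategy is to compute $T^{\lambda_\infty}(f)=\lambda((\iota_{\mu,\nu}(f))_\infty)$ in two ways. Using that $(\iota_{\mu,\nu}(f))_\infty$ admits the partition $(0,f)$, together with (\ref{eqn:jacquet_ext}), the identity $\rho(E_+)\iota_{\mu,\nu}(h)=\iota_{\mu,\nu}(-h')$, and $((\iota_{\mu,\nu}(f))_\infty)_\infty=\iota_{\mu,\nu}(f)$, one obtains
\[
T^{\lambda_\infty}(f) = \sum_{l\in L_1}\alpha_1(l)\,\cF_{\mu,\nu,\infty}(f)(l)+\sum_{m\geq 0}(-1)^m\beta_1(m)\,f^{(m)}(0),
\]
while on the other hand $T^{\lambda_\infty}=T_{\alpha_2}=\sum_{l\in L_2}\alpha_2(l)\,\cF(f)(l)$. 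These two expressions already agree on $f\in C_0^\infty(\bR^\times)$ by the automorphic-pair relation, so their difference is a distribution supported at $0$. The degenerate hypotheses force $\cos(\pi(n+\mu)/2)=0$, so by Lemma~\ref{lem:pre_twist_Ftrans}~(ii) the leading pole of $\cF(f_{\mu,s,\infty})(0)$ at $s=-n/2$ is absent, while the subleading poles at $s=-i/2$ for $i<n$ contribute explicit $\delta^{(i)}(f)$-terms in the expression for $\cF_{\mu,\nu,\infty}(f)(0)$. Matching the $\delta^{(m)}(f)$-coefficients of the difference and separating the subcases $0\notin L_{3-i}$ (which eliminates the $\alpha_{3-i}(0)$-contribution on the right) and $n>1$ (which decouples the $m=0$ and $m=n$ equations) forces $\beta_i(0)=0$. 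The same argument with $\lambda$ and $\lambda_\infty$ swapped treats both $i=1$ and $i=2$.
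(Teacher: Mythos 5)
Your reduction of the routine parts to Propositions~\ref{prop:pair_to_functionl}, \ref{prop:QAD_Fourier}, \ref{prop:Fexp_functional2} is correct and is exactly what the paper does for part~(ii) and for the existence/uniqueness and the ``support condition'' statements of part~(i).

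However, your strategy for the extra vanishing $\beta_i(0)=0$ has a genuine gap, and it is not a repairable detail but a wrong route. The identity you propose to exploit,
\[
T_{\alpha_2}(f)\;=\;\sum_{l\in L_1}\alpha_1(l)\,\cF_{\mu,\nu,\infty}(f)(l)\;+\;\sum_{m\ge 0}\beta_1(m)\,\delta^{(m)}(f)
\qquad (f\in C_0^\infty(\bR)),
\]
is \emph{exactly} the content of the hypothesis $(\lambda_{\alpha_1,\beta_1})_\infty=\lambda_{\alpha_2,\beta_2}$, i.e.\ of quasi-automorphy; it is how $\beta_1$ is \emph{defined} from the pair $(\alpha_1,\alpha_2)$ in the proof of Proposition~\ref{prop:QAD_Fourier}. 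This relation holds for every element of $(I^{-\infty}_{\mu,\nu})^{\mathrm{quasi}}_{L_1,L_2}$, yet the vanishing $\beta_i(0)=0$ is asserted only for elements of the strictly smaller space $(I^{-\infty}_{\mu,\nu})_{L_1,L_2}$. Hence no amount of ``matching $\delta^{(m)}$-coefficients'' in that identity can produce the vanishing: the identity carries no information beyond quasi-automorphy. There is also a secondary technical confusion in your sketch: the regularized value $\cF_{\mu,-n/2,\infty}(f)(0)$ is not a distribution supported at $\{0\}$, so the pole structure of Lemma~\ref{lem:pre_twist_Ftrans}~(ii) (residues proportional to $\delta^{(i)}(f)$) does not decompose $\cF_{\mu,-n/2,\infty}(f)(0)$ into an explicit sum of deltas; it describes $\nu$-dependence, not $f$-dependence.

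What is actually needed, and what the paper's Lemma~\ref{lem:b0_vanish_Fexp} supplies, is information about $\lambda$ on vectors \emph{outside} the image of $\iota_{\mu,\nu}$: one evaluates $\lambda_{\alpha_1,\beta_1}$ at the $K$-type vector $F_{-n/2,\kappa}$ with $|\kappa|\le n-1$, $\kappa\in\mu+2\bZ$. In the degenerate case all $\cJ_l(F_{-n/2,\kappa})$ and $\delta^{(n)}_\infty(F_{-n/2,\kappa})$ vanish (Proposition~\ref{prop:Jac_int_Ktype}), so $e^{\pi\sI\mu/2}\lambda_{\alpha_1,\beta_1}(\rho(\tilde{\overline{\ru}}(t))F_{-n/2,\kappa})$ reduces to $(1+t^2)^{(n-1)/2}e^{-\sI\kappa\arg(1+\sI t)}\beta_1(0)$; comparing with the $\tilde{\overline{\ru}}(t)$-automorphy ($t\in L_2^\vee$), which has a unimodular multiplier, forces $\beta_1(0)=0$ when $n>1$, and when $n=1$ the condition $0\notin L_2$ makes $\omega_{L_2}$ nontrivial and again forces $\beta_1(0)=0$. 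This crucially uses the $L_2$-automorphy of $\lambda$ on \emph{all} of $I^\infty_{\mu,\nu}$, not just the induced relation on the line model, which is the ingredient missing from your approach.
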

Theorem \ref{thm:QAD_Fexp} (ii) follows immediately from 
Propositions \ref{prop:QAD_Fourier} and \ref{prop:Fexp_functional2}. 
A proof of Theorem \ref{thm:QAD_Fexp} (i) is given 
in \S \ref{subsec:Jac_int_special}.

Let $(\alpha_i,\beta_i)\in 
\mathfrak{M}(L_i)\times {\mathfrak{N}}(\bZ_{\geq 0})$ 
$(i=1,2)$. 
When $\xi_\pm (\alpha_i;s)$ (or their completion 
$\Xi_\pm (\alpha_i;s)$) have 
the meromorphic continuations to the whole $s$-plane for $i=1,2$, 
we consider the following conditions 
{\normalfont [D3]} and {\normalfont [D4]} on 
$\xi_\pm (\alpha_1;s)$, $\xi_\pm (\alpha_2;s)$, $\beta_1$ and $\beta_2$: 
\begin{description}
\item[{\normalfont [D3]}]For any $m\in \bZ_{\geq 0}$ and $i=1,2$, 
the functions $\xi_\pm (\alpha_i;s)$ satisfy 
\[
\displaystyle 
\underset{s=m+1}{\mathrm{Res}}
(\xi_+(\alpha_i;s)+(-1)^m\xi_-(\alpha_i;s))=2(2\pi \sI)^m\beta_{3-i}(m). 
\]
\item[{\normalfont [D4]}]For $i=1,2$, 
the functions $(s-1)(s+2\nu -1)\xi_\pm (\alpha_{i};s)$ are entire 
if $0\in L_{3-i}$, and 
the functions $\xi_\pm (\alpha_{i};s)$ are entire 
if $0\not\in L_{3-i}$. 
\end{description}

Here we note that $\beta_1$ and $\beta_2$ are uniquely determined by 
$\alpha_2$ and $\alpha_1$, respectively, if {\normalfont [D3]} holds.  
If {\normalfont [D3]} and {\normalfont [D4]} hold, 
we have $\beta_i(m)=0$ for $i=1,2$ and $m\in \bZ_{\geq 0}$ unless 
$0\in L_{3-i}$ and $m\in \{0,-2\nu \}$. 
Let {\normalfont [D1]}, {\normalfont [D2-1]} and {\normalfont [D2-2]} be the conditions 
in \S \ref{subsec:DS_AP}. 

\begin{thm}
\label{thm:QAD_DS}
Let $L_1$ and $L_2$ be two shifted lattices in $\bR$. 
Let $\mu,\nu \in \bC$. 
Let $(\alpha_1,\beta_1)\in \mathfrak{M}(L_1)\times 
{\mathfrak{N}}(\bZ_{\geq 0})$ and 
$(\alpha_2,\beta_2)\in \mathfrak{M}(L_2)\times 
{\mathfrak{N}}(\bZ_{\geq 0})$. \\[1mm]
(i)  Assume 
$(\lambda_{\alpha_1,\beta_1})_\infty =\lambda_{\alpha_2,\beta_2}$. 
Then the conditions {\normalfont [D1]}, {\normalfont [D2-1]}  and 
{\normalfont [D3]} hold. Moreover, if $\lambda_{\alpha_1,\beta_1} \in 
(I^{-\infty}_{\mu ,\nu})_{L_1,L_2}$, then the condition 
{\normalfont [D4]} holds. \\[1mm]
(ii) Assume that 
{\normalfont [D1]}, {\normalfont [D2-2]} and {\normalfont [D3]} hold. 
Then $\lambda_{\alpha_1,\beta_1} \in 
(I^{-\infty}_{\mu ,\nu})_{L_1,L_2}^{\rm quasi}$ 
and $(\lambda_{\alpha_1,\beta_1})_\infty =\lambda_{\alpha_2,\beta_2}$. 
If {\normalfont [D4]} also holds, then $\lambda_{\alpha_1,\beta_1} \in 
(I^{-\infty}_{\mu ,\nu})_{L_1,L_2}$. 
\end{thm}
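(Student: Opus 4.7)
The plan is to reduce Theorem~\ref{thm:QAD_DS} to the already-established Theorem~\ref{thm:DS} and Theorem~\ref{thm:QAD_Fexp}, with condition~[D3] encoding precisely the match of the $\delta_\infty^{(m)}$-coefficients $\beta_i$ under the $\infty$-involution. A key computational input is the explicit formula
\[
T^{(\lambda_{\alpha_1,\beta_1})_\infty}(f)
=\sum_{l\in L_1}\alpha_1(l)\,\cF_{\mu,\nu,\infty}(f)(l)
+\sum_{m\in\bZ_{\geq 0}}\beta_1(m)\,\delta^{(m)}(f)
\qquad (f\in C_0^\infty(\bR)),
\]
which follows from $(f,0)$ being a partition of $\iota_{\mu,\nu}(f)$, the identity~(\ref{eqn:jacquet_ext}), and the direct computation $\delta_\infty^{(m)}(\iota_{\mu,\nu}(f)_\infty)=(-1)^mf^{(m)}(0)=\delta^{(m)}(f)$ via the characterization~(\ref{eqn:iota_characterization}).

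For part~(ii), given [D1] and [D2-2], Theorem~\ref{thm:DS}(ii) produces $(T_{\alpha_1},T_{\alpha_2})\in\cA(L_1,L_2;J_{\mu,\nu})$. Proposition~\ref{prop:pair_to_functionl}(iii) lifts this to a unique $\tilde\lambda\in(I^{-\infty}_{\mu,\nu})^{\rm quasi}_{L_1,L_2}$ with $(T^{\tilde\lambda},T^{\tilde\lambda_\infty})=(T_{\alpha_1},T_{\alpha_2})$, and Proposition~\ref{prop:QAD_Fourier} writes $\tilde\lambda=\lambda_{\alpha_1,\tilde\beta_1}$, $\tilde\lambda_\infty=\lambda_{\alpha_2,\tilde\beta_2}$ for unique $\tilde\beta_i\in\mathfrak{N}(\bZ_{\geq 0})$. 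It remains to show $\tilde\beta_i=\beta_i$. Applying the displayed formula to the identity $T^{\tilde\lambda_\infty}=T_{\alpha_2}$ and taking Mellin transforms with test functions of the type used in the proof of Theorem~\ref{thm:DS} (so that the Mellin variable reads off the Dirichlet coefficients), one obtains the intrinsic residue identity $\underset{s=m+1}{\mathrm{Res}}\bigl(\xi_+(\alpha_2;s)+(-1)^m\xi_-(\alpha_2;s)\bigr)=2(2\pi\sI)^m\tilde\beta_1(m)$; comparing with hypothesis [D3] forces $\tilde\beta_1=\beta_1$, and the symmetric argument yields $\tilde\beta_2=\beta_2$. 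Hence $\lambda_{\alpha_1,\beta_1}=\tilde\lambda\in(I^{-\infty}_{\mu,\nu})^{\rm quasi}_{L_1,L_2}$ and $(\lambda_{\alpha_1,\beta_1})_\infty=\lambda_{\alpha_2,\beta_2}$. If [D4] additionally holds, the vanishing residue condition of [D3] forces $\beta_i\in\mathfrak{N}(S_\nu(L_i))$, while the $\alpha_i(0)$-condition for membership in $\mathfrak{M}(L_i)^0_{\mu,\nu}$ follows by inspecting the permitted poles in [D1] at $s=0$ and $s=1-2\nu$ in the exceptional regime; Theorem~\ref{thm:QAD_Fexp}(ii) then upgrades to full automorphy.

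For part~(i), I would reverse the above argument. Starting from $(\lambda_{\alpha_1,\beta_1})_\infty=\lambda_{\alpha_2,\beta_2}$, we immediately read off $T^{(\lambda_{\alpha_1,\beta_1})_\infty}=T^{\lambda_{\alpha_2,\beta_2}}=T_{\alpha_2}$, so $(T_{\alpha_1},T_{\alpha_2})\in\cA(L_1,L_2;J_{\mu,\nu})$ by Proposition~\ref{prop:pair_to_functionl}(iii); Theorem~\ref{thm:DS}(i) then supplies [D1] and [D2-1]. The same Mellin-residue computation now produces the identity of [D3] with the given $\beta_i$. If furthermore $\lambda_{\alpha_1,\beta_1}\in(I^{-\infty}_{\mu,\nu})_{L_1,L_2}$, Theorem~\ref{thm:QAD_Fexp}(i) restricts each $\beta_i$ to $\mathfrak{N}(S_\nu(L_i))$, whence [D3] forces the residues of $\xi_\pm(\alpha_{3-i};s)$ at $s=m+1$ to vanish unless $m\in\{0,-2\nu\}$; together with the poles at $s=1,\,1-2\nu$ allowed by [D1], this is precisely [D4].

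The main obstacle is establishing the intrinsic Mellin-residue identity from the displayed explicit formula. The delicate point is that $\cF_{\mu,\nu,\infty}(f)(0)$ coincides with $\cF(f_{\mu,\nu,\infty})(0)$ only for generic $\nu$; at the exceptional values $-2\nu\in\bZ_{\geq 0}$, Lemma~\ref{lem:pre_twist_Ftrans} introduces additional $\delta^{(n)}$-contributions at $l=0$ that must be tracked carefully in order to recover the residue formula with the clean constant $2(2\pi\sI)^m$. Once this computational step is in place, both parts of the theorem follow cleanly from the structural bijections already established in earlier sections.
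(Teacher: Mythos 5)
Your overall structure matches the paper's: reduce via Theorem~\ref{thm:DS} and Propositions~\ref{prop:pair_to_functionl}, \ref{prop:QAD_Fourier} to identifying the auxiliary $\beta$'s through a residue constraint, then upgrade with Theorem~\ref{thm:QAD_Fexp}. The displayed formula for $T^{(\lambda_{\alpha_1,\beta_1})_\infty}$ is correct and is in fact the content of equation~(\ref{eqn:pf_QAD_Fourier002}) combined with Lemma~\ref{lem:delta_functional}. However, there is a genuine gap in the middle of the argument.

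The residue identity you need --- relating the poles of $\xi_\pm(\alpha_i;s)$ at $s=m+1$ to the coefficients $\beta_{3-i}(m)$ (and, in the exceptional case $-2\nu\in\bZ_{\geq 0}$, a double-pole term involving $\alpha_{3-i}(0)$) --- is precisely Proposition~\ref{prop:poles_DS}, a result the paper states separately in \S\ref{subsec:quasi_auto} and proves in \S\ref{subsec:pole_DS} via Proposition~\ref{prop:pre_pole005} and Lemmas~\ref{lem:pre_pole003}, \ref{lem:pre_pole004}, using the specific test functions $\Delta_m(x)=x^m\Delta(x)$ and the local functional equation for $\Phi_{\pm 1}(\Delta_m;s)$. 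You correctly identify this as ``the main obstacle'' and note the delicacy at $-2\nu\in\bZ_{\geq 0}$, but you do not actually carry the computation out, so the proof is not closed. Since Proposition~\ref{prop:poles_DS} is available (it is stated before the theorem), the cleanest fix is simply to cite it; attempting to redo it in-line requires substantially more work than your sketch provides.

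A second, smaller gap concerns the final upgrade to $(I^{-\infty}_{\mu,\nu})_{L_1,L_2}$ under [D4]. To invoke Theorem~\ref{thm:QAD_Fexp}(ii) you need not only $\beta_i\in\mathfrak{N}(S_\nu(L_i))$ (which does follow from [D3]+[D4] as you say) but also $\alpha_i\in\mathfrak{M}(L_i)^0_{\mu,\nu}$, i.e.\ $\alpha_i(0)=0$ whenever $0\in L_i$, $-2\nu\in\bZ_{>0}$ and $\mu-2\nu-1\notin 2\bZ$. Your claim that this ``follows by inspecting the permitted poles in [D1] at $s=0$ and $s=1-2\nu$'' does not work: [D1] by itself places no constraint on $\alpha_i(0)$. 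What is actually needed is the \emph{second-order} pole at $s=1-2\nu=n+1$ in Proposition~\ref{prop:poles_DS} (applied with indices swapped), whose coefficient is proportional to $\cos\bigl(\tfrac{\pi(n\mp\mu)}{2}\bigr)\alpha_i(0)$. In the exceptional regime that cosine is nonzero, while [D4] only permits a simple pole there; hence $\alpha_i(0)=0$. Without Proposition~\ref{prop:poles_DS} in hand this conclusion is not accessible from [D1], [D3], [D4] alone.
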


In order to prove Theorem \ref{thm:QAD_DS}, 
we use the following proposition, 
which is proved in \S \ref{subsec:pole_DS}.

\begin{prop}
\label{prop:poles_DS}
Let $L_1$ and $L_2$ be two shifted lattices in $\bR$. 
Let $\mu,\nu \in \bC$. Let 
$(\alpha_i,\beta_i)\in \mathfrak{M}(L_i)\times 
{\mathfrak{N}}(\bZ_{\geq 0})$ $(i=1,2)$ 
such that $(\lambda_{\alpha_1,\beta_1})_\infty =\lambda_{\alpha_2,\beta_2}$. 
Let $\xi_\pm (\alpha_{1};s)$ be the Dirichlet series, 
which is meromorphically continued to the whole $s$-plane 
by Theorem \ref{thm:DS} (i). 
If $-2\nu \not\in \bZ_{\geq 0}$, 
then the functions 
\begin{align*}
&\xi_\pm (\alpha_{1};s)
-\sum_{m=0}^\infty
\frac{(\pm 2\pi \sI)^{m}\beta_2(m)}{s-m-1}
-\frac{2\Gamma (2\nu)\cos \bigl(\tfrac{\pi (2\nu \mp \mu )}{2}\bigr)
\alpha_{2}(0)}
{(2\pi)^{2\nu }(s+2\nu -1)}
\end{align*}
are entire. 
If $-2\nu=n$ with some $n\in \bZ_{\geq 0}$, 
then the functions 
\begin{align*}
&\xi_\pm (\alpha_{1};s) 
-\sum_{m=0}^\infty
\frac{(\pm 2\pi \sI)^{m}\beta_2(m)}{s-m-1}\\
&+\frac{\pi (2\pi)^{n}\sin \bigl(\tfrac{\pi (n\mp \mu )}{2}\bigr)
\alpha_{2}(0)}{n!(s-n-1)}
+\frac{2(2\pi)^{n}\cos \bigl(\tfrac{\pi (n\mp \mu )}{2}\bigr)
\alpha_{2}(0)}{n!(s-n-1)^2}
\end{align*}
are entire. Here we understand $\alpha_{2}(0)=0$ if $0\not\in L_2$, 
and double signs are in the same order. 
\end{prop}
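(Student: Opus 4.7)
The plan is as follows. By Proposition \ref{prop:pair_to_functionl}(iii) the hypothesis $(\lambda_{\alpha_1,\beta_1})_\infty=\lambda_{\alpha_2,\beta_2}$ forces $(T_{\alpha_1},T_{\alpha_2})\in\cA(L_1,L_2;J_{\mu,\nu})$, and so Theorem \ref{thm:DS}(i) supplies the meromorphic continuation of $\Xi_\pm(\alpha_1;s)$ together with condition [D1]. Solving [D1] for $(\Xi_+(\alpha_1;s),\Xi_-(\alpha_1;s))^T$ by inverting $\mathrm{E}(s)$ (whose determinant $2\sI\sin(\pi s)$ vanishes only at integers), one reads off: in the non-degenerate case $-2\nu\notin\bZ_{\geq 0}$, the pole at $s=1-2\nu$ originates solely from the explicit $\alpha_2(0)$-term, and a direct calculation using $\Sigma_\mu\binom{1}{1}=\binom{e^{\pi\sI\mu/2}}{e^{-\pi\sI\mu/2}}$, the reflection formula $\Gamma(2\nu)\Gamma(1-2\nu)=\pi/\sin(2\pi\nu)$, and $\xi_\pm=(2\pi)^s\Gamma(s)^{-1}\Xi_\pm$ delivers exactly the claimed residue $\frac{2\Gamma(2\nu)\cos(\pi(2\nu\mp\mu)/2)\alpha_2(0)}{(2\pi)^{2\nu}(s+2\nu-1)}$.

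The poles at $s=m+1$ ($m\in\bZ_{\geq 0}$) cannot be determined from [D1] alone, since they are controlled by the correction term $H(s):=\mathrm{E}(s)(\Xi_+,\Xi_-)^T+\cdots$ at positive integers, which the pair $(T_{\alpha_1},T_{\alpha_2})$ does not pin down without the $\beta_i$-data. To access them I apply the defining identity $\lambda_{\alpha_1,\beta_1}(F_\infty)=\lambda_{\alpha_2,\beta_2}(F)$ at $F=\iota_{\mu,\nu}(h)$ with $h\in C^\infty_0(\bR)$ (not required to vanish at $0$). Using (\ref{eqn:jacquet_ext}), the computation $\delta_\infty^{(m)}(F_\infty)=(\rho(E_+)^mF)(\tilde w)=(-1)^mh^{(m)}(0)$, and the vanishing $\delta_\infty^{(m)}(F)=0$ (because $h_{\mu,\nu,\infty}$ is identically zero near $0$), the Fourier expansion (\ref{eqn:def_Fexp_ps}) on both sides collapses to the enriched identity
\begin{align*}
\sum_{l_1\in L_1}\alpha_1(l_1)\,\cF_{\mu,\nu,\infty}(h)(l_1)
+\sum_{m\geq 0}\beta_1(m)(-1)^mh^{(m)}(0)
=\sum_{l_2\in L_2}\alpha_2(l_2)\,\cF(h)(l_2),
\end{align*}
together with the mirror identity (which follows from $(\lambda_{\alpha_2,\beta_2})_\infty=\lambda_{\alpha_1,\beta_1}$) obtained by exchanging $(\alpha_1,\beta_1)\leftrightarrow(\alpha_2,\beta_2)$.

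Next, substitute the mirror identity into a one-parameter Mellin family: take $h=h_y$ ($y>0$) to be a one-sided Cauchy/Poisson regularization whose Fourier transform picks out $l>0$ (for capturing $\xi_+$; the complex-conjugate analogue captures $\xi_-$), multiply by $y^{s-1}$, and integrate. On the right-hand side one obtains, for $\mathrm{Re}(s)$ large, exactly $\Xi_+(\alpha_1;s)$ plus the known $\alpha_1(0)/s$ simple-pole contribution. On the left, the sum against $\cF_{\mu,\nu,\infty}$ reproduces $\Xi_+(\alpha_2;1-s-2\nu)$ (encoding the functional equation already in [D1]) together with the $\alpha_2(0)$-term analyzed above, while the sum against $\beta_2(m)(-1)^mh_y^{(m)}(0)$ yields a series of simple poles at $s=m+1$: indeed, for the one-sided regularization the Mellin integral $\int_0^\infty h_y^{(m)}(0)y^{s-1}dy$ is a multiple of $(s-m-1)^{-1}$ with residue $(2\pi\sI)^m$ (and $(-2\pi\sI)^m$ for the negative analogue feeding $\xi_-$). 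Matching residues at $s=m+1$ delivers the claimed $(\pm 2\pi\sI)^m\beta_2(m)/(s-m-1)$ term.

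The main obstacle is the degenerate case $-2\nu=n\in\bZ_{\geq 0}$, where the $\alpha_2(0)$-pole at $s=n+1$ coincides with the $\beta_2(n)$-pole. Here Lemma \ref{lem:pre_twist_Ftrans}(ii) is the essential input: as a function of $\nu$, $\cF(h_{\mu,\nu,\infty})(0)$ has a simple pole at $\nu=-n/2$ with residue $(\sI)^n\cdot 2\cos(\pi(n+\mu)/2)\delta^{(n)}(h)/n!$, and $\cF_{\mu,\nu,\infty}(h)(0)$ is defined precisely by subtracting this pole. Propagating this through the Mellin computation above and expanding Laurent series simultaneously at $2\nu=-n$ and at $s=n+1$, the formal $\alpha_2(0)$-residue $\frac{2\Gamma(2\nu)\cos(\pi(2\nu\mp\mu)/2)\alpha_2(0)}{(2\pi)^{2\nu}(s+2\nu-1)}$ resolves into a genuine double pole at $s=n+1$: the leading $(s-n-1)^{-2}$ coefficient reduces to $\frac{2(2\pi)^n\cos(\pi(n\mp\mu)/2)\alpha_2(0)}{n!}$ (the regular part of $\Gamma(2\nu)(2\pi)^{-2\nu}\cos(\pi(2\nu\mp\mu)/2)$ at $2\nu=-n$), while the subleading $(s-n-1)^{-1}$ coefficient acquires the derivative contribution $-\frac{\pi(2\pi)^n\sin(\pi(n\mp\mu)/2)\alpha_2(0)}{n!}$ (from differentiating $\cos(\pi(2\nu\mp\mu)/2)$ in $\nu$ at $2\nu=-n$) in addition to the $(\pm 2\pi\sI)^n\beta_2(n)$ contribution from the $\beta_2$-channel. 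This simultaneous Laurent-series bookkeeping at the coincident poles in $\nu$ and $s$ is the technical heart of the argument.
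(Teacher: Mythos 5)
Your overall strategy matches the paper's: derive an enriched summation formula that carries the $\beta_2$-data from $(\lambda_{\alpha_1,\beta_1})_\infty=\lambda_{\alpha_2,\beta_2}$, then Mellin-transform against well-chosen test functions to read off the residues, handling the coincidence of poles at $-2\nu=n$ by a careful limit in $\nu$. Your enriched identity is correct (it corresponds to the canonical partitions $(h,0)$ of $\iota_{\mu,\nu}(h)$ and $(0,h)$ of $(\iota_{\mu,\nu}(h))_\infty$), and using the mirror identity with $\beta_2$ on the $\cF_{\mu,\nu,\infty}$-side is the right move; this is essentially the paper's Lemma \ref{lem:pre_pole003}.

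Two steps, however, do not go through as written. First, your test functions. A one-sided Cauchy/Poisson kernel $h_y$ with $\cF(h_y)(l)=e^{-2\pi ly}\mathbf{1}_{l>0}$ gives $h_y(x)\sim\frac{1}{2\pi(y+\sI x)}$, which is not in $C^\infty_0(\bR)$, so the summation formula does not apply to it directly; moreover $h_y^{(m)}(0)$ scales like $y^{-m-1}$, and the asserted Mellin integral $\int_0^\infty h_y^{(m)}(0)\,y^{s-1}\,dy$ has \emph{no} strip of convergence (the integrand $y^{s-m-2}$ is not integrable at either end for any $s$), so the statement that it ``is a multiple of $(s-m-1)^{-1}$ with residue $(2\pi\sI)^m$'' has no meaning as given. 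The paper avoids both problems by taking $\Delta_m(x)=x^m\Delta(x)\in C^\infty_0(\bR)$, which has $\delta^{(n)}(\Delta_m)=0$ for $n\neq m$ so that a single $\beta_2(m)$ survives, and by splitting the Mellin integral into $Z_+$ and $Z_-$ on a cutoff so that each piece converges; the $(s-m-1)^{-1}$ factor then comes from $\int_0^1 t^{s-m-1}\frac{dt}{t}$, not from an improper integral.

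Second, the degenerate case. At fixed $\nu=-n/2$, $\Gamma(2\nu)$ is not defined, so the expression ``$\frac{2\Gamma(2\nu)\cos(\cdots)\alpha_2(0)}{(2\pi)^{2\nu}(s+2\nu-1)}$'' has no literal meaning, and ``expanding Laurent series simultaneously at $2\nu=-n$ and $s=n+1$'' is only a heuristic for what the answer should look like. What actually produces the double pole is the scaling law for $\cF_{\mu,-n/2,\infty}(f_{[t^{-1}]})(0)$: it picks up a $t^{-n}\log t$ term (Lemma \ref{lem:pre_pole001}(iii), proved by l'Hospital from the definition of $\cF_{\mu,\nu,\infty}$ as a $\nu$-limit), and $\int_0^1 t^{s-n-1}\log t\,\frac{dt}{t}=-\frac{1}{(s-n-1)^2}$. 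You cite Lemma \ref{lem:pre_twist_Ftrans}(ii), which is the precursor (the $\nu$-pole of $\cF(h_{\mu,\nu,\infty})(0)$), but it is not the scaling statement you need; without Lemma \ref{lem:pre_pole001}(iii) the logarithmic correction, hence the $(s-n-1)^{-2}$ term, is never produced at fixed $\nu$.
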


\begin{proof}[Proof of Theorem \ref{thm:QAD_DS}]
First, we will prove the statement (i). 
Assume that 
$(\lambda_{\alpha_1,\beta_1})_\infty =\lambda_{\alpha_2,\beta_2}$ 
holds. 
Then we have $(T_{\alpha_1},T_{\alpha_2})\in 
\cA (L_1,L_2;J_{\mu,\nu})$ 
by Propositions \ref{prop:pair_to_functionl} and 
\ref{prop:QAD_Fourier}. 
Hence, {\normalfont [D1]} and {\normalfont [D2-1]} hold by Theorem \ref{thm:DS} (i). 
Since $(\lambda_{\alpha_2,\beta_2})_\infty =\lambda_{\alpha_1,\beta_1}$ 
also holds, we know that {\normalfont [D3]} holds 
by Proposition \ref{prop:poles_DS}. 
If $\lambda_{\alpha_1,\beta_1} \in 
(I^{-\infty}_{\mu ,\nu})_{L_1,L_2}$, then 
{\normalfont [D4]} holds by 
Theorem \ref{thm:QAD_Fexp} (i) and 
Proposition \ref{prop:poles_DS}. 

Next, we will prove the statement (ii). 
Assume that {\normalfont [D1]}, {\normalfont [D2-2]} and {\normalfont [D3]} hold. 
Then we have $(T_{\alpha_1},T_{\alpha_2})\in \cA (L_1,L_2;J_{\mu,\nu})$ 
by {\normalfont [D1]}, {\normalfont [D2-2]} and Theorem \ref{thm:DS} (ii). 
Let $\lambda =\Lambda (T_{\alpha_1},T_{\alpha_2})\in 
(I^{-\infty}_{\mu ,\nu})_{L_1,L_2}^{\rm quasi}$. 
By Proposition \ref{prop:QAD_Fourier}, 
there are $\beta_1',\beta_2'\in 
\mathfrak{N}(\bZ_{\geq 0})$ such that 
$\lambda =\lambda_{\alpha_1,\beta_1'}$ 
and $(\lambda_{\alpha_1,\beta_1'})_\infty 
=\lambda_{\alpha_2,\beta_2'}$. 
By Proposition \ref{prop:poles_DS} and {\normalfont [D3]}, 
we have $\beta_1=\beta_1'$ and $\beta_2=\beta_2'$. 
Hence, $\lambda_{\alpha_1,\beta_1} \in 
(I^{-\infty}_{\mu ,\nu})_{L_1,L_2}^{\rm quasi}$ 
and $(\lambda_{\alpha_1,\beta_1})_\infty =\lambda_{\alpha_2,\beta_2}$. 
If {\normalfont [D4]} also holds, we have $\lambda_{\alpha_1,\beta_1} \in 
(I^{-\infty}_{\mu ,\nu})_{L_1,L_2}$ 
by Theorem \ref{thm:QAD_Fexp} (ii) and Proposition \ref{prop:poles_DS}. 
\end{proof}


\subsection{Knopp's result}
\label{subsec:quasi_auto2}

In this subsection, we show that there exist quasi-automorphic distributions, 
which are not automorphic distributions, by using the ``Mittag--Leffler'' 
theorem of Knopp. 
The result in this subsection is given by Professor Fumihiro Sato.

\begin{thm}[{\cite[Theorem 2]{Knopp_001}}]
\label{thm:Knopp}
Let $\kappa \in \bR_{\geq 2}$. For a 
rational function $q(s)$ satisfying $q(\kappa -s)=q(s)$, 
there is a Dirichlet series 
$\xi_q(s)=\sum_{m=1}^\infty a_q(m)m^{-s}$ satisfying the following 
conditions {\normalfont [K1]}, {\normalfont [K2]} and 
{\normalfont [K3]}:
\begin{description}
\item[{\normalfont [K1]}]There is $r>0$ such that 
$a_q(m)=O(m^r)$ $(m\to \infty )$. 
In particular, the Dirichlet series 
$\xi_q(s)$ converges absolutely on $\mathrm{Re}(s)>r+1$. 

\item[{\normalfont [K2]}]The Dirichlet series $\xi_q(s)$ has 
the meromorphic continuation to the whole $s$-plane, 
and there is a polynomial function $p(s)$ such that $p(s)\xi_q(s)$ is 
an entire function of finite genus. 

\item[{\normalfont [K3]}]The function $\Xi_q(s)=\pi^{-s}\Gamma (s)\xi_q(s)$ 
satisfies $\Xi_q(\kappa -s)=\Xi_q(s)$, 
and the function $\Xi_q(s)-q(s)$ is entire. 

\end{description}
\end{thm}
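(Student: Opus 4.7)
The plan is to reverse the classical Hecke correspondence: I would construct a holomorphic object on the upper half plane whose Mellin transform produces the required Dirichlet series $\xi_q$, with the prescribed pole data $q(s)$ encoded as an ``error term'' in the modular transformation law of the object. By linearity of the conditions {\normalfont [K1]}--{\normalfont [K3]}, one first reduces to the case where $q(s)$ is an elementary symmetric fraction $c(s-s_0)^{-n}+c(s-(\kappa-s_0))^{-n}$ (or $c(s-\kappa/2)^{-n}$ with appropriate parity), summing the constructions over a partial-fraction decomposition of a general $q$.

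For such an elementary $q$, I would build a holomorphic function $F$ on $\gH$ with three properties: (i) periodicity $F(z+2)=F(z)$, giving a Fourier expansion $F(z)=\sum_{m\geq 1}a(m)e^{\pi \sI mz}$ (the period $2$ matches the $\pi^{-s}$ in $\Xi_q$); (ii) the twisted modular identity $F(-1/z)-(-\sI z)^{\kappa}F(z)=P(z)$, where $P(z)$ is the rational function characterized by $\int_1^{\infty}P(\sI y)y^{-s-1}\,dy = q(s)$ (well-posed because the symmetry $q(\kappa-s)=q(s)$ translates to $P(-1/z)=(-\sI z)^{\kappa}P(z)$); and (iii) moderate growth $y^{\kappa/2}|F(x+\sI y)|=O(1)$ on $\gH$. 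The construction is a Poincar\'e-type series of weight $\kappa$ summed over $\Gamma_\infty\backslash \Gamma$ for a suitable subgroup $\Gamma$ of $SL(2,\bZ)$, seeded by a kernel representing the prescribed period function $P$.

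With $a_q(m):=a(m)$, I would then verify the three conditions. Condition {\normalfont [K1]} follows from (iii) by the standard estimate on Fourier coefficients of holomorphic functions of moderate growth. For {\normalfont [K3]}, split $\Xi_q(s)=\int_0^{\infty}F(\sI y)y^{s-1}\,dy$ at $y=1$, apply $y\mapsto 1/y$ in the integral over $(0,1)$, and invoke (ii): the contribution of $P$ produces exactly $q(s)$, while the remaining pieces are manifestly symmetric under $s\mapsto \kappa-s$, giving both the functional equation $\Xi_q(\kappa-s)=\Xi_q(s)$ and the entireness of $\Xi_q(s)-q(s)$. For {\normalfont [K2]}, choose $p(s)$ to clear the (finitely many) poles of $\xi_q$; then a Phragm\'en--Lindel\"of argument combined with Stirling's estimate for $\Gamma(s)$ bounds $p(s)\xi_q(s)$ on vertical strips, yielding finite genus.

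The main obstacle is producing $F$ with a \emph{prescribed} period function $P$: equivalently, inverting an Eichler-type coboundary map. For real weight $\kappa\geq 2$, possibly non-integral, and with $P$ possibly having poles in $\gH$, the classical Poincar\'e series need not converge absolutely, and Knopp's delicate Mittag--Leffler-type argument must be used: introduce an auxiliary convergence factor depending on an extra complex parameter, sum, and then recover $F$ by analytic continuation in that parameter, checking that the limit retains holomorphy, periodicity, moderate growth, and the exact prescribed transformation behavior.
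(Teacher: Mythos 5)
The paper itself does not prove this theorem: it is imported verbatim as \cite[Theorem 2]{Knopp_001}, and the only use made of it is in \S\ref{subsec:quasi_auto2} to exhibit quasi-automorphic distributions that are not automorphic. So strictly speaking there is no ``paper's own proof'' to compare against; the relevant benchmark is Knopp's argument, which your sketch does track in broad outline (modular integral on $\gH$ with a prescribed period under inversion, split the Mellin integral at $y=1$, handle convergence by a Mittag--Leffler/analytic-continuation device).

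That said, there is a concrete slip in the way you characterize the period function. You want $P$ to satisfy $\int_1^{\infty}P(\sI y)y^{-s-1}\,dy=q(s)$, and you call the resulting $P$ a \emph{rational function}. But if $q$ has a pole at $s_0\in\bC$ of order $n$, the inverse of that Mellin-type transform forces $P(\sI y)$ to contain a term proportional to $y^{s_0}(\log y)^{n-1}$, i.e.\ $P(z)$ is a finite sum of ``log-powers'' $(-\sI z)^{s_0}\bigl(\log(-\sI z)\bigr)^{j}$, which is rational in $z$ only when $q$'s poles are non-negative integers and simple. In Knopp's setting the period function is therefore a log-polynomial sum, not a rational period function, and the corresponding Poincar\'e/Niebur-type series is seeded by such log-powers. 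This changes the estimates needed for convergence and moderate growth, and it is precisely the obstacle that Knopp's ``Mittag--Leffler'' construction is designed to handle. You flag the hard step (auxiliary convergence parameter and continuation) correctly, but as written the claim that $P$ is rational would make the seed for the Poincar\'e series and the verification of {\normalfont [K3]} fail for generic rational $q$, so the proposal should be corrected to work with log-polynomial period functions from the outset.
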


Let $i\in \{0,1\}$ and $n\in \bZ_{> 0}$. 
Let $\beta \in \mathfrak{N}(\bZ_{\geq 0})$. 
For $\kappa =2n$ and  
\[
q(s)=\sum_{m=0}^\infty \beta (m)
\left(
\frac{1}{s-n-i-2m}+\frac{1}{-s+n-i-2m}\right),
\]
we take a Dirichlet series 
$\xi_q(s)=\sum_{m=1}^\infty a_q(m)m^{-s}$ 
as in Theorem \ref{thm:Knopp}. 
Set 
\begin{align*}
&\mu =0,&
&\nu =-n+\tfrac{1}{2},& 
&L_1=L_2=2^{-1}\bZ 
\end{align*}
and 
\begin{align*}
&\alpha_1(l)=\left\{\begin{array}{ll}
a_q(2l)&\text{if}\ l>0,\\
0&\text{if}\ l=0,\\
(-1)^{n+i+1}a_q(-2l)&\text{if}\ l<0
\end{array}\right.
\quad (l\in 2^{-1}\bZ ),
\hspace{10mm}
\alpha_2=(-1)^n\alpha_1.
\end{align*}
Then {\normalfont [K1]} implies $\alpha_1\in \mathfrak{M}(L_1)$ and 
$\alpha_2\in \mathfrak{M}(L_2)$. 
Moreover, we have 
\begin{align*}
&\Xi_+(\alpha_1;s)=(-1)^{n+i+1}\Xi_-(\alpha_1 ;s)\\
&=(-1)^n\Xi_+(\alpha_2;s)=(-1)^{i+1}\Xi_-(\alpha_2;s)
=\Xi_q(s).
\end{align*}
Hence, {\normalfont [K2]} and {\normalfont [K3]} imply that 
$\alpha_1$, $\alpha_2$ satisfy {\normalfont [D1]} 
and {\normalfont [D2-2]}. 
By Theorem \ref{thm:DS} and Proposition \ref{prop:pair_to_functionl}, 
we have $(T_{\alpha_1},T_{\alpha_2})\in \cA (L_1,L_2;J_{0,\nu})$ 
and $\Lambda (T_{\alpha_1},T_{\alpha_2})\in 
(I_{0,\nu}^{-\infty})_{L_1,L_2}^{\rm quasi}$. 
If we take $\beta$ so that $q(s)$ has a pole on 
$\bC \smallsetminus (\{2n\}\cup \bZ_{\leq 1})$, 
then $\Lambda (T_{\alpha_1},T_{\alpha_2})\not\in 
(I_{0,\nu}^{-\infty})_{L_1,L_2}$ 
by Theorems \ref{thm:QAD_Fexp} (i) and \ref{thm:QAD_DS} (i). 
In particular, we know that 
the complement of $(I_{0,\nu}^{-\infty})_{L_1,L_2}$ in 
$(I_{0,\nu}^{-\infty})_{L_1,L_2}^{\rm quasi}$ is 
infinite dimensional.

\subsection{The Poisson transform}
\label{subsec:poisson}

Let $\mu,\nu \in \bC$ and $\kappa \in \mu +2\bZ$. 
Let $C^\infty (\gH )$ be the space of smooth functions on $\gH$. 
For $\phi \in C^\infty (\gH )$ and $g\in G$, we set 
\begin{align*}
&(\phi \big|_\kappa g)(z)=\left(\frac{J(g,z)}{|J(g,z)|}\right)^{-\kappa}
\phi (gz)&&(z\in \gH).
\end{align*}
We define the hyperbolic Laplacian 
$\Omega_\kappa $ of weight $\kappa$ on $\gH$ by  
\begin{align}
\label{eqn:def_Omega}
&\Omega_\kappa 
=-y^2\left(\frac{\partial^2}{\partial x^2}+
\frac{\partial^2}{\partial y^2}\right)
+\sI \kappa y\frac{\partial}{\partial x}&
&(z=x+\sI y\in \gH).
\end{align}
Let $\cM_\nu (\gH ;\kappa )$ be a subspace 
of $C^\infty (\gH )$ consisting of all functions $\phi$ 
satisfying 
\[
(\Omega_\kappa \phi )(z)
=\left(\tfrac{1}{4}-\nu^2\right)\phi (z)
\]
and the following condition 
{\normalfont [M1]}: 
\begin{description}
\item[{\normalfont [M1]}]
There are $c,r>0$ such that 
$\displaystyle 
|\phi (z)|\leq c \left(\frac{|z|^2+1}{y}\right)^{\!r}$\quad 
($z=x+\sI y\in \gH $). 
\end{description}

For a distribution $\lambda $ on $I^{\infty}_{\mu,\nu}$, 
we define the Poisson transform 
$\cP_{\nu,\kappa}(\lambda)$ of $\lambda$ by 
\begin{align}
\label{eqn:def_poisson}
&\cP_{\nu,\kappa}(\lambda)(z)
=e^{\pi \sI \kappa /2}
\lambda \bigl(\rho (\tilde{\ru}(x)\tilde{\ra}(y))F_{\nu,\kappa}\bigr)&
&(z=x+\sI y\in \gH ),
\end{align}
where $F_{\nu,\kappa}$ is an element of $I_{\mu,\nu}^\infty$ defined by 
\begin{align}
\label{eqn:def_F_nu_kappa}
&F_{\nu,\kappa }(\tilde{g})=\mathrm{Im}(g\sI)^{\nu +1/2}e^{\sI \kappa \theta}&
&(\tilde{g}=(g,\theta )\in \widetilde{G}).
\end{align}

\begin{prop}
\label{prop:poisson}
Let $\mu,\nu \in \bC$, and $\kappa \in \mu +2\bZ$. \\[1mm]
(i) Let $\lambda \in I^{-\infty}_{\mu,\nu}$. 
Then 
we have $\cP_{\nu,\kappa}(\lambda)\in \cM_\nu (\gH ;\kappa )$ and 
\begin{align}
\label{eqn:poisson_Gact}
&(\cP_{\nu,\kappa}(\lambda)\big|_\kappa g) (z)
=e^{\pi \sI \kappa /2}
\lambda \bigl(\rho ({}^s\!g\tilde{\ru}(x)\tilde{\ra}(y))F_{\nu,\kappa}\bigr)
\end{align}
for $g\in G$ and $z=x+\sI y\in \gH$. \\[1mm]
(ii) Let $\alpha \in \mathfrak{M}(L)_{\mu,\nu}^0$ and  
$\beta \in {\mathfrak{N}}(S_{\nu}(L))$. 
For $z=x+\sI y\in \gH $, we have 
\begin{align}
\label{eqn:poisson_Fexp}
\begin{split}
&\cP_{\nu,\kappa}(\lambda_{\alpha,\beta})(z)=
\sum_{0\neq l\in L}
\frac{\pi^{\nu +\frac{1}{2}}|l|^{\nu -\frac{1}{2}}\alpha (l)
}{\Gamma \bigl(\frac{2\nu +1+\sgn (l) \kappa }{2}\bigr)}\,
W_{\sgn (l)\frac{\kappa}{2},\nu}
(4\pi |l|y)e^{2\pi \sI l x}\\
&\hspace{12mm}
+cy^{-\nu +\frac{1}{2}}
+\left\{\begin{array}{ll}
(-1)^{\frac{\mu -\kappa }{2}}\beta (0)y^{\nu +\frac{1}{2}}
&\text{if $0\in L$ and $\nu \neq 0$},\\[1mm]
-2\cos \bigl(\tfrac{\pi \kappa}{2}\bigr)\alpha (0)
y^{\frac{1}{2}}\log y
&\text{if $0\in L$ and $\nu =0$},\\[1mm]
0&\text{if $0\not\in L$}.
\end{array}\right.
\end{split}
\end{align} 
Here $W_{\kappa ,\nu}(y)$ is Whittaker's function 
(\cite[Chapter 16]{Whittaker_Watson_001}), 
and $c$ is the constant determined by 
\begin{align*}
&c
=\left\{\!\begin{array}{l}
\displaystyle 
\frac{2^{1-2\nu }\pi \Gamma (2\nu )}
{\Gamma \bigl(\tfrac{2\nu +1-\kappa}{2}\bigr)
\Gamma \bigl(\tfrac{2\nu +1+\kappa }{2}\bigr)}\,
\alpha (0)
\hspace{15mm}\text{ if $0\in L$ and $-2\nu \not\in \bZ_{\geq 0}$},\\[4mm]
\displaystyle 
(-1)^{\frac{|\kappa|-n-1}{2}}\frac{2^{n}
\pi \,\bigl(\tfrac{|\kappa|+n-1}{2}\bigr)!}
{n!\,\bigl(\tfrac{|\kappa|-n-1}{2}\bigr)!}\,\alpha (0)
+(-1)^{\frac{\mu -\kappa }{2}}\beta (n)\mathbf{d}(n,\kappa )\\[4mm]
\hspace{11.6mm}
\text{if $0\in L$, $-2\nu =n$ and $|\kappa |-n+1\in 2\bZ_{> 0}$ 
with some $n\in \bZ_{\geq 0}$, }\\[2mm]
\displaystyle 
(-1)^{\frac{\mu -\kappa }{2}}\mathbf{d}(n,\kappa )\beta (n)\\
\hspace{11.6mm}
\text{if $0\in L$, $-2\nu =n$ and $\mu +n-1\not\in 2\bZ $ 
with some $n\in \bZ_{>0}$, }\\[2mm]
2\mathbf{j} (\kappa )\alpha (0)\cos \bigl(\tfrac{\pi \kappa }{2}\bigr)
+(-1)^{\frac{\mu -\kappa }{2}}\beta (0)\hspace{7mm} 
\text{if $0\in L$, $\nu =0$ and $\mu -1 \not\in 2\bZ$},\\[1mm]
0\hspace{55mm}\text{ otherwise}
\end{array}\!\right.
\end{align*}
with  
\begin{align}
\label{eqn:def_d_n_kappa}
&\mathbf{d}(n,\kappa )
=(\sI )^n\prod_{j=0}^{n-1}(\kappa +n-1-2j),\\
\label{eqn:def_j_kappa}
&\mathbf{j} (\kappa )
=\sum_{i=0}^\infty 
\left(\frac{1}{2i+1-\kappa }
+\frac{1}{2i+1+\kappa }-\frac{1}{i+1}\right)
-\log 2.
\end{align}
\end{prop}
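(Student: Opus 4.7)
For part (i), the plan is to verify the four required properties in turn. Smoothness of $\cP_{\nu,\kappa}(\lambda)$ in $z$ follows from joint smoothness of the family $(x,y)\mapsto \rho(\tilde{\ru}(x)\tilde{\ra}(y))F_{\nu,\kappa}$ in every seminorm $\cQ_X$ on $I^\infty_{\mu,\nu}$, combined with the continuity estimate (\ref{eqn:cdn_conti_ps}) for $\lambda$, which allows us to exchange differentiation with the application of $\lambda$. Using $\tilde{\ra}(y)^{-1}\tilde{\ru}(h)\tilde{\ra}(y)=\tilde{\ru}(h/y)$ the partial derivatives in $x$ and $y$ translate into right translations by elements of $\cU(\g_\bC)$; hence the eigenvalue equation reduces to the fact that the Casimir element of $\cU(\g_\bC)$ acts on $I^\infty_{\mu,\nu}$ as the scalar $\nu^2-1/4$, and that under the standard dictionary relating right-invariant operators on $\widetilde{G}$ to operators on $\gH$ via weight-$\kappa$ functions this matches $-\Omega_\kappa+1/4$. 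The growth condition [M1] follows by estimating each $\cQ_X(\rho(\tilde{\ru}(x)\tilde{\ra}(y))F_{\nu,\kappa})$ through the Iwasawa decomposition as a polynomial in $y^{\pm 1/2}$ whose coefficients are bounded by a multiple of $((|z|^2+1)/y)^r$. Finally, (\ref{eqn:poisson_Gact}) comes from the identity ${}^s\!g\,\tilde{\ru}(x)\tilde{\ra}(y)=\tilde{\ru}(\mathrm{Re}(gz))\tilde{\ra}(\mathrm{Im}(gz))\tilde{\rk}(-\arg J(g,z))$ in $\widetilde{G}$, together with the definition $F_{\nu,\kappa}((h,\theta))=\mathrm{Im}(hi)^{\nu+1/2}e^{\sI\kappa\theta}$ and the cocycle property of $J$.

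For part (ii), the Fourier expansion (\ref{eqn:def_Fexp_ps}) of $\lambda_{\alpha,\beta}$ reduces the computation to two building blocks: $\cJ_l(\rho(\tilde{\ru}(x)\tilde{\ra}(y))F_{\nu,\kappa})$ for $l\in L$, and $\delta^{(m)}_\infty(\rho(\tilde{\ru}(x)\tilde{\ra}(y))F_{\nu,\kappa})$ for $m\geq 0$. For $l\neq 0$, the substitution $t\mapsto t+x$ in the Jacquet integral together with the Iwasawa decomposition of $\tilde{w}\tilde{\ru}(-t)\tilde{\ra}(y)$ reduces the integral to a classical representation of the Whittaker function, yielding the Whittaker sum in (\ref{eqn:poisson_Fexp}) with its $e^{2\pi\sI lx}$ factor. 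For $l=0$ and $\mathrm{Re}(\nu)\gg 0$, the same computation reduces to a beta-type integral $\int_{-\infty}^\infty (1+t^2)^{-\nu-1/2}\bigl((1-\sI t)/\sqrt{1+t^2}\bigr)^\kappa\,dt$, yielding the $y^{-\nu+1/2}$ term with Gamma-ratio coefficient matching the first case of $c$; meromorphic continuation, together with the renormalization of $\cF_{\mu,\nu,\infty}(\cdot)(0)$ from \S \ref{subsec:Fexp_ILL}, extends the formula to all $\nu$. For $\delta^{(m)}_\infty$, applying the definition (\ref{eqn:def_delta_infty}) and differentiating the Iwasawa decomposition of $\tilde{w}\,\widetilde{\exp}(tE_+)\tilde{\ru}(x)\tilde{\ra}(y)$ in $t$ produces a pure $y^{\nu+1/2}$ contribution whose coefficient is a degree-$m$ polynomial in $\kappa$; collecting terms rearranges into the factor $\mathbf{d}(m,\kappa)$.

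The main obstacle is the bookkeeping when $-2\nu=n\in\bZ_{\geq 0}$. In this case $y^{-\nu+1/2}$ and $y^{\nu+1/2}$ coincide, so the $\cJ_0$ and $\delta^{(n)}_\infty$ contributions merge into a single power; moreover, when additionally $|\kappa|\geq n+1$ and $\mu-\kappa\in 2\bZ$, the $l\to 0$ limit of the Whittaker function $W_{\sgn(l)\kappa/2,\nu}(4\pi|l|y)$ degenerates into a polynomial whose residue at $2\nu=-n$ contributes an extra polynomial term, producing the combinatorial coefficient $2^n\pi\,((|\kappa|+n-1)/2)!/(n!((|\kappa|-n-1)/2)!)$ in the second case of $c$. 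The exceptional case $\nu=0$ uses the coalescence $\lim_{\nu\to 0}(y^{\nu+1/2}-y^{-\nu+1/2})/(2\nu)=y^{1/2}\log y$ for the logarithmic term, and the constant $\mathbf{j}(\kappa)$ emerges as the $\nu$-derivative at $0$ of the Gamma ratio appearing in the $l=0$ coefficient. Enumerating the five cases of $c$ is therefore a careful residue computation—controlled by the analytic continuation data built in \S \ref{subsec:Fexp_ILL}—rather than a conceptual difficulty, once the three building-block evaluations above are in hand.
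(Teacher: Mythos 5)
Part (i) of your proposal is essentially the paper's route: the paper packages the argument through the function $\cP_{\nu,\kappa}^{\widetilde{G}}(\lambda)$ on $\widetilde{G}$ and verifies $K$-type, Casimir eigenvalue and the polynomial bound $\|g\|^{|2\mathrm{Re}(\nu)+1|+2j}$ (Corollary \ref{cor:mg_conti_functional}), then dictionaries back to $\gH$ via Lemma \ref{lem:autom_GtoH_001}. Your smoothness/growth and Iwasawa-identity arguments, and the observation that the Casimir acts by $\nu^2-\tfrac14$, all line up with that.

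Part (ii) has the right skeleton — separate $\lambda_{\alpha,\beta}$ into the $\cJ_l$ and $\delta^{(m)}_\infty$ building blocks, and push through $\rho(\tilde{\ru}(x)\tilde{\ra}(y))$ — but two of your statements misidentify where terms come from, and the second of these would derail the actual residue bookkeeping.

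First, you assert that $\delta^{(m)}_\infty$ yields ``a pure $y^{\nu+1/2}$ contribution.'' That is only the $m=0$ case. Proposition \ref{prop:act_A_Jac_int}~(i) gives $\delta^{(m)}_\infty(\rho(\tilde{\ra}(y))F)=y^{\nu+m+1/2}\delta^{(m)}_\infty(F)$, so for $m=n=-2\nu$ the exponent is $\nu+n+\tfrac12=-\nu+\tfrac12$, which is precisely why the $\delta^{(n)}_\infty(F_{-n/2,\kappa})$ contribution (evaluated via $\mathbf{d}(n,\kappa)$) merges with the $\cJ_0$ term into the single power $y^{-\nu+\frac12}$ in the formula for $c$, while the $\delta^{(0)}_\infty$ contribution stays at $y^{\nu+\frac12}$. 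Your description collapses this distinction and would place the $\beta(n)\mathbf{d}(n,\kappa)$ piece in the wrong power of $y$.

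Second, and more seriously, you attribute the combinatorial coefficient $(-1)^{(|\kappa|-n-1)/2}\,2^{n}\pi\,\bigl(\tfrac{|\kappa|+n-1}{2}\bigr)!\,/\,\bigl(n!\,\bigl(\tfrac{|\kappa|-n-1}{2}\bigr)!\bigr)$ in the second case of $c$ to an ``$l\to 0$ limit of the Whittaker function.'' This is not where that constant comes from. The sum over $0\ne l\in L$ is a separate sum whose $|l|^{\nu-1/2}W_{\sgn(l)\kappa/2,\nu}(4\pi|l|y)$ terms do not feed into the $l=0$ constant; no limiting process in $l$ is taken. The coefficient arises entirely from $\cJ_0(F_{-n/2,\kappa})$, i.e. from the beta-type integral (or rather its analytic continuation). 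At $\nu=-n/2$ the numerator $\Gamma(2\nu)$ in $\frac{2^{1-2\nu}\pi\Gamma(2\nu)}{\Gamma(\frac{2\nu+1-\kappa}{2})\Gamma(\frac{2\nu+1+\kappa}{2})}$ has a pole, but under $|\kappa|-n+1\in 2\bZ_{>0}$ one of the Gamma factors in the denominator also has a pole at the same parameter; the two cancel and the finite value is exactly the stated factorial expression (Proposition \ref{prop:Jac_int_Ktype}~(ii)). If you try to produce it instead from a Whittaker degeneration you will find nothing there to take a residue of, since each $l\ne 0$ term is separately regular in $\nu$ at $\nu=-n/2$. The renormalization you invoke from \S\ref{subsec:Fexp_ILL} concerns the $\cJ_0$ term alone, and that is the correct place to locate the residue computation.
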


Proofs of the statements (i) and (ii) of 
Proposition \ref{prop:poisson} are given in 
\S \ref{subsec:ps_mod_growth} and \S \ref{subsec:Jac_int_special}, 
respectively.


\subsection{Maass forms of real weights}
\label{subsec:Maass_forms}

Let $\Gamma $ be a cofinite subgroup of $SL(2,\bZ)$ 
such that $-1_2\in \Gamma$. 
Let $\kappa \in \bR$. 
We call $v$ a {\it multiplier system} 
on $\Gamma$ of weight $\kappa $ 
if and only if $v$ is a map from $\Gamma$ to $\bC^\times$ 
satisfying the conditions 
$v(-1_2)=e^{-\pi \kappa \sI }$, 
$|v(\gamma )|=1$ $(\gamma \in \Gamma )$ 
and 
\begin{align}
\label{eqn:cdn_multiplier}
&v(\gamma_1\gamma_2)=v(\gamma_1)v(\gamma_2)
\frac{J(\gamma_1,\gamma_2z)^\kappa J(\gamma_2,z)^\kappa }
{J(\gamma_1\gamma_2,z)^\kappa}&
&(\gamma_1,\gamma_2\in \Gamma,\ z\in \gH). 
\end{align}
Here we note that the right hand side 
does not depend on $z\in \gH$.

Let $\gD=\{z=x+\sI y \mid  -1/2\leq x\leq 1/2,\ 
y\geq \sqrt{1-x^2}\}$, 
which is the closure of the standard fundamental domain 
of $SL(2,\bZ)\backslash \gH$. 
Let $v$ be a multiplier system on $\Gamma$ of weight $\kappa$. 
Let $\nu \in \bC$. 
Let $\cM_{\nu }(\Gamma \backslash \gH ;v,\kappa )$ be 
a subspace of $C^\infty (\gH )$ consisting all functions $\phi$ 
satisfying 
\begin{align*}
&(\phi \big|_\kappa \gamma )(z)
=v(\gamma )\phi (z)\quad (\gamma \in \Gamma ),&
&(\Omega_\kappa \phi )(z)
=\left(\tfrac{1}{4}-\nu^2\right)\phi (z)
\end{align*}
and the following condition {\normalfont [M2]}:
\begin{description}
\item[{\normalfont [M2]}]
$\phi$ is of moderate growth at any cusp of $\Gamma$, that is, 
for any $\gamma \in SL(2,\bZ)$, there exist $c,r>0$ such that 
$(\phi \big|_\kappa \gamma )(z)\leq c\,\mathrm{Im}(z)^{r}$ $(z\in \gD )$. 
\end{description}
We call a function in 
$\cM_{\nu }(\Gamma \backslash \gH ;v,\kappa )$ 
a {\it Maass form} for $\Gamma $ of weight $\kappa$ 
with multiplier system $v$ and eigenvalue $\tfrac{1}{4}-\nu^2$. 
The following proposition is proved 
in \S \ref{subsec:mg_ftn_G}.

\begin{prop}
\label{prop:Maass_subspace}
Retain the notation. Then we have 
\[
\cM_{\nu }(\Gamma \backslash \gH ;v,\kappa )
=\left\{\phi \in 
\cM_\nu (\gH ;\kappa )\ \left| \ 
\phi \big|_\kappa \gamma 
=v(\gamma )\phi \quad (\gamma \in \Gamma )\right.\right\}.
\]
\end{prop}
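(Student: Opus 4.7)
My plan is to prove the two set-inclusions separately. Since the Laplacian eigenvalue equation and the $\Gamma$-automorphy with multiplier $v$ appear on both sides, the content reduces to the equivalence of the growth conditions {\normalfont [M1]} and {\normalfont [M2]} for $\Gamma$-automorphic $\phi$. Throughout, the hypothesis $\kappa\in \bR$ makes the slash factor $\bigl(J(\gamma,z)/|J(\gamma,z)|\bigr)^{-\kappa}$ of modulus one, so $|(\phi\big|_\kappa \gamma)(z)| = |\phi(\gamma z)|$ for any $\gamma \in SL(2,\bZ)$.

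The inclusion $\supseteq$ does not require $\Gamma$-automorphy. Given $\phi\in \cM_\nu(\gH;\kappa)$ and $\gamma=\bigl(\begin{smallmatrix}a&b\\c&d\end{smallmatrix}\bigr)\in SL(2,\bZ)$, I apply {\normalfont [M1]} at $\gamma z$ using the identity
\[
\frac{|\gamma z|^2+1}{\mathrm{Im}(\gamma z)}=\frac{|az+b|^2+|cz+d|^2}{y}.
\]
For $z=x+\sI y\in \gD$, we have $|x|\leq 1/2$ and $y\geq \sqrt{3}/2$, so the numerator is bounded by a polynomial of degree two in $y$ while the denominator is linear in $y$; hence $|(\phi\big|_\kappa \gamma)(z)|\leq c_\gamma\, y^{R}$ on $\gD$, verifying {\normalfont [M2]}.

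For $\subseteq$, given $z\in \gH$ I choose $\sigma \in SL(2,\bZ)$ with $\sigma z \in \gD$, so that
\[
|\phi(z)| = \bigl|(\phi\big|_\kappa \sigma^{-1})(\sigma z)\bigr|,
\]
and I would like to apply {\normalfont [M2]} with $\gamma_0 = \sigma^{-1}$ at $\sigma z\in \gD$. The main obstacle is that the constants $c_{\gamma_0}$ and $r_{\gamma_0}$ in {\normalfont [M2]} a priori depend on $\gamma_0$, whereas $\sigma$ varies with $z$. This is overcome using $\Gamma$-automorphy together with the cofiniteness of $\Gamma$ in $SL(2,\bZ)$: the relation $(\phi\big|_\kappa (\gamma\gamma_0))(z)=v(\gamma)(\phi\big|_\kappa \gamma_0)(z)$ combined with $|v(\gamma)|=1$ shows that $|(\phi\big|_\kappa \gamma_0)(z)|$ depends only on the coset $\Gamma\gamma_0 \in \Gamma\backslash SL(2,\bZ)$, of which there are only finitely many. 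Taking the maximum over these cosets yields uniform constants $c,r>0$ with $|(\phi\big|_\kappa \gamma_0)(z)|\leq c\,\mathrm{Im}(z)^r$ on $\gD$ for all $\gamma_0\in SL(2,\bZ)$.

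It remains to bound $\mathrm{Im}(\sigma z)$ in terms of $(|z|^2+1)/y$. Writing $\sigma=\bigl(\begin{smallmatrix}a&b\\c&d\end{smallmatrix}\bigr)$, one has $\mathrm{Im}(\sigma z)=y/|cz+d|^2$, and a short case check gives $\mathrm{Im}(\sigma z)\leq (|z|^2+1)/y$: if $c=0$, then $\mathrm{Im}(\sigma z)=y\leq (y^2+1)/y$; if $c\neq 0$, then $|cz+d|^2\geq c^2y^2\geq y^2$, so $\mathrm{Im}(\sigma z)\leq 1/y\leq (|z|^2+1)/y$. Substituting this into the uniform estimate produces {\normalfont [M1]} and completes the proof.
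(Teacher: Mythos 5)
Your proof is correct and its computational content matches the paper's, but you implement it directly on $\gH$ rather than routing through the intermediate growth condition {\normalfont [M3]} on $\widetilde{G}$ as the paper does (via Lemmas \ref{lem:autom_GtoH_001} and \ref{lem:autom_GtoH_002}). The paper lifts $\phi$ to $F_\phi$ on $\widetilde{G}$ and measures growth via the operator norm $\|g\|$; the key identity $\|\ru(x)\ra(y)\|^2=(|z|^2+1)/y$ shows this is the same quantity you track on $\gH$, so your estimates for both inclusions (the bound $(|az+b|^2+|cz+d|^2)/y=O(y)$ on $\gD$ for $\supseteq$, and the bound $\mathrm{Im}(\sigma z)\leq(|z|^2+1)/y$ together with the cofiniteness argument for $\subseteq$) mirror the paper's inequalities $\|\gamma\ru(x)\ra(y)\|\leq\tfrac{\sqrt{21}}{2}\|\gamma\|\,y^{1/2}$ and $\|g\|\geq\sqrt{y_0}$. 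What the paper's detour buys is reusability: {\normalfont [M3]} is also needed for the Poisson transform (Proposition \ref{prop:dist_to_form_G} via Corollary \ref{cor:mg_conti_functional}), so the paper proves the $\gH\leftrightarrow\widetilde{G}$ translation once. For the present proposition in isolation, your direct route is shorter and avoids introducing $F_\phi$ and $\|\cdot\|$ entirely.
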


In \cite[Chapter IV, \S 2]{Maass_003}, Maass introduce 
the Fourier expansion of Maass forms 
at each cusps. 
Let $\phi \in \cM_{\nu }(\Gamma \backslash \gH ;v,\kappa )$, 
and we introduce the Fourier expansion of $\phi$ at $\infty$ here. 
Let $L=R^{-1}(u+\bZ )$ be a shifted lattice 
with the real numbers $R$, $u$ determined by 
\begin{align}
\label{eqn:def_R_u}
&R=\min \{t\in \bZ_{>0}\mid \ru (t)\in \Gamma \},&
&0\leq u<1,&
&v(\ru (R))=e^{2\pi \sI u}.
\end{align}
Then $\phi$ has the following Fourier expansion at $\infty$ 
\begin{align}
\label{eqn:Fexp_Maass}
\begin{split}
\phi (z)=
&\sum_{0\neq l\in L}a(\phi ;l)
W_{\sgn (l)\frac{\kappa}{2},\nu}
(4\pi |l|y)e^{2\pi \sI l x}
+a(\phi ;0)y^{-\nu +\frac{1}{2}}\\
&+\left\{\begin{array}{ll}
b(\phi ;0)y^{\nu +\frac{1}{2}}&\text{if }\nu \neq 0,\\[1mm]
b(\phi ;0)y^{\frac{1}{2}}\log y&\text{if }\nu =0
\end{array}\right.\hspace{8mm}
(z=x+\sI y\in \gH )
\end{split}
\end{align}
with $b(\phi ;0),a(\phi ;l)\in \bC$ ($l\in L$), 
where $a(\phi ;0)=b(\phi ;0)=0$ if $0\not\in L$. 
Here $W_{\kappa ,\nu}(y)$ is Whittaker's function 
(\cite[Chapter 16]{Whittaker_Watson_001}).

\begin{rem}
Our definition of Maass forms is slightly different from that in 
Maass \cite[Chapter IV, \S 2]{Maass_003}. 
For a smooth function $\phi$ on $\gH$, 
we note that $\phi \in \cM_{\nu }(\Gamma \backslash \gH ;v,\kappa )$ 
if and only if 
$f(z,\overline{z})=\mathrm{Im}(z)^{-\nu -\frac{1}{2}}\phi (z)$ 
is an automorphic form of the type 
$\{\Gamma,\nu +\frac{1+\kappa }{2},\nu +\frac{1-\kappa }{2},v\}$ 
in the sense of Maass. 
\end{rem}

\subsection{Automorphic distributions for discrete groups}
\label{sec:AD_discrete}

Let $\Gamma $ be a cofinite subgroup of $SL(2,\bZ)$ 
such that $-1_2\in \Gamma$. 
We define a subgroup $\widetilde{\Gamma}$ of $\widetilde{G}$ by 
$\widetilde{\Gamma}=\varpi^{-1}(\Gamma)
=\{(\gamma ,\theta )\in \widetilde{G}\mid \gamma \in \Gamma\}$. 
Let $\kappa \in \bR$. 
For a multiplier system $v$ on $\Gamma$ of weight $\kappa $, 
we set 
\begin{align}
\label{eqn:def_char_mult_sys}
&\tilde{\chi}_v(\tilde{\gamma})=v(\gamma)
e^{\sI \kappa(\theta +\arg J(\gamma ,\sI))}
&(\tilde{\gamma}=(\gamma ,\theta )\in \widetilde{\Gamma}).
\end{align}
Then $\tilde{\chi}_v$ is a unitary character of $\widetilde{\Gamma}$. 
Moreover, 
$v\mapsto \tilde{\chi}_v$ defines a bijection from 
the set of multiplier systems on $\Gamma$ of weight $\kappa $ to the set 
of unitary characters $\tilde{\chi}$ of $\widetilde{\Gamma}$ satisfying 
$\tilde{\chi}({}^s\!(-1_2))=e^{-\pi \kappa \sI }$, 
since its inverse map $\tilde{\chi}\mapsto v_{\tilde{\chi}}$ is given by 
$v_{\tilde{\chi}}(\gamma)=\tilde{\chi}({}^s\!\gamma)$ 
($\gamma \in \Gamma$).

Let $\mu \in \bR$ and $\nu\in \bC$. 
Let $v$ be a multiplier system on $\Gamma$ of weight $\kappa $. 
We define a subspace 
$(I^{-\infty}_{\mu,\nu})^{\widetilde{\Gamma},\tilde{\chi}_v}$ 
of $I^{-\infty}_{\mu,\nu}$ by 
\begin{align*}
&(I^{-\infty}_{\mu,\nu})^{\widetilde{\Gamma},\tilde{\chi}_v}
=\{\lambda \in I^{-\infty}_{\mu,\nu}\mid 
\lambda (\rho (\tilde{\gamma})F)
=\tilde{\chi}_v(\tilde{\gamma})\lambda (F)\quad 
(\tilde{\gamma}\in \widetilde{\Gamma},\ 
F\in I^{\infty}_{\mu,\nu})\}. 
\end{align*}
We call a distribution in 
$(I^{-\infty}_{\mu,\nu})^{\widetilde{\Gamma},\tilde{\chi}_v}$ 
an {\it automorphic distribution} 
on $I^{\infty}_{\mu,\nu}$ for $\widetilde{\Gamma}$ 
with character $\tilde{\chi}_v$.  
For $m\in \bZ$, we have 
\begin{align}
\label{eqn:wt_compatibility}
&\rho (\tilde{\rk}(m\pi ))F=e^{\pi m \sI \mu }F\quad 
(F\in I_{\mu,\nu}^\infty),&
&\tilde{\chi}_v(\tilde{\rk}(m\pi ))=e^{\pi m \sI \kappa }. 
\end{align}
Hence, 
$(I^{-\infty}_{\mu,\nu})^{\widetilde{\Gamma},\tilde{\chi}_v}=\{0\}$ 
unless $\kappa \in \mu +2\bZ$. We assume $\kappa \in \mu +2\bZ$ 
until the end of this subsection. 
Then we have the following proposition as an immediate consequence of 
Propositions \ref{prop:poisson} (i) and \ref{prop:Maass_subspace}. 

\begin{prop}
\label{prop:dist_to_form}
Retain the notation. Let $\lambda \in 
(I^{-\infty}_{\mu,\nu})^{\widetilde{\Gamma},\tilde{\chi}_v}$. 
Then the Poisson transform $\cP_{\nu,\kappa}(\lambda )$ of $\lambda$ 
is a Maass form in $\cM_{\nu }(\Gamma \backslash \gH ;v,\kappa )$. 
\end{prop}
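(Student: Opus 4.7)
The proposition is intended to follow directly from Propositions \ref{prop:poisson} (i) and \ref{prop:Maass_subspace}, so the plan is to unpack exactly which pieces come from each. First I would invoke Proposition \ref{prop:poisson} (i) to deduce that $\cP_{\nu ,\kappa}(\lambda)\in \cM_\nu (\gH;\kappa)$, giving three of the four requirements for membership in $\cM_\nu(\Gamma \backslash \gH;v,\kappa)$: smoothness, the eigenvalue equation for $\Omega_\kappa$ with eigenvalue $\tfrac{1}{4}-\nu^2$, and the growth bound \textbf{[M1]}. By Proposition \ref{prop:Maass_subspace}, the only remaining thing to verify is the transformation law
\[
(\cP_{\nu ,\kappa}(\lambda)\big|_\kappa \gamma )(z)=v(\gamma )\,\cP_{\nu ,\kappa}(\lambda)(z)
\qquad (\gamma \in \Gamma ,\ z\in \gH ),
\]
after which \textbf{[M2]} is automatic from \textbf{[M1]} via Proposition \ref{prop:Maass_subspace}.

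To verify the transformation law, I would apply formula (\ref{eqn:poisson_Gact}) with $g=\gamma \in \Gamma$, obtaining
\[
(\cP_{\nu ,\kappa}(\lambda)\big|_\kappa \gamma )(z)
=e^{\pi \sI \kappa /2}\,
\lambda \bigl(\rho ({}^s\!\gamma \,\tilde{\ru}(x)\tilde{\ra}(y))F_{\nu ,\kappa}\bigr).
\]
Since ${}^s\!\gamma \in \widetilde{\Gamma}=\varpi^{-1}(\Gamma)$ and $\rho$ is a group homomorphism on $\widetilde{G}$, I would write $\rho({}^s\!\gamma\,\tilde{\ru}(x)\tilde{\ra}(y)) = \rho({}^s\!\gamma)\,\rho(\tilde{\ru}(x)\tilde{\ra}(y))$ and apply the defining automorphy of $\lambda$ to pull out a scalar factor $\tilde{\chi}_v({}^s\!\gamma)$.

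It remains to compute $\tilde{\chi}_v({}^s\!\gamma)=v(\gamma)$. This is essentially the inverse correspondence $\tilde{\chi}\mapsto v_{\tilde{\chi}}$ noted right after (\ref{eqn:def_char_mult_sys}): with ${}^s\!\gamma =(\gamma ,-\arg J(\gamma,\sI))$, the definition (\ref{eqn:def_char_mult_sys}) collapses to $\tilde{\chi}_v({}^s\!\gamma)=v(\gamma)e^{\sI \kappa \cdot 0}=v(\gamma)$. Combining these two steps yields
\[
(\cP_{\nu ,\kappa}(\lambda)\big|_\kappa \gamma )(z)
=v(\gamma )\,e^{\pi \sI \kappa /2}\,
\lambda \bigl(\rho (\tilde{\ru}(x)\tilde{\ra}(y))F_{\nu ,\kappa}\bigr)
=v(\gamma)\,\cP_{\nu ,\kappa}(\lambda)(z),
\]
as required. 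The compatibility condition $\kappa \in \mu +2\bZ$, which the paper assumes in this subsection, is exactly what makes the character $\tilde{\chi}_v$ of $\widetilde{\Gamma}$ consistent with the central character of $I^\infty_{\mu,\nu}$ (see (\ref{eqn:wt_compatibility})), so no nontriviality issue arises. The only mildly delicate point—hardly an obstacle—is keeping the order of operators straight when passing from $\rho({}^s\!\gamma\cdot \tilde{h})$ to $\rho({}^s\!\gamma)\rho(\tilde{h})$ and checking that ${}^s\!\gamma \in \widetilde{\Gamma}$ so that the automorphy relation applies; everything else is bookkeeping from the two cited propositions.
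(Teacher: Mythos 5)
Your proof is correct and is exactly the unpacking of what the paper intends when it says the proposition is "an immediate consequence of Propositions~\ref{prop:poisson} (i) and \ref{prop:Maass_subspace}": you use (\ref{eqn:poisson_Gact}), the homomorphism property of $\rho$, and the evaluation $\tilde{\chi}_v({}^s\!\gamma)=v(\gamma)$ to verify the transformation law, and Proposition~\ref{prop:Maass_subspace} does the rest. Nothing is missing.
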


Let $L=R^{-1}(u+\bZ )$ be a shifted lattice in $\bR$ 
with the real numbers $R$, $u$ determined by 
(\ref{eqn:def_R_u}). 
Let $\hat{L}=\hat{R}^{-1}(\hat{u}+\bZ )$ 
be a shifted lattice in $\bR$ 
with the real numbers $\hat{R}$, $\hat{u}$ determined by 
\begin{align}
\label{eqn:def_hat_R_u}
&\hat{R}=\min \{t\in \bZ_{>0}\mid \overline{\ru} (-t)\in \Gamma \},&
&0\leq \hat{u}<1,&
&v(\overline{\ru}(-\hat{R}))=e^{2\pi \sI \hat{u}}.
\end{align}
Then it is obvious that 
$(I^{-\infty}_{\mu,\nu})^{\widetilde{\Gamma},\tilde{\chi}_v}$ 
is a subspace of $(I^{-\infty}_{\mu,\nu})_{L,\hat{L}}$. 
Hence, by Theorem \ref{thm:QAD_Fexp} (i), 
for $\lambda \in (I^{-\infty}_{\mu,\nu})^{\widetilde{\Gamma},\tilde{\chi}_v}$, 
there are unique 
$\alpha \in \mathfrak{M}(L)_{\mu,\nu}^0$, 
$\beta \in {\mathfrak{N}}(S_{\nu}(L))$, 
$\hat{\alpha}\in \mathfrak{M}(\hat{L})_{\mu,\nu}^0$ 
and $\hat{\beta}\in {\mathfrak{N}}(S_{\nu}(\hat{L}))$ 
such that $\lambda =\lambda_{\alpha ,\beta }$ and 
$\lambda_\infty =\lambda_{\hat{\alpha},\hat{\beta}}$, 
that is, 
\begin{align*}
\begin{split}
&\lambda (F)=\lambda_{\alpha ,\beta }(F)
=\sum_{l\in L }\alpha (l)\cJ_l(F)
+\sum_{m\in S_{\nu}(L)}\beta (m)\delta^{(m)}_{\infty}(F),\\
&\lambda_\infty (F)=\lambda_{\hat{\alpha},\hat{\beta}}(F)
=\sum_{l\in \hat{L}}\hat{\alpha}(l)\cJ_l(F)
+\sum_{m\in S_{\nu}(\hat{L})}\hat{\beta}(m)\delta^{(m)}_{\infty}(F)
\end{split}&
\end{align*}
for $F\in I_{\mu,\nu}^\infty$. 
We note that (\ref{eqn:poisson_Fexp}) is 
the Fourier expansion (\ref{eqn:Fexp_Maass}) at $\infty$ 
for the Maass form $\cP_{\nu,\kappa}(\lambda )
=\cP_{\nu,\kappa}(\lambda_{\alpha,\beta})$.

\subsection{Automorphic distributions for $\widetilde{\Gamma}_0(N)$}
\label{subsec:Auto_Dist_group}

Let $N\in \bZ_{>0}$. 
We define a subgroup $\Gamma_0(N)$ of $SL(2,\bZ)$ by 
\begin{align*}
\Gamma_0(N)=\left\{\left.
\left(\begin{array}{cc}
a&b\\ Nc&d
\end{array}\right)
\ \right|
a,b,c,d\in \bZ,\ ad-Nbc=1
\right\},
\end{align*}
and set $\widetilde{\Gamma}_0(N)=\varpi^{-1}(\Gamma_0(N))
=\{(\gamma ,\theta)\in \widetilde{G}\mid \gamma \in \Gamma_0(N)\}$. 
Let $\kappa \in \bR$. 
Let $v$ be a multiplier system on $\Gamma_0(N)$ of weight $\kappa$, 
and define the unitary character 
$\tilde{\chi}_v$ of $\widetilde{\Gamma}_0(N)$ by 
(\ref{eqn:def_char_mult_sys}) with $\Gamma =\Gamma_0(N)$. 
For $\Gamma =\Gamma_0(N)$ and $v$, 
we take shifted lattices $L$, $\hat{L}$ as in the previous subsection, 
that is, $L=u+\bZ$, $\hat{L}=N^{-1}(\hat{u}+\bZ )$ with 
$0\leq u,\hat{u}<1$ determined by 
$v(\ru(1))=e^{2\pi \sI u}$ and 
$v(\overline{\ru}(-N))=e^{2\pi \sI \hat{u}}$. 
Let {\normalfont [D1]}, {\normalfont [D2-1]}, {\normalfont [D2-2]}, {\normalfont [D3]} and {\normalfont [D4]} 
be the conditions for $\alpha_i\in \mathfrak{M}(L_i)$, 
$\beta_i\in \mathfrak{N}(\bZ_{\geq 0})$ ($i=1,2$), 
which are given in \S \ref{subsec:DS_AP} 
and \S \ref{subsec:quasi_auto}. 

When $N=1$, we note $v(\ru(1))=v(\overline{\ru}(-1))$ and $L=\hat{L}$. 
As a corollary of Theorem \ref{thm:QAD_DS}, 
we obtain the following.

\begin{cor}
\label{cor:SL2Z_converse}
Retain the notation and assume $N=1$. 
Let $\mu \in \kappa +2\bZ$ and $\nu \in \bC$. 
Let $(\alpha ,\beta )\in \mathfrak{M}(L)\times 
{\mathfrak{N}}(\bZ_{\geq 0})$. 
Set $\alpha_1=\alpha$, $\beta_{1}=\beta$, 
$\alpha_2=v(w)e^{\pi \sI \mu /2}\alpha$ and 
$\beta_{2}=v(w)e^{\pi \sI \mu /2}\beta$ with 
$L_1=L_2=L$. 
\\[1mm]
(i)  Assume 
$\lambda_{\alpha ,\beta }\in 
(I^{-\infty}_{\mu ,\nu})^{\widetilde{\Gamma}_0(1),\tilde{\chi}_v}$. 
Then the conditions {\normalfont [D1]}, {\normalfont [D2-1]}, {\normalfont [D3]} and {\normalfont [D4]} hold 
for the above $\alpha_1$, $\alpha_2$, $\beta_1$ and $\beta_2$. \\[1mm]
(ii) Assume that 
{\normalfont [D1]}, {\normalfont [D2-2]}, {\normalfont [D3]} and {\normalfont [D4]} hold 
for the above $\alpha_1$, $\alpha_2$, $\beta_1$ and $\beta_2$. 
Then $\lambda_{\alpha ,\beta } \in 
(I^{-\infty}_{\mu ,\nu})^{\widetilde{\Gamma}_0(1),\tilde{\chi}_v}$ 
and $(\lambda_{\alpha ,\beta })_\infty =
v(w)e^{\pi \sI \mu /2}\lambda_{\alpha ,\beta }$. 
Moreover, 
we have $\cP_{\nu ,\kappa}(\lambda_{\alpha ,\beta })\in 
\cM_{\nu }(\Gamma_0(1) \backslash \gH ;v,\kappa )$ 
with the Fourier expansion (\ref{eqn:poisson_Fexp}). 
\end{cor}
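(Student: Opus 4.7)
The plan is to reduce automorphy for $\widetilde{\Gamma}_0(1)$ to conditions on two generators and then invoke Theorem \ref{thm:QAD_DS}. Since $SL(2,\bZ)$ is generated by $\ru(1)$ and $w$, the preimage $\widetilde{\Gamma}_0(1) = \varpi^{-1}(SL(2,\bZ))$ is generated inside $\widetilde{G}$ by $\tilde{\ru}(1)$, $\tilde{w}$, and $\tilde{\rk}(2\pi)$, the last generating $\ker \varpi$. Under our hypothesis $\kappa \in \mu + 2\bZ$, the invariance with respect to $\tilde{\rk}(2\pi)$ is automatic from \eqref{eqn:wt_compatibility}, since $\rho(\tilde{\rk}(2\pi)) F = e^{2\pi\sI\mu}F$ and $\tilde{\chi}_v(\tilde{\rk}(2\pi)) = e^{2\pi\sI\kappa}$ agree. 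Consequently $\lambda \in (I^{-\infty}_{\mu,\nu})^{\widetilde{\Gamma}_0(1), \tilde{\chi}_v}$ if and only if
\[
\lambda(\rho(\tilde{\ru}(1))F) = \tilde{\chi}_v(\tilde{\ru}(1))\lambda(F)
\quad \text{and} \quad
\lambda(\rho(\tilde{w})F) = \tilde{\chi}_v(\tilde{w})\lambda(F)
\quad (F \in I^\infty_{\mu,\nu}).
\]

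Next I would compute the two characters explicitly via \eqref{eqn:def_char_mult_sys}: from $J(\ru(1),\sI) = 1$ one gets $\tilde{\chi}_v(\tilde{\ru}(1)) = v(\ru(1)) = e^{2\pi \sI u} = \omega_L(1)$, and from $\tilde{w} = (w, -\pi/2)$ with $\arg J(w, \sI) = \pi/2$ one gets $\tilde{\chi}_v(\tilde{w}) = v(w)$. Since $L^\vee = \bZ$ is cyclically generated by $1$, the $\tilde{\ru}(1)$-invariance is equivalent to $\lambda \in (I^{-\infty}_{\mu,\nu})_L$. Using $\lambda_\infty(F) = e^{\pi\sI\mu/2}\lambda(\rho(\tilde{w})F)$, the $\tilde{w}$-invariance is equivalent to $\lambda_\infty = v(w) e^{\pi\sI\mu/2} \lambda$; applied to $\lambda = \lambda_{\alpha_1,\beta_1}$ this reads precisely $(\lambda_{\alpha_1,\beta_1})_\infty = \lambda_{\alpha_2, \beta_2}$ for the $\alpha_2, \beta_2$ in the statement. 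Because $\tilde{\overline{\ru}}(t) = \tilde{w}\tilde{\ru}(-t)\tilde{w}^{-1}$, the $\tilde{\overline{\ru}}(t)$-condition for $t \in L^\vee$ is then a consequence of these two, so the automorphy of $\lambda_{\alpha,\beta}$ is equivalent to the conjunction $\lambda_{\alpha_1,\beta_1} \in (I^{-\infty}_{\mu,\nu})_{L,L}$ together with $(\lambda_{\alpha_1,\beta_1})_\infty = \lambda_{\alpha_2, \beta_2}$.

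For (i), this equivalence turns the hypothesis directly into the input of Theorem \ref{thm:QAD_DS}(i), yielding [D1], [D2-1], [D3] and [D4] at once. For (ii), assuming [D1], [D2-2], [D3], [D4], Theorem \ref{thm:QAD_DS}(ii) supplies $\lambda_{\alpha_1,\beta_1} \in (I^{-\infty}_{\mu,\nu})_{L,L}$ together with $(\lambda_{\alpha_1,\beta_1})_\infty = \lambda_{\alpha_2, \beta_2}$, and running the equivalence backwards recovers both the $\tilde{\ru}(1)$- and $\tilde{w}$-invariances of $\lambda_{\alpha,\beta}$, hence its automorphy for $\widetilde{\Gamma}_0(1)$ with $\tilde{\chi}_v$. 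The Maass form conclusion and the Fourier expansion \eqref{eqn:poisson_Fexp} then follow free of charge from Proposition \ref{prop:dist_to_form} and Proposition \ref{prop:poisson}(ii).

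All the substantive analytic content is already packed into Theorems \ref{thm:QAD_Fexp} and \ref{thm:QAD_DS} together with the Poisson transform formula, so what remains is really just group-theoretic bookkeeping. The only points that will require care are the explicit identification of $\tilde{\chi}_v$-values on the two generators with the data $(e^{2\pi\sI u}, v(w))$ and the ``infinity'' twist $e^{\pi\sI\mu/2}$, and confirming that the central element $\tilde{\rk}(2\pi)$ contributes no new constraint precisely because $\kappa \in \mu + 2\bZ$.
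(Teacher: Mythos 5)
Your proof is correct and follows essentially the same route as the paper's: reduce to the generators $\ru(1)$ and $w$ of $SL(2,\bZ)$, observe that the central constraint is automatic because $\kappa \in \mu + 2\bZ$ via \eqref{eqn:wt_compatibility}, translate the $\tilde{w}$-invariance into $\lambda_\infty = v(w)e^{\pi\sI\mu/2}\lambda$, and then cite Theorem \ref{thm:QAD_DS} together with Propositions \ref{prop:dist_to_form} and \ref{prop:poisson}(ii). You spell out the character computations $\tilde{\chi}_v(\tilde{\ru}(1)) = e^{2\pi\sI u} = \omega_L(1)$ and $\tilde{\chi}_v(\tilde{w}) = v(w)$ that the paper leaves implicit, and you note (correctly, if with the slightly redundant extra generator $\tilde{\rk}(2\pi)$, which is already $\tilde{w}^{-4}$) that the lift $\widetilde{\Gamma}_0(1)$ is controlled by the same two elements.
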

\begin{proof}
It is wellknown that 
$\Gamma_0(1)=SL(2,\bZ )$ is generated by $\ru (1)$ and $w$. 
Hence, by (\ref{eqn:wt_compatibility}), 
for $\lambda \in (I_{\mu,\nu}^{-\infty})_{L,L}$, 
we know that $\lambda \in 
(I^{-\infty}_{\mu ,\nu})^{\widetilde{\Gamma}_0(1),\tilde{\chi}_v}$ 
if and only if 
\begin{align*}
&\lambda (\rho (\tilde{w})F)=v(w)\lambda (F)&
&(F\in I_{\mu,\nu}^\infty ). 
\end{align*}
This equality is equivalent to 
$\lambda_\infty =v(w)e^{\pi \sI \mu /2}\lambda $. 
Hence, the assertion follows from 
Theorem \ref{thm:QAD_DS} together with 
Propositions \ref{prop:dist_to_form} and \ref{prop:poisson} (ii). 
\end{proof}

Until the end of this subsection, 
we assume $N>1$. 
Let $d$ be a positive integer coprime to $N$. 
Let $\psi$ be a Dirichlet character modulo $d$.  
For $m\in \bZ$, we define character sums $\tau_{\psi}(m)$ 
and $\hat{\tau}_{\psi ,v}(m)$ by 
\begin{align}
&\tau_{\psi}(m)=
\underset{\mathrm{gcd}(c,d)=1}{\sum_{c\bmod d}}\psi (c)
e^{2\pi \sI mc/d},\\
\label{eqn:def_char_mult_sum}
&\hat{\tau}_{\psi ,v}(m)
=\underset{\mathrm{gcd}(c,d)=1}{\sum_{c\bmod d}}
\overline{\psi (c)}v(\gamma )
e^{2\pi \sI (mc+\hat{u}c-ub)/d}
\end{align}
with $\displaystyle 
\gamma =\left(\begin{array}{cc}
a&b\\
Nc&d
\end{array}\right)\!\in \Gamma_0(N)$. 
Here we note that these definitions do not 
depend on the choice of a reduced residue system modulo $d$ 
and $\gamma \in \Gamma_0(N)$.

Let $(\alpha ,\beta )\in \mathfrak{M}(L)\times {\mathfrak{N}}(\bZ_{\geq 0})$ 
and  $(\hat{\alpha},\hat{\beta})\in \mathfrak{M}(\hat{L})
\times {\mathfrak{N}}(\bZ_{\geq 0})$. 
Let $(\alpha_{\psi} ,\beta_\psi )$ be an element of 
$\mathfrak{M}(L)\times {\mathfrak{N}}(\bZ_{\geq 0})$ determined by 
\begin{align*}
&\alpha_\psi (l)=\tau_{\psi}(l-u)\alpha (l)\quad (l\in L),&
&\beta_\psi =\tau_{\psi}(0)\beta .
\end{align*}
Let $(\hat{\alpha}_{\psi,v} ,\hat{\beta}_{\psi,v} )$ be an element of 
$\mathfrak{M}(d^{-2}\hat{L})\times {\mathfrak{N}}(\bZ_{\geq 0})$ determined by 
\begin{align*}
&\hat{\alpha}_{\psi,v}(d^{-2}l)=
\overline{\psi (-N)}
\hat{\tau}_{\psi ,v}(Nl-\hat{u})
d^{2\nu -1}\hat{\alpha}(l),\\
&\hat{\beta}_{\psi,v}(m)
=\overline{\psi (-N)}
\hat{\tau}_{\psi ,v}(0)
d^{-2\nu -2m -1}\\
&\phantom{\hat{\beta}_{\psi,v}(m)=}\times 
\left\{\begin{array}{ll}
\bigl(\hat{\beta}(0)+4\cos \bigl(\tfrac{\pi \mu }{2}\bigr)
\hat{\alpha}(0)\log d\bigr)
&\text{if $0\in \hat{L}$ and $m=\nu =0$},
\\[1mm]
\hat{\beta}(m)&\text{otherwise}
\end{array}\right.
\end{align*}
for $l\in \hat{L}$ and $m\in \bZ_{\geq 0}$. 
For $i=1,2$, we consider the following conditions 
{\normalfont [W1-$i$]} and {\normalfont [W2-$i$]${}_{d,\psi}$} 
for $\alpha ,\beta ,\hat{\alpha}$ and $\hat{\beta}$: 
\begin{description}
\item[{\normalfont [W1-$i$]}]The conditions {\normalfont [D1]}, 
{\normalfont [D2-$i$]}, {\normalfont [D3]} and {\normalfont [D4]} 
hold for $\alpha_1=\alpha$, $\beta_{1}=\beta$, $\alpha_2=\hat{\alpha}$ 
and $\beta_{2}=\hat{\beta}$ with $L_1=L$ and $L_2=\hat{L}$. 

\item[{\normalfont [W2-$i$]}${}_{d,\psi}$]The conditions 
{\normalfont [D1]}, {\normalfont  [D2-$i$]},  
{\normalfont [D3]} and {\normalfont [D4]} hold for 
$\alpha_1=\alpha_\psi$, $\beta_{1}=\beta_\psi$, 
$\alpha_2=\hat{\alpha}_{\psi,v}$ 
and $\beta_{2}=\hat{\beta}_{\psi,v}$ with $L_1=L$ and 
$L_2=d^{-2}\hat{L}$. 

\end{description}
Let $\bP_N$ be a subset of the set of 
positive odd prime integers not dividing $N$, 
such that 
$\bP_N\cap \{am+b\mid m\in \bZ\}\neq \emptyset $ 
for any integers $a$, $b$ coprime to each other 
(the existence of such $\bP_N$ follows from 
Dirichlet's theorem on arithmetic progressions).

\begin{thm}
\label{thm:Weil_converse}
Let $\mu \in \bR$, $\nu \in \bC$ and $N\in \bZ_{>1}$. 
Let $\kappa \in \mu +2\bZ$. 
Let $v$ be a multiplier system on $\Gamma_0(N)$ of weight $\kappa$. 
Let $L=u+\bZ$ and $\hat{L}=N^{-1}(\hat{u}+\bZ )$ with 
$0\leq u,\hat{u}<1$ determined by 
$v(\ru(1))=e^{2\pi \sI u}$ and 
$v(\overline{\ru}(-N))=e^{2\pi \sI \hat{u}}$. 
Let $(\alpha ,\beta )\in \mathfrak{M}(L)\times 
{\mathfrak{N}}(\bZ_{\geq 0})$ and 
$(\hat{\alpha},\hat{\beta})\in \mathfrak{M}(\hat{L})\times 
{\mathfrak{N}}(\bZ_{\geq 0})$. \\[1mm]
(i)  Assume $\lambda_{\alpha ,\beta } \in 
(I^{-\infty}_{\mu ,\nu})^{\widetilde{\Gamma}_0(N),\tilde{\chi}_v}$ and 
$(\lambda_{\alpha ,\beta})_\infty =\lambda_{\hat{\alpha},\hat{\beta}}$. 
Then the condition {\normalfont [W1-1]} holds. Moreover, 
the condition {\normalfont [W2-1]}${}_{d,\psi}$ holds 
for any $d\in \bZ_{>0}$ coprime to $N$ 
and any Dirichlet character $\psi$ modulo $d$. \\[1mm]
(ii) Assume that {\normalfont [W1-2]} holds. 
Assume furthermore that {\normalfont [W2-2]}${}_{d,\psi}$ holds 
for any $d\in \bP_N$ 
and any Dirichlet character $\psi$ modulo $d$. 
Then $\lambda_{\alpha ,\beta } \in 
(I^{-\infty}_{\mu ,\nu})^{\widetilde{\Gamma}_0(N),\tilde{\chi}_v}$ and 
$(\lambda_{\alpha ,\beta})_\infty =\lambda_{\hat{\alpha},\hat{\beta}}$. 
Moreover, 
we have $\cP_{\nu ,\kappa}(\lambda_{\alpha ,\beta })\in 
\cM_{\nu }(\Gamma_0(N) \backslash \gH ;v,\kappa )$ 
with the Fourier expansion (\ref{eqn:poisson_Fexp}). 
\end{thm}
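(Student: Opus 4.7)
The plan is to translate both directions of Theorem \ref{thm:Weil_converse} into applications of Theorem \ref{thm:QAD_DS}, thereby reducing the statement to already-proved facts about automorphic pairs of distributions on $\bR$. The bridge is that invariance of $\lambda_{\alpha,\beta}$ under a specific element $\tilde{\gamma}\in\widetilde{G}$ becomes, via Theorem \ref{thm:QAD_DS}, a functional equation for an associated (possibly character-twisted) pair of Dirichlet series, and vice versa.

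For part (i), the automorphy of $\lambda_{\alpha,\beta}$ for $\widetilde{\Gamma}_0(N)$ with character $\tilde{\chi}_v$ in particular places it in $(I^{-\infty}_{\mu,\nu})_{L,\hat{L}}$, so [W1-1] is the immediate output of Theorem \ref{thm:QAD_DS} (i). For [W2-1]${}_{d,\psi}$, I would introduce a character-twisted distribution of the form
\[
\lambda^{(\psi)}(F)=\sum_{\substack{c\bmod d\\\gcd(c,d)=1}}\psi(c)\,e^{-2\pi\sI uc/d}\,\lambda_{\alpha,\beta}(\rho(\tilde{\ru}(c/d))F),
\]
compute its Fourier expansion together with that of its $\infty$-twist, and identify them with $\lambda_{\alpha_\psi,\beta_\psi}$ and $\lambda_{\hat{\alpha}_{\psi,v},\hat{\beta}_{\psi,v}}$. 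The $\widetilde{\Gamma}_0(N)$-automorphy, applied to matrices $\begin{pmatrix}a & b\\ Nc & d\end{pmatrix}\in \Gamma_0(N)$ with $\gcd(c,d)=1$ (which contribute the factor $\hat{\tau}_{\psi,v}$ on the $\infty$-side), then shows that this pair lies in $(I^{-\infty}_{\mu,\nu})_{L,d^{-2}\hat{L}}$, and Theorem \ref{thm:QAD_DS} (i) yields [W2-1]${}_{d,\psi}$.

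For part (ii), I first apply Theorem \ref{thm:QAD_DS} (ii) to [W1-2] to obtain $\lambda_{\alpha,\beta}\in(I^{-\infty}_{\mu,\nu})_{L,\hat{L}}$ with $(\lambda_{\alpha,\beta})_\infty=\lambda_{\hat{\alpha},\hat{\beta}}$, which is precisely invariance under $\tilde{\ru}(1)$ and $\tilde{\overline{\ru}}(-N)$ with the $\tilde{\chi}_v$-weights prescribed by $u$ and $\hat{u}$. For each $d\in\bP_N$ and each Dirichlet character $\psi$ modulo $d$, applying Theorem \ref{thm:QAD_DS} (ii) to [W2-2]${}_{d,\psi}$ yields $\lambda_{\alpha_\psi,\beta_\psi}\in(I^{-\infty}_{\mu,\nu})_{L,d^{-2}\hat{L}}$ with its $\infty$-twist pinned down. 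Inverting the Dirichlet-character sum by orthogonality modulo $d$ pins down, for every $c$ coprime to $d$, the invariance of $\lambda_{\alpha,\beta}$ under the lift to $\widetilde{G}$ of any $\gamma=\begin{pmatrix}a & b\\Nc & d\end{pmatrix}\in\Gamma_0(N)$, with the correct weight $\tilde{\chi}_v(\tilde{\gamma})$. Because $\bP_N$ is chosen so that Dirichlet's theorem supplies a prime $d$ in every arithmetic progression coprime to $N$, these matrices together with $\ru(1)$ generate $\Gamma_0(N)$, so $\lambda_{\alpha,\beta}\in(I^{-\infty}_{\mu,\nu})^{\widetilde{\Gamma}_0(N),\tilde{\chi}_v}$. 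The conclusion about $\cP_{\nu,\kappa}(\lambda_{\alpha,\beta})$ then follows from Propositions \ref{prop:dist_to_form} and \ref{prop:poisson} (ii).

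The main obstacle is the bookkeeping in the orthogonality step: one must verify that the shifts $u,\hat{u}$, the normalizations $d^{2\nu-1}$ and $d^{-2\nu-2m-1}$, and the $\log d$ correction in $\hat{\beta}_{\psi,v}$ at $\nu=0$ combine with the cocycle of $\widetilde{G}$ and with $v(\gamma)$ so that the twisted data fed into Theorem \ref{thm:QAD_DS} detects exactly the desired automorphy rather than a nearby twist of it; the various arithmetic factors must cancel precisely against the analytic contributions at the poles $s=1$ and $s=1-2\nu$ governed by [D4]. Once these match, the final arithmetic endpoint---reducing $\Gamma_0(N)$-invariance to invariance under $\ru(1)$, $\overline{\ru}(-N)$, and one prime denominator per arithmetic progression---is the standard Weil-type group-theoretic argument.
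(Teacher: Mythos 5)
Your proposal is correct and follows essentially the same route as the paper's own proof, which is carried out via Lemmas~\ref{lem:weil1} and~\ref{lem:weil2}: translate $\widetilde{\Gamma}_0(N)$-automorphy into invariance for matrices $\bigl(\begin{smallmatrix}a&b\\Nc&d\end{smallmatrix}\bigr)$ with fixed denominator $d$, convert that (via the Bruhat decomposition ${}^s\!\gamma=\tilde{\ru}(b/d)\tilde{\ra}(d^{-2})\tilde{\overline{\ru}}(Nc/d)$ and character orthogonality) into the equality $(\lambda_{\alpha_\psi,\beta_\psi})_\infty=\lambda_{\hat{\alpha}_{\psi,v},\hat{\beta}_{\psi,v}}$, and then feed the resulting pairs into Theorem~\ref{thm:QAD_DS}, with Propositions~\ref{prop:dist_to_form} and~\ref{prop:poisson}~(ii) supplying the Maass-form conclusion. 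Your only organizational difference is to package the twist as an explicit auxiliary distribution $\lambda^{(\psi)}$ rather than manipulating the invariance identity directly as in Lemma~\ref{lem:weil2}; this is cosmetic, though one should note that identifying $(\lambda^{(\psi)})_\infty$ with $\lambda_{\hat{\alpha}_{\psi,v},\hat{\beta}_{\psi,v}}$ is \emph{not} a formal Fourier computation but genuinely uses the automorphy (this is exactly where the factor $v(\gamma)$ in $\hat{\tau}_{\psi,v}$ enters), which your sketch does acknowledge.
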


This theorem is proved in \S \ref{subsec:proof_weil}. 
Theorem \ref{thm:Weil_converse} (ii) is 
a Weil type converse theorem for automorphic distributions 
and Maass forms of real weights, 
which is a generalization of \cite[Theorems 4.2 and 4.3]{preMSSU}.

\section{Preparation}
\label{sec:junbi}

\subsection{Some lemmas}
\label{subsec:lem_comp}

\begin{lem}
\label{lem:fourier_diff}
Let $S$ be a finite subset of $\bR$. 
For $f\in C(\bR)\cap L^1(\bR)$ 
such that $f\in C^1(\bR \smallsetminus S)$ 
and $f'\in L^1(\bR)$, we have 
\begin{align*}
&(2\pi \sI y)\cF (f)(y)=-\cF (f')(y)&
&(y\in \bR).
\end{align*}
\end{lem}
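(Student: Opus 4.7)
The plan is to perform integration by parts on each of the finitely many intervals where $f$ is $C^1$, and then to sum. Write $S = \{s_1 < s_2 < \cdots < s_n\}$, and set $s_0 = -\infty$, $s_{n+1} = +\infty$ (if $S = \emptyset$, the single interval is all of $\bR$).

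The first step is a preliminary boundary analysis: I would establish that $\lim_{x\to\pm\infty} f(x) = 0$. On the half-line $(s_n,\infty)$ the function $f$ is $C^1$ with $f'\in L^1$, so the fundamental theorem of calculus combined with absolute integrability of $f'$ gives
\begin{align*}
f(x) = f(s_n^+) + \int_{s_n}^{x} f'(t)\,dt,
\end{align*}
and the right-hand side has a finite limit as $x \to +\infty$. Because $f\in L^1(\bR)$ and is continuous, this limit must be $0$; a symmetric argument treats $-\infty$.

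The second step is the piecewise integration by parts. On each $(s_i, s_{i+1})$ the function $f$ is $C^1$ with $f,f'\in L^1$, so
\begin{align*}
\int_{s_i}^{s_{i+1}} f'(x)\,e^{2\pi\sI xy}\,dx
= \bigl[f(x)\,e^{2\pi\sI xy}\bigr]_{s_i}^{s_{i+1}}
- (2\pi\sI y)\int_{s_i}^{s_{i+1}} f(x)\,e^{2\pi\sI xy}\,dx,
\end{align*}
where the boundary terms at a finite $s_i$ are the one-sided limits of $f(x)e^{2\pi\sI xy}$. Summing over $i=0,1,\dots,n$, the interior boundary contributions at each $s_j$ (for $1\le j \le n$) cancel in pairs because $f$ is continuous on all of $\bR$, and the contributions at $\pm\infty$ vanish by the first step. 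Collecting the remaining pieces yields
\begin{align*}
\cF(f')(y) = -(2\pi\sI y)\,\cF(f)(y),
\end{align*}
which is the claimed identity.

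The only nontrivial issue is verifying that $f(x)\to 0$ at $\pm\infty$ — without this, the boundary terms need not vanish. Everything else is bookkeeping: continuity of $f$ at the exceptional points $s_j$ kills the interior boundary terms, and integrability of $f$ and $f'$ justifies the integration by parts on each finite piece and the convergence of the resulting integrals.
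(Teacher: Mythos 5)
Your proof is correct and takes essentially the same approach as the paper, which simply states that the lemma "follows immediately from integration by parts." You have carefully supplied the details that justify this — in particular the crucial observation that $f(x)\to 0$ as $x\to\pm\infty$ (a finite limit exists because $f'\in L^1$, and it must be $0$ because $f\in L^1$), and the cancellation of interior boundary terms by continuity of $f$ across the exceptional set $S$.
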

\begin{proof}
The assertion follows immediately from integration by parts.
\end{proof}

\begin{lem}
\label{lem:F_lambda_abcd}
Let $a,b,c,d\in \bR$ such that 
$\{x\in \bR \mid cx+d>0\}\neq \emptyset$. 
Let $s \in \bC$, $n\in \bZ_{\geq 0}$ and $f\in C^n(\bR )$. 
Then we have 
\begin{align*}
&\frac{d^n}{dx^n}\left\{(cx+d)^{s}
f\left(\frac{ax+b}{cx+d}\right)\right\}\\
&=\sum_{i=0}^n\binom{n}{i}(ad-bc)^ic^{n-i}
(s-n+1)_{n-i}(cx+d)^{s-n-i}
f^{(i)}\left(\frac{ax+b}{cx+d}\right)
\end{align*}
on $\{x\in \bR \mid cx+d>0\}$. 
Here $\displaystyle 
\binom{n}{i}=\frac{n!}{i!(n-i)!}$ is the binomial coefficient, 
and $(z)_i={\Gamma (z+i)}/{\Gamma (z)}=z(z+1)\cdots (z+i-1)$ is the 
Pochhammer symbol. 
\end{lem}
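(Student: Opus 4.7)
The plan is to prove the formula by induction on $n$. The base case $n=0$ is immediate: both sides reduce to $(cx+d)^{s}f\bigl(\tfrac{ax+b}{cx+d}\bigr)$, using the convention $(z)_{0}=1$.

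For the inductive step, assume the formula at level $n$ and differentiate each term. Write the $i$-th summand as
\[
T_{n}^{i}=\binom{n}{i}(ad-bc)^{i}c^{n-i}(s-n+1)_{n-i}(cx+d)^{s-n-i}f^{(i)}\!\left(\tfrac{ax+b}{cx+d}\right).
\]
Applying the product rule splits $\tfrac{d}{dx}T_{n}^{i}$ into two contributions: (A) differentiating $(cx+d)^{s-n-i}$ produces $c(s-n-i)(cx+d)^{s-n-i-1}$ times $f^{(i)}(\cdot)$, keeping the derivative order $i$; (B) the chain rule applied to $f^{(i)}\bigl(\tfrac{ax+b}{cx+d}\bigr)$, together with $\tfrac{d}{dx}\tfrac{ax+b}{cx+d}=\tfrac{ad-bc}{(cx+d)^{2}}$, raises the derivative order to $i+1$ and multiplies by $(ad-bc)(cx+d)^{-2}$.

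Next I would collect the coefficient of $(cx+d)^{s-(n+1)-j}f^{(j)}\bigl(\tfrac{ax+b}{cx+d}\bigr)$ in $\tfrac{d^{n+1}}{dx^{n+1}}\{(cx+d)^{s}f(\cdot)\}$. This coefficient receives contributions from (A) at index $i=j$ (when $0\le j\le n$) and from (B) at index $i=j-1$ (when $1\le j\le n+1$); the boundary cases $j=0$ and $j=n+1$ receive only one of the two. After factoring out $(ad-bc)^{j}c^{n-j+1}(s-n+1)_{n-j}$ and using $(s-n+1)_{n-j+1}=(s-n+1)_{n-j}\,(s-j+1)$, the combined coefficient becomes
\[
(ad-bc)^{j}c^{n-j+1}(s-n+1)_{n-j}\bigl[\,\tbinom{n}{j}(s-n-j)+\tbinom{n}{j-1}(s-j+1)\,\bigr].
\]
A short calculation, expanding $(s-n-j)=(s-n)-j$ and $(s-j+1)=(s-n)+(n-j+1)$, using the identity $\binom{n}{j-1}(n-j+1)=j\binom{n}{j}$ to cancel the $\pm j\binom{n}{j}$ terms, and applying Pascal's rule $\binom{n}{j}+\binom{n}{j-1}=\binom{n+1}{j}$, reduces the bracket to $\binom{n+1}{j}(s-n)$. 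Together with $(s-n)_{n-j+1}=(s-n)(s-n+1)_{n-j}$, this yields exactly the $j$-th summand of the formula at level $n+1$.

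The main obstacle is purely bookkeeping: ensuring that the Pochhammer shift identity is applied correctly and that the boundary indices $j=0$ and $j=n+1$ are handled. There is no conceptual difficulty beyond the chain rule and the two identities above.
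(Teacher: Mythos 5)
Your proof is correct and follows exactly the approach the paper indicates, namely induction on $n$; you have simply carried out the bookkeeping that the paper's one-line proof ("by induction with respect to $n$") leaves to the reader, and the identities you invoke — the Pochhammer shift, $\binom{n}{j-1}(n-j+1)=j\binom{n}{j}$, and Pascal's rule — combine correctly to produce the level-$(n+1)$ coefficient $\binom{n+1}{j}(ad-bc)^j c^{n+1-j}(s-n)_{n+1-j}$.
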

\begin{proof}
We obtain the assertion by induction with respect to $n$. 
\end{proof}

\subsection{Test functions}
\label{subsec:test_ftn}

We define a function $\Delta$ on $\bR$ by 
\begin{align*}
&\Delta (t)
=\left\{\begin{array}{ll}
\displaystyle 
\frac{\me (4-t^2)}{\me (4-t^2)+\me (4-1/t^2)}
&\text{ if }\ t\neq 0,\\[3mm]
1&\text{ if }\ t=0,
\end{array}\right.&
&\me (t)=\left\{\begin{array}{ll}
e^{-1/t}&\text{ if }\ t>0,\\
0&\text{ if }\ t\leq 0.
\end{array}\right.
\end{align*}
Then we note that $\Delta $ is a smooth function on $\bR$ 
with $\supp (\Delta)\subset \{t\in \bR \mid |t|\leq 2\}$ 
and has the following properties.
\begin{enumerate}
\item[(1)]$0\leq \Delta (t)\leq 1$ and $\Delta (t)=\Delta (-t)$ for $t\in \bR$.
\item[(2)]$\Delta (t)=1$ if and only if $|t|\leq 1/2$.
\item[(3)]$\Delta (t)+\Delta (-1/t)=1$ for $t\in \bR^\times$.
\end{enumerate}
We define a function $\varphi$ on $\bR$ by 
$\varphi (t)=0$ $(t\leq 0)$ and 
\begin{align*}
&\varphi (t)= 
\frac{1}{2\pi \sI}\int_{\mathrm{Re}(s)=\sigma_0}
\exp\!\left(-\frac{1}{\pi}\int_{-\infty}^{\infty}
\frac{(1-\sI sr)\sqrt{|r|}}{(\sI r-s)(1+r^2)}\,dr\right)
t^{-s}ds\ \ (t>0).
\end{align*}
Here the path of the integration 
$\int_{\mathrm{Re}(s)=\sigma_0}$ 
is the vertical line 
from $\sigma_0-\sI \infty$ to $\sigma_0+\sI \infty$ 
with $\sigma_0<0$. 

By the proof of \cite[Lemma 4.30]{Kimura_001}, 
we know that $\varphi $ is a smooth function on $\bR$ 
with $\supp (\varphi )\subset \{t\in \bR \mid t\geq 1\}$ 
and has the following properties. 
\begin{enumerate}
\item[(1)]For $n\in \bZ_{\geq 0}$, the $n$-th derivative 
$\displaystyle \varphi^{(n)}$ of $\varphi$ is bounded. 
\item[(2)]For $s\in \bC$ such that $\mathrm{Re}(s)<0$, 
we have 
\begin{align*}
\left|\int_{0}^\infty \varphi (t)t^{s-1}dt\right|
>\exp\!\left(-\sqrt{|\mathrm{Im}(s)|}
-\frac{\sqrt{|\mathrm{Re}(s)|}}{\pi}\int_{-\infty}^{\infty}\frac{\sqrt{|t|}}{1+t^2}dt
\right).
\end{align*}
\end{enumerate}

Let $\varepsilon \in \{\pm 1\}$, $\sigma \in \bR$ and $u>1$. 
We define functions $\varphi_{\varepsilon ,\sigma }\in C^\infty (\bR)$ and 
$\varphi_{\varepsilon ,\sigma ,u}\in C^\infty_0(\bR^\times )$ by 
\begin{align*}
&\varphi_{\varepsilon ,\sigma} (x)
=|x|^{-\sigma}\varphi (\varepsilon x),&
&\varphi_{\varepsilon ,\sigma ,u}(x)
=|x|^{-\sigma}\varphi (\varepsilon x)
\Delta \!\left(\frac{\varepsilon x}{2u}\right)&
&(x\in \bR^\times ).
\end{align*}
By the above properties of $\Delta$ and $\varphi$, we have 
\begin{align}
\label{eqn:supp_phies}
&\supp (\varphi_{\varepsilon ,\sigma ,u})\subset 
\supp (\varphi_{\varepsilon ,\sigma})\subset 
\{x\in \bR \mid \varepsilon x\geq 1\},\\
\label{eqn:supp_phies_minus}
&\supp (\varphi_{\varepsilon ,\sigma}-\varphi_{\varepsilon ,\sigma ,u})
\subset \{x\in \bR\mid \varepsilon x\geq u\},\\
\label{eqn:phies_estimate}
&(\varphi_{\varepsilon ,\sigma})^{(n)}(x)
=O(|x|^{-\sigma})\ \ (|x|\to \infty )
\hspace{10mm}
(n\in \bZ_{\geq 0}).
\end{align}

\begin{lem}
\label{lem:test_Shintani}
Let $\sigma \in \bR$ and $n\in \bZ_{\geq 0}$. 
There exists $\tilde{c}_{\sigma ,n}>0$ such that  
\begin{align}
\label{eqn:lem_shintani}
&\left|(\varphi_{\varepsilon ,\sigma}-
\varphi_{\varepsilon ,\sigma ,u})^{(n)}(x)\right|\leq 
\tilde{c}_{\sigma ,n}|x|^{-\sigma}&
&(x\in \bR^\times ,\ \varepsilon \in \{\pm 1\},\ u>1).
\end{align}
\end{lem}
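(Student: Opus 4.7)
The plan is to write $\varphi_{\varepsilon,\sigma}-\varphi_{\varepsilon,\sigma,u}$ as $\varphi_{\varepsilon,\sigma}$ times a smooth cutoff, apply Leibniz, and bound each factor uniformly in $u>1$. Directly from the definitions one has $\varphi_{\varepsilon,\sigma,u}(x)=\varphi_{\varepsilon,\sigma}(x)\,\Delta(\varepsilon x/(2u))$, so setting $\Psi_u(x):=1-\Delta(\varepsilon x/(2u))$ gives
\[
(\varphi_{\varepsilon,\sigma}-\varphi_{\varepsilon,\sigma,u})(x)=\varphi_{\varepsilon,\sigma}(x)\,\Psi_u(x).
\]
By (\ref{eqn:supp_phies_minus}) the left-hand side of (\ref{eqn:lem_shintani}) vanishes off $\{x\in\bR^\times:\varepsilon x\ge u\}$, so I only need to verify the estimate there; this set is contained in the support $\{\varepsilon x\ge 1\}$ of $\varphi_{\varepsilon,\sigma}$, on which I can use (\ref{eqn:phies_estimate}).

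Next I will expand
\[
(\varphi_{\varepsilon,\sigma}\Psi_u)^{(n)}(x)
=\sum_{k=0}^{n}\binom{n}{k}(\varphi_{\varepsilon,\sigma})^{(n-k)}(x)\,\Psi_u^{(k)}(x)
\]
and estimate each piece. For $\Psi_u^{(k)}$: when $k=0$ one has $|\Psi_u(x)|\le 1$, while for $k\ge 1$ the chain rule gives $\Psi_u^{(k)}(x)=-(\varepsilon/(2u))^{k}\Delta^{(k)}(\varepsilon x/(2u))$, so $|\Psi_u^{(k)}(x)|\le M_k(2u)^{-k}\le M_k$ uniformly in $u>1$, where $M_k:=\|\Delta^{(k)}\|_{\infty}$ is finite because $\Delta\in C_0^\infty(\bR)$. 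For $(\varphi_{\varepsilon,\sigma})^{(n-k)}$ I will use (\ref{eqn:phies_estimate}): combined with continuity of the derivatives on the closed support $\{\varepsilon x\ge 1\}$, together with the elementary observation that $|x|^{-\sigma}$ is bounded below by a positive constant on any compact subset of $[1,\infty)$, this yields constants $C_m>0$ with $|(\varphi_{\varepsilon,\sigma})^{(m)}(x)|\le C_m|x|^{-\sigma}$ for all $x$ in the support and all $m\le n$.

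Combining these bounds on the support gives (\ref{eqn:lem_shintani}) with
\[
\tilde{c}_{\sigma,n}=\sum_{k=0}^{n}\binom{n}{k}C_{n-k}\max(1,M_k),
\]
a constant independent of $\varepsilon$ and $u$, as required. I do not anticipate any genuine obstacle; the only mildly delicate point is the passage from the asymptotic bound in (\ref{eqn:phies_estimate}) to a bound on $(\varphi_{\varepsilon,\sigma})^{(m)}$ valid uniformly on the whole support, but this is routine once one exploits that the support is bounded below and the derivatives are continuous.
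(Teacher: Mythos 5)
Your proof is correct and follows essentially the same route as the paper: factor the difference as $\varphi_{\varepsilon,\sigma}$ times a cutoff depending on $u$, apply the Leibniz rule, bound derivatives of the cutoff uniformly in $u>1$ via the $(2u)^{-k}\le 1$ factors, and bound derivatives of $\varphi_{\varepsilon,\sigma}$ by a multiple of $|x|^{-\sigma}$ using (\ref{eqn:phies_estimate}) together with continuity on the support. If anything, your Leibniz expansion is slightly more careful than the paper's displayed formula (which starts its sum at $i=1$ and thereby drops the term pairing $\varphi_{\varepsilon,\sigma}$ undifferentiated with $\Delta^{(n)}$); your version includes all $k=0,\dots,n$ and so is the cleaner computation.
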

\begin{proof}
By direct computation, we have 
\begin{align*}
&(\varphi_{\varepsilon ,\sigma}-
\varphi_{\varepsilon ,\sigma ,u})^{(n)}(x)
=\frac{d^n}{dx^n}\left\{\varphi_{1,\sigma }(\varepsilon x)- 
\varphi_{1,\sigma }(\varepsilon x)
\Delta\!\left(\frac{\varepsilon x}{2u}\right)\right\}\\
&=\varepsilon^n(\varphi_{1,\sigma })^{(n)}(\varepsilon x)
-\varepsilon^n\sum_{i=1}^n\binom{n}{i}
(\varphi_{1,\sigma })^{(i)}(\varepsilon x)
\Delta^{(n-i)}\!\left(\frac{\varepsilon x}{2u}\right)(2u)^{-n+i}.
\end{align*}
Hence, if we put 
\begin{align*}
\tilde{c}_{\sigma ,n}
=\sup_{x\in \bR}\left||x|^\sigma (\varphi_{1,\sigma })^{(n)}(x)\right|
+\sum_{i=1}^n\binom{n}{i}\sup_{x\in \bR}\left||x|^\sigma 
(\varphi_{1,\sigma })^{(i)}(x)\right| \,
\sup_{x\in \bR}\left|\Delta^{(n-i)}(x)\right|,
\end{align*}
the inequality (\ref{eqn:lem_shintani}) holds. 
\end{proof}
\begin{prop}
\label{prop:test_Shintani1}
Let $\sigma >1$ and $n\in \bZ_{\geq 0}$. 
There exists $c_{\sigma ,n}>0$ such that  
\begin{align}
\label{eqn:prop_shintani1}
&|y|^n|\cF (\varphi_{\varepsilon ,\sigma}-
\varphi_{\varepsilon ,\sigma ,u})(y)|
\leq c_{\sigma ,n}u^{1-\sigma }&
&(y\in \bR ,\ \varepsilon \in \{\pm 1\},\ u>1).
\end{align}
\end{prop}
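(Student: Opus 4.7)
My plan is to combine the pointwise derivative bound from Lemma \ref{lem:test_Shintani} with the standard $L^1$ bound on the Fourier transform, using integration by parts to move powers of $y$ across.

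First I would set $f = \varphi_{\varepsilon,\sigma} - \varphi_{\varepsilon,\sigma,u}$. Since $\varphi$ is smooth with $\supp(\varphi)\subset [1,\infty)$, the factor $|x|^{-\sigma}$ is smooth on the support, so both $\varphi_{\varepsilon,\sigma}$ and $\varphi_{\varepsilon,\sigma,u}$ lie in $C^\infty(\bR)$ and therefore $f\in C^\infty(\bR)$. By (\ref{eqn:supp_phies_minus}), $\supp(f)\subset\{x\in\bR\mid \varepsilon x\geq u\}$, and by Lemma \ref{lem:test_Shintani}, $|f^{(k)}(x)|\leq \tilde{c}_{\sigma,k}|x|^{-\sigma}$ for every $k\in\bZ_{\geq 0}$. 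Since $\sigma>1$, this shows in particular that $f,f',\ldots,f^{(n)}$ all lie in $L^1(\bR)$, which legitimizes the use of Lemma \ref{lem:fourier_diff}.

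Next I would iterate Lemma \ref{lem:fourier_diff} $n$ times (with $S=\emptyset$) to obtain
\begin{align*}
(2\pi \sI y)^n\cF(f)(y)=(-1)^n\cF(f^{(n)})(y)\qquad (y\in\bR),
\end{align*}
whence $|y|^n|\cF(f)(y)|\leq (2\pi)^{-n}\|f^{(n)}\|_{L^1(\bR)}$. Then I would estimate the $L^1$-norm using the support condition and Lemma \ref{lem:test_Shintani}:
\begin{align*}
\|f^{(n)}\|_{L^1(\bR)}\leq \tilde{c}_{\sigma,n}\int_{\{\varepsilon x\geq u\}}|x|^{-\sigma}\,dx=\tilde{c}_{\sigma,n}\int_{u}^\infty x^{-\sigma}\,dx=\frac{\tilde{c}_{\sigma,n}}{\sigma-1}\,u^{1-\sigma}.
\end{align*}
Combining, one can take $c_{\sigma,n}=(2\pi)^{-n}\tilde{c}_{\sigma,n}/(\sigma-1)$ to obtain (\ref{eqn:prop_shintani1}).

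This is a routine estimate whose pieces are all in place; there is no serious obstacle. The only point requiring a moment of care is ensuring the integrability hypothesis needed to iterate Lemma \ref{lem:fourier_diff}, but this follows immediately from the one-sided support of $f$ combined with $\sigma>1$, which makes $|x|^{-\sigma}$ integrable on $[u,\infty)$. Note that the estimate is uniform in $\varepsilon\in\{\pm 1\}$ and $u>1$ because the constants $\tilde{c}_{\sigma,k}$ produced by Lemma \ref{lem:test_Shintani} are independent of these parameters.
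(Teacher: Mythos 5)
Your proof is correct and follows essentially the same route as the paper's: iterate Lemma \ref{lem:fourier_diff} to move $|y|^n$ onto the $n$-th derivative, bound the Fourier transform by the $L^1$-norm, and then combine the support condition (\ref{eqn:supp_phies_minus}) with the pointwise bound from Lemma \ref{lem:test_Shintani} and the elementary integral $\int_u^\infty x^{-\sigma}dx = u^{1-\sigma}/(\sigma-1)$. Your extra remarks verifying the smoothness and integrability hypotheses before invoking Lemma \ref{lem:fourier_diff} are sensible but do not change the substance; you arrive at the same constant $c_{\sigma,n}=\tilde{c}_{\sigma,n}/((2\pi)^n(\sigma-1))$ as the paper.
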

\begin{proof}
By repeated application of Lemma \ref{lem:fourier_diff}, 
we have 
\begin{align*}
&(2\pi \sI y)^n\cF (\varphi_{\varepsilon ,\sigma}-
\varphi_{\varepsilon ,\sigma ,u})(y)
=(-1)^n\cF ((\varphi_{\varepsilon ,\sigma}-
\varphi_{\varepsilon ,\sigma ,u})^{(n)})(y).
\end{align*}
Hence we have 
\begin{align*}
&|y|^n\left|\cF (\varphi_{\varepsilon ,\sigma}-
\varphi_{\varepsilon ,\sigma ,u})(y)\right|
=\frac{1}{(2\pi)^n}
\left|\cF ((\varphi_{\varepsilon ,\sigma}-
\varphi_{\varepsilon ,\sigma ,u})^{(n)})(y)\right|\\
&\leq \frac{1}{(2\pi)^n}\int_{\bR }\left|
(\varphi_{\varepsilon ,\sigma}-
\varphi_{\varepsilon ,\sigma ,u})^{(n)}(x)\right|\,dx
\leq \frac{\tilde{c}_{\sigma ,n}u^{1-\sigma}}{(2\pi)^n(\sigma -1)}.
\end{align*}
Here the last inequality follows from (\ref{eqn:supp_phies_minus}), 
Lemma \ref{lem:test_Shintani} and 
\begin{align*}
&\int_u^\infty |x|^{-\sigma}\,dx=\int_{-\infty}^{-u} |x|^{-\sigma}\,dx
=\frac{u^{1-\sigma}}{\sigma -1}. 
\end{align*}
Therefore, we obtain (\ref{eqn:prop_shintani1}) 
with $\displaystyle {c}_{\sigma ,n}
=\frac{\tilde{c}_{\sigma ,n}}{(2\pi)^n(\sigma -1)}$. 
\end{proof}

\begin{prop}
\label{prop:test_Shintani2}
Let $n\in \bZ_{\geq 0}$, $\mu ,\nu \in \bC$ and 
$\sigma >2\mathrm{Re}(\nu )+2n$. 
There exists $c_{\mu ,\nu,\sigma ,n}>0$ such that 
\begin{align}
\label{eqn:prop_shintani2}
\begin{split}
|y|^n|\cF ((\varphi_{\varepsilon ,\sigma}-
\varphi_{\varepsilon ,\sigma ,u})_{\mu ,\nu,\infty})(y)|
\leq c_{\mu ,\nu,\sigma ,n}u^{-\sigma +2\mathrm{Re}(\nu )+2n}&\\
(y\in \bR ,\ \varepsilon \in \{\pm 1\},\ u>1).&
\end{split}
\end{align}
\end{prop}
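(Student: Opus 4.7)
The plan is to proceed in direct parallel to the proof of Proposition \ref{prop:test_Shintani1}, only now the $n$-fold iteration of Lemma \ref{lem:fourier_diff} must be carried out on the twisted function $g := (\varphi_{\varepsilon,\sigma} - \varphi_{\varepsilon,\sigma,u})_{\mu,\nu,\infty}$. Concretely, from the definition $g(x) = e^{-\sgn(x)\pi \sI \mu /2}|x|^{-2\nu -1}(\varphi_{\varepsilon,\sigma}-\varphi_{\varepsilon,\sigma,u})(-1/x)$ and (\ref{eqn:supp_phies_minus}), the function $g$ is smooth on $\bR^\times$ and supported in
\[
\{x\in \bR^\times \mid \sgn (x)=-\varepsilon,\ |x|\leq 1/u\}.
\]
The goal is to show that $g\in C^{n-1}(\bR)\cap L^1(\bR)$ with $g^{(n-1)}\in C^1(\bR\setminus\{0\})$ and $g^{(n)}\in L^1(\bR)$, and then to bound $\|g^{(n)}\|_{L^1}$ by the claimed quantity.

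First, I would differentiate $g$ using Lemma \ref{lem:F_lambda_abcd} applied separately on $\bR_{>0}$ and $\bR_{<0}$ with $(a,b,c,d)=(0,-1,1,0)$ (or $(0,1,-1,0)$) and $s=-2\nu -1$: each term of $g^{(k)}(x)$ is, up to a bounded constant depending on $k, \nu$, of the form
\[
|x|^{-2\nu -1-k-i}\,(\varphi_{\varepsilon,\sigma}-\varphi_{\varepsilon,\sigma,u})^{(i)}(-1/x),\qquad 0\leq i\leq k.
\]
By Lemma \ref{lem:test_Shintani} the second factor is $O(|{-1/x}|^{-\sigma})=O(|x|^{\sigma})$, so
\[
|g^{(k)}(x)|\leq C_{k}\,|x|^{\sigma -2\mathrm{Re}(\nu)-1-2k}
\]
on $\supp (g)$. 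The hypothesis $\sigma > 2\mathrm{Re}(\nu)+2n$ gives $\sigma -2\mathrm{Re}(\nu)-1-2k>-1$ for $0\leq k\leq n$ and $\sigma -2\mathrm{Re}(\nu)-1-2k>0$ for $0\leq k\leq n-1$, so each $g^{(k)}$ with $k<n$ extends continuously to $0$ by the value $0$, and $g^{(n)}$ belongs to $L^1(\bR)$.

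With these regularity and integrability properties in hand, I can iterate Lemma \ref{lem:fourier_diff} $n$ times to obtain
\[
(2\pi \sI y)^{n}\cF (g)(y)=(-1)^{n}\cF (g^{(n)})(y),
\]
whence
\[
|y|^{n}|\cF (g)(y)|\leq \frac{1}{(2\pi)^{n}}\|g^{(n)}\|_{L^1}.
\]
Integrating the pointwise bound on $g^{(n)}$ over $0<|x|\leq 1/u$, with the worst power $|x|^{\sigma -2\mathrm{Re}(\nu)-1-2n}$ (dominating the others since $u>1$ makes the remaining $i<n$ terms smaller), yields $\|g^{(n)}\|_{L^1}=O(u^{-\sigma +2\mathrm{Re}(\nu)+2n})$ with a constant depending only on $\mu,\nu,\sigma,n$. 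Collecting everything gives (\ref{eqn:prop_shintani2}) with an explicit constant $c_{\mu,\nu,\sigma,n}$.

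The only delicate step is verifying that each intermediate derivative $g^{(k)}$ for $k<n$ is genuinely continuous at $0$ (so that Lemma \ref{lem:fourier_diff} applies at each stage); this is a matching-exponent bookkeeping question, and it is exactly the hypothesis $\sigma > 2\mathrm{Re}(\nu)+2n$ that makes it work. The rest is routine estimation and the application of Lemma \ref{lem:F_lambda_abcd}, paralleling Proposition \ref{prop:test_Shintani1}.
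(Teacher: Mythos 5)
Your proposal is correct and takes essentially the same route as the paper's proof: both apply Lemma \ref{lem:F_lambda_abcd} to write $g^{(n)}$ as a sum involving $(\varphi_{\varepsilon,\sigma}-\varphi_{\varepsilon,\sigma,u})^{(i)}(-1/x)$ with weight $|x|^{-2\nu-1-n-i}$, invoke Lemma \ref{lem:test_Shintani} to get the pointwise bound $|g^{(n)}(x)|\lesssim |x|^{\sigma-2\mathrm{Re}(\nu)-1-2n}$ on $\supp(g)$, iterate Lemma \ref{lem:fourier_diff} $n$ times, and integrate over $0<|x|\leq 1/u$. The only difference is that you make explicit the continuity of the intermediate derivatives $g^{(k)}$ ($k<n$) at the origin, a hypothesis that Lemma \ref{lem:fourier_diff} genuinely requires and that the paper leaves implicit.
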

\begin{proof}
By (\ref{eqn:supp_phies_minus}), we have
\begin{align}
\label{eqn:pf_prop_shintani2_002}
\supp ((\varphi_{\varepsilon ,\sigma}-
\varphi_{\varepsilon ,\sigma ,u})_{\mu ,\nu,\infty})
\subset \{x\in \bR\mid 0\leq -\varepsilon x\leq 1/u\}.
\end{align}
For $-\varepsilon x>0$, we have 
\begin{align*}
(\varphi_{\varepsilon ,\sigma}-
\varphi_{\varepsilon ,\sigma ,u})_{\mu ,\nu,\infty}(x)
=e^{\varepsilon \pi \sI \mu /2}
(-\varepsilon x)^{-2\nu -1}
(\varphi_{1,\sigma}-
\varphi_{1,\sigma ,u})\left(\frac{1}{-\varepsilon x}\right).
\end{align*}
By Lemma \ref{lem:F_lambda_abcd} with 
$a=d=0$, $b=1$, $c=-\varepsilon$, $s=-2\nu -1$ and 
$f=\varphi_{1,\sigma}-\varphi_{1,\sigma ,u}$, 
we have 
\begin{align*}
((\varphi_{\varepsilon ,\sigma}-
\varphi_{\varepsilon ,\sigma ,u})_{\mu ,\nu,\infty})^{(n)}(x)
=&\varepsilon^ne^{\varepsilon \pi \sI \mu /2}
\sum_{i=0}^n\binom{n}{i}(-1)^{n-i}
(-2\nu -n)_{n-i}\\
&\times (-\varepsilon x)^{-2\nu -1-n-i}
(\varphi_{1,\sigma}-\varphi_{1,\sigma ,u})^{(i)}
\left(\frac{1}{-\varepsilon x}\right)
\end{align*}
for $-\varepsilon x>0$. 
By Lemma \ref{lem:test_Shintani}, for $0<-\varepsilon x<1$, we have 
\begin{align}
\label{eqn:pf_prop_shintani2_001}
\left|((\varphi_{\varepsilon ,\sigma}-
\varphi_{\varepsilon ,\sigma ,u})_{\mu ,\nu,\infty})^{(n)}(x)\right|
\leq &\tilde{c}_{\mu ,\nu ,\sigma ,n}
|x|^{\sigma -2\mathrm{Re}(\nu) -1-2n}
\end{align}
with 
\begin{align*}
\displaystyle \tilde{c}_{\mu ,\nu ,\sigma ,n}=
e^{\pi |\mathrm{Im}(\mu)|/2}\sum_{i=0}^n\binom{n}{i}
\left|(-2\nu -n)_{n-i}\right|\tilde{c}_{\sigma ,i}.
\end{align*}
By repeated application of Lemma \ref{lem:fourier_diff}, 
we have 
\begin{align*}
&(2\pi \sI y)^n\cF ((\varphi_{\varepsilon ,\sigma}-
\varphi_{\varepsilon ,\sigma ,u})_{\mu,\nu,\infty})(y)
=(-1)^n\cF (((\varphi_{\varepsilon ,\sigma}-
\varphi_{\varepsilon ,\sigma ,u})_{\mu,\nu,\infty})^{(n)})(y).
\end{align*}
Hence we have 
\begin{align*}
&|y|^n\left|\cF ((\varphi_{\varepsilon ,\sigma}-
\varphi_{\varepsilon ,\sigma ,u})_{\mu,\nu,\infty})(y)\right|
=\frac{1}{(2\pi)^n}
\left|\cF (((\varphi_{\varepsilon ,\sigma}-
\varphi_{\varepsilon ,\sigma ,u})_{\mu,\nu,\infty})^{(n)})(y)\right|\\
&\leq \frac{1}{(2\pi)^n}\int_{\bR }\left|
((\varphi_{\varepsilon ,\sigma}-
\varphi_{\varepsilon ,\sigma ,u})_{\mu,\nu,\infty})^{(n)}(x)\right|\,dx
\leq \frac{\tilde{c}_{\mu ,\nu ,\sigma ,n}u^{-\sigma +2\mathrm{Re}(\nu)+2n}}
{(2\pi)^n(\sigma -2\mathrm{Re}(\nu)-2n)}.
\end{align*}
Here the last inequality follows from (\ref{eqn:pf_prop_shintani2_002}), 
(\ref{eqn:pf_prop_shintani2_001}) and 
\begin{align*}
\int_0^{1/u} |x|^{\sigma -2\mathrm{Re}(\nu) -1-2n}\,dx
=\int_{-1/u}^{0} |x|^{\sigma -2\mathrm{Re}(\nu) -1-2n}\,dx
=\frac{u^{-\sigma +2\mathrm{Re}(\nu) +2n}}{\sigma -2\mathrm{Re}(\nu) -2n}. 
\end{align*}
Therefore, we obtain (\ref{eqn:prop_shintani2}) 
with $\displaystyle {c}_{\mu ,\nu ,\sigma ,n}
=\frac{\tilde{c}_{\mu ,\nu ,\sigma ,n}}
{(2\pi)^n(\sigma -2\mathrm{Re}(\nu)-2n)}$. 
\end{proof}

\begin{cor}
\label{cor:test_Shintani3}
Let $\mu ,\nu \in \bC$, $\varepsilon \in \{\pm 1\}$ 
and $\sigma >2\mathrm{Re}(\nu )$. 
For $n\in \bZ$ such that $\sigma -2\mathrm{Re}(\nu )>2n\geq 0$, 
we have 
\begin{align*}
&\cF ((\varphi_{\varepsilon ,\sigma})_{\mu ,\nu,\infty})(y)
=O(|y|^{-n})\quad (|y|\to \infty )
\end{align*}
\end{cor}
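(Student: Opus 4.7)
The plan is to deduce the corollary from Proposition \ref{prop:test_Shintani2} by means of the splitting $\varphi_{\varepsilon,\sigma}=\varphi_{\varepsilon,\sigma,u}+(\varphi_{\varepsilon,\sigma}-\varphi_{\varepsilon,\sigma,u})$ for a fixed convenient value of $u>1$ (say $u=2$). Using linearity of the map $f\mapsto f_{\mu,\nu,\infty}$ and of the Fourier transform, this yields
\[
\cF((\varphi_{\varepsilon,\sigma})_{\mu,\nu,\infty})(y)
=\cF((\varphi_{\varepsilon,\sigma,u})_{\mu,\nu,\infty})(y)
+\cF((\varphi_{\varepsilon,\sigma}-\varphi_{\varepsilon,\sigma,u})_{\mu,\nu,\infty})(y),
\]
and it suffices to bound each of the two terms by a constant times $|y|^{-n}$ as $|y|\to\infty$.

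First I would treat the term $\cF((\varphi_{\varepsilon,\sigma,u})_{\mu,\nu,\infty})(y)$. Since $\supp(\varphi_{\varepsilon,\sigma,u})\subset\{x\in\bR\mid 1\leq \varepsilon x\leq 4u\}$ by construction, the inversion $x\mapsto -1/x$ sends this set into a compact subset of $\bR^\times$ that is bounded away from $0$. Hence $(\varphi_{\varepsilon,\sigma,u})_{\mu,\nu,\infty}$ is smooth on $\bR^\times$ and vanishes in a neighborhood of $0$, so (extending by $0$) it lies in $C^\infty_0(\bR)$. Its Fourier transform is therefore a Schwartz function, and in particular
\[
|y|^n\bigl|\cF((\varphi_{\varepsilon,\sigma,u})_{\mu,\nu,\infty})(y)\bigr|\to 0\quad (|y|\to\infty).
\]

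For the second term, the assumption $\sigma-2\mathrm{Re}(\nu)>2n\geq 0$ is exactly the hypothesis of Proposition \ref{prop:test_Shintani2}, which gives
\[
|y|^n\bigl|\cF((\varphi_{\varepsilon,\sigma}-\varphi_{\varepsilon,\sigma,u})_{\mu,\nu,\infty})(y)\bigr|
\leq c_{\mu,\nu,\sigma,n}\,u^{-\sigma+2\mathrm{Re}(\nu)+2n}
\]
for all $y\in\bR$. With $u$ fixed, this is a constant upper bound. Adding the two estimates shows that $|y|^n\,|\cF((\varphi_{\varepsilon,\sigma})_{\mu,\nu,\infty})(y)|$ is bounded as $|y|\to\infty$, which is the desired assertion.

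There is essentially no serious obstacle; the only point requiring a moment of care is to verify that the smooth compactly supported nature of $\varphi_{\varepsilon,\sigma,u}$ on $\bR^\times$ survives the transformation $f\mapsto f_{\mu,\nu,\infty}$, i.e.\ that the resulting function extends smoothly across $0$. This is handled by the support computation above, and after that the argument is a direct combination of Proposition \ref{prop:test_Shintani2} with the standard fact that Fourier transforms of $C^\infty_0(\bR)$ functions are Schwartz.
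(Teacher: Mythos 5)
Your proof is correct and follows essentially the same argument as the paper: split $\varphi_{\varepsilon,\sigma}$ as $\varphi_{\varepsilon,\sigma,u}+(\varphi_{\varepsilon,\sigma}-\varphi_{\varepsilon,\sigma,u})$ for a fixed $u>1$, note that $(\varphi_{\varepsilon,\sigma,u})_{\mu,\nu,\infty}\in C^\infty_0(\bR)$ so its Fourier transform is Schwartz, and apply Proposition~\ref{prop:test_Shintani2} to the difference. The only difference is that you spell out the support computation ensuring $(\varphi_{\varepsilon,\sigma,u})_{\mu,\nu,\infty}\in C^\infty_0(\bR^\times)$, which the paper leaves implicit.
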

\begin{proof}
Since $\varphi_{\varepsilon ,\sigma ,u}\in C^\infty_0(\bR^\times )$, 
we have $\cF ((\varphi_{\varepsilon ,\sigma ,u})_{\mu ,\nu,\infty})
\in \cS (\bR )$ for $u>1$. Hence, the assertion 
follows immediately from Proposition \ref{prop:test_Shintani2}. 
\end{proof}

\section{The local zeta functions}
\label{sec:local_zeta}

\subsection{The definition of the local zeta functions}
\label{subsec:def_local_zeta}

Let $\varepsilon \in \{\pm 1\}$ and $f\in C(\bR^\times )$. 
For $s\in \bC$, we set 
\begin{align*}
&\Phi_{\varepsilon}(f;s)=\int_0^\infty f(\varepsilon t)t^{s-1}\,dt
=\Phi_{\varepsilon ,+}(f;s)+\Phi_{\varepsilon ,-}(f;s)
\end{align*}
with 
\begin{align*}
&\Phi_{\varepsilon ,+}(f;s)=\int_1^\infty f(\varepsilon t)t^{s-1}\,dt,& 
&\Phi_{\varepsilon ,-}(f;s)=\int_0^1f(\varepsilon t)t^{s-1}\,dt. 
\end{align*}
The integral $\Phi_{\varepsilon ,+}(f;s)$ 
converges absolutely and defines a holomorphic function 
on $\mathrm{Re}(s)<-r$ if there is $r\in \bR$ such that 
$f(x)=O(|x|^r)\ (|x|\to \infty )$. 
The integral $\Phi_{\varepsilon ,-}(f;s)$ 
converges absolutely and defines a holomorphic function 
on $\mathrm{Re}(s)>-r$ if there is $r\in \bR$ such that 
$f(x)=O(|x|^r)\ (|x|\to 0)$. 
We call $\Phi_{\varepsilon }(f;s)$ the {\it local zeta function}.

\subsection{The meromorphic continuation of $\Phi_{\pm 1}(f;s)$}
\label{subsec:AC_local_zeta}

Let $\varepsilon \in \{\pm 1\}$ and $f\in \cS (\bR )$. 
Then $\Phi_{\varepsilon}(f;s)$ converges absolutely and defines 
a holomorphic function on $\mathrm{Re}(s)>0$. 
In this subsection, we give the meromorphic continuation of 
$\Phi_{\varepsilon }(f;s)$. 
For $s\in \bC$ and $m\in \bZ_{\geq 0}$, we set 
\begin{align*}
\Phi_{\varepsilon ,-}^{m}(f;s)=
\int_0^1\left(
f(\varepsilon t)-\sum_{n=0}^{m-1}\frac{\delta^{(n)}(f)}{n!}
(-\varepsilon t)^n
\right)t^{s-1}\,dt 
\end{align*}
with $\delta^{(n)}(f)=(-1)^nf^{(n)}(0)$. 
Here we note that $\Phi_{\varepsilon ,-}^{0}(f;s)=
\Phi_{\varepsilon ,-}(f;s)$. 
Maclaurin's theorem asserts that, 
for $m\in \bZ_{\geq 0}$ and $x\in \bR $, 
there exists $0<\theta <1$ such that 
\begin{align}
\label{eqn:maclaurin}
&f(x)-\sum_{n=0}^{m-1}\frac{\delta^{(n)}(f)}{n!}(-x)^n
=\frac{f^{(m)}(\theta x)}{m!}x^m.
\end{align}
Hence, we know that the integral $\Phi_{\varepsilon ,-}^{m}(f;s)$ 
converges absolutely and defines a holomorphic function 
on $\mathrm{Re}(s)>-m$. 

Using the equality 
$\displaystyle \frac{1}{s+n}=\int_{0}^1t^{s+n-1}\,dt$, 
we have 
\begin{align}
\label{eqn:AC_Phi}
\Phi_{\varepsilon }(f;s)
=&\Phi_{\varepsilon ,+}(f;s)+\Phi_{\varepsilon ,-}^{m}(f;s)+
\sum_{n=0}^{m-1}\frac{(-\varepsilon )^n\delta^{(n)}(f)}{n!(s+n)} 
\end{align}
for $m\in \bZ_{\geq 0}$ and $s\in \bC$ such that $\mathrm{Re}(s)>0$. 
The expression (\ref{eqn:AC_Phi}) gives the meromorphic continuation 
of $\Phi_{\varepsilon }(f;s)$ to $\mathrm{Re}(s)>-m$. 
Since a non-negative integer $m$ can be chosen arbitrarily, 
we have the meromorphic continuation of $\Phi_{\varepsilon }(f;s)$ 
to the whole $s$-plane. 
The local zeta function $\Phi_{\varepsilon }(f;s)$ 
is holomorphic on $\bC \smallsetminus \bZ_{\leq 0}$ and 
has poles of order at most $1$ at $s=-n\ (n\in \bZ_{\geq 0})$.

\subsection{The local zeta distributions}
\label{subsec:dist_local_zeta}

In this subsection, we explain that 
the local zeta functions can be regarded as distributions. 
Let $\varepsilon \in \{\pm 1\}$. 
Let $u>1$. 
Then, for $s\in \bC$ and $f\in C^\infty_0(\bR )$ such that 
$\supp (f)\subset \{x\in \bR \mid |x|\leq u\}$, we have 
\begin{align*}
&\left|\Phi_{\varepsilon ,+}(f;s)\right|
=\left|\int_1^u f(\varepsilon t)t^{s-1}\,dt\right|
\leq \left(\int_1^u t^{\mathrm{Re}(s)-1}\,dt\right)
\sup_{x\in \bR}|f(x)|.
\end{align*}
Hence 
$C^\infty_0(\bR )\ni f\mapsto \Phi_{\varepsilon ,+}(f;s)\in \bC$ 
is a distribution on $\bR$ for $s\in \bC$. 

Let $m\in \bZ_{\geq 0}$. 
By Maclaurin's theorem (\ref{eqn:maclaurin}), we have 
\begin{align*}
&\left|\Phi_{\varepsilon ,-}^m(f;s)\right|
=\left|\int_0^1 \left(
f(\varepsilon t)-\sum_{n=0}^{m-1}\frac{\delta^{(n)}(f)}{n!}
(-\varepsilon t)^n
\right)t^{s-1}\,dt\right|\\
&\leq \frac{1}{m!}\left(\int_0^1 t^{\mathrm{Re}(s)+m-1}\,dt\right)
\sup_{x\in \bR}|f^{(m)}(x)|
=\frac{1}{m!(\mathrm{Re}(s)+m)}\sup_{x\in \bR}|f^{(m)}(x)|
\end{align*}
for $f\in C^\infty_0(\bR )$ and $s\in \bC$ 
such that $\mathrm{Re}(s)>-m$.
Hence 
$C^\infty_0(\bR )\ni f\mapsto \Phi_{\varepsilon ,-}^{m}(f;s)\in \bC$ 
is a distribution on $\bR$ for $s\in \bC$ such that $\mathrm{Re}(s)>-m$.

Therefore, because of the expression (\ref{eqn:AC_Phi}), 
we know that $C^\infty_0(\bR )\ni f\mapsto \Phi_{\varepsilon }(f;s)\in \bC$ 
is a distribution on $\bR$ for $s\in \bC \smallsetminus \bZ_{\leq 0}$.

\subsection{The local functional equation}
\label{subsec:LFE}

In this subsection, we introduce some equations 
for the local zeta functions. 
The equation in the following proposition is called 
the {\it local functional equation}. 
\begin{prop}[{\cite[Proposition 4.21]{Kimura_001}}]
\label{prop:LFE}
For $f\in \cS (\bR )$, we have 
\begin{align*}
(\Phi_1(\cF (f);s),\Phi_{-1}(\cF (f);s))
=(2\pi )^{-s}\Gamma (s)(\Phi_1(f;-s+1),\Phi_{-1}(f;-s+1)){\mathrm{E}}(s).
\end{align*}
Here ${\mathrm{E}}(s)$ is the matrix defined by (\ref{eqn:def_matrix_gamma}).
\end{prop}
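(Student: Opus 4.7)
The plan is to establish the identity on the vertical strip $0<\mathrm{Re}(s)<1$, where both sides are given by absolutely convergent integrals, and then extend to all $s$ by the meromorphic continuation from \S\ref{subsec:AC_local_zeta}.

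First I fix $s$ with $0<\mathrm{Re}(s)<1$. Since $\cF(f)\in \cS(\bR)$, the integral $\Phi_\varepsilon(\cF(f);s)$ converges absolutely, and so does $\Phi_\varepsilon(f;-s+1)$. Unfolding the Fourier transform gives
\[
\Phi_\varepsilon(\cF(f);s)=\int_0^\infty\!\!\int_{-\infty}^\infty f(x)\,e^{2\pi \sI \varepsilon xt}\,dx\,t^{s-1}\,dt.
\]
The inner $t$-integral is only conditionally convergent, so direct application of Fubini is not available. I would insert an Abel regularizer $e^{-\delta t}$ with $\delta>0$; the resulting integrand is absolutely integrable, so Fubini applies and, for $x\neq 0$,
\[
\int_0^\infty e^{-(\delta - 2\pi \sI \varepsilon x)t}t^{s-1}\,dt=\Gamma(s)\,(\delta - 2\pi \sI \varepsilon x)^{-s}
\]
by the classical gamma integral. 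Passing $\delta \to 0^+$ inside the outer $x$-integral by dominated convergence (using rapid decay of $f$ together with the uniform bound $|(\delta - 2\pi \sI \varepsilon x)^{-s}|\leq (2\pi |x|)^{-\mathrm{Re}(s)}e^{\pi |\mathrm{Im}(s)|/2}$, whose right hand side is integrable against $f$ near $0$ exactly because $\mathrm{Re}(s)<1$) yields
\[
\Phi_\varepsilon(\cF(f);s)=(2\pi)^{-s}\Gamma(s)\int_{-\infty}^\infty f(x)\,|x|^{-s}e^{\sgn(\varepsilon x)\pi \sI s/2}\,dx,
\]
where the phase $e^{\sgn(\varepsilon x)\pi \sI s/2}$ arises from tracking the principal argument $\arg(-2\pi\sI\varepsilon x)=-\sgn(\varepsilon x)\pi/2$.

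Splitting the $x$-integral at $0$ and making the substitution $x\mapsto -x$ on the negative half identifies the right-hand side as
\[
(2\pi)^{-s}\Gamma(s)\bigl(e^{\varepsilon \pi\sI s/2}\Phi_1(f;-s+1)+e^{-\varepsilon \pi\sI s/2}\Phi_{-1}(f;-s+1)\bigr).
\]
Specializing to $\varepsilon=1$ and $\varepsilon=-1$ gives, respectively, the first and second components of $(2\pi)^{-s}\Gamma(s)(\Phi_1(f;-s+1),\Phi_{-1}(f;-s+1))\mathrm{E}(s)$, which is the asserted identity on $0<\mathrm{Re}(s)<1$. Both sides are meromorphic on $\bC$ by \S\ref{subsec:AC_local_zeta}, so the identity extends to all $s$ outside the poles. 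The main obstacle is the careful handling of the conditionally convergent oscillatory $t$-integral: the Fubini interchange must go through the Abel regularization, and the limit $\delta\to 0^+$ must be controlled uniformly so as to produce the correct boundary value of $(\delta-2\pi\sI\varepsilon x)^{-s}$ on the imaginary axis. Once this analytic point is settled, matching the entries of the matrix equation is purely formal.
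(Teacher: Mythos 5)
Your proof is correct. The paper does not prove Proposition~\ref{prop:LFE} but cites \cite[Proposition 4.21]{Kimura_001} for it, so there is no internal proof to compare against; your derivation --- regularize with $e^{-\delta t}$ so Fubini applies, evaluate the resulting Gamma integral $\Gamma(s)(\delta-2\pi\sI\varepsilon x)^{-s}$, then pass $\delta\to 0^+$ under dominated convergence on $0<\mathrm{Re}(s)<1$ and meromorphically continue --- is the standard route to the archimedean local functional equation and matches the computation in Kimura's book. The only point worth flagging for a careful writeup is that the boundary value $(-2\pi\sI\varepsilon x)^{-s}$ is taken in the paper's branch convention $z^s=|z|^s e^{\sI s\arg z}$ with $-\pi<\arg z\leq\pi$; since $\delta-2\pi\sI\varepsilon x$ stays in the open right half-plane for $\delta>0$ and its limit is purely imaginary (hence away from the branch cut) for $x\neq 0$, the pointwise convergence and the bound $|(\delta-2\pi\sI\varepsilon x)^{-s}|\leq(2\pi|x|)^{-\mathrm{Re}(s)}e^{\pi|\mathrm{Im}(s)|/2}$ you use are both legitimate.
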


\begin{lem}
\label{lem:LFE_infty}
Let $\varepsilon \in \{\pm 1\}$ and $f\in C_0(\bR^\times )$. 
Then $\Phi_{\varepsilon }(f;s)$ is entire and is bounded on a vertical strip 
$\sigma_1\leq \mathrm{Re}(s)\leq \sigma_2$ for $\sigma_1,\sigma_2\in \bR$ 
such that $\sigma_1<\sigma_2$. 
Moreover, we have the equality 
\begin{align*}
(\Phi_{1}(f_{\mu,\nu,\infty};s),\Phi_{-1}(f_{\mu,\nu,\infty};s))
=&(\Phi_{1}(f;-s+2\nu +1),\Phi_{-1}(f;-s+2\nu +1))\Sigma_{\mu}
\end{align*}
for $\mu ,\nu \in \bC$. 
Here $\Sigma_{\mu}$ is the matrix defined by (\ref{eqn:def_matrix_sigma}). 
\end{lem}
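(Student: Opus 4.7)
The plan is to handle the two assertions separately, using only the defining integrals and one variable change.

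For the entireness and boundedness on vertical strips, I would first observe that since $f\in C_0(\bR^\times)$ the support of $f$ is a compact subset of $\bR^\times$, so there exist $0<r_1<r_2$ with $\supp(f)\subset \{x\in\bR\mid r_1\leq |x|\leq r_2\}$. Consequently
\[
\Phi_\varepsilon(f;s)=\int_{r_1}^{r_2}f(\varepsilon t)\,t^{s-1}\,dt,
\]
an integral over a compact interval of a function that is holomorphic in $s$ and continuous in $(t,s)$, hence entire in $s$ by Morera's theorem (or by differentiation under the integral sign). For the bound on $\sigma_1\leq \mathrm{Re}(s)\leq \sigma_2$, I would simply estimate
\[
|\Phi_\varepsilon(f;s)|\leq \Bigl(\sup_{x\in\bR}|f(x)|\Bigr)\int_{r_1}^{r_2}t^{\mathrm{Re}(s)-1}\,dt
\leq \Bigl(\sup_{x\in\bR}|f(x)|\Bigr)(r_2-r_1)\max\bigl(r_1^{\sigma_1-1},r_2^{\sigma_2-1}\bigr),
\]
which is a constant independent of $s$ on the strip.

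For the functional equation, I would unwind the definitions. Since $f\in C_0(\bR^\times)$, we also have $f_{\mu,\nu,\infty}\in C_0(\bR^\times)$ by the first identity in (\ref{eqn:basic_f_infty}), so both sides are entire by the first part. Using $f_{\mu,\nu,\infty}(\varepsilon t)=e^{-\varepsilon\pi\sI\mu/2}t^{-2\nu-1}f(-\varepsilon/t)$ for $t>0$, I would compute
\[
\Phi_\varepsilon(f_{\mu,\nu,\infty};s)=e^{-\varepsilon\pi\sI\mu/2}\int_0^\infty f(-\varepsilon/t)\,t^{s-2\nu-2}\,dt,
\]
and then substitute $u=1/t$ (so $du=-t^{-2}\,dt$, $dt=-u^{-2}\,du$, and the orientation flips) to obtain
\[
\Phi_\varepsilon(f_{\mu,\nu,\infty};s)=e^{-\varepsilon\pi\sI\mu/2}\int_0^\infty f(-\varepsilon u)\,u^{-s+2\nu}\,du
=e^{-\varepsilon\pi\sI\mu/2}\,\Phi_{-\varepsilon}(f;-s+2\nu+1).
\]
Writing this for $\varepsilon=\pm 1$ gives the two scalar equalities
\[
\Phi_{1}(f_{\mu,\nu,\infty};s)=e^{-\pi\sI\mu/2}\Phi_{-1}(f;-s+2\nu+1),\qquad
\Phi_{-1}(f_{\mu,\nu,\infty};s)=e^{\pi\sI\mu/2}\Phi_{1}(f;-s+2\nu+1),
\]
which is precisely the claimed matrix identity with $\Sigma_\mu$ as in (\ref{eqn:def_matrix_sigma}).

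There is really no hard step here; the only thing to be careful about is the bookkeeping in the substitution $u=1/t$ (the reversal of orientation cancels the Jacobian sign, so the final expression comes out with the correct sign) and the indexing swap $\varepsilon\mapsto-\varepsilon$, which is what places the anti-diagonal entries of $\Sigma_\mu$ in the right positions. The entireness and strip-bound parts are immediate from the compactness of $\supp(f)$ in $\bR^\times$, so no use of Maclaurin expansions or Proposition \ref{prop:LFE} is needed.
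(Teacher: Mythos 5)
Your argument is correct in substance and follows the paper's proof exactly: compact support in $\bR^\times$ gives entireness and strip-boundedness, and the substitution $t\mapsto 1/t$ produces the functional equation with the stated sign bookkeeping. One small slip: the explicit constant $\max(r_1^{\sigma_1-1},r_2^{\sigma_2-1})$ does not in general bound $t^{\mathrm{Re}(s)-1}$ on the rectangle $[r_1,r_2]\times[\sigma_1,\sigma_2]$ (try $r_1=2$, $r_2=3$, $\sigma_1=-1$, $\sigma_2=0$, where the maximum $r_1^{\sigma_2-1}=1/2$ is not among your two candidates); the maximum can occur at any of the four corners, so the bound should be $\max\bigl(r_1^{\sigma_1-1},r_1^{\sigma_2-1},r_2^{\sigma_1-1},r_2^{\sigma_2-1}\bigr)$, or one can split the integral at $t=1$ as the paper does, which sidesteps the case analysis.
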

\begin{proof}
The entireness of $\Phi_{\varepsilon }(f;s)$ is obvious. 
For $\sigma_1,\sigma_2\in \bR$ such that $\sigma_1<\sigma_2$, 
the function $\Phi_{\varepsilon }(f;s)$ is bounded on 
a vertical strip $\sigma_1\leq \mathrm{Re}(s)\leq \sigma_2$, 
since 
\begin{align*}
|\Phi_{\varepsilon }(f;s)|
=&\left|\int_0^\infty f(\varepsilon t)t^{s-1}\,dt\right|
\leq \int_1^\infty |f(\varepsilon t)|t^{\sigma_2-1}\,dt
+\int_0^1 |f(\varepsilon t)|t^{\sigma_1-1}\,dt. 
\end{align*}
By direct computation, we have 
\begin{align*}
\Phi_{\varepsilon }(f_{\mu ,\nu ,\infty};s)
=&\int_0^\infty f_{\mu ,\nu ,\infty}(\varepsilon t)t^{s-1}\,dt
%
=e^{-\varepsilon \pi \sI \mu /2} \int_0^\infty 
f(-\varepsilon t^{-1})t^{s-2\nu -2}\,dt\\
%
%
=&e^{-\varepsilon \pi \sI \mu /2} \Phi_{-\varepsilon }(f;-s+2\nu +1).
\end{align*}
Here the third equality follows from the substitution $t\to t^{-1}$. 
This implies that the equality in the statement holds.  
\end{proof}

\subsection{The local zeta function for $\varphi_{\varepsilon ,\sigma}$}

Let $\varepsilon \in \{\pm 1\}$ and $\sigma >1$. 
Let $\varphi_{\varepsilon ,\sigma}\in C^\infty (\bR )$ and 
$\varphi_{\varepsilon ,\sigma ,u}\in C^\infty_0(\bR^\times)$ 
$(u>1)$ be the functions in \S \ref{subsec:test_ftn}. 
We consider here the local zeta function for 
$\varphi_{\varepsilon ,\sigma}$. 
By (\ref{eqn:supp_phies}) and (\ref{eqn:phies_estimate}), 
the integral 
$\Phi_{1}(\varphi_{1,\sigma};s)=\Phi_{-1}(\varphi_{-1,\sigma};s)$ 
converges absolutely and defines a holomorphic function 
on $\mathrm{Re}(s)<\sigma $. By (\ref{eqn:supp_phies}), we have 
 $\Phi_{-1}(\varphi_{1,\sigma};s)=\Phi_{1}(\varphi_{-1,\sigma};s)=0\ 
(s\in \bC )$. By the property (2) of $\varphi$ 
in \S \ref{subsec:test_ftn}, 
for $s\in \bC$ such that $\mathrm{Re}(s)<\sigma $, we have 
\begin{align}
\label{eqn:est_LZ_test}
\left|\Phi_{1}(\varphi_{1,\sigma};s)\right|
>\exp\!\left(-\sqrt{|\mathrm{Im}(s)|}
-\frac{\sqrt{\sigma -\mathrm{Re}(s)}}{\pi}
\int_{-\infty}^{\infty}\frac{\sqrt{|t|}}{1+t^2}dt
\right).
\end{align}

By (\ref{eqn:supp_phies}), (\ref{eqn:phies_estimate}) and 
Lemma \ref{lem:fourier_diff}, we have 
\begin{align}
\label{eqn:Fourier_phies_estimate}
&\cF (\varphi_{\varepsilon ,\sigma})(y)=O(|y|^{-n})\quad 
(|y|\to \infty)&&(n\in \bZ_{\geq 0}). 
\end{align}
Hence, 
for $\varepsilon_1\in \{\pm 1\}$, the integral 
$\Phi_{\varepsilon_1}(\cF (\varphi_{\varepsilon ,\sigma});s)$ 
converges absolutely and defines a holomorphic function 
on $\mathrm{Re}(s)>0$. 

\begin{lem}
\label{lem:LFE_test}
Retain the notation. 
For $s\in \bC$ such that $\mathrm{Re}(s)>0$, we have 
\begin{align*}
&(\Phi_1(\cF (\varphi_{\varepsilon ,\sigma});s),
\Phi_{-1}(\cF (\varphi_{\varepsilon ,\sigma});s))\\
&=(2\pi )^{-s}\Gamma (s)(\Phi_1(\varphi_{\varepsilon ,\sigma};-s+1),
\Phi_{-1}(\varphi_{\varepsilon ,\sigma};-s+1)){\mathrm{E}}(s). 
\end{align*}
\end{lem}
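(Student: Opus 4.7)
The plan is to derive the identity by approximating $\varphi_{\varepsilon,\sigma}$ in a suitable sense by the compactly supported test functions $\varphi_{\varepsilon,\sigma,u}\in C_0^\infty(\bR^\times)\subset \cS(\bR)$ for which Proposition \ref{prop:LFE} applies directly. For each $u>1$, Proposition \ref{prop:LFE} gives
\begin{align*}
&(\Phi_1(\cF(\varphi_{\varepsilon,\sigma,u});s),\Phi_{-1}(\cF(\varphi_{\varepsilon,\sigma,u});s))\\
&\qquad =(2\pi)^{-s}\Gamma(s)
(\Phi_1(\varphi_{\varepsilon,\sigma,u};-s+1),\Phi_{-1}(\varphi_{\varepsilon,\sigma,u};-s+1))
{\mathrm{E}}(s),
\end{align*}
so it suffices, for fixed $s$ with $\mathrm{Re}(s)>0$, to let $u\to \infty$ on both sides and identify the resulting limits with $\Phi_{\varepsilon_1}(\cF(\varphi_{\varepsilon,\sigma});s)$ and $\Phi_{\varepsilon_1}(\varphi_{\varepsilon,\sigma};-s+1)$, respectively.

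For the left-hand side I would split the local zeta integral as $\int_0^1+\int_1^\infty$ and estimate the difference $\Phi_{\varepsilon_1}(\cF(\varphi_{\varepsilon,\sigma}-\varphi_{\varepsilon,\sigma,u});s)$. On $(0,1)$ I use Proposition \ref{prop:test_Shintani1} with $n=0$, giving a uniform bound $|\cF(\varphi_{\varepsilon,\sigma}-\varphi_{\varepsilon,\sigma,u})(y)|\leq c_{\sigma,0}u^{1-\sigma}$ and an absolutely convergent integral of $t^{\mathrm{Re}(s)-1}$ since $\mathrm{Re}(s)>0$. On $[1,\infty)$ I choose $n\in \bZ_{\geq 0}$ with $n>\mathrm{Re}(s)$ and use Proposition \ref{prop:test_Shintani1} to obtain $|\cF(\varphi_{\varepsilon,\sigma}-\varphi_{\varepsilon,\sigma,u})(y)|\leq c_{\sigma,n}u^{1-\sigma}|y|^{-n}$, again yielding an absolutely convergent integral. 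Both estimates show the difference is $O(u^{1-\sigma})$, which tends to $0$ as $u\to \infty$ since $\sigma>1$.

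For the right-hand side, the function $\varphi_{\varepsilon,\sigma,u}$ agrees with $\varphi_{\varepsilon,\sigma}$ on $\{|x|\leq u\}$ by property (2) of $\Delta$, so $\varphi_{\varepsilon,\sigma}-\varphi_{\varepsilon,\sigma,u}$ is supported in $\{\varepsilon x\geq u\}$ by (\ref{eqn:supp_phies_minus}) and is bounded by $\tilde c_{\sigma,0}|x|^{-\sigma}$ via Lemma \ref{lem:test_Shintani}. Since $\varphi_{\varepsilon,\sigma}$ and $\varphi_{\varepsilon,\sigma,u}$ both vanish on $\{\varepsilon_1 t>0\}$ when $\varepsilon_1=-\varepsilon$, the case $\varepsilon_1=-\varepsilon$ is trivial. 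For $\varepsilon_1=\varepsilon$, I bound
\begin{align*}
\left|\Phi_{\varepsilon}(\varphi_{\varepsilon,\sigma}-\varphi_{\varepsilon,\sigma,u};-s+1)\right|
\leq \tilde c_{\sigma,0}\int_u^\infty t^{-\sigma-\mathrm{Re}(s)}\,dt,
\end{align*}
which equals $\tilde c_{\sigma,0}u^{1-\sigma-\mathrm{Re}(s)}/(\sigma+\mathrm{Re}(s)-1)$ and tends to $0$ as $u\to \infty$, since $\sigma>1$ and $\mathrm{Re}(s)>0$. Combining these two limits in the Proposition \ref{prop:LFE} identity for $\varphi_{\varepsilon,\sigma,u}$ yields the claimed equation.

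The only real delicate point is making sure $\Phi_{\varepsilon_1}(\cF(\varphi_{\varepsilon,\sigma});s)$ itself is well-defined as an absolutely convergent integral on $\mathrm{Re}(s)>0$; this follows because $\varphi_{\varepsilon,\sigma}\in L^1(\bR)$ (since $\sigma>1$), so $\cF(\varphi_{\varepsilon,\sigma})$ is continuous and bounded, while (\ref{eqn:Fourier_phies_estimate}) gives arbitrary polynomial decay at infinity. The remaining work is the uniform bookkeeping of estimates above, which is routine given Propositions \ref{prop:test_Shintani1} and Lemma \ref{lem:test_Shintani}.
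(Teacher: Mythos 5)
Your proposal is correct and follows essentially the same strategy as the paper: apply Proposition \ref{prop:LFE} to the cutoff $\varphi_{\varepsilon,\sigma,u}\in C^\infty_0(\bR^\times)$, then pass to the limit $u\to\infty$ by estimating the difference on each side of the identity using Proposition \ref{prop:test_Shintani1} (with $n=0$ near $0$, and with $n>\mathrm{Re}(s)$ near $\infty$) for the Fourier side, and the pointwise bound from Lemma \ref{lem:test_Shintani} with the support condition (\ref{eqn:supp_phies_minus}) for the direct side. The paper also first uses $\Phi_{-\varepsilon}(\varphi_{\varepsilon,\sigma,u};\cdot)=0$ to collapse the matrix identity to a scalar one before estimating, but this is a cosmetic simplification and the substance of your argument matches the paper's proof.
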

\begin{proof}
Let $\varepsilon,\varepsilon_1\in \{\pm 1\}$ and $u>1$. 
Let $s\in \bC$ such that $\mathrm{Re}(s)>0$. 
By (\ref{eqn:supp_phies}), 
we have 
$\Phi_{-\varepsilon}(\varphi_{\varepsilon,\sigma ,u};s)=0$. 
Hence, applying Proposition \ref{prop:LFE} to 
$f=\varphi_{\varepsilon ,\sigma ,u}$, we have 
\begin{align}
\label{eqn:pf_testLFE001}
\Phi_{\varepsilon_1}(\cF (\varphi_{\varepsilon ,\sigma ,u});s)
=(2\pi)^{-s}\Gamma (s)
e^{\varepsilon \varepsilon_1\pi \sI s/2}
\Phi_{\varepsilon }(\varphi_{\varepsilon ,\sigma ,u};-s+1).
\end{align}
Since 
$|x|^\sigma |(\varphi_{\varepsilon ,\sigma }
-\varphi_{\varepsilon ,\sigma ,u})(x)|
\leq |\varphi (\varepsilon x)|$ for $x\in \bR$, 
we have  
\begin{align}
\label{eqn:pf_testLFE002}
&|x|^\sigma |(\varphi_{\varepsilon ,\sigma }
-\varphi_{\varepsilon ,\sigma ,u})(x)|\leq c_\varphi &
&(x\in \bR)
\end{align}
with $c_\varphi =\sup_{x\in \bR}|\varphi (x)|$. 
We take $n\in \bZ_{\geq 0}$ so that $\mathrm{Re}(s)<n$. 
Using (\ref{eqn:supp_phies_minus}), (\ref{eqn:pf_testLFE001}), 
(\ref{eqn:pf_testLFE002}) and Proposition \ref{prop:test_Shintani1}, 
we have 
\begin{align*}
&\left|\Phi_{\varepsilon_1}(\cF (\varphi_{\varepsilon ,\sigma });s)
-(2\pi)^{-s}\Gamma (s)
e^{\varepsilon \varepsilon_1\pi \sI s/2}
\Phi_{\varepsilon }(\varphi_{\varepsilon ,\sigma };-s+1)\right|\\
&=\left|\Phi_{\varepsilon_1}(\cF (\varphi_{\varepsilon ,\sigma }
-\varphi_{\varepsilon ,\sigma ,u});s)
-(2\pi)^{-s}\Gamma (s)
e^{\varepsilon \varepsilon_1\pi \sI s/2}
\Phi_{\varepsilon }(\varphi_{\varepsilon ,\sigma }
-\varphi_{\varepsilon ,\sigma ,u};-s+1)\right|\\
&\leq \int_0^\infty \left|
\cF (\varphi_{\varepsilon ,\sigma }
-\varphi_{\varepsilon ,\sigma ,u})
(\varepsilon_1 t)\right|t^{\mathrm{Re}(s)-1}\,dt\\
&\hphantom{=.}
+(2\pi)^{-\mathrm{Re}(s)}
e^{-\varepsilon \varepsilon_1\pi \mathrm{Im}(s)/2}
|\Gamma (s)|
\int_0^\infty \left|
(\varphi_{\varepsilon ,\sigma }
-\varphi_{\varepsilon ,\sigma ,u})
(\varepsilon t)\right|t^{-\mathrm{Re}(s)}\,dt\\
&\leq \left(\int_0^1 c_{\sigma ,0}u^{1-\sigma }t^{\mathrm{Re}(s)-1}\,dt
+\int_1^\infty c_{\sigma ,n}u^{1-\sigma }t^{\mathrm{Re}(s)-n-1}\,dt\right)\\
&\hphantom{=.}
+(2\pi)^{-\mathrm{Re}(s)}e^{-\varepsilon \varepsilon_1\pi \mathrm{Im}(s)/2}
|\Gamma (s)|
\int_u^\infty 
c_\varphi t^{-\mathrm{Re}(s)-\sigma}\,dt\\
&\leq \left(\frac{c_{\sigma ,0}}{\mathrm{Re}(s)}
+\frac{c_{\sigma ,n}}{n-\mathrm{Re}(s)}\right)u^{1-\sigma }
+\frac{c_\varphi (2\pi)^{-\mathrm{Re}(s)}
e^{-\varepsilon \varepsilon_1\pi \mathrm{Im}(s)/2}
|\Gamma (s)|}{\mathrm{Re}(s)+\sigma -1}
u^{-\mathrm{Re}(s)-\sigma +1}.
\end{align*}
Since this inequality holds for any $u>1$,
we have 
\begin{align*}
&\Phi_{\varepsilon_1}(\cF (\varphi_{\varepsilon ,\sigma });s)
=(2\pi)^{-s}\Gamma (s)
e^{\varepsilon \varepsilon_1\pi \sI s/2}
\Phi_{\varepsilon }(\varphi_{\varepsilon ,\sigma };-s+1).
\end{align*}
This implies that the equality in the statement holds. 
\end{proof}

\section{Dirichlet series}
\label{sec:DS}

In this section, we give a proof of Theorem \ref{thm:DS}.

\subsection{Definition of zeta integrals}
\label{subsec:zeta_integrals}

Let ${L}$ be a shifted lattice in $\bR$. 
Let $r\in \bR$, $\alpha \in {\mathfrak{M}}_r({L})$ and $\sigma >r+1$. 
Let $f$ be a function in $C (\bR^\times )$ satisfying 
the following condition:  
\begin{align}
\label{eqn:cdn_zeta_def}
\text{$x\mapsto |x|^{\sigma}f(x)$ is a bounded function 
on $\bR^\times$. }
\end{align}
It is convenient to introduce a series 
\begin{align*}
&\vartheta({\alpha ,f};t)
=\sum_{0\neq l\in {L}}\alpha (l)f(tl)&
&(t>0).
\end{align*}
\begin{lem}
\label{lem:theta_modoki1}
We take ${L}$, $r$, $\alpha$ and $\sigma $ as above. 
For $f\in C (\bR^\times )$ satisfying (\ref{eqn:cdn_zeta_def}), 
the series $\vartheta({\alpha ,f};t)$ converges absolutely and defines 
a continuous function on $\bR_{>0}$. 
Moreover, there is a constant $C_{\alpha ,{\sigma}}>0$ such that 
\begin{align}
\label{eqn:theta_est}
\begin{split}
\sum_{0\neq l\in {L}}|\alpha (l)f(tl)|
\leq C_{\alpha ,{\sigma}}\sup_{x\in \bR^\times}(|x|^\sigma |f(x)|)
t^{-{\sigma}}&\\
(t>0,\ \text{$f\in C (\bR^\times )$ satisfying (\ref{eqn:cdn_zeta_def}) })&.
\end{split}
\end{align} 
\end{lem}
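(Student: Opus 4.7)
The plan is to combine a pointwise size bound on each term with the fact that a shifted lattice is discrete, then apply the Weierstrass M-test.

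First I would fix the notation $\|f\|_\sigma = \sup_{x \in \bR^\times}(|x|^\sigma |f(x)|)$, which is finite by hypothesis. Writing $L = u_1 + u_2 \bZ$, I would note that the set $\{|l| \mid 0 \neq l \in L\}$ is bounded below by a positive number $c_L > 0$ (since the only possible accumulation point of $L$ in $\bR$ is at infinity, and the nonzero elements closest to $0$ form a finite set of positive norm). Combining this with $\alpha \in \mathfrak{M}_r(L)$, I can absorb the finitely many ``small'' $|l|$ into the constant and conclude that there exists $C_\alpha > 0$ with
\begin{equation*}
|\alpha(l)| \leq C_\alpha |l|^r \qquad (0 \neq l \in L).
\end{equation*}

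Next, for $t > 0$ and $0 \neq l \in L$, the hypothesis on $f$ gives $|f(tl)| \leq \|f\|_\sigma (t|l|)^{-\sigma}$, so
\begin{equation*}
|\alpha(l) f(tl)| \leq C_\alpha \|f\|_\sigma |l|^{r-\sigma} t^{-\sigma}.
\end{equation*}
Since $\sigma > r+1$, we have $r - \sigma < -1$, and the series $\sum_{0 \neq l \in L} |l|^{r-\sigma}$ converges (the tail is comparable to $\sum_{n \geq 1} n^{r-\sigma}$ because $|u_1 + u_2 n| \asymp |n|$ for large $|n|$). Setting
\begin{equation*}
C_{\alpha,\sigma} = C_\alpha \sum_{0 \neq l \in L} |l|^{r-\sigma} < \infty
\end{equation*}
yields the claimed estimate (\ref{eqn:theta_est}), and in particular the absolute convergence of $\vartheta(\alpha, f; t)$ for each $t > 0$.

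For continuity on $\bR_{>0}$, I would fix a compact interval $[a,b] \subset \bR_{>0}$ and observe that on this interval each term $|\alpha(l) f(tl)|$ is bounded by the $t$-independent summable majorant $C_\alpha \|f\|_\sigma |l|^{r-\sigma} a^{-\sigma}$. Since each term $t \mapsto \alpha(l) f(tl)$ is continuous on $\bR_{>0}$ and the Weierstrass M-test gives uniform convergence on $[a,b]$, the sum is continuous on $[a,b]$, hence on all of $\bR_{>0}$. There is no serious obstacle here; the only point needing a little care is the lower bound $|l| \geq c_L > 0$ for nonzero $l \in L$, which is what allows the polynomial growth hypothesis on $\alpha$ to produce a uniform bound $|\alpha(l)| \leq C_\alpha |l|^r$ covering also the small-$|l|$ regime.
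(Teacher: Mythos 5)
Your argument is correct and follows the same route as the paper's: bound each term by $C_\alpha\|f\|_\sigma |l|^{r-\sigma}t^{-\sigma}$ using $\alpha\in\mathfrak M_r(L)$ together with the hypothesis on $f$, and sum the resulting convergent series. Your additional care in noting that nonzero lattice points are bounded away from $0$ (so the $O(|l|^r)$ condition upgrades to a global bound $|\alpha(l)|\le C_\alpha|l|^r$), and your explicit Weierstrass $M$-test argument for continuity, fill in details the paper leaves implicit but do not change the underlying approach.
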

\begin{proof}
Since $\alpha \in {\mathfrak{M}}_r({L})$, 
there is $C_\alpha >0$ such that 
$|\alpha (l)|\leq C_\alpha  |l|^r \ (0\neq l\in {L})$. 
Hence, for $t>0$ and $0\neq l\in {L}$, 
we have 
\begin{align*}
\left|\alpha (l)f(tl)\right|
&\leq C_\alpha |l|^r
\sup_{x\in \bR^\times}(|x|^\sigma |f(x)|)
|tl|^{-{\sigma}}
=C_\alpha |l|^{-({\sigma}-r)}\sup_{x\in \bR^\times }
(|x|^\sigma |f(x)|)t^{-{\sigma}}.
\end{align*}
Since $\sum_{0\neq l\in {L}}|l|^{-({\sigma}-r)}$ converges, 
we obtain the assertion. 
\end{proof}

For $s\in \bC$, we define the zeta integral $Z(\alpha ,f;s)$ by 
\begin{align*}
&Z(\alpha ,f;s)
=\int_0^\infty \vartheta({\alpha ,f};t)t^{s}\frac{dt}{t}
=Z_+(\alpha ,f;s)+Z_-(\alpha ,f;s)
\end{align*}
with 
\begin{align*}
&Z_+(\alpha ,f;s)
=\int_1^\infty \vartheta({\alpha ,f};t)t^{s}\frac{dt}{t},& 
&Z_-(\alpha ,f;s)
=\int_0^1 \vartheta({\alpha ,f};t)t^{s}\frac{dt}{t}. 
\end{align*}
\begin{lem}
\label{lem:par_zeta_converge}
We take ${L}$, $r$, $\alpha$ and $\sigma $ as above. 
Let $f$ be a function in $C (\bR^\times )$ 
satisfying (\ref{eqn:cdn_zeta_def}). 
We take $C_{\alpha ,\sigma}>0$ as in Lemma \ref{lem:theta_modoki1}. 

\noindent (i) 
The integral $Z_+(\alpha ,f;s)$ converges absolutely 
and defines a holomorphic function on $\mathrm{Re} (s)< {\sigma}$. 
For $s\in \bC$ such that $\mathrm{Re}(s)<\sigma $, 
we have 
\begin{align*}
&|Z_+(\alpha ,f;s)|\leq 
\frac{C_{\alpha ,\sigma}}{\sigma -\mathrm{Re}(s)}
\sup_{x\in \bR^\times }(|x|^\sigma |f(x)|).
\end{align*}

\noindent (ii) 
The integral $Z_-(\alpha ,f;s)$ converges absolutely and 
defines a holomorphic function on $\mathrm{Re} (s)>\sigma$. 
For $s\in \bC$ such that $\mathrm{Re}(s)>\sigma $, 
we have 
\begin{align*}
&|Z_-(\alpha ,f;s)|\leq 
\frac{C_{\alpha ,\sigma}}{\mathrm{Re}(s)-\sigma }
\sup_{x\in \bR^\times }(|x|^\sigma |f(x)|).
\end{align*}

\end{lem}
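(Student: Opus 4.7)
The plan is to reduce both statements to the pointwise bound on $\vartheta(\alpha,f;t)$ already established in Lemma \ref{lem:theta_modoki1}, namely
\[
|\vartheta(\alpha,f;t)|\le \sum_{0\neq l\in L}|\alpha(l)f(tl)|\le C_{\alpha,\sigma}M_f\,t^{-\sigma}\qquad(t>0),
\]
where $M_f=\sup_{x\in\bR^\times}(|x|^\sigma|f(x)|)$. Setting $M=M_f$, for part (i) I would estimate directly
\[
\int_1^\infty|\vartheta(\alpha,f;t)|\,t^{\mathrm{Re}(s)-1}\,dt
\le C_{\alpha,\sigma}M\int_1^\infty t^{\mathrm{Re}(s)-\sigma-1}\,dt
=\frac{C_{\alpha,\sigma}M}{\sigma-\mathrm{Re}(s)},
\]
which is finite precisely when $\mathrm{Re}(s)<\sigma$; this yields the asserted absolute convergence and the stated bound. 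For part (ii) the same bound gives
\[
\int_0^1|\vartheta(\alpha,f;t)|\,t^{\mathrm{Re}(s)-1}\,dt
\le C_{\alpha,\sigma}M\int_0^1 t^{\mathrm{Re}(s)-\sigma-1}\,dt
=\frac{C_{\alpha,\sigma}M}{\mathrm{Re}(s)-\sigma},
\]
finite precisely when $\mathrm{Re}(s)>\sigma$.

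For holomorphy, I would argue in the usual fashion. The integrand $\vartheta(\alpha,f;t)\,t^{s-1}$ is continuous in $t$ by Lemma \ref{lem:theta_modoki1}, and entire in $s$ for each fixed $t$. On any compact subset of the half-plane in question, the above dominated bound applies uniformly (take a slightly smaller $\sigma_0$ or $\sigma_1$ inside the open region to obtain a uniform majorant that is integrable on the relevant interval). Therefore, one can differentiate under the integral sign, or equivalently apply Morera's theorem combined with Fubini, to conclude that $Z_+(\alpha,f;s)$ (resp.\ $Z_-(\alpha,f;s)$) is holomorphic on the indicated half-plane.

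There is no genuine obstacle here: the proof is a direct verification and the only ingredient beyond the definitions is the uniform estimate provided by Lemma \ref{lem:theta_modoki1}. The only minor point to be careful about is separating the cases $\mathrm{Re}(s)<\sigma$ and $\mathrm{Re}(s)>\sigma$ so that each of the two resulting elementary integrals converges, which is precisely why $Z(\alpha,f;s)$ is split into $Z_+$ and $Z_-$ in the first place.
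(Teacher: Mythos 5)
Your proposal is correct and is precisely what the paper intends when it says the lemma ``follows immediately from Lemma \ref{lem:theta_modoki1}'': you integrate the pointwise bound $|\vartheta(\alpha,f;t)|\le C_{\alpha,\sigma}M_f\,t^{-\sigma}$ over $[1,\infty)$ and $(0,1]$ respectively, obtaining exactly the two stated majorants, and holomorphy follows by the standard uniform-domination argument (Morera/Fubini or differentiation under the integral) on compact subsets of each half-plane. No meaningful difference from the paper's route.
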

\begin{proof}
The assertion follows immediately from 
Lemma \ref{lem:theta_modoki1}.
\end{proof}

\begin{lem}
\label{lem:zeta_converge}
We take ${L}$, $r$ and $\alpha$ as above. 
Let $\sigma_2>\sigma_1>r+1$. 
Let $f$ be a function in $C (\bR^\times )$ such that 
$x\mapsto |x|^{\sigma_i}f(x)$ is a bounded function 
on $\bR^\times$ for $i\in \{1,2\}$. 
Then the zeta integral $Z(\alpha ,f;s)$ 
converges absolutely 
and defines a holomorphic function on 
$\sigma_1<\mathrm{Re} (s)<\sigma_2$. 
Moreover, for $s\in \bC$ such that $\sigma_1<\mathrm{Re} (s)<\sigma_2$,
we have 
\begin{align}
\label{eqn:rel_zeta_DS}
Z(\alpha ,f;s)
=(\Phi_1(f;s),\Phi_{-1}(f;s))\left(\begin{array}{c}
\xi_+(\alpha ;s)\\
\xi_-(\alpha ;s)
\end{array}\right).
\end{align}
\end{lem}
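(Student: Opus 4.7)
The plan is to decompose $Z(\alpha,f;s)=Z_+(\alpha,f;s)+Z_-(\alpha,f;s)$ and apply Lemma \ref{lem:par_zeta_converge} twice: taking $\sigma=\sigma_2$ yields absolute convergence and holomorphy of $Z_+(\alpha,f;s)$ on $\mathrm{Re}(s)<\sigma_2$, and taking $\sigma=\sigma_1$ yields absolute convergence and holomorphy of $Z_-(\alpha,f;s)$ on $\mathrm{Re}(s)>\sigma_1$. Intersecting the two half-planes gives the desired strip $\sigma_1<\mathrm{Re}(s)<\sigma_2$.

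For the identity (\ref{eqn:rel_zeta_DS}), I would first interchange the sum defining $\vartheta(\alpha,f;t)$ with the $t$-integral. To justify this by Fubini--Tonelli, split the $t$-integral at $t=1$ and use the estimate (\ref{eqn:theta_est}) of Lemma \ref{lem:theta_modoki1}: on $[1,\infty)$ apply it with $\sigma=\sigma_2$ and on $(0,1]$ apply it with $\sigma=\sigma_1$, so that the double integral
\[
\int_0^\infty\sum_{0\neq l\in L}|\alpha(l)f(tl)|\,t^{\mathrm{Re}(s)-1}\,dt
\]
is bounded by a sum of two elementary integrals which converge precisely because $\sigma_1<\mathrm{Re}(s)<\sigma_2$.

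After interchanging, each term of the series contributes $\alpha(l)\int_0^\infty f(tl)t^{s-1}\,dt$. For $l>0$ the substitution $u=tl$ turns this integral into $l^{-s}\Phi_{1}(f;s)$, and for $l<0$ the substitution $u=-tl=|l|t$ turns it into $|l|^{-s}\Phi_{-1}(f;s)$. Collecting the contributions from the positive and negative elements of $L$ and recalling the definitions (\ref{eqn:def_DS}) of $\xi_\pm(\alpha;s)$ yields exactly
\[
Z(\alpha,f;s)=\Phi_1(f;s)\,\xi_+(\alpha;s)+\Phi_{-1}(f;s)\,\xi_-(\alpha;s),
\]
which is the matrix identity in the lemma.

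The only nontrivial step is the justification of the Fubini interchange; once that is in hand, the remainder is a substitution. In particular, no new test-function estimates are required, because Lemma \ref{lem:theta_modoki1} already provides the uniform bound on $\vartheta(\alpha,f;t)$ needed to control the integrand on both ends of the $t$-interval.
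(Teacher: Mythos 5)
Your proof is correct and follows essentially the same route as the paper's: convergence via two applications of Lemma~\ref{lem:par_zeta_converge} on the two half-planes, then interchange of sum and integral followed by the substitution $t\mapsto t/|l|$ to produce the $\Phi_{\pm1}$ factors. You are a bit more explicit than the paper about invoking Lemma~\ref{lem:theta_modoki1} to justify the Fubini step, but that is the same implicit justification the paper relies on.
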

\begin{proof}
The former part of the statement follows immediately 
from Lemma \ref{lem:par_zeta_converge}. 
The equality (\ref{eqn:rel_zeta_DS}) is obtained as follows:
\begin{align*}
&Z(\alpha ,f;s)
=\int_0^\infty \vartheta({\alpha ,f};t)t^{s}\frac{dt}{t}
=\int_0^\infty \sum_{0\neq l\in L}\alpha (l)f(tl)t^{s}\frac{dt}{t}\\
&=\sum_{0<l\in L}\alpha (l)\int_0^\infty f(tl)t^{s}\frac{dt}{t}
+\sum_{0>l\in L}\alpha (l)\int_0^\infty f(tl)t^{s}\frac{dt}{t}\\
&=\xi_+(\alpha ;s)\Phi_{1}(f;s)
+\xi_-(\alpha ;s)\Phi_{-1}(f;s)
=(\Phi_1(f;s),\Phi_{-1}(f;s))\left(\begin{array}{c}
\xi_+(\alpha ;s)\\
\xi_-(\alpha ;s)
\end{array}\right).
\end{align*}
Here the fourth equality follows from 
the substitution $t\to t|l|^{-1}$. 
\end{proof}

\subsection{Functional equations}
\label{subsec:DS_FE}

In this subsection, we give a proof of the following 
proposition, which is the former part of Theorem \ref{thm:DS} (i). 
\begin{prop}
\label{prop:DS_FE}
Let ${L}_1$ and ${L}_2$ be two shifted lattices in $\bR$. 
Let $\mu,\nu \in \bC$. 
Let $(T_{\alpha_1},T_{\alpha_2})\in 
\cA (L_1,L_2;J_{\mu ,\nu})$ with 
$\alpha_1\in \mathfrak{M}(L_1)$ and $\alpha_2\in \mathfrak{M}(L_2)$. 
Then $\Xi_\pm (\alpha_1;s)$ and $\Xi_\pm (\alpha_2;s)$ satisfy 
the condition {\normalfont [D1]} in \S \ref{subsec:DS_AP}. 
\end{prop}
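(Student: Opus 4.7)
The plan is to apply the automorphy (\ref{eqn:autom_pair_def}) to rescaled test functions $g_{[1/t]}(x) = g(x/t)$ with $g \in C_0^\infty(\bR^\times)$ and $t > 0$, derive a Hecke-type identity for the theta-like series $\vartheta(\alpha_i, \cF(\cdot); t)$, and Mellin-transform in $t$ to read off the Dirichlet series properties required by {\normalfont [D1]}. Since $g_{[1/t]} \in C_0^\infty(\bR^\times)$, the automorphy yields $T_{\alpha_1}(g_{[1/t]}) = T_{\alpha_2}((g_{[1/t]})_{\mu,\nu,\infty})$. A direct calculation using the definition of $f_{\mu,\nu,\infty}$ gives $(g_{[1/t]})_{\mu,\nu,\infty} = t^{2\nu+1}(g_{\mu,\nu,\infty})_{[t]}$, and the Fourier scaling is $\cF(g_{[1/t]})(y) = t\,\cF(g)(ty)$. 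Expanding both sides via the Fourier expansions of $T_{\alpha_i}$ and separating the $l=0$ contribution produces
\begin{align*}
\vartheta(\alpha_1, \cF(g); t) + \alpha_1(0)\cF(g)(0) = t^{2\nu-1}\bigl[\vartheta(\alpha_2, \cF(g_{\mu,\nu,\infty}); 1/t) + \alpha_2(0)\cF(g_{\mu,\nu,\infty})(0)\bigr].
\end{align*}

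Next, I will split the zeta integral $Z(\alpha_1, \cF(g); s) = Z_+(\alpha_1, \cF(g); s) + Z_-(\alpha_1, \cF(g); s)$ at $t=1$; since $\cF(g)$ is Schwartz, $Z_+$ is entire in $s$. Substituting the theta identity into $Z_-$ and changing variables $\tau = 1/t$ (together with the elementary integrals $\int_0^1 t^{s-1}\,dt = 1/s$ and $\int_0^1 t^{s+2\nu-2}\,dt = 1/(s+2\nu-1)$) produces
\begin{align*}
Z(\alpha_1, \cF(g); s) + \tfrac{\alpha_1(0)\cF(g)(0)}{s} - \tfrac{\alpha_2(0)\cF(g_{\mu,\nu,\infty})(0)}{s+2\nu-1} = Z_+(\alpha_1, \cF(g); s) + Z_+(\alpha_2, \cF(g_{\mu,\nu,\infty}); -s-2\nu+1),
\end{align*}
which is entire in $s$. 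Exchanging the roles of $(\alpha_1, g)$ and $(\alpha_2, g_{\mu,\nu,\infty})$ (using $(g_{\mu,\nu,\infty})_{\mu,\nu,\infty} = g$ from (\ref{eqn:basic_f_infty})) shows that the left-hand side is invariant under $s \leftrightarrow -s-2\nu+1$ together with the corresponding swap, which will supply the functional equation.

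Finally, Lemma \ref{lem:zeta_converge} combined with Proposition \ref{prop:LFE} rewrites $Z(\alpha_1, \cF(g); s)$ as $(\Phi_1(g; 1-s), \Phi_{-1}(g; 1-s))\,\mathrm{E}(s)\,(\Xi_+(\alpha_1; s), \Xi_-(\alpha_1; s))^T$, while Lemma \ref{lem:LFE_infty} yields the analogous expression on the $\alpha_2$-side with $\Sigma_\mu\mathrm{E}(-s-2\nu+1)$ in place of $\mathrm{E}(s)$. Crucially, $\cF(g)(0)$ equals $(\Phi_1(g;1-s), \Phi_{-1}(g;1-s))(1,1)^T$ at $s = 0$, and $\cF(g_{\mu,\nu,\infty})(0)$ equals $(\Phi_1(g;1-s), \Phi_{-1}(g;1-s))\Sigma_\mu(1,1)^T$ at $s = 1-2\nu$, so the discrepancies divided by $s$ and by $s+2\nu-1$ respectively are entire in $s$. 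Combining with the previous step and varying $g$ so that $(\Phi_1(g;1-s), \Phi_{-1}(g;1-s))$ attains arbitrary values in $\bC^2$ (by choosing $\supp g \subset (0,\infty)$ or $\supp g \subset (-\infty,0)$) extracts simultaneously the functional equation of {\normalfont [D1]}, the meromorphic continuation of $\Xi_\pm(\alpha_1; s)$ to $\bC$, and the entireness of the vector (\ref{eqn:DS_entire}).

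The main obstacle is the bookkeeping in the last step: the $g$-dependent correction $\alpha_1(0)\cF(g)(0)/s$ must, upon cancellation of the common factor $(\Phi_1(g;1-s), \Phi_{-1}(g;1-s))$, reduce to the $g$-independent quantity $\alpha_1(0)(1,1)^T/s$ up to an entire remainder, and likewise for the $\alpha_2(0)$ term. This works precisely because the numerators of the discrepancies vanish at $s=0$ and $s=1-2\nu$ respectively; what must be checked carefully is that the reduction is compatible with varying $g$ across a family rich enough to span $\bC^2$ via $(\Phi_1(g;1-s), \Phi_{-1}(g;1-s))$ at each fixed generic $s$, so that the resulting $g$-free identity really yields the entireness statement for (\ref{eqn:DS_entire}) component-wise.
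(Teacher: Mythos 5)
Your proposal follows essentially the same route as the paper: derive the theta-type identity from the automorphy (\ref{eqn:autom_pair_def}) via rescaling (the paper's Lemma \ref{lem:Po_sum_modoki}), pass to the zeta integral and split at $t=1$ to obtain the meromorphic continuation formula (Lemma \ref{lem:AC_global_zeta}), use the local functional equation (Proposition \ref{prop:LFE} and Lemma \ref{lem:LFE_infty}) to express $Z(\alpha_1,\cF(g);s)$ through $\Xi_\pm(\alpha_1;s)$, derive the functional equation from the swap symmetry $(T_{\alpha_1},T_{\alpha_2})\leftrightarrow(T_{\alpha_2},T_{\alpha_1})$, and extract the entireness of (\ref{eqn:DS_entire}) by varying $g$ with one-sided support and matching the $g$-dependent correction terms against the target at $s=0$ and $s=1-2\nu$. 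The only difference is cosmetic: the paper works with the explicit family $\Delta_{s_0,\varepsilon}(x)=|x|^{s_0}\Delta(\varepsilon x-3)$, whose local zeta function $\Phi_1(\Delta_{s_0,1};-s+1)$ is manifestly nonzero at $s=s_0$ for every $s_0$, so the local-to-global patching and the residue check are immediate; your abstract ``vary $g$'' formulation is the same idea, and you correctly identify where the bookkeeping must be checked.
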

In order to prove this proposition, we prepare some lemmas. 
For $t>0$ and a function $f$ on $\bR$ or $\bR^\times$, 
we set $f_{[t]}(x)=f(tx)$. 
\begin{lem}
\label{lem:rel_f_t}
Let $\mu ,\nu \in \bC$, $y\in \bR$ and $t>0$. 

\noindent (i) 
$\cF (f)(ty)=t^{-1}\cF (f_{[t^{-1}]})(y)$ for $f\in L^{1}(\bR)$. 

\noindent (ii) 
$(f_{[t^{-1}]})_{\mu,\nu,\infty} =t^{2\nu +1}(f_{\mu,\nu,\infty})_{[t]}$ 
for $f\in C(\bR^\times )$.

\end{lem}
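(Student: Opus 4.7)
Both parts are routine identities about the Fourier transform and the involution $f\mapsto f_{\mu,\nu,\infty}$, so my plan is just direct computation from the definitions, with care taken about the homogeneity of $J_{\mu,\nu}$.

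For part (i), I would start from the definition
\[
\cF(f)(ty)=\int_{-\infty}^{\infty}f(x)e^{2\pi\sI xty}\,dx
\]
and perform the substitution $x=t^{-1}u$. Since $t>0$, we have $dx=t^{-1}\,du$ and the integration range is unchanged, giving
\[
\cF(f)(ty)=t^{-1}\int_{-\infty}^{\infty}f(t^{-1}u)e^{2\pi\sI uy}\,du=t^{-1}\cF(f_{[t^{-1}]})(y),
\]
which is exactly the claim.

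For part (ii), I would unwind both sides from the defining formulas in \S\ref{subsec:def_auto_pair}. On the left,
\[
(f_{[t^{-1}]})_{\mu,\nu,\infty}(x)=J_{\mu,\nu}(x)\,f_{[t^{-1}]}(-1/x)=J_{\mu,\nu}(x)\,f(-1/(tx))
\qquad (x\in\bR^\times).
\]
On the right,
\[
t^{2\nu+1}(f_{\mu,\nu,\infty})_{[t]}(x)=t^{2\nu+1}J_{\mu,\nu}(tx)\,f(-1/(tx)).
\]
So the identity reduces to $J_{\mu,\nu}(x)=t^{2\nu+1}J_{\mu,\nu}(tx)$, which is the homogeneity
\[
J_{\mu,\nu}(tx)=e^{-\sgn(tx)\pi\sI\mu/2}|tx|^{-2\nu-1}=t^{-2\nu-1}e^{-\sgn(x)\pi\sI\mu/2}|x|^{-2\nu-1}=t^{-2\nu-1}J_{\mu,\nu}(x),
\]
using $t>0$ so that $\sgn(tx)=\sgn(x)$ and $|tx|^{-2\nu-1}=t^{-2\nu-1}|x|^{-2\nu-1}$ with the unambiguous real exponential.

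There is no real obstacle here; the only point worth flagging is that the identity $|tx|^{-2\nu-1}=t^{-2\nu-1}|x|^{-2\nu-1}$ is well-defined precisely because $t>0$, so that $t^{-2\nu-1}=\exp((-2\nu-1)\log t)$ involves the real logarithm and no branch choice is needed; likewise $\sgn(tx)=\sgn(x)$ uses $t>0$. Both parts of the lemma are thus immediate from substitution and this scaling property of $J_{\mu,\nu}$.
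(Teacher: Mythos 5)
Your proof is correct and follows essentially the same route as the paper's: part (i) by the substitution $x\mapsto t^{-1}x$, and part (ii) by the homogeneity $J_{\mu,\nu}(tx)=t^{-2\nu-1}J_{\mu,\nu}(x)$ using $\sgn(tx)=\sgn(x)$ for $t>0$. Nothing to change.
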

\begin{proof}
For $f\in L^{1}(\bR)$, we have 
\begin{align*}
\cF (f)(ty)&=\int_{-\infty}^{\infty}f(x){e}^{2\pi \sI xty}\, dx
=t^{-1}\cF(f_{[t^{-1}]})(y).
\end{align*}
Here the second equality follows from the substitution $x\to t^{-1}x$. 
For $x\in \bR^\times$ and $f\in C(\bR^\times )$, we have 
\begin{align*}
t^{2\nu +1}(f_{\mu,\nu,\infty})_{[t]}(x)
&=
e^{-\sgn (x) \pi \sI \mu /2}|x|^{-2\nu -1}
f(-1/(tx))
=(f_{[t^{-1}]})_{\mu,\nu,\infty}(x).
\end{align*}
Here the first equality follows from $\sgn (tx)=\sgn (x)$. 
\end{proof}
\begin{lem}
\label{lem:Po_sum_modoki}
We use the notation in Proposition \ref{prop:DS_FE}. 
Then we have 
\begin{align*}
&\sum_{l\in {L}_1}\alpha_{1}(l)\cF (f)(tl)
=t^{2\nu -1}\sum_{l\in {L}_2}\alpha_{2}(l)
\cF (f_{\mu,\nu,\infty} )(t^{-1}l)&
&(t>0,\ f\in C^\infty_0(\bR^\times )).
\end{align*}
\end{lem}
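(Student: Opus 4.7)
The plan is to reduce the identity to the defining automorphic relation $T_{\alpha_1}(g)=T_{\alpha_2}(g_{\mu,\nu,\infty})$ for $g\in C_0^\infty(\bR^\times)$ by a two-step change of variable, using Lemma \ref{lem:rel_f_t} to move scalings in and out of the Fourier transform and the $\infty$-involution. Throughout, $t>0$ and $f\in C_0^\infty(\bR^\times)$ are fixed, so $f_{[t^{-1}]}$ and all related functions remain in $C_0^\infty(\bR^\times)$ and the automorphic equation is applicable.

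First I would rewrite the left-hand side: by Lemma \ref{lem:rel_f_t}(i),
\[
\sum_{l\in L_1}\alpha_1(l)\cF(f)(tl)
=t^{-1}\sum_{l\in L_1}\alpha_1(l)\cF(f_{[t^{-1}]})(l)
=t^{-1}T_{\alpha_1}(f_{[t^{-1}]}).
\]
Since $f_{[t^{-1}]}\in C_0^\infty(\bR^\times)$, applying (\ref{eqn:autom_pair_def}) gives $T_{\alpha_1}(f_{[t^{-1}]})=T_{\alpha_2}((f_{[t^{-1}]})_{\mu,\nu,\infty})$, which by Lemma \ref{lem:rel_f_t}(ii) equals $t^{2\nu+1}T_{\alpha_2}((f_{\mu,\nu,\infty})_{[t]})$.

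Finally I would unwind the scaling on the $L_2$ side: by Lemma \ref{lem:rel_f_t}(i) applied in the reverse direction (replacing $t$ by $t^{-1}$),
\[
\cF\bigl((f_{\mu,\nu,\infty})_{[t]}\bigr)(l)=t^{-1}\cF(f_{\mu,\nu,\infty})(t^{-1}l),
\]
so $T_{\alpha_2}((f_{\mu,\nu,\infty})_{[t]})=t^{-1}\sum_{l\in L_2}\alpha_2(l)\cF(f_{\mu,\nu,\infty})(t^{-1}l)$. Multiplying together the factors $t^{-1}\cdot t^{2\nu+1}\cdot t^{-1}=t^{2\nu-1}$ yields the claimed identity.

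There is essentially no obstacle beyond bookkeeping of the three scaling factors; the only thing to check is that every function appearing as an argument of $T_{\alpha_1}$ or $T_{\alpha_2}$ lies in $C_0^\infty(\bR^\times)$ so that (\ref{eqn:autom_pair_def}) legitimately applies. This is immediate because $t>0$ preserves the support conditions, and because $g\mapsto g_{\mu,\nu,\infty}$ sends $C_0^\infty(\bR^\times)$ into itself by (\ref{eqn:basic_f_infty}).
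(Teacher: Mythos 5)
Your proof is correct and follows essentially the same route as the paper: rewrite the left-hand side via Lemma \ref{lem:rel_f_t}~(i) to identify $T_{\alpha_1}(f_{[t^{-1}]})$, apply (\ref{eqn:autom_pair_def}), convert via Lemma \ref{lem:rel_f_t}~(ii), and unwind on the $L_2$ side. One tiny slip: the closure of $C_0^\infty(\bR^\times)$ under $g\mapsto g_{\mu,\nu,\infty}$ is not what (\ref{eqn:basic_f_infty}) asserts (it follows directly from the defining formula for $f_{\mu,\nu,\infty}$), but this remark is not actually needed here since (\ref{eqn:autom_pair_def}) is only applied to $f_{[t^{-1}]}\in C_0^\infty(\bR^\times)$.
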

\begin{proof}
By Lemma \ref{lem:rel_f_t} 
and the relation (\ref{eqn:autom_pair_def}), we have 
\begin{align*}
&\sum_{l\in {L}_1}\alpha_{1}(l)\cF (f)(tl)
=t^{-1}\sum_{l\in {L}_1}\alpha_{1}(l)\cF (f_{[t^{-1}]})(l)\\
&=t^{-1}T_{\alpha_1}(f_{[t^{-1}]})
=t^{-1}T_{\alpha_2}((f_{[t^{-1}]})_{\mu,\nu,\infty})
=t^{2\nu}T_{\alpha_2}((f_{\mu,\nu,\infty})_{[t]})\\
&=t^{2\nu}\sum_{l\in {L}_2}
\alpha_{2}(l)\cF ((f_{\mu,\nu,\infty})_{[t]})(l)
=t^{2\nu -1}\sum_{l\in {L}_2}\alpha_{2}(l)
\cF (f_{\mu,\nu,\infty})(t^{-1}l)
\end{align*}
for $t>0$ and $f\in C^\infty_0(\bR^\times )$. 
Hence, we obtain the assertion. 
\end{proof}

\begin{lem}
\label{lem:AC_global_zeta}
We use the notation in Proposition \ref{prop:DS_FE}. 
Let $f\in C^\infty_0(\bR^\times)$. 
Take $r\in \bR$ so that $\alpha_{1}\in 
\mathfrak{M}_r({L}_1)$. 
Then, for $s\in \bC$ such that $\mathrm{Re}(s)>
\max \{r+1,0\}$, we have 
\begin{align*}
Z(\alpha_{1},\cF (f);s)
=
&Z_+(\alpha_{1},\cF (f);s)
+Z_+(\alpha_{2},\cF (f_{\mu,\nu,\infty});-s-2\nu +1)\\
&
-\frac{\alpha_{1}(0)\cF (f)(0)}{s}
+\frac{\alpha_{2}(0)\cF (f_{\mu,\nu,\infty})(0)}{s+2\nu -1}.
\end{align*}
\end{lem}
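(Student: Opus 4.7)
The plan is to reduce the identity to the theta-inversion formula of Lemma~\ref{lem:Po_sum_modoki}. I will write $Z(\alpha_1,\cF(f);s) = Z_+(\alpha_1,\cF(f);s) + Z_-(\alpha_1,\cF(f);s)$ and transform only the $Z_-$ piece, since $Z_+(\alpha_1,\cF(f);s)$ already appears on the right-hand side and is entire in $s$ (because $\cF(f)\in \cS(\bR)$, so Lemma~\ref{lem:par_zeta_converge} applies with arbitrarily large $\sigma$).

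For $\mathrm{Re}(s)$ large enough that every integral in sight converges absolutely (concretely $\mathrm{Re}(s) > \max\{r+1,\,0,\,1-2\mathrm{Re}(\nu)\}$), Lemma~\ref{lem:Po_sum_modoki}, after isolating the $l=0$ terms that are present in its displayed sums but absent in $\vartheta$, becomes
\begin{align*}
\vartheta(\alpha_1,\cF(f);t)
&= t^{2\nu-1}\vartheta(\alpha_2,\cF(f_{\mu,\nu,\infty});t^{-1}) \\
&\quad + t^{2\nu-1}\alpha_2(0)\cF(f_{\mu,\nu,\infty})(0) - \alpha_1(0)\cF(f)(0),
\end{align*}
with the convention $\alpha_i(0)=0$ when $0\notin L_i$. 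Substituting this into $\int_0^1 \vartheta(\alpha_1,\cF(f);t)\,t^{s-1}\,dt$ and applying the change of variable $u=1/t$ in the first resulting integral converts it into $Z_+(\alpha_2,\cF(f_{\mu,\nu,\infty});-s-2\nu+1)$, while the two elementary integrals $\int_0^1 t^{s+2\nu-2}\,dt$ and $\int_0^1 t^{s-1}\,dt$ evaluate to $1/(s+2\nu-1)$ and $1/s$ in this half-plane. Adding back $Z_+(\alpha_1,\cF(f);s)$ yields the claimed identity in the deeper region.

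To reach the stated range $\mathrm{Re}(s) > \max\{r+1,0\}$ I invoke analytic continuation: by Lemma~\ref{lem:zeta_converge} the left-hand side is holomorphic on $\mathrm{Re}(s) > \max\{r+1,0\}$, while the right-hand side is meromorphic on all of $\bC$ with at most simple poles at $s=0$ and $s=1-2\nu$, so both are meromorphic on $\mathrm{Re}(s) > \max\{r+1,0\}$ and the identity proved on the smaller open subset propagates. The main technical obstacle is the bookkeeping of the $l=0$ contributions, which vanish by convention when $0\notin L_i$ but otherwise are precisely what produces the two pole terms; a secondary point is justifying termwise integration, which is handled by the absolute-convergence estimate of Lemma~\ref{lem:theta_modoki1} applied with arbitrarily large decay exponent since both $\cF(f)$ and $\cF(f_{\mu,\nu,\infty})$ are Schwartz (note $f_{\mu,\nu,\infty}\in C_0^\infty(\bR^\times)\subset\cS(\bR)$ because $f\in C_0^\infty(\bR^\times)$ forces the support of $x\mapsto f(-1/x)$ to be compact in $\bR^\times$).
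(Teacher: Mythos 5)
Your proof follows essentially the same route as the paper's: split $Z = Z_+ + Z_-$, apply the theta-inversion of Lemma~\ref{lem:Po_sum_modoki} inside $Z_-$ after isolating the $l=0$ contributions, change variables $t\mapsto t^{-1}$ to produce $Z_+(\alpha_2,\cF(f_{\mu,\nu,\infty});-s-2\nu+1)$, and evaluate the two elementary integrals to get the pole terms. The paper performs the identical computation for $\mathrm{Re}(s)$ large and then cites Lemma~\ref{lem:par_zeta_converge} to cover the stated half-plane; your explicit appeal to analytic continuation past $\mathrm{Re}(s) > 1-2\mathrm{Re}(\nu)$ just spells out that last step.
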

\begin{proof}
By Lemma \ref{lem:Po_sum_modoki}, we have 
\begin{align}
\nonumber 
\vartheta(\alpha_{1},\cF (f);t)
=&t^{2\nu -1}\vartheta(\alpha_{2},\cF (f_{\mu,\nu,\infty});t^{-1})\\
\label{eqn:pf_zeta_FE_001}
&-\alpha_{1}(0)\cF (f)(0)
+t^{2\nu -1}\alpha_{2}(0)\cF (f_{\mu,\nu,\infty})(0).
\end{align}
If $\mathrm{Re}(s)$ is sufficiently large, we have 
\begin{align*}
&Z_-(\alpha_{1},\cF (f);s)
=
\int_0^1 \vartheta(\alpha_{1},\cF (f);t)t^{s}
\frac{dt}{t}\\
&=
\int_0^1 \vartheta(\alpha_{2},\cF (f_{\mu,\nu,\infty});t^{-1})
t^{s+2\nu -1}
\frac{dt}{t}\\
&\hphantom{=}
-\int_0^1 \alpha_{1}(0)\cF (f)(0)t^{s}\frac{dt}{t}
+\int_0^1 \alpha_{2}(0)\cF (f_{\mu,\nu,\infty})(0)t^{s+2\nu -1}\frac{dt}{t}\\
&=
Z_+(\alpha_{2},\cF (f_{\mu,\nu,\infty}) ;-s-2\nu +1)
-\frac{\alpha_{1}(0)\cF (f)(0)}{s}
+\frac{\alpha_{2}(0)\cF (f_{\mu,\nu,\infty})(0)}{s+2\nu -1}.
\end{align*}
Here the second equality follows from (\ref{eqn:pf_zeta_FE_001}), 
and the third equality follows from the substitution $t\to t^{-1}$ 
in the first term. 
The assertion follows from this equality 
and Lemma \ref{lem:par_zeta_converge}. 
\end{proof}

\begin{proof}[Proof of Proposition \ref{prop:DS_FE}]
Let $f\in C^\infty_0(\bR^\times )$. 
By Lemma \ref{lem:AC_global_zeta}, we have 
\begin{equation}
\begin{aligned}
Z(\alpha_{1},\cF (f);s)
=\,
&Z_+(\alpha_{1},\cF (f);s)
+Z_+(\alpha_{2},\cF (f_{\mu,\nu,\infty});-s-2\nu +1)\\
&
\label{eqn:pf_zeta_FE_002}
-\frac{\alpha_{1}(0)\cF (f)(0)}{s}
+\frac{\alpha_{2}(0)\cF (f_{\mu,\nu,\infty})(0)}{s+2\nu -1}
\end{aligned}
\end{equation}
for $s\in \bC$ such that $\mathrm{Re}(s)$ is sufficiently large. 
Since the first 2 terms on the right-hand side of 
(\ref{eqn:pf_zeta_FE_002}) are entire functions 
by Lemma \ref{lem:par_zeta_converge} (i), 
the expression (\ref{eqn:pf_zeta_FE_002})
gives the meromorphic continuation of 
$Z(\alpha_{1},\cF (f);s)$ to the whole $s$-plane. 
By Lemma \ref{lem:zeta_converge} and 
Proposition \ref{prop:LFE}, we have 
\begin{align}
\label{eqn:pf_zeta_FE_003}
&Z(\alpha_{1},\cF (f);s)
=\left(\,\Phi_{1}(f;-s+1),\,\Phi_{-1}(f;-s+1)\,\right){\mathrm{E}}(s)
\left(\begin{array}{c}
\Xi_+(\alpha_{1} ;s)\\
\Xi_-(\alpha_{1} ;s)
\end{array}\right)
\end{align}
for $s\in \bC$ such that $\mathrm{Re}(s)$ is sufficiently large. 
Let $s_0\in \bC$. For $\varepsilon \in \{\pm 1\}$, we define 
the function $\Delta_{s_0,\varepsilon}$ in $C^\infty_0(\bR^\times)$ by  
\begin{align*}
&\Delta_{s_0,\varepsilon}(x)
=|x|^{s_0}\Delta (\varepsilon x-3)&
&(x\in \bR^\times ), 
\end{align*}
where $\Delta$ is the function defined in \S \ref{subsec:test_ftn}. 
Then 
$\displaystyle \Phi_1(\Delta_{s_0,1};-s_0+1)>0$ and 
\begin{align}
\label{eqn:pf_zeta_FE_006}
&\left(\begin{array}{cc}
\Phi_1(\Delta_{s_0,1};s)&
\Phi_{-1}(\Delta_{s_0,1};s)\\
\Phi_1(\Delta_{s_0,-1};s)&
\Phi_{-1}(\Delta_{s_0,-1};s)
\end{array}\right)
=\Phi_1(\Delta_{s_0,1};s)1_2
\end{align}
for $s\in \bC$. 
Hence, we have  
\begin{align*}
\Phi_1(\Delta_{s_0,1};-s+1)\,
{\mathrm{E}}(s)
\left(\begin{array}{c}
\Xi_+(\alpha_{1} ;s)\\
\Xi_-(\alpha_{1} ;s)
\end{array}\right)
=\left(\begin{array}{c}
Z(\alpha_{1},\cF (\Delta_{s_0,1});s)\\
Z(\alpha_{1},\cF (\Delta_{s_0,-1});s)
\end{array}\right),
\end{align*}
and $\Phi_1(\Delta_{s_0,1};-s+1)\neq 0$ 
on a sufficiently small neighborhood of $s_0$. 
Since $s_0$ can be chosen arbitrarily, 
we obtain the meromorphic continuations 
of $\Xi_+(\alpha_{1} ;s)$ and $\Xi_-(\alpha_{1} ;s)$ 
to the whole $s$-plane. 
Moreover, since 
\begin{align*}
&\cF (\Delta_{0,1})(0)=\cF (\Delta_{0,-1})(0)
=\Phi_1(\Delta_{0,1};1),\\
&\cF ((\Delta_{-2\nu +1,\varepsilon })_{\mu ,\nu ,\infty})(0)
=e^{\varepsilon \pi \sI \mu /2}\Phi_1(\Delta_{-2\nu +1,1};2\nu )&
&(\varepsilon \in \{\pm 1\}),
\end{align*}
we know that (\ref{eqn:DS_entire}) is entire.

Since $(T_{\alpha_1},T_{\alpha_2})\in \mathcal{A} ({L}_1,{L}_2;J_{\mu ,\nu})$ 
if and only if 
$(T_{\alpha_2},T_{\alpha_1})\in \mathcal{A} ({L}_2,{L}_1;J_{\mu ,\nu})$, 
by Lemma \ref{lem:AC_global_zeta}, we note that 
$Z(\alpha_{2},\cF (f_{\mu,\nu,\infty});-s-2\nu +1)$ is equal to 
the right-hand side of (\ref{eqn:pf_zeta_FE_002}). 
Hence we obtain the equality
\begin{align}
\label{eqn:pf_zeta_FE_004}
Z(\alpha_{1},\cF (f);s)
=Z(\alpha_{2},\cF (f_{\mu,\nu,\infty});-s-2\nu +1). 
\end{align}
By Lemmas \ref{lem:LFE_infty}, \ref{lem:zeta_converge} and 
Proposition \ref{prop:LFE}, we have 
\begin{align}
\label{eqn:pf_zeta_FE_005}
\begin{split}
&Z(\alpha_{2},\cF (f_{\mu ,\nu ,\infty});s)\\
&=(\Phi_{1}(f;s+2\nu ),\Phi_{-1}(f;s+2\nu ))\Sigma_{\mu}
{\mathrm{E}}(s)
\left(\begin{array}{c}
\Xi_+(\alpha_{2} ;s)\\
\Xi_-(\alpha_{2} ;s)
\end{array}\right). 
\end{split}
\end{align}
By (\ref{eqn:pf_zeta_FE_003}), 
(\ref{eqn:pf_zeta_FE_004}) and (\ref{eqn:pf_zeta_FE_005}) 
with $f=\Delta_{s_0,\varepsilon}$ 
($s_0\in \bC$, $\varepsilon \in \{\pm 1\}$) and (\ref{eqn:pf_zeta_FE_006}), 
we obtain the functional equation in {\normalfont [D1]}. 
\end{proof}

\subsection{The estimate on vertical strips}
\label{subsec:est_DS}

In this subsection, we complete a proof of Theorem \ref{thm:DS} (i).  
We use the test functions $\varphi_{\varepsilon ,\sigma}\in C^\infty (\bR)$ 
and $\varphi_{\varepsilon ,\sigma ,u}\in C^\infty_0(\bR^\times )$ 
in \S \ref{subsec:test_ftn}. 

\begin{lem}
\label{lem:test_Z_ACexp}
We use the notation in Proposition \ref{prop:DS_FE}. 
Let $r$ be a real number such that 
$\alpha_{1}\in \mathfrak{M}_r(L_1)$ and 
$\alpha_{2}\in \mathfrak{M}_r(L_2)$. 
Let $\varepsilon \in \{\pm 1\}$, and 
take $\sigma >1$ and $n\in \bZ$ so that 
$\sigma -2\mathrm{Re}(\nu )>2n
\geq 2\max\{r+1,0\}$. 
For $s\in \bC$ such that 
$\mathrm{Re}(s)>\max \{n,-n-2\mathrm{Re}(\nu )+1\}$, 
we have 
\begin{align*}
Z(\alpha_{1},\cF (\varphi_{\varepsilon ,\sigma});s)
=
&Z_+(\alpha_{1},\cF (\varphi_{\varepsilon ,\sigma});s)
+Z_+(\alpha_{2},
\cF ((\varphi_{\varepsilon ,\sigma})_{\mu,\nu,\infty});
-s-2\nu +1)\\
&-\frac{\alpha_{1}(0)\cF (\varphi_{\varepsilon ,\sigma})(0)}{s}
+\frac{\alpha_{2}(0)
\cF ((\varphi_{\varepsilon ,\sigma})_{\mu,\nu,\infty})(0)}{s+2\nu -1}.
\end{align*}
\end{lem}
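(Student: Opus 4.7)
The plan is to deduce this identity from Lemma \ref{lem:AC_global_zeta} by approximating $\varphi_{\varepsilon,\sigma}$ by its compactly supported truncations $\varphi_{\varepsilon,\sigma,u}\in C_0^\infty(\bR^\times)$ and letting $u\to\infty$. First, since $\mathrm{Re}(s)>n\ge\max\{r+1,0\}$, Lemma \ref{lem:AC_global_zeta} applied to $f=\varphi_{\varepsilon,\sigma,u}$ yields the claimed identity at this $s$ with every $\varphi_{\varepsilon,\sigma}$ replaced by $\varphi_{\varepsilon,\sigma,u}$. It then suffices to verify that each of the four terms on the right converges, as $u\to\infty$, to the corresponding term for $\varphi_{\varepsilon,\sigma}$, and similarly for the left side.

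The two point-evaluations at $y=0$ are handled immediately by Propositions \ref{prop:test_Shintani1} and \ref{prop:test_Shintani2} with $n=0$, which give
\[
|\cF(\varphi_{\varepsilon,\sigma}-\varphi_{\varepsilon,\sigma,u})(0)|=O(u^{1-\sigma}),\qquad
|\cF((\varphi_{\varepsilon,\sigma}-\varphi_{\varepsilon,\sigma,u})_{\mu,\nu,\infty})(0)|=O(u^{-\sigma+2\mathrm{Re}(\nu)}),
\]
both vanishing as $u\to\infty$ since $\sigma>1$ and $\sigma>2\mathrm{Re}(\nu)$. For the two $Z_+$-integrals and the full $Z$-integral on the left, I would combine Proposition \ref{prop:test_Shintani1} at $n=0$ together with a large auxiliary integer $N$ to produce uniform estimates of the form
\[
\sup_{x\in\bR^\times}|x|^{\sigma_0}|\cF(\varphi_{\varepsilon,\sigma}-\varphi_{\varepsilon,\sigma,u})(x)|=O(u^{1-\sigma})
\]
for any fixed $\sigma_0\ge 0$, and insert this into Lemmas \ref{lem:par_zeta_converge} and \ref{lem:zeta_converge} (with $\sigma_1\in(r+1,\mathrm{Re}(s))$ and $\sigma_2=\sigma_0$ chosen large) to conclude that the differences $Z(\alpha_1,\cF(\varphi_{\varepsilon,\sigma}-\varphi_{\varepsilon,\sigma,u});s)$ and $Z_+(\alpha_1,\cF(\varphi_{\varepsilon,\sigma}-\varphi_{\varepsilon,\sigma,u});s)$ both tend to $0$. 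The analogous argument with Proposition \ref{prop:test_Shintani2} in place of Proposition \ref{prop:test_Shintani1} disposes of $Z_+(\alpha_2,\cF((\varphi_{\varepsilon,\sigma}-\varphi_{\varepsilon,\sigma,u})_{\mu,\nu,\infty});-s-2\nu+1)$, with decay rate $O(u^{-\sigma+2\mathrm{Re}(\nu)+2n})$.

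The subtle point, and the reason for the second condition $\mathrm{Re}(s)>-n-2\mathrm{Re}(\nu)+1$, lies in this last convergence. Because $(\varphi_{\varepsilon,\sigma})_{\mu,\nu,\infty}$ inherits the singular factor $|x|^{-2\nu-1}$ from $J_{\mu,\nu}$ near $0$, its Fourier transform possesses only limited polynomial decay: by Corollary \ref{cor:test_Shintani3}, the function $|x|^{\tau}|\cF((\varphi_{\varepsilon,\sigma})_{\mu,\nu,\infty})(x)|$ is bounded only for $\tau\le n$ within the allowed range $2n<\sigma-2\mathrm{Re}(\nu)$. Consequently, invoking Lemma \ref{lem:par_zeta_converge} (i) for the $Z_+$-integral evaluated at $-s-2\nu+1$ forces $\mathrm{Re}(-s-2\nu+1)<n$, i.e.\ $\mathrm{Re}(s)>-n-2\mathrm{Re}(\nu)+1$. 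Balancing this against the first constraint $\mathrm{Re}(s)>n$—which ensures absolute convergence of the left-hand $Z$ and of $Z_+(\alpha_1,\cF\varphi_{\varepsilon,\sigma};s)$—is the main technical obstacle; the whole reason for introducing the auxiliary integer $n$ into the statement is precisely to quantify the loss of decay at infinity caused by the singularity of $(\varphi_{\varepsilon,\sigma})_{\mu,\nu,\infty}$ at $0$. Once both constraints are in force, the four limits above collectively yield the desired identity.
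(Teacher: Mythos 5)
Your proposal is correct and follows essentially the same route as the paper: apply Lemma \ref{lem:AC_global_zeta} to the truncations $\varphi_{\varepsilon,\sigma,u}$, then control the error as $u\to\infty$ via Propositions \ref{prop:test_Shintani1} and \ref{prop:test_Shintani2} fed into Lemma \ref{lem:par_zeta_converge}, with the auxiliary integer $n$ playing exactly the role you identify—compensating for the limited decay of $\cF((\varphi_{\varepsilon,\sigma})_{\mu,\nu,\infty})$ caused by the $|x|^{-2\nu-1}$ singularity. One small inaccuracy: $Z_+(\alpha_1,\cF(\varphi_{\varepsilon,\sigma});s)$ is in fact entire in $s$ (the rapid decay of $\cF(\varphi_{\varepsilon,\sigma})$ makes Lemma \ref{lem:par_zeta_converge} (i) apply on every half-plane), so the condition $\mathrm{Re}(s)>n$ is really enforcing convergence of $Z_-(\alpha_1,\cdot\,;s)$ and of the error term $Z_-(\alpha_1,\cF(\varphi_{\varepsilon,\sigma}-\varphi_{\varepsilon,\sigma,u});s)$, not of $Z_+$.
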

\begin{proof}
By (\ref{eqn:Fourier_phies_estimate}), 
Corollary \ref{cor:test_Shintani3} and Lemma \ref{lem:par_zeta_converge}, 
we know that the integrals 
$Z_+(\alpha_{1},\cF (\varphi_{\varepsilon ,\sigma});s)$, 
$Z_-(\alpha_{1},\cF (\varphi_{\varepsilon ,\sigma});s)$ and 
$Z_+(\alpha_{2},\cF ((\varphi_{\varepsilon ,\sigma})_{\mu,\nu,\infty});s)$ 
converge absolutely and define holomorphic functions 
on the whole $s$-plane, $\mathrm{Re}(s)>n$ and $\mathrm{Re}(s)<n$, 
respectively. 
Let $u>1$. 
Because of the proof of Lemma \ref{lem:AC_global_zeta}, 
for $s\in \bC$ such that $\mathrm{Re}(s)>\max\{r+1,0\}$, 
we have 
\begin{align}
\label{eqn:pf_test_Z_ACexp}
\begin{split}
Z_-(\alpha_{1},\cF (\varphi_{\varepsilon ,\sigma ,u});s)
=
&Z_+(\alpha_{2},
\cF ((\varphi_{\varepsilon ,\sigma ,u})_{\mu,\nu,\infty});-s-2\nu +1)\\
&-\frac{\alpha_{1}(0)\cF (\varphi_{\varepsilon ,\sigma ,u})(0)}{s}
+\frac{\alpha_{2}(0)
\cF ((\varphi_{\varepsilon ,\sigma ,u})_{\mu,\nu,\infty})(0)}{s+2\nu -1}.
\end{split}
\end{align}
Let $s\in \bC$ such that $\mathrm{Re}(s)>\max \{n,-n-2\mathrm{Re}(\nu )+1\}$. 
Then we have 
\begin{align*}
\nonumber 
&\biggl| Z_-(\alpha_{1},\cF (\varphi_{\varepsilon ,\sigma} );s)
-Z_+(\alpha_{2},
\cF ((\varphi_{\varepsilon ,\sigma})_{\mu,\nu,\infty});-s-2\nu +1)\\
&
+\frac{\alpha_{1}(0)\cF (\varphi_{\varepsilon ,\sigma})(0)}{s}
-\frac{\alpha_{2}(0)
\cF ((\varphi_{\varepsilon ,\sigma})_{\mu,\nu,\infty})(0)}{s+2\nu -1}\biggr|\\
&=\biggl| Z_-(\alpha_{1},\cF (\varphi_{\varepsilon ,\sigma} 
-\varphi_{\varepsilon ,\sigma,u});s)
-Z_+(\alpha_{2},\cF ((\varphi_{\varepsilon ,\sigma}
-\varphi_{\varepsilon ,\sigma,u})_{\mu,\nu,\infty});-s-2\nu +1)\\
&\hphantom{=.}
+\frac{\alpha_{1}(0)\cF (\varphi_{\varepsilon ,\sigma} 
-\varphi_{\varepsilon ,\sigma,u})(0)}{s}
-\frac{\alpha_{2}(0)\cF ((\varphi_{\varepsilon ,\sigma}
-\varphi_{\varepsilon ,\sigma,u})_{\mu,\nu,\infty})(0)}
{s+2\nu -1}\biggr|\\
&\leq 
\frac{C_{\alpha_1,n}}{\mathrm{Re}(s)-n }
\underset{\ x\in \bR^\times \hspace{-1mm}}{\sup}
(|x|^{n}|\cF (\varphi_{\varepsilon ,\sigma} 
-\varphi_{\varepsilon ,\sigma,u})(x)|)\\
&\hphantom{=.}
+\frac{C_{\alpha_2,{n}}}{n -\mathrm{Re}(-s-2\nu +1)}
\underset{\ x\in \bR^\times \hspace{-1mm}}{\sup}
(|x|^{n}|\cF ((\varphi_{\varepsilon ,\sigma}
-\varphi_{\varepsilon ,\sigma,u})_{\mu,\nu,\infty})(x)|)\\
&\hphantom{=.}
+\frac{|\alpha_{1}(0)|
}{|s|}|\cF (\varphi_{\varepsilon ,\sigma} 
-\varphi_{\varepsilon ,\sigma,u})(0)|
+\frac{|\alpha_{2}(0)|}{|s+2\nu -1|}
|\cF ((\varphi_{\varepsilon ,\sigma}
-\varphi_{\varepsilon ,\sigma,u})_{\mu,\nu,\infty})(0)|\\
&\leq 
\frac{C_{\alpha_1,n}c_{\sigma ,n}}{\mathrm{Re}(s)-n }u^{1-\sigma }
+\frac{C_{\alpha_2,{n}}c_{\mu ,\nu ,\sigma ,n}}{n -\mathrm{Re}(-s-2\nu +1)}
u^{-(\sigma -2\mathrm{Re}(\nu )-2n)}\\
&\hphantom{=.}
+\frac{|\alpha_{1}(0)|
}{|s|}c_{\sigma ,0}u^{1-\sigma }
+\frac{|\alpha_{2}(0)|}{|s+2\nu -1|}
c_{\mu ,\nu ,\sigma ,0}u^{-(\sigma -2\mathrm{Re}(\nu ))}.
\end{align*}
Here the first equality follows from (\ref{eqn:pf_test_Z_ACexp}), 
the first inequality follows from Lemma \ref{lem:par_zeta_converge}, 
and the second inequality follows from 
Propositions \ref{prop:test_Shintani1} 
and \ref{prop:test_Shintani2}. 
Since this inequality holds for any $u>1$,
we have 
\begin{align*}
\nonumber 
Z_-(\alpha_{1},\cF (\varphi_{\varepsilon ,\sigma} );s)
=&Z_+(\alpha_{2},
\cF ((\varphi_{\varepsilon ,\sigma})_{\mu,\nu,\infty});-s-2\nu +1)\\
&-\frac{\alpha_{1}(0)\cF (\varphi_{\varepsilon ,\sigma})(0)}{s}
+\frac{\alpha_{2}(0)
\cF ((\varphi_{\varepsilon ,\sigma})_{\mu,\nu,\infty})(0)}{s+2\nu -1}. 
\end{align*}
By this equality, we obtain the assertion. 
\end{proof}

\begin{proof}[Proof of Theorem \ref{thm:DS} (i)]
Because of Proposition \ref{prop:DS_FE}, it suffices to prove that 
the meromorphically continued function $\Xi_\pm (\alpha_1;s)$ satisfies 
the condition {\normalfont [D2-1]} in \S \ref{subsec:DS_AP}. 
Let $\sigma_1,\sigma_2\in \bR $ such that 
$\sigma_1<\sigma_2$. 
We take $r\in \bR$ so that $\alpha_{1}\in \mathfrak{M}_r(L_1)$ 
and $\alpha_{2}\in \mathfrak{M}_r(L_2)$. 
Let $\varepsilon \in \{\pm 1\}$. 
We take 
$\sigma >1$ and $n\in \bZ$ so that 
$\sigma -2\mathrm{Re}(\nu )>2n
\geq 2\max\{r+1,0\}$ and 
$\max \{ 1-\sigma ,\, -n-2\mathrm{Re}(\nu )+1\}<\sigma_1$.

By Lemma \ref{lem:zeta_converge} and Lemma \ref{lem:LFE_test}, 
for $s\in \bC$ such that $\mathrm{Re}(s)>n$, we have 
\begin{align*}
&\nonumber 
Z(\alpha_{1},\cF (\varphi_{\varepsilon ,\sigma});s)
=(\Phi_{1}(\cF (\varphi_{\varepsilon ,\sigma});s),\ 
\Phi_{-1}(\cF (\varphi_{\varepsilon ,\sigma});s))
\left(\begin{array}{c}
\xi_+(\alpha_{1} ;s)\\
\xi_-(\alpha_{1} ;s)
\end{array}\right)\\
&=\left(\,\Phi_{1}(\varphi_{\varepsilon ,\sigma};-s+1),\,
\Phi_{-1}(\varphi_{\varepsilon ,\sigma};-s+1)\,\right){\mathrm{E}}(s)
\left(\begin{array}{c}
\Xi_+(\alpha_{1} ;s)\\
\Xi_-(\alpha_{1} ;s)
\end{array}\right).
\end{align*}
Since 
$\Phi_{1}(\varphi_{1,\sigma};s)=\Phi_{-1}(\varphi_{-1,\sigma};s)$ 
and $\Phi_{-1}(\varphi_{1,\sigma};s)=\Phi_{1}(\varphi_{-1,\sigma};s)=0$, 
we have 
\begin{align}
\label{eqn:pf_AP_to_DS005}
\left(\begin{array}{c}
\Xi_+(\alpha_{1} ;s)\\
\Xi_-(\alpha_{1} ;s)
\end{array}\right)
=
\frac{1}{\Phi_{1}(\varphi_{1,\sigma};-s+1)}{\mathrm{E}}(s)^{-1}
\left(\begin{array}{c}
Z(\alpha_{1},\cF (\varphi_{1,\sigma});s)\\
Z(\alpha_{1},\cF (\varphi_{-1,\sigma});s)
\end{array}\right).
\end{align}

By Lemma \ref{lem:test_Z_ACexp}, we have 
\begin{align*}
\begin{split}
Z(\alpha_{1},\cF (\varphi_{\varepsilon ,\sigma});s)
=
&Z_+(\alpha_{1},\cF (\varphi_{\varepsilon ,\sigma});s)
+Z_+(\alpha_{2},
\cF ((\varphi_{\varepsilon ,\sigma})_{\mu ,\nu,\infty});
-s-2\nu +1)\\
&-\frac{\alpha_{1}(0)\cF (\varphi_{\varepsilon ,\sigma})(0)}{s}
+\frac{\alpha_{2}(0)
\cF ((\varphi_{\varepsilon ,\sigma})_{\mu ,\nu,\infty})(0)}{s+2\nu -1} 
\end{split}
\end{align*}
for $s\in \bC$ such that 
$\mathrm{Re}(s)>\max \{n,-n-2\mathrm{Re}(\nu )+1\}$. 
This expression gives the meromorphic continuation of 
$Z(\alpha_{1},\cF (\varphi_{\varepsilon ,\sigma});s)$ 
to $\mathrm{Re}(s)>-n-2\mathrm{Re}(\nu )+1$, 
and the inequality  
\begin{align*}
\left|Z(\alpha_{1},\cF (\varphi_{\varepsilon ,\sigma});s)\right|
%
&\leq 
\int_1^\infty \left|\vartheta ({\alpha_{1} ,
\cF (\varphi_{\varepsilon ,\sigma})};t)\right|
t^{{\sigma}_2}\,\frac{dt}{t}\\
&\hphantom{=.}
+\int_1^\infty 
\left|\vartheta(\alpha_{2},
\cF ((\varphi_{\varepsilon ,\sigma})_{\mu ,\nu,\infty});t)\right|
t^{-{\sigma}_1-2\mathrm{Re}(\nu) +1}\,\frac{dt}{t}\\
&\hphantom{=.}
+\left| \frac{\alpha_{1}(0)\cF (\varphi_{\varepsilon ,\sigma})(0)}{s}\right|
+\left| \frac{\alpha_{2}(0)
\cF ((\varphi_{\varepsilon ,\sigma})_{\mu ,\nu,\infty})(0)}{s+2\nu -1}\right| 
\end{align*}
for $s\in \bC$ such that ${\sigma}_1\leq \mathrm{Re}(s)\leq {\sigma}_2$. 
Hence, we have 
\begin{align*}
&Z(\alpha_{1},\cF (\varphi_{\varepsilon ,\sigma});s)
=O(1)&&(|s|\to \infty)&
&\text{uniformly on \ $\sigma_1\leq \mathrm{Re}(s)\leq \sigma_2$}.
\end{align*}
By (\ref{eqn:est_LZ_test}), (\ref{eqn:pf_AP_to_DS005}) and 
this estimate, we obtain the assertion. 
\end{proof}

\subsection{The converse theorem}
\label{subsec:converse}

In this subsection, 
we give a proof of Theorem \ref{thm:DS} (ii). 

\begin{lem}
\label{lem:cdn_zeta_converse}
We use the notation in Theorem \ref{thm:DS} (ii). 
Then, for any $f\in C^\infty_0(\bR^\times)$, 
the zeta integrals $Z(\alpha_{1},\cF (f);s)$ and 
$Z(\alpha_2,\cF (f_{\mu,\nu,\infty});s)$ satisfy 
the following conditions {\normalfont [Z1]} and {\normalfont [Z2]}: 
\begin{description}
\item[{\normalfont [Z1]}]
The zeta integral $Z(\alpha_{1},\cF (f);s)$ 
has the meromorphic continuation 
to the whole $s$-plane, and the function 
\begin{align}
\label{eqn:zeta_entire}
Z(\alpha_{1},\cF (f);s)+\frac{\alpha_{1}(0)\cF (f)(0)}{s}
-\frac{\alpha_{2}(0)\cF (f_{\mu,\nu,\infty})(0)}{s+2\nu -1}
\end{align}
is entire. 
Moreover, $Z(\alpha_{1},\cF (f);s)$ and 
$Z(\alpha_2,\cF (f_{\mu,\nu,\infty});s)$ satisfy 
the functional equation 
\begin{align}
\label{eqn:zeta_FE}
Z(\alpha_{1},\cF (f);s)
=Z(\alpha_{2},\cF (f_{\mu,\nu,\infty});-s-2\nu+1).
\end{align}
\item[{\normalfont [Z2]}]For any ${\sigma}_1,{\sigma}_2\in \bR$ 
such that $\sigma_1<\sigma_2$, 
there is some $c_0\in \bR_{>0}$ such that 
\begin{align*}
&Z(\alpha_{1},\cF (f);s)=O({e}^{|s|^{c_0}})&
&(|s|\to \infty )&
&\text{uniformly on ${\sigma}_1\leq \mathrm{Re}(s)\leq {\sigma}_2$}. 
\end{align*} 
\end{description}
\end{lem}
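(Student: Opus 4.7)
The plan is to start from the identity
\begin{align*}
Z(\alpha_1, \cF (f); s) = (\Phi_1(f; -s+1), \Phi_{-1}(f; -s+1))\, {\mathrm{E}}(s) \begin{pmatrix} \Xi_+(\alpha_1; s) \\ \Xi_-(\alpha_1; s) \end{pmatrix},
\end{align*}
which holds on $\mathrm{Re}(s) > r+1$ for any $r$ with $\alpha_1 \in \mathfrak{M}_r(L_1)$, and is obtained by combining Lemma \ref{lem:zeta_converge} with the local functional equation in Proposition \ref{prop:LFE}, and then to transport every claim about $\Xi_{\pm}(\alpha_1; s)$ supplied by {\normalfont [D1]} and {\normalfont [D2-2]} to $Z(\alpha_1, \cF (f); s)$ through this formula. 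Because $f \in C_0^\infty(\bR^\times)$, Lemma \ref{lem:LFE_infty} ensures that $\Phi_{\pm 1}(f; \cdot)$ is entire and bounded on every vertical strip, so the meromorphic continuation of the right-hand side furnished by {\normalfont [D1]} immediately yields meromorphic continuation of $Z(\alpha_1, \cF (f); s)$ to the whole $s$-plane.

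For the functional equation in {\normalfont [Z1]}, I would substitute the functional equation of {\normalfont [D1]} into the right-hand side of the displayed identity, producing a row-column product in which the row is $(\Phi_1(f;-s+1),\Phi_{-1}(f;-s+1))\Sigma_\mu$ and the column involves $\Xi_\pm(\alpha_2;-s-2\nu+1)$. Lemma \ref{lem:LFE_infty} (with the parameter substitution $s \to s+2\nu$) then identifies the row with $(\Phi_1(f_{\mu,\nu,\infty};s+2\nu),\Phi_{-1}(f_{\mu,\nu,\infty};s+2\nu))$, so that the resulting expression is precisely the analogue of the displayed identity for $Z(\alpha_2, \cF (f_{\mu,\nu,\infty}); -s-2\nu+1)$, giving (\ref{eqn:zeta_FE}).

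For the entireness of (\ref{eqn:zeta_entire}), the only possible poles of the right-hand side of the displayed identity come from ${\mathrm{E}}(s)$ acting on the $\Xi_\pm(\alpha_1; s)$ vector, which by the entireness clause of {\normalfont [D1]} has at most simple poles at $s=0$ and $s=1-2\nu$ with residues controlled by $-\alpha_1(0)$ and $\alpha_2(0)\Sigma_\mu$ (applied to the all-ones column vector). Pairing these residues with $(\Phi_1(f;-s+1),\Phi_{-1}(f;-s+1))$ and using the identities $\Phi_1(f;1)+\Phi_{-1}(f;1)=\cF (f)(0)$ and $e^{\pi\sI\mu/2}\Phi_1(f;2\nu)+e^{-\pi\sI\mu/2}\Phi_{-1}(f;2\nu)=\cF (f_{\mu,\nu,\infty})(0)$ (the latter read off from Lemma \ref{lem:LFE_infty}) produces residues $-\alpha_1(0)\cF (f)(0)$ at $s=0$ and $\alpha_2(0)\cF (f_{\mu,\nu,\infty})(0)$ at $s=1-2\nu$ for $Z(\alpha_1,\cF (f);s)$, which are precisely cancelled by the correction terms in (\ref{eqn:zeta_entire}). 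The verification persists uniformly in the coincidence case $\nu=1/2$, when both poles collide at $s=0$.

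For {\normalfont [Z2]}, fix $\sigma_1<\sigma_2$. On the strip $\sigma_1 \le \mathrm{Re}(s)\le \sigma_2$, Lemma \ref{lem:LFE_infty} bounds $(\Phi_1(f;-s+1),\Phi_{-1}(f;-s+1))$, the entries of ${\mathrm{E}}(s)$ are $O(e^{\pi|s|/2})$, and {\normalfont [D2-2]} gives $\Xi_\pm(\alpha_1;s)=O(e^{|s|^{c_0}})$; multiplying yields $Z(\alpha_1,\cF (f);s) = O(e^{|s|^{c_0'}})$ on the strip for any $c_0'>\max\{c_0,1\}$. I do not anticipate any deep obstacle; the only subtle step is the residue bookkeeping in the preceding paragraph, where the two identifications of Fourier-transform values as limits of linear combinations of local zeta values must be traced carefully through Lemma \ref{lem:LFE_infty}, while all remaining claims follow routinely once the displayed identity is in hand.
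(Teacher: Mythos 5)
Your proposal is correct and follows the same route as the paper: the paper's own proof also starts from the identity $Z(\alpha_{1},\cF (f);s)=(\Phi_{1}(f;-s+1),\Phi_{-1}(f;-s+1)){\mathrm{E}}(s)\bigl(\Xi_+(\alpha_{1};s),\Xi_-(\alpha_{1};s)\bigr)^{\mathrm t}$ (and its counterpart for $Z(\alpha_{2},\cF(f_{\mu,\nu,\infty});s)$, which you instead re-derive via Lemma \ref{lem:LFE_infty}), and then transports {\normalfont [D1]}, {\normalfont [D2-2]} through it using Lemma \ref{lem:LFE_infty} and $\cF(f)(0)=\Phi_{1}(f;1)+\Phi_{-1}(f;1)$. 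Your residue computation at $s=0$ and $s=1-2\nu$, including the identity $\cF(f_{\mu,\nu,\infty})(0)=e^{\pi\sI\mu/2}\Phi_{1}(f;2\nu)+e^{-\pi\sI\mu/2}\Phi_{-1}(f;2\nu)$ read off from Lemma \ref{lem:LFE_infty}, is exactly the bookkeeping the paper leaves implicit.
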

\begin{proof}
By Lemmas \ref{lem:LFE_infty}, \ref{lem:zeta_converge} and 
Proposition \ref{prop:LFE}, 
we have 
\begin{align*}
&Z(\alpha_{1},\cF (f);s)
=\left(\,\Phi_{1}(f;-s+1),\,\Phi_{-1}(f;-s+1)\,\right){\mathrm{E}}(s)
\left(\begin{array}{c}
\Xi_+(\alpha_{1};s)\\
\Xi_-(\alpha_{1};s)
\end{array}\right),\\
&Z(\alpha_{2},\cF (f_{\mu ,\nu ,\infty});s)
=(\Phi_{1}(f;s+2\nu ),\Phi_{-1}(f;s+2\nu ))\Sigma_{\mu}
{\mathrm{E}}(s)
\left(\begin{array}{c}
\Xi_+(\alpha_{2};s)\\
\Xi_-(\alpha_{2};s)
\end{array}\right) 
\end{align*}
for $s\in \bC$ with a sufficiently large real part. 
Using these equalities, Lemma \ref{lem:LFE_infty} and 
$\cF (f)(0)=\Phi_{1}(f;1)+\Phi_{-1}(f;1)$, 
we obtain {\normalfont [Z1]} and {\normalfont [Z2]} 
from the conditions {\normalfont [D1]} and {\normalfont [D2-2]} 
in \S \ref{subsec:DS_AP}. 
\end{proof}

We recall the Mellin inversion formula. 
For a continuous function $\vartheta$ on $\bR_{>0}$, 
we define the Mellin transform $\cM (\vartheta )$ by 
\begin{align*}
&\cM (\vartheta )(s)=\int_0^\infty \vartheta (t)t^{s-1}dt&
&(s\in \bC).
\end{align*}

\begin{lem}[The Mellin inversion formula]
\label{lem:Mellin_inv}
Let $\vartheta$ be a continuous function on $\bR_{>0}$. 
Assume that there exists $\sigma \in \bR$ such that 
\begin{align}
\label{eqn:cdn_Mellin_001}
&\int_0^\infty |\vartheta (t)|t^{\sigma -1}\,dt<\infty ,&
&\int_{\mathrm{Re}(s)=\sigma }|\cM (\vartheta )(s)|\,ds<\infty .
\end{align}
Here the path of the integration $\int_{\mathrm{Re}(s)=\sigma }$ 
is the vertical line from $\sigma -\sI \infty$ to $\sigma +\sI \infty$. 
Then we have $\displaystyle \vartheta (t)=\frac{1}{2\pi \sI}
\int_{\mathrm{Re}(s)=\sigma}\cM (\vartheta )(s)t^{-s}\,ds$ 
$(t>0)$. 
\end{lem}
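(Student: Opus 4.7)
The plan is to reduce the Mellin inversion formula to the classical Fourier inversion theorem via the exponential change of variables. Concretely, I set $t=e^u$ and define
\[
g(u)=\vartheta(e^u)e^{\sigma u}\qquad (u\in\bR).
\]
Since $\vartheta$ is continuous on $\bR_{>0}$, the function $g$ is continuous on $\bR$. Substituting $t=e^u$ in the integral defining $\cM(\vartheta)(\sigma+\sI\tau)$ yields
\[
\cM(\vartheta)(\sigma+\sI\tau)=\int_{-\infty}^{\infty}g(u)\,e^{\sI\tau u}\,du,
\]
which, after the further substitution $\tau=2\pi y$, is exactly $\cF(g)(y)$ with the Fourier normalization used in this paper. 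The substitutions also transform the two integrability hypotheses in \eqref{eqn:cdn_Mellin_001} respectively into $g\in L^1(\bR)$ and $\cF(g)\in L^1(\bR)$.

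Next I apply the classical Fourier inversion theorem: since $g\in L^1(\bR)$ is continuous and $\cF(g)\in L^1(\bR)$, the inverse Fourier transform of $\cF(g)$ is also continuous (by dominated convergence), and Fourier inversion gives the pointwise equality
\[
g(u)=\int_{-\infty}^{\infty}\cF(g)(y)\,e^{-2\pi\sI uy}\,dy\qquad (u\in\bR).
\]
Substituting back $\cF(g)(y)=\cM(\vartheta)(\sigma+2\pi\sI y)$ and then returning to the variable $\tau=2\pi y$ (so $d\tau=2\pi\,dy$) yields
\[
\vartheta(e^u)e^{\sigma u}=\frac{1}{2\pi}\int_{-\infty}^{\infty}\cM(\vartheta)(\sigma+\sI\tau)\,e^{-\sI u\tau}\,d\tau.
\]

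Finally I reparameterize the vertical line by $s=\sigma+\sI\tau$ (so $ds=\sI\,d\tau$) and write $e^{-us}=t^{-s}$ with $t=e^u$. Multiplying both sides by $e^{-\sigma u}=t^{-\sigma}$ and collecting the factors of $\sI$ converts the last display into
\[
\vartheta(t)=\frac{1}{2\pi\sI}\int_{\mathrm{Re}(s)=\sigma}\cM(\vartheta)(s)\,t^{-s}\,ds\qquad (t>0),
\]
which is the asserted identity. There is no genuinely hard step here: the only thing that needs care is the verification that the two hypotheses in \eqref{eqn:cdn_Mellin_001} translate, under $t=e^u$ and $\tau=2\pi y$, into exactly the $L^1$ conditions on $g$ and $\cF(g)$ required to invoke pointwise Fourier inversion, together with the continuity of $g$ that upgrades the a.e.~identity to an everywhere identity.
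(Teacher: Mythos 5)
Your proof is correct and follows exactly the route the paper has in mind: the paper's entire proof is the one-line remark ``We derive this lemma from the Fourier inversion formula,'' and your argument supplies the standard exponential substitution $t=e^u$ that carries this out. All the bookkeeping (the $L^1$ conditions on $g$ and $\cF(g)$, the use of continuity to upgrade a.e.\ equality to everywhere, the factors of $2\pi$ and $\sI$ from $\tau=2\pi y$ and $ds=\sI\,d\tau$) checks out.
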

\begin{proof}
We derive this lemma from the Fourier inversion formula 
(See, for example, \cite[\S 1.5]{Bump_004}). 
\end{proof}

\begin{lem}
\label{lem:cthm_junbi}
Let ${L}$ be a shifted lattice in $\bR$. 
Let $r\in \bR $, $\alpha \in {\mathfrak{M}}_r({L})$, $f\in \cS (\bR)$ 
and ${\sigma}>\max \{0,r+1\}$. 
Then 
\begin{align}
\label{eqn:cthm_junbi_estimate}
&Z(\alpha ,f;s)=O(|s|^{-n})\ \ (|s|\to \infty)\ \ 
\text{ on \ $\mathrm{Re}(s)=\sigma$}
\end{align}
for any $n\in \bZ_{\geq 0}$. Moreover, we have 
\begin{align}
\label{eqn:cthm_junbi_Mellin}
&\vartheta (\alpha ,f;t)=\frac{1}{2\pi \sI}
\int_{\mathrm{Re}(s)=\sigma}Z(\alpha ,f;s)t^{-s}\,ds&
&(t>0).
\end{align}
\end{lem}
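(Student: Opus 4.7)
The plan is to deduce both claims from a single pair of growth/decay estimates on $\vartheta(\alpha, f; t)$ and its derivatives, after which the decay of $Z$ comes from iterated integration by parts and the Mellin inversion follows from Lemma \ref{lem:Mellin_inv}.

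First, I would show that $\vartheta(\alpha, f; t)$ is smooth on $\bR_{>0}$ and satisfies
\[
\vartheta^{(k)}(\alpha, f; t) = \vartheta(\alpha_k, f^{(k)}; t), \qquad \alpha_k(l) := l^k \alpha(l) \in \mathfrak{M}_{r+k}(L),
\]
by verifying that term-by-term differentiation is legitimate (the differentiated series converges uniformly on compact subsets of $(0,\infty)$). Since $f^{(k)} \in \mathcal{S}(\bR)$, the bound (\ref{eqn:theta_est}) from Lemma \ref{lem:theta_modoki1} applies with any $\sigma' > r+k+1$. Specializing, one obtains the two-sided estimates: $\vartheta^{(k)}(\alpha, f; t) = O(t^{-(r+k+1+\varepsilon)})$ as $t \to 0^+$ (any $\varepsilon > 0$), and $\vartheta^{(k)}(\alpha, f; t) = O(t^{-N})$ as $t \to \infty$ for every $N$.

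Second, I would prove (\ref{eqn:cthm_junbi_estimate}) by integration by parts. Since $\sigma > r+1$, the above estimates guarantee that for each fixed $n$ the boundary terms $[\vartheta^{(k)}(t) \, t^{s+k}]_{0}^{\infty}$ vanish for $0 \leq k \leq n-1$ on the line $\mathrm{Re}(s) = \sigma$ (indeed the integrand decays rapidly at $\infty$ and is $O(t^{\sigma - r - 1 - \varepsilon})$ at $0$). Iterating $n$ times yields
\[
Z(\alpha, f; s) = \frac{(-1)^n}{s(s+1)\cdots(s+n-1)} \int_0^\infty \vartheta^{(n)}(\alpha, f; t) \, t^{s+n-1} \, dt,
\]
and the integral is bounded in modulus, uniformly on $\mathrm{Re}(s) = \sigma$, by $\int_0^\infty |\vartheta^{(n)}| \, t^{\sigma + n - 1} \, dt$, which is finite by the estimates of the first step. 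Since $|s(s+1)\cdots(s+n-1)| \gg |s|^n$ as $|s| \to \infty$, this gives $Z(\alpha, f; s) = O(|s|^{-n})$ on $\mathrm{Re}(s) = \sigma$.

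Third, for (\ref{eqn:cthm_junbi_Mellin}) I would verify the two hypotheses of Lemma \ref{lem:Mellin_inv}. The absolute integrability $\int_0^\infty |\vartheta(\alpha, f; t)| t^{\sigma-1} dt < \infty$ follows from the growth/decay estimates in the first step together with $\sigma > r+1$; the integrability of $Z(\alpha, f; s)$ along $\mathrm{Re}(s) = \sigma$ follows from (\ref{eqn:cthm_junbi_estimate}) with, say, $n = 2$. Then Lemma \ref{lem:Mellin_inv} applied to $\vartheta = \vartheta(\alpha, f; \cdot)$ yields (\ref{eqn:cthm_junbi_Mellin}) directly, since $Z(\alpha, f; s)$ is by definition the Mellin transform of $\vartheta$. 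The main technical obstacle is the careful bookkeeping of the polynomial growth of $\vartheta^{(k)}$ near $t = 0$, which governs exactly how many times one may integrate by parts — but this reduces to routine application of Lemma \ref{lem:theta_modoki1} to the derived coefficients $\alpha_k$.
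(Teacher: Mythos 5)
Your argument is correct and is essentially the paper's proof, merely unrolled: the paper performs a single integration by parts to get the recursion $Z(\alpha,f;s)=-s^{-1}Z(\alpha',f';s+1)$ with $\alpha'(l)=l\alpha(l)$, then inducts on $n$, which after $n$ steps produces exactly your identity $Z(\alpha,f;s)=\frac{(-1)^n}{s(s+1)\cdots(s+n-1)}\int_0^\infty \vartheta^{(n)}(\alpha,f;t)\,t^{s+n-1}\,dt$. Your explicit verification that $\vartheta$ is smooth and may be differentiated termwise (via uniform convergence on compacta, using Lemma \ref{lem:theta_modoki1} for $\alpha_k$, $f^{(k)}$) is a small point the paper glosses over, and your bookkeeping of the $t\to 0^+$ behaviour of $\vartheta^{(k)}$ to justify vanishing of the boundary terms and convergence of the final integral is exactly what is needed; the rest, including the appeal to Lemma \ref{lem:Mellin_inv}, matches the paper.
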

\begin{proof}
We prove (\ref{eqn:cthm_junbi_estimate}) by induction with respect to $n$. 
Let $\sigma >\max \{0,r+1\}$. 
When $n=0$, the estimate (\ref{eqn:cthm_junbi_estimate}) holds since 
\begin{align*}
|Z(\alpha ,f;s)|\leq \int_{0}^\infty 
\left| \vartheta ({\alpha ,f};t)\right|t^{\sigma }\,\frac{dt}{t} 
\end{align*}
for $s\in \bC$ such that $\mathrm{Re}(s)=\sigma$. 
If $\mathrm{Re}(s)=\sigma$, we have 
\begin{align}
\nonumber 
Z(\alpha ,f;s)
=&\int_0^\infty \sum_{0\neq l\in {L}}\alpha (l)f(tl)t^{s-1}\,dt
=\int_0^\infty \sum_{0\neq l\in {L}}\alpha (l)f(tl)
\left(\frac{d}{dt}\frac{t^{s}}{s}\right)dt\\
\label{eqn:pf_cthm_junbi_001}
=&-\int_0^\infty 
\sum_{0\neq l\in {L}}\alpha (l)lf'(tl)
\frac{t^{s}}{s}\,dt
=-\frac{1}{s}Z({\alpha',f'};s+1)
\end{align}
with $\alpha'\in {\mathfrak{M}}_{r+1}({L})$ defined by 
$\alpha'(l)=l\alpha (l)\ (l\in {L} )$. 
Here the third equality follows from integration by parts. 
From the expression (\ref{eqn:pf_cthm_junbi_001}), we know that 
(\ref{eqn:cthm_junbi_estimate}) for $n=n_0+1$ holds 
if (\ref{eqn:cthm_junbi_estimate}) for $n=n_0$ holds. 
This complete the proof of (\ref{eqn:cthm_junbi_estimate}) 
for all $n\in \bZ_{\geq 0}$. 

Because of Lemma \ref{lem:theta_modoki1} and 
the estimate (\ref{eqn:cthm_junbi_estimate}), 
we note that $\vartheta (t)=\vartheta (\alpha ,f;t)$ satisfies 
the condition (\ref{eqn:cdn_Mellin_001}). 
Since $\cM (\vartheta )(s)=Z(\alpha ,f;s)$, 
we obtain (\ref{eqn:cthm_junbi_Mellin}) by 
the Mellin inversion formula (Lemma \ref{lem:Mellin_inv}). 
\end{proof}

\begin{lem}[The Phragmen--Lindel\"of theorem]
\label{lem:PL_theorem}
Let ${\sigma}_1,{\sigma}_2\in \bR$ such that ${\sigma}_1<{\sigma}_2$. 
Let $\Theta (s)$ be a holomorphic function on a domain containing 
a vertical strip $\sigma_1\leq \mathrm{Re}(s)\leq \sigma_2$. 
Assume that there are $c_0\in \bR_{>0}$ and $c_1\in \bR$ such that 
\begin{align*}
&\Theta (s)=O({e}^{|s|^{c_0}})&&(|s|\to \infty)&
&\text{uniformly on \ $\sigma_1\leq \mathrm{Re}(s)\leq \sigma_2$}, \\
&\Theta (s)=O(|s|^{c_1})&&(|s|\to \infty)& 
&\text{on \ $\mathrm{Re}(s)={\sigma}_1$ and $\mathrm{Re}(s)={\sigma}_2$}. 
\end{align*}
Then $\Theta (s)=O(|s|^{c_1})\ (|s|\to \infty)$ 
uniformly on $\sigma_1\leq \mathrm{Re}(s)\leq \sigma_2$. 
\end{lem}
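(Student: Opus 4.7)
The plan is the classical Phragm\'en--Lindel\"of strategy: construct an auxiliary function that kills the exponential growth inside the strip while barely affecting the boundary bound, then apply the maximum modulus principle on a large rectangle and let the perturbation tend to zero.

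After translating horizontally I may assume $\sigma_1>0$, so that $s^{c_1}$ (principal branch) is nonvanishing and holomorphic on the strip with $|s^{c_1}|=|s|^{c_1}$ (using $c_1\in\bR$). Dividing $\Theta$ by $s^{c_1}$ reduces the problem to the case $c_1=0$: I may assume henceforth $|\Theta(s)|\leq M$ on the two vertical boundary lines and $|\Theta(s)|\leq A\,e^{|s|^{c_0}}$ uniformly in the strip (the growth exponent $c_0$ changes by at most an arbitrarily small amount after the division, which is harmless). Choose an odd positive integer $k$ with $2k>c_0$, and for $\epsilon>0$ consider the auxiliary function
\[
F_\epsilon(s) = \Theta(s)\,\exp(\epsilon s^{2k}).
\]
Writing $s=\sigma+\sI t$ and expanding by the binomial theorem, the leading real part is $(-1)^k t^{2k} = -t^{2k}$, so uniformly in $\sigma\in[\sigma_1,\sigma_2]$ we have $\mathrm{Re}(s^{2k}) = -t^{2k} + O(t^{2k-2})$ as $|t|\to\infty$. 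In particular $\mathrm{Re}(s^{2k})$ is bounded above on the whole strip by some constant $C_k>0$, so $|\exp(\epsilon s^{2k})|\leq e^{\epsilon C_k}$ throughout the strip, and because $2k>c_0$ we get $|F_\epsilon(s)|\leq A\,e^{|t|^{c_0}(1+o(1))}\,e^{-\epsilon t^{2k}(1+o(1))}\to 0$ as $|\mathrm{Im}(s)|\to\infty$.

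Now apply the maximum modulus principle to $F_\epsilon$ on the rectangle $R_T=\{s:\sigma_1\leq\mathrm{Re}(s)\leq\sigma_2,\ |\mathrm{Im}(s)|\leq T\}$. On the two vertical sides $|F_\epsilon(s)|\leq Me^{\epsilon C_k}$; on the two horizontal sides $|F_\epsilon(s)|\to 0$ as $T\to\infty$ by the estimate above. Letting $T\to\infty$ yields $|F_\epsilon(s)|\leq Me^{\epsilon C_k}$ on the entire strip, whence
\[
|\Theta(s)|\leq Me^{\epsilon C_k}e^{-\epsilon\,\mathrm{Re}(s^{2k})}.
\]
Letting $\epsilon\to 0^+$ gives $|\Theta(s)|\leq M$ in the strip, and undoing the reduction yields $\Theta(s)=O(|s|^{c_1})$ uniformly on $\sigma_1\leq\mathrm{Re}(s)\leq\sigma_2$. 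The main technical point is that $c_0$ is allowed to be arbitrarily large, so the textbook auxiliary $\exp(-\epsilon s^{\alpha})$ with $\alpha<1$---whose real part is positive throughout a right-half-plane strip---does not suffice; using $\exp(\epsilon s^{2k})$ for \emph{odd} $k$ resolves this, since the even-power monomial produces a strongly negative real part in the strip, giving decay of arbitrarily high polynomial order and thereby overwhelming any $e^{|s|^{c_0}}$.
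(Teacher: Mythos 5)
The proof is correct. Since the paper itself offers no argument for this lemma — it simply cites Miyake \cite[Lemma 4.3.4]{Miyake_001} — your write-up fills in the expected classical Phragm\'en--Lindel\"of argument (auxiliary entire factor, maximum modulus on a truncated rectangle, send $T\to\infty$ then $\epsilon\to 0^+$), and all the steps hold: the reduction to $c_1=0$ via division by the principal branch $s^{c_1}$ on a translated strip is legitimate, the sign analysis $\mathrm{Re}(s^{2k})=-t^{2k}+O(t^{2k-2})$ for odd $k$ is correct and gives the needed uniform upper bound and decay, and the requirement $2k>c_0$ is exactly what makes the exponential hypothesis compatible with the vanishing on the horizontal edges. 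The closing remark about why a sub-unit exponent $\alpha<1$ in $\exp(-\epsilon s^\alpha)$ would fail for large $c_0$ is accurate and worth keeping. Two cosmetic points: (1) "the growth exponent $c_0$ changes by at most an arbitrarily small amount" should more precisely read that $c_0$ may be replaced by any $c_0'>c_0$, which suffices because the rest of the argument only uses that some finite $c_0$ works; (2) it is not the evenness of the exponent $2k$ per se that gives the sign, but $k$ odd, i.e.\ $2k\equiv 2\pmod 4$ — you already restrict to odd $k$, so this is only a matter of phrasing.
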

\begin{proof}
See, for example, Miyake \cite[Lemma 4.3.4]{Miyake_001}. 
\end{proof}

\begin{proof}[Proof of Theorem \ref{thm:DS} (ii)]
Let $f\in C^\infty_0(\bR^\times)$. 
By Lemma \ref{lem:cdn_zeta_converse}, 
we know that $\alpha_1$, $\alpha_2$ and $f$ satisfy 
the conditions {\normalfont [Z1]} and {\normalfont [Z2]}. 
Let $\sigma $ be a sufficiently large real number. 
By Lemma \ref{lem:cthm_junbi}, we have 
\begin{align*}
&Z(\alpha_{1},\cF (f);s)=O(|s|^{-n})&&(|s|\to \infty) &
&\text{on $\mathrm{Re}(s)=\sigma$, }
\end{align*}
for any $n\in \bZ_{\geq 0}$. By the functional equation 
(\ref{eqn:zeta_FE}) in {\normalfont [Z1]}, 
we note that this estimate also holds on 
$\mathrm{Re}(s)=-\sigma -2\mathrm{Re}(\nu)+1$. 
By these estimates, {\normalfont [Z2]} and the entireness of 
(\ref{eqn:zeta_entire}) in {\normalfont [Z1]}, 
we know that 
\begin{align*}
\Theta (s)=s(s+2\nu -1)Z(\alpha_{1},\cF (f);s) 
\end{align*}
satisfies the assumption in the Phragmen--Lindel\"of theorem 
(Lemma \ref{lem:PL_theorem}) with 
$\sigma_1=-\sigma -2\mathrm{Re}(\nu)+1$, $\sigma_2=\sigma $ 
and $c_1=-n$ for any $n\in \bZ_{\geq 0}$. 
This implies that  
\begin{align*}
&Z(\alpha_{1},\cF (f);s)=O(|s|^{-n})&&(|s|\to \infty) 
\end{align*}
uniformly on $-\sigma -2\mathrm{Re}(\nu)+1\leq \mathrm{Re}(s)\leq \sigma$, 
for any $n\in \bZ_{\geq 0}$. 
Hence, by Lemma \ref{lem:cthm_junbi} and 
the shift of the contour, we have 
\begin{align}
\nonumber 
\vartheta(\alpha_{1},\cF (f);t)
=&\frac{1}{2\pi \sI}\int_{\mathrm{Re}(s)=\sigma}
Z(\alpha_{1},\cF (f);s)t^{-s}\,ds\\
\nonumber 
=&\frac{1}{2\pi \sI}\int_{\mathrm{Re}(s)=-\sigma -2\mathrm{Re}(\nu)+1}
Z(\alpha_{1},\cF (f);s)t^{-s}\,ds\\
\label{eqn:pf_cthm_zeta_001}
&-\alpha_{1}(0)\cF (f)(0)
+\alpha_{2}(0)\cF (f_{\mu,\nu,\infty})(0)t^{2\nu -1}.
\end{align}
Moreover, we have 
\begin{align}
\nonumber 
&\frac{1}{2\pi \sI}\int_{\mathrm{Re}(s)=-\sigma -2\mathrm{Re}(\nu)+1}
Z(\alpha_{1},\cF (f);s)t^{-s}\,ds\\
\nonumber 
&=\frac{1}{2\pi \sI}\int_{\mathrm{Re}(s)=-\sigma -2\mathrm{Re}(\nu)+1}
Z(\alpha_{2},\cF (f_{\mu,\nu,\infty});-s-2\nu+1)t^{-s}\,ds\\
\nonumber 
&=\frac{1}{2\pi \sI}\int_{\mathrm{Re}(s)=\sigma}
Z(\alpha_{2},\cF (f_{\mu,\nu,\infty});s)t^{s+2\nu -1}\,ds\\
\label{eqn:pf_cthm_zeta_002}
&=t^{2\nu -1}\vartheta(\alpha_{2},\cF (f_{\mu,\nu,\infty});t^{-1}). 
\end{align}
Here the first equality follows from the functional equation 
(\ref{eqn:zeta_FE}) in {\normalfont [Z1]}, 
the second equality follows from the substitution 
$s\to -s-2\nu+1$, and the third equality follows 
from Lemma \ref{lem:cthm_junbi}. 
Combining the equalities (\ref{eqn:pf_cthm_zeta_001}) 
and (\ref{eqn:pf_cthm_zeta_002}), we have
\begin{align*}
&\vartheta(\alpha_{1},\cF (f);t)+\alpha_{1}(0)\cF (f)(0)\\
&=t^{2\nu -1}
\{\vartheta(\alpha_{2},\cF (f_{\mu,\nu,\infty});t^{-1})
+\alpha_{2}(0)\cF (f_{\mu,\nu,\infty})(0)\}.
\end{align*}
This equality for $t=1$ is 
$T_{\alpha_1}(f)=T_{\alpha_2}(f_{\mu ,\nu ,\infty })$, 
and complete the proof. 
\end{proof}

\section{Distributions on $I_{\mu,\nu}^\infty$}
\label{sec:CF_ps}

\subsection{The correspondence between $\cA (J_{\mu,\nu})$ and $I_{\mu,\nu}^{-\infty}$. }
\label{subsec:pf_PFprop}

In this subsection, we give a proof of 
Proposition \ref{prop:pair_to_functionl}. 
For $j\in \bZ_{\geq 0}$, we denote by $\cU_j(\g_\bC)$ 
the subspace of $\cU(\g_\bC)$ spanned by the products of 
$j$ or less elements of $\g_{\bC}$. 
We denote by $\Ad$ the adjoint action of $G$ on $\g_{\bC}$, 
that is, $\Ad (g)X=gXg^{-1}$ $(g\in G,\ X\in \g_{\bC})$. 
We denote the action of $G$ on $\cU(\g_\bC)$ 
induced from $\Ad$ by the same symbol. 
Let $\exp \colon \g \to G$ be the exponential map, that is, 
\begin{align*}
&\exp (X)=\sum_{i=0}^\infty \frac{1}{i!}X^i&
&(X\in \g). 
\end{align*}

\begin{lem}
\label{lem:univ_exp1}
Let $X\in \g$. \\[1mm]
\noindent (i) 
$\displaystyle 
\widetilde{\exp}(X)=\bigl({}^s\!\exp \bigl(\tfrac{1}{m}X\bigr)\bigr)^m$
for a sufficiently large $m\in \bZ_{\geq 0}$. \\[1mm]
\noindent (ii) 
$\tilde{g}\,\widetilde{\exp}(X)\tilde{g}^{-1}
=\widetilde{\exp}(\Ad (g)X)$ 
for $\tilde{g}=(g,\theta )\in \widetilde{G}$.\\[1mm]
\noindent (iii) 
$\widetilde{\exp}(tH)=\tilde{\ra}(e^{2t})$, 
$\widetilde{\exp}(tE_+)=\tilde{\ru}(t)$ and 
$\widetilde{\exp}(tE_-)=\tilde{\overline{\ru}}(t)$ for $t\in \bR$. \\[1mm]
\noindent (iv) 
$\widetilde{\exp}(t(E_+-E_-))=\tilde{\rk}(t)$ for $t\in \bR$. 
\end{lem}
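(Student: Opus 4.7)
The plan is to establish (i) first via the local agreement of $\widetilde{\exp}$ with the section ${}^s\!\,$, then deduce (ii) from uniqueness of lifts of one-parameter subgroups, and reduce (iii) and (iv) to $m$-fold products in $\widetilde{G}$ that can be computed directly from (\ref{eqn:op_tilde_G}).

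For (i), since $\varpi\colon\widetilde{G}\to G$ is a covering map hence a local diffeomorphism near $(1_2,0)$, and since $g\mapsto{}^s\!g$ is continuous with ${}^s1_2=(1_2,0)$, both smooth maps $X\mapsto\widetilde{\exp}(X)$ and $X\mapsto{}^s\!\exp(X)$ are lifts of $X\mapsto\exp(X)$ to $\widetilde{G}$ sending $0\in\g$ to $(1_2,0)$; they therefore coincide on some neighborhood $U$ of $0$. For given $X\in\g$ choose $m\in\bZ_{>0}$ with $X/m\in U$; since $t\mapsto\widetilde{\exp}(tX)$ is a one-parameter subgroup, $\widetilde{\exp}(X)=\widetilde{\exp}(X/m)^m=({}^s\!\exp(X/m))^m$. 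For (ii), the path $t\mapsto\tilde{g}\,\widetilde{\exp}(tX)\tilde{g}^{-1}$ is a smooth one-parameter subgroup of $\widetilde{G}$ whose image under $\varpi$ is $t\mapsto\exp(t\,\Ad(g)X)$; by uniqueness of lifts of one-parameter subgroups through the identity it equals $t\mapsto\widetilde{\exp}(t\,\Ad(g)X)$, and specializing to $t=1$ gives (ii).

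For (iii), direct matrix exponentiation yields $\exp(tH)=\ra(e^{2t})$, $\exp(tE_+)=\ru(t)$, $\exp(tE_-)=\overline{\ru}(t)$. Since $J(\ra(y),\sI)=1/\sqrt{y}>0$ and $J(\ru(x),\sI)=1$, one has ${}^s\!\ra(y)=\tilde{\ra}(y)$ and ${}^s\!\ru(x)=\tilde{\ru}(x)$. The cocycle correction $-\arg(J(g_1,g_2\sI)/J(g_1,\sI))$ in (\ref{eqn:op_tilde_G}) vanishes on $\widetilde{U}$ (because $J(\ru(x),z)\equiv 1$) and on $\widetilde{A}$ (because $J(\ra(y),z)=1/\sqrt{y}$ is $z$-independent), so $\tilde{\ru}(t/m)^m=\tilde{\ru}(t)$ and $\tilde{\ra}(e^{2t/m})^m=\tilde{\ra}(e^{2t})$; combined with (i), this handles the $H$ and $E_+$ cases. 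For $E_-$, compute $\Ad(w)E_+=-E_-$; then (ii) combined with the $E_+$ case gives $\tilde{w}\,\tilde{\ru}(t)\,\tilde{w}^{-1}=\widetilde{\exp}(-tE_-)$, and the identity $\tilde{\overline{\ru}}(-t)=\tilde{w}\,\tilde{\ru}(t)\,\tilde{w}^{-1}$ (built into the definition of $\tilde{\overline{\ru}}$) yields $\widetilde{\exp}(tE_-)=\tilde{\overline{\ru}}(t)$.

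For (iv), $\exp(t(E_+-E_-))=\rk(t)$ and $J(\rk(\theta),\sI)=e^{-\sI\theta}$, so for $m$ large enough that $|t|/m<\pi$ one gets $-\arg J(\rk(t/m),\sI)=t/m$, whence ${}^s\!\rk(t/m)=(\rk(t/m),t/m)=\tilde{\rk}(t/m)$. Since $K$ fixes $\sI$, the cocycle term for two elements of $\widetilde{K}$ is identically $0$, so $\tilde{\rk}(\theta_1)\tilde{\rk}(\theta_2)=\tilde{\rk}(\theta_1+\theta_2)$; iterating gives $\tilde{\rk}(t/m)^m=\tilde{\rk}(t)$, and (i) finishes the argument. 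The main technical point throughout is the verification that the cocycle in (\ref{eqn:op_tilde_G}) is trivial on each of the families $\widetilde{U}$, $\widetilde{A}$, $\widetilde{K}$; once this is noted the $m$-fold products telescope and the more awkward case of $\widetilde{\overline{U}}$ is bypassed via conjugation by $\tilde{w}$.
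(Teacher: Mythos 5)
Your proof is correct and follows essentially the same strategy as the paper: establish the local agreement of $\widetilde{\exp}$ with the section ${}^s\!(\cdot)\circ\exp$ near $0$, use the homomorphism property to write $\widetilde{\exp}(X)$ as an $m$-fold product, and treat (ii) via uniqueness of lifted one-parameter subgroups (the paper runs the conjugation argument first and recovers (i) as the special case $\tilde g = (1_2,0)$, but the logical content is the same). Your explicit verification that the cocycle in (\ref{eqn:op_tilde_G}) vanishes on $\widetilde{U}$, $\widetilde{A}$ and $\widetilde{K}$, and the reduction of the $E_-$ case to the $E_+$ case via $\Ad(w)E_+ = -E_-$, are exactly the details the paper compresses into "(iii) and (iv) follow from (i)."
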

\begin{proof}
Let $X\in \g$ and $\tilde{g}=(g,\theta )\in \widetilde{G}$. 
Since $\bR \ni t\mapsto 
\tilde{g}\,\widetilde{\exp}(tX)\tilde{g}^{-1}\in \widetilde{G}$ is a 
continuous group homomorphism, we note that 
$\tilde{g}\,\widetilde{\exp}(tX)\tilde{g}^{-1}$ is contained in 
a neighborhood ${\mathstrut }^s\!G$ of $(1_2,0)$ 
for $t\in \bR$ such that $|t|$ is sufficiently small. 
Since 
\[
\varpi (\tilde{g}\widetilde{\exp}(tX)\tilde{g}^{-1})
=g\exp (tX)g^{-1}=\exp (t\Ad (g)X), 
\]
we have 
$\tilde{g}\,\widetilde{\exp}(tX)\tilde{g}^{-1}
={}^s\!\exp (t\Ad (g)X)$
for $t\in \bR$ such that $|t|$ is sufficiently small. 
Hence, for a sufficiently large positive integer $m$, we have 
\[
\tilde{g}\,\widetilde{\exp}(X)\tilde{g}^{-1}=
\left(\tilde{g}\,\widetilde{\exp}\!\left(\tfrac{1}{m}X\right)
\tilde{g}^{-1}\right)^m=
\left({}^s\!\exp\!\left(\tfrac{1}{m}\Ad (g)X\right)\right)^m.
\]
The statement (i) follows from this equality in the case of 
$\tilde{g}=(1_2,0)$. The statement (ii) follows from 
this equality and the statement (i). 
The statement (iii) and (iv) follow from  the statement (i). 
\end{proof}

\begin{lem}
\label{lem:partition_exist}
Let $\mu,\nu \in \bC$. 
Let $F\in I^\infty_{\mu ,\nu}$, and set 
\begin{align}
\label{eqn:partition_example}
&f_1(x)=F(\tilde{w}\tilde{\ru}(-x))\Delta (x),&
&f_2(x)=F_\infty (\tilde{w}\tilde{\ru}(-x)){\Delta}(x)&
&(x\in \bR),
\end{align}
where $\Delta$ is the function defined in \S \ref{subsec:test_ftn}. 
Then $(f_1,f_2)$ is a partition of $F$, and satisfies 
$\supp (f_i)\subset \{t\in \bR \mid |t|\leq 2\}$ $(i=1,2)$.
\end{lem}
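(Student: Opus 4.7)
The plan is to verify first the regularity/support of $f_1,f_2$ and then the partition identity. The first part is immediate: since $F$ and $F_\infty$ are smooth on $\widetilde{G}$ and $x\mapsto \tilde{w}\tilde{\ru}(-x)$ is smooth, both $x\mapsto F(\tilde{w}\tilde{\ru}(-x))$ and $x\mapsto F_\infty(\tilde{w}\tilde{\ru}(-x))$ lie in $C^\infty(\bR)$; multiplication by the compactly supported bump $\Delta$ (with $\supp(\Delta)\subset\{|t|\le 2\}$) yields $f_1,f_2\in C^\infty_0(\bR)$ with support contained in $\{|t|\le 2\}$.

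For the partition identity, set $G:=F-\iota_{\mu,\nu}(f_1)-(\iota_{\mu,\nu}(f_2))_\infty$, which lies in $I^\infty_{\mu,\nu}$ (the involution $F\mapsto F_\infty$ preserves $I^\infty_{\mu,\nu}$). By (\ref{eqn:iota_iff}) applied to the zero function, it suffices to verify $G(\tilde{w}\tilde{\ru}(-x))=0$ for every $x\in\bR$. Since $\iota_{\mu,\nu}(f_1)(\tilde{w}\tilde{\ru}(-x))=f_1(x)=F(\tilde{w}\tilde{\ru}(-x))\Delta(x)$, this reduces, by property (3) of $\Delta$ (which gives $1-\Delta(x)=\Delta(-1/x)$ for $x\neq 0$), to the identity
\[
(\iota_{\mu,\nu}(f_2))_\infty(\tilde{w}\tilde{\ru}(-x))
=F(\tilde{w}\tilde{\ru}(-x))\,\Delta(-1/x)\qquad (x\in\bR^\times).
\]

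The key technical step is the pointwise identity
\[
H_\infty(\tilde{w}\tilde{\ru}(-x))=J_{\mu,\nu}(x)\,H(\tilde{w}\tilde{\ru}(1/x))
\qquad (H\in I^\infty_{\mu,\nu},\ x\neq 0),
\]
which generalises the computation leading to (\ref{eqn:pre_infty_change}) from $H=\iota_{\mu,\nu}(f)$ to arbitrary $H$. Starting from $H_\infty(\tilde{g})=e^{\pi\sI\mu/2}H(\tilde{g}\tilde{w})$ and writing $\tilde{w}\tilde{\ru}(-x)\tilde{w}=\tilde{\overline{\ru}}(x)\tilde{\rk}(-\pi)$, I would expand this element in Bruhat form via (\ref{eqn:Bruhat_g}) and apply the defining transformation law of $I_{\mu,\nu}$; the scalar $e^{-\sgn(x)\pi\sI\mu/2}$ appearing in $J_{\mu,\nu}$ emerges from the fact that $\theta+\arg(J(g,\sI)/c)$ takes different values in $\pi\bZ$ for $x>0$ and $x<0$. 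Applying this identity with $H=\iota_{\mu,\nu}(f_2)$ gives $J_{\mu,\nu}(x)f_2(-1/x)=J_{\mu,\nu}(x)F_\infty(\tilde{w}\tilde{\ru}(1/x))\Delta(-1/x)$, and re-applying it with $H=F$ at argument $-1/x$, together with the trivial relation $J_{\mu,\nu}(x)J_{\mu,\nu}(-1/x)=1$, yields $J_{\mu,\nu}(x)F_\infty(\tilde{w}\tilde{\ru}(1/x))=F(\tilde{w}\tilde{\ru}(-x))$, which is exactly what is needed. The case $x=0$ is handled by continuity; alternatively, one observes directly that $f_2(-1/x)=0$ whenever $|x|<1/2$, so that $(\iota_{\mu,\nu}(f_2))_\infty(\tilde{w})=0$ while $f_1(0)=F(\tilde{w})$.

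The main obstacle is the Bruhat bookkeeping required to establish the pointwise identity for $H_\infty(\tilde{w}\tilde{\ru}(-x))$; the sign factor in $J_{\mu,\nu}$ does not come from $\tilde{\overline{\ru}}(x)$ per se but from the discontinuous choice of the $\tilde{\rk}$-coordinate in the Bruhat form of $\tilde{w}\tilde{\ru}(-x)\tilde{w}$ between $x>0$ and $x<0$. Once this identity is in hand, everything else reduces to a direct application of the partition-of-unity property $\Delta(x)+\Delta(-1/x)=1$.
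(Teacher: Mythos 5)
Your proof is correct and follows essentially the same route as the paper. The paper records the explicit Bruhat-coordinate formulas $\iota_{\mu,\nu}(f_1)(\tilde g)=F(\tilde g)\Delta(-d/c)$ (for $c\neq 0$) and $(\iota_{\mu,\nu}(f_2))_\infty(\tilde g)=F(\tilde g)\Delta(c/d)$ (for $d\neq 0$) and checks the sum on all of $\widetilde G$ directly, whereas you restrict to the dense slice $\tilde w\widetilde U$ and invoke (\ref{eqn:iota_iff}) to conclude; your ``key technical step'' $H_\infty(\tilde w\tilde\ru(-x))=J_{\mu,\nu}(x)H(\tilde w\tilde\ru(1/x))$ for general $H\in I^\infty_{\mu,\nu}$ is the same Bruhat computation that underlies both (\ref{eqn:pre_infty_change}) and the paper's formula for $(\iota_{\mu,\nu}(f_2))_\infty(\tilde g)$, so the content is identical up to bookkeeping.
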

\begin{proof}
Let  
$\tilde{g}=(g,\theta )\in \widetilde{G}$ 
with $g=\left(\begin{array}{cc}
a&b\\
c&d
\end{array}\right)\in G$. 
By definition, we have 
\begin{align*}
&\iota_{\mu,\nu }(f_1)(\tilde{g})
=\left\{\begin{array}{ll}
\displaystyle 
F(\tilde{g})\Delta (-d/c)&
\text{ if }\ c\neq 0,\\[2mm]
\displaystyle 
0&\text{ if }\ c=0,
\end{array}\right.\\
&(\iota_{\mu,\nu }(f_2))_\infty (\tilde{g})
=\left\{\begin{array}{ll}
\displaystyle 
F (\tilde{g}){\Delta}(c/d)&
\text{ if }\ d\neq 0,\\[2mm]
\displaystyle 
0&\text{ if }\ d=0.
\end{array}\right.&
\end{align*}
By $\Delta (0)=1$, 
$\Delta (x)+{\Delta}(-1/x)=1$ $(x\in \bR^\times )$, 
$\supp (\Delta)\subset \{t\in \bR \mid |t|\leq 2\}$ and 
these equalities, we know that $(f_1,f_2)$ is a partition of $F$. 
\end{proof}

\begin{lem}
\label{lem:aimai_partitions}
Let $\mu,\nu \in \bC$. Let $(T_1,T_2)\in \cA (J_{\mu,\nu})$. 
Let $(f_{1},f_{2})$ and $(f_{3},f_{4})$ 
be two partitions of $F\in I^\infty_{\mu,\nu}$. 
Then $T_1(f_1)+T_2(f_2)=T_1(f_3)+T_2(f_4)$. 
\end{lem}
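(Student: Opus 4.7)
The plan is to reduce the equality $T_1(f_1)+T_2(f_2)=T_1(f_3)+T_2(f_4)$ to the defining relation (\ref{eqn:autom_pair_def}) of $\cA(J_{\mu,\nu})$, by comparing the two partitions and verifying that the differences $g_1 := f_1-f_3$ and $g_2 := f_4-f_2$ actually lie in $C_0^\infty(\bR^\times)$, not merely in $C_0^\infty(\bR)$. If this is established, then (\ref{eqn:autom_pair_def}) gives $T_1(g_1)=T_2((g_1)_{\mu,\nu,\infty})$, and it remains to show $(g_1)_{\mu,\nu,\infty}=g_2$ on $\bR^\times$, which will follow from the relation between $g_1$ and $g_2$ coming from the partition condition together with the involutivity $(f_{\mu,\nu,\infty})_{\mu,\nu,\infty}=f$ in (\ref{eqn:basic_f_infty}).

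First, since both $(f_1,f_2)$ and $(f_3,f_4)$ are partitions of $F$, subtracting yields
\[
\iota_{\mu,\nu}(g_1)=(\iota_{\mu,\nu}(g_2))_\infty
\]
in $I^\infty_{\mu,\nu}$. Evaluating this identity at $\tilde{w}\tilde{\ru}(-x)$ and using the characterization (\ref{eqn:iota_characterization}) together with (\ref{eqn:pre_infty_change}) gives
\[
g_1(x)=(g_2)_{\mu,\nu,\infty}(x)=J_{\mu,\nu}(x)\,g_2(-1/x)\qquad (x\in\bR^\times).
\]
Symmetrically, using $((\iota_{\mu,\nu}(g_2))_\infty)_\infty = \iota_{\mu,\nu}(g_2)$ (a consequence of $\tilde{w}^2$ being central and (\ref{eqn:basic_f_infty})), one gets $g_2(x)=(g_1)_{\mu,\nu,\infty}(x)$ on $\bR^\times$.

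Next I would exploit the compact supports: since $g_2\in C_0^\infty(\bR)$, there is $M>0$ with $\supp(g_2)\subset[-M,M]$, so $g_2(-1/x)=0$ for $0<|x|<1/M$, and hence $g_1$ vanishes in a neighborhood of $0$. The same argument applies to $g_2$. Therefore $g_1,g_2\in C_0^\infty(\bR^\times)$, which is the crucial point that allows the use of (\ref{eqn:autom_pair_def}).

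Finally, applying (\ref{eqn:autom_pair_def}) to $g_1\in C_0^\infty(\bR^\times)$ and using $(g_1)_{\mu,\nu,\infty}=((g_2)_{\mu,\nu,\infty})_{\mu,\nu,\infty}=g_2$ on $\bR^\times$ (via (\ref{eqn:basic_f_infty})), we obtain $T_1(g_1)=T_2(g_2)$, that is, $T_1(f_1-f_3)=T_2(f_4-f_2)$, which rearranges to the claimed equality. The main obstacle is really the verification that $g_1,g_2$ vanish near $0$; once this is noticed (it rests on the blowing-up factor $|x|^{-2\nu-1}$ in $J_{\mu,\nu}$ being harmless because $g_2(-1/x)$ is identically zero near $x=0$), the rest is purely formal.
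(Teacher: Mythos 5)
Your proof is correct and follows essentially the same route as the paper's: subtract the two partition identities to get $f_1-f_3=-(f_2-f_4)_{\mu,\nu,\infty}$, observe that compact support of the differences forces them to vanish in a punctured neighborhood of $0$ (so they actually lie in $C_0^\infty(\bR^\times)$), and then apply the defining relation (\ref{eqn:autom_pair_def}) together with the involutivity (\ref{eqn:basic_f_infty}). The only cosmetic difference is that you phrase the subtraction at the level of $\iota_{\mu,\nu}$ before evaluating at $\tilde{w}\tilde{\ru}(-x)$, whereas the paper evaluates $F(\tilde{w}\tilde{\ru}(-x))$ directly; the content is identical.
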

\begin{proof}
By (\ref{eqn:iota_characterization}), (\ref{eqn:pre_infty_change}) 
and the definition of partitions, 
we have 
\begin{align*}
&F(\tilde{w}\tilde{\ru}(-x))=f_1(x)+(f_2)_{\mu,\nu,\infty}(x)
=f_3(x)+(f_4)_{\mu,\nu,\infty}(x)&&(x\in \bR).
\end{align*}
Hence, $f_{1}-f_{3}=-(f_{2}-f_{4})_{\mu,\nu,\infty}$. 
This equality implies $f_2-f_4\in C^\infty_0(\bR^\times )$ since 
$f_1,f_2,f_3,f_4\in C^\infty_0(\bR)$. 
Therefore, by (\ref{eqn:autom_pair_def}), we have 
\begin{align*}
&(T_1(f_{1})+T_2(f_{2}))-(T_1(f_{3})+T_2(f_{4}))\\
&=T_1(f_{1}-f_{3})+T_2(f_{2}-f_{4})
=T_1(f_{1}-f_{3}+(f_{2}-f_{4})_{\mu,\nu,\infty})=T_1(0)=0. 
\end{align*}
Hence, we obtain the assertion. 
\end{proof}

\begin{lem}
\label{lem:norm_K_to_N}
Let $\mu,\nu \in \bC$ and $u>0$. 
For $F\in I^\infty_{\mu,\nu }$, 
the inequalities 
\begin{align}
\label{eqn:Knorm_to_N}
\sup_{-u\leq x\leq u}
|F(\tilde{w}\tilde{\ru}(-x))|
&\leq (1+u^2)^{|\mathrm{Re}(\nu)+1/2|}|F|_K,\\
\label{eqn:Knorm_infty}
|F_\infty|_K&\leq e^{{\pi}|\mathrm{Im}(\mu)|/{2}}|F|_K
\end{align}
hold. For $f\in C^\infty_0(\bR)$ with 
$\supp (f)\subset \{x\in \bR \mid |x|\leq u\}$, 
we have 
\begin{align}
\label{eqn:Nnorm_K}
&|\iota_{\mu,\nu}(f)|_K\leq 
e^{\pi |\mathrm{Im}(\mu )|}(1+u^2)^{|\mathrm{Re}(\nu)+1/2|}
\sup_{x\in \bR}|f(x)|.
\end{align}
\end{lem}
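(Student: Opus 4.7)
The plan is to express each function value via the Iwasawa decomposition (\ref{eqn:Iwasawa_g})---equivalently via formula (\ref{eqn:ps_red_section})---so as to reduce every question to evaluation at an element of $\widetilde{K}$, and then use both the left $\widetilde{M}$-equivariance $F(\tilde{\rk}(m\pi)\tilde{g}) = e^{\pi m\sI\mu}F(\tilde{g})$ and the fact that $\widetilde{K}$ is abelian (a direct consequence of (\ref{eqn:op_tilde_G}), since $\rk(\theta)\sI = \sI$ makes the cocycle correction vanish, so $\tilde{\rk}(\theta_1)\tilde{\rk}(\theta_2) = \tilde{\rk}(\theta_1+\theta_2)$) to bring the angle argument back into the fundamental interval $[-\pi,\pi)$. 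For (\ref{eqn:Knorm_to_N}) I first compute $w\ru(-x)\sI = (x+\sI)/(x^2+1)$, giving $\mathrm{Im}(g\sI) = 1/(x^2+1)$ and $\arg J(g,\sI) = \pi/2 + \arctan x$ with $g = w\ru(-x)$. Determining the angle component of $\tilde{w}\tilde{\ru}(-x)$ from (\ref{eqn:op_tilde_G}) one finds that $\theta + \arg J(g,\sI) = 0$ in (\ref{eqn:ps_red_section}), so the $\mu$-dependent phase disappears, and since $-\arg J(g,\sI) \in (-\pi,0) \subset [-\pi,\pi)$ the bound $|F(\tilde{w}\tilde{\ru}(-x))| \le (x^2+1)^{-\mathrm{Re}(\nu)-1/2}|F|_K$ follows immediately. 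A trivial case analysis on the sign of $\mathrm{Re}(\nu)+1/2$ then promotes this to (\ref{eqn:Knorm_to_N}) for $|x| \le u$.

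For (\ref{eqn:Knorm_infty}) the abelianness of $\widetilde{K}$ gives $\tilde{\rk}(\theta)\tilde{w} = \tilde{\rk}(\theta-\pi/2)$, whence $F_\infty(\tilde{\rk}(\theta)) = e^{\pi\sI\mu/2}F(\tilde{\rk}(\theta-\pi/2))$. For $\theta \in [-\pi/2,\pi)$ the shifted angle already lies in $[-\pi,\pi)$, while for $\theta \in [-\pi,-\pi/2)$ one writes $\theta-\pi/2 = -\pi + (\theta+\pi/2)$ with $\theta+\pi/2 \in [-\pi/2,0)$ and invokes the $\widetilde{M}$-equivariance to produce an extra factor $e^{-\pi\sI\mu}$ of modulus $e^{\pi\mathrm{Im}(\mu)}$. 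Combining the two cases yields $|F_\infty(\tilde{\rk}(\theta))| \le e^{-\pi\mathrm{Im}(\mu)/2}\max(1, e^{\pi\mathrm{Im}(\mu)})|F|_K = e^{\pi|\mathrm{Im}(\mu)|/2}|F|_K$.

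For (\ref{eqn:Nnorm_K}) I apply (\ref{eqn:embedding1}) directly at $\tilde{g} = \tilde{\rk}(\theta)$: the cases $c = -\sin\theta = 0$, i.e.\ $\theta \in \{0,-\pi\}$, contribute $0$, and otherwise
\[
|\iota_{\mu,\nu}(f)(\tilde{\rk}(\theta))| = \bigl|e^{\sI\mu(\theta+\arg(J(\rk(\theta),\sI)/c))}\bigr|\,|\sin\theta|^{-2\mathrm{Re}(\nu)-1}\,|f(\cot\theta)|.
\]
A piecewise analysis of the principal argument shows the phase exponent equals $\pi$ on $(0,\pi)$ and $0$ on $(-\pi,0)$, so its modulus contribution is at most $\max(1, e^{-\pi\mathrm{Im}(\mu)}) \le e^{\pi|\mathrm{Im}(\mu)|}$. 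The support condition $\supp(f)\subset\{|x|\le u\}$ forces $|\cot\theta| \le u$, so $|\sin\theta|^{-2\mathrm{Re}(\nu)-1} = (1+\cot^2\theta)^{\mathrm{Re}(\nu)+1/2}$ is bounded by $(1+u^2)^{|\mathrm{Re}(\nu)+1/2|}$ by the same case analysis used for (\ref{eqn:Knorm_to_N}). The primary obstacle throughout is disciplined bookkeeping of the branch of $\arg$ at points where the principal value jumps, together with the concomitant identification of the integer $n$ appearing in the $\widetilde{M}$-twist $e^{n\pi\sI\mu}$; once these are handled correctly the remaining estimates are elementary.
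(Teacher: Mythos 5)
Your proposal is correct, and it follows essentially the same route as the paper's own proof. For (\ref{eqn:Knorm_to_N}) you use formula (\ref{eqn:ps_red_section}) while the paper writes out the Iwasawa decomposition of $\tilde{w}\tilde{\ru}(-x)$ explicitly, but these are the same computation (the paper's form $\tilde{\ru}(x/(1+x^2))\tilde{\ra}(1/(1+x^2))\tilde{\rk}(-\arg(\sI-x))$ is exactly what your angle bookkeeping reproduces, including the vanishing of the $\mu$-phase). For (\ref{eqn:Knorm_infty}) and (\ref{eqn:Nnorm_K}) you derive the same identities the paper states, via the same centrality/abelianness of $\widetilde{K}$ and the same reduction through (\ref{eqn:embedding1}) and the support condition $|\cot\theta|\le u$; the case splits on $\theta$ and on $\mathrm{Re}(\nu)+\tfrac{1}{2}$ are exactly those implicit in the paper.
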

\begin{proof}
Let $F\in I_{\mu,\nu}^\infty$. Since 
\begin{align*}
\tilde{w}\tilde{\ru}(-x)=
\tilde{\ru}\!\left(\frac{x}{1+x^2}\right)
\tilde{\ra}\!\left(\frac{1}{1+x^2}\right)
\tilde{\rk}(-\arg (\sI -x)),
\end{align*}
we have $F(\tilde{w}\tilde{\ru}(-x))=(1+x^2)^{-\nu-1/2}
F(\tilde{\rk}(-\arg (\sI -x)))$ for $x\in \bR$. 
By this equality, we obtain (\ref{eqn:Knorm_to_N}).  
The inequality (\ref{eqn:Knorm_infty}) follows from 
\begin{align*}
&F_\infty (\tilde{\rk}(\theta))
=e^{\pi \sI \mu /2}F(\tilde{\rk}(\theta -\pi /2))
=e^{-\pi \sI \mu /2}F(\tilde{\rk}(\theta +\pi /2))&
&(\theta \in \bR).
\end{align*}
The inequality (\ref{eqn:Nnorm_K}) follows from 
\begin{align*}
&\iota_{\mu,\nu }(f)(\tilde{\rk}(\theta))
=\left\{\begin{array}{ll}
\displaystyle 
e^{\pi \sI \mu }
|\sin \theta |^{-2\nu -1}f\!\left(1/\tan \theta \right)
&\text{ if }\ 0<\theta <\pi,\\[2mm]
|\sin \theta |^{-2\nu -1}f\!\left(1/\tan \theta \right)
&\text{ if }-\pi <\theta <0,\\[2mm]
\displaystyle 
0&\text{ if }\ \theta \in \pi \bZ
\end{array}\right.\\
&\frac{1}{1+u^2}\leq \sin^2\theta \leq 1\quad 
\text{if }\left|\frac{1}{\tan \theta}\right|\leq u,
\end{align*}
for $\theta \in \bR$ and $f\in C^\infty_0(\bR)$. 
\end{proof}
\begin{lem}
\label{lem:gact_fct_on_R}
Let $\mu,\nu \in \bC$. 

\noindent (i) 
$\displaystyle 
(\rho (E_+)F)(\tilde{w}\tilde{\ru}(-x))
=-\frac{d}{dx}F(\tilde{w}\tilde{\ru}(-x))$ 
$(x\in \bR)$ for $F\in I^\infty_{\mu,\nu}$. \\[2mm]
\noindent (ii) 
Let $f\in C^\infty_0(\bR)$. Then 
$\rho (E_+)\iota_{\mu,\nu}(f)=-\iota_{\mu,\nu}(f')$ and 
\begin{align*}
&\rho (H)\iota_{\mu,\nu}(f)=\iota_{\mu,\nu}(f_H)&
&\text{with}\ f_H(x)=-2xf'(x)-(2\nu +1)f(x),\\
&\rho (E_-)\iota_{\mu,\nu}(f)=
\iota_{\mu,\nu}(f_{E_-})&
&\text{with}\ f_{E_-}(x)=x^2f'(x)+(2\nu +1)xf(x).
\end{align*}
\end{lem}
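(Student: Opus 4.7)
The plan is to reduce each identity to a computation at points of the form $\tilde{w}\tilde{\ru}(-x)$ by combining Lemma \ref{lem:univ_exp1} with the characterization (\ref{eqn:iota_iff}) of $\iota_{\mu,\nu}(f)$: it suffices to show that both sides agree at every $\tilde{w}\tilde{\ru}(-x)$, since both are manifestly in $I_{\mu,\nu}^\infty$.

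For (i), since $\widetilde{\exp}(tE_+)=\tilde{\ru}(t)$ by Lemma \ref{lem:univ_exp1}(iii) and $J(\ru(-x),z)\equiv 1$ makes the cocycle in (\ref{eqn:op_tilde_G}) vanish, one obtains $\tilde{\ru}(-x)\tilde{\ru}(t)=\tilde{\ru}(t-x)$ and hence $\tilde{w}\tilde{\ru}(-x)\widetilde{\exp}(tE_+)=\tilde{w}\tilde{\ru}(-(x-t))$; differentiating $F$ at $t=0$ yields (i). The first formula of (ii) then follows immediately from (i) and (\ref{eqn:iota_iff}).

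For the $\rho(H)$ identity, one has $\widetilde{\exp}(tH)=\tilde{\ra}(e^{2t})$ by Lemma \ref{lem:univ_exp1}(iii), and the two relations $\tilde{\ru}(-x)\tilde{\ra}(y)=\tilde{\ra}(y)\tilde{\ru}(-x/y)$ and $\tilde{w}\tilde{\ra}(y)=\tilde{\ra}(1/y)\tilde{w}$ (both inherited from the corresponding matrix identities in $G$ because the relevant cocycles in (\ref{eqn:op_tilde_G}) vanish upon checking the $J$-factors) combine to give $\tilde{w}\tilde{\ru}(-x)\tilde{\ra}(e^{2t})=\tilde{\ra}(e^{-2t})\tilde{w}\tilde{\ru}(-xe^{-2t})$. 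Applying the defining transformation law of $I_{\mu,\nu}^\infty$ and (\ref{eqn:iota_iff}) yields $e^{-(2\nu+1)t}f(xe^{-2t})$, whose derivative at $t=0$ equals $f_H(x)$.

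The essential work lies in the $\rho(E_-)$ identity, since $\widetilde{\exp}(tE_-)=\tilde{\overline{\ru}}(t)$ carries $\tilde{w}\tilde{\ru}(-x)$ out of the standard open Bruhat cell $\widetilde{P}\tilde{w}\widetilde{U}$, forcing one to apply the full formula (\ref{eqn:embedding1}). For $t$ near $0$ the matrix $g(t):=w\ru(-x)\overline{\ru}(t)$ has lower-left entry $c(t)=1-xt\neq 0$, so (\ref{eqn:embedding1}) yields
\begin{align*}
\iota_{\mu,\nu}(f)(\tilde{w}\tilde{\ru}(-x)\tilde{\overline{\ru}}(t))
=e^{\sI\mu\,\Theta(t)}(1-xt)^{-2\nu-1}f\!\left(\tfrac{x}{1-xt}\right),
\end{align*}
where $\Theta(t):=\theta(t)+\arg(J(g(t),\sI)/(1-xt))$ is the discrete phase from (\ref{eqn:Bruhat_g}). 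The hard part will be to verify $\Theta(t)\equiv 0$ on a neighborhood of $t=0$: formula (\ref{eqn:Bruhat_g}) constrains $\Theta(t)\in\pi\bZ$ whenever $c(t)\neq 0$, so $\Theta$ is locally constant by continuity, and a direct evaluation at $t=0$ using $\sI(1+\sI x)=\sI-x$ (equivalently, the observation that $\tilde{w}\tilde{\ru}(-x)$ is already in the Bruhat form (\ref{eqn:Bruhat_g}) with trivial central $\tilde{\rk}$-factor) shows $\Theta(0)=0$. Once this is in hand the phase factor drops out, a routine differentiation at $t=0$ produces $x^2f'(x)+(2\nu+1)xf(x)=f_{E_-}(x)$, and (\ref{eqn:iota_iff}) closes the argument.
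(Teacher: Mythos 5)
Your proposal is correct, and agrees with the paper's proof on part (i) and on the $\rho(E_+)$ formula (both read the translation action of $\tilde{\ru}(t)$ off Lemma \ref{lem:univ_exp1}(iii) and invoke (\ref{eqn:iota_iff})). It diverges from the paper in two places, both legitimately. For $\rho(H)$ the paper sidesteps any further group computation by applying the commutator identity $\rho(H)=\rho(E_+)\circ\rho(E_-)-\rho(E_-)\circ\rho(E_+)$ to the already-established $E_\pm$-formulas; you instead compute $\tilde w\tilde{\ru}(-x)\tilde{\ra}(e^{2t})=\tilde{\ra}(e^{-2t})\tilde w\tilde{\ru}(-xe^{-2t})$ directly, use the defining transformation law of $I_{\mu,\nu}^\infty$ under the left $\widetilde A$-action, and differentiate at $t=0$. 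This is a valid independent derivation at the small cost of two extra cocycle checks. For $\rho(E_-)$ the paper cites (\ref{eqn:Bruhat_g}) to rewrite $\tilde w\tilde{\ru}(-x)\tilde{\overline{\ru}}(t)$ in the form $\tilde{\ru}(\cdot)\tilde{\ra}(\cdot)\tilde w\tilde{\ru}(\cdot)$ with \emph{no} central $\tilde{\rk}(m\pi)$-factor, tacitly assuming that factor is trivial for $|t|$ small, and then applies (\ref{eqn:iota_characterization}). You make this implicit step explicit: you observe that the phase $\Theta(t)$ produced by (\ref{eqn:Bruhat_g}) lies in $\pi\bZ$, is continuous in $t$ near $0$ (since $c(t)=1-xt\neq 0$ and $J(g(t),\sI)/c(t)=\sI-x/(1-xt)$ stays off the negative real axis), hence is locally constant, and $\Theta(0)=-\pi/2-\arg(1+\sI x)+\arg(\sI-x)=0$ via $\sI-x=\sI(1+\sI x)$. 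So your $E_-$ argument is the paper's decomposition with the phase verification carried out rather than asserted; it is, if anything, slightly more self-contained.
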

\begin{proof}
The statement (i) follows immediately from Lemma \ref{lem:univ_exp1} (iii).
Let $x\in \bR$. 
By the statement (i) and (\ref{eqn:iota_characterization}), we have 
$(\rho (E_+)\iota_{\mu,\nu}(f))(\tilde{w}\tilde{\ru}(-x))=-f'(x)$. 
Hence, we have $\rho (E_+)\iota_{\mu,\nu}(f)=-\iota_{\mu,\nu}(f')$ 
by the characterization (\ref{eqn:iota_iff}). 
By Lemma \ref{lem:univ_exp1} (iii) and (\ref{eqn:Bruhat_g}), 
we have 
\begin{align*}
&\tilde{w}\tilde{\ru}(-x)\,\widetilde{\exp}(tE_-)=
\tilde{w}\tilde{\ru}(-x)\tilde{\overline{\ru}}(t)
=\tilde{\ru}\!\left(\frac{-t}{1-tx}\right)
\tilde{\ra}\!\left(\frac{1}{(1-tx)^2}\right)
\tilde{w}\,\tilde{\ru}\!\left(\frac{-x}{1-tx}\right)
\end{align*}
for $t\in \bR$ such that $|t|$ is sufficiently small. 
Using this equality, we have 
\begin{align*}
(\rho (E_-)\iota_{\mu,\nu}(f))(\tilde{w}\tilde{\ru}(-x))
&=\frac{d}{dt}\bigg|_{t=0}
\left((1-tx)^{-2\nu -1}f\left(\frac{x}{1-tx}\right)\right)\\
&=x^2f'(x)+(2\nu +1)xf(x)=f_{E_-}(x). 
\end{align*}
Hence, we have  
$\rho (E_-)\iota_{\mu,\nu}(f)=\iota_{\mu,\nu}(f_{E_-})$ 
by the characterization (\ref{eqn:iota_iff}). 
We obtain $\rho (H)\iota_{\mu,\nu}(f)=\iota_{\mu,\nu}(f_{H})$ 
by the formulas for $\rho (E_+)$ and $\rho (E_-)$ 
with $\rho (H)=\rho (E_+)\circ \rho (E_-)-\rho (E_-)\circ \rho (E_+)$. 
\end{proof}

\begin{lem}
\label{lem:seminorm_Finfty_leq_F}
Let $\mu,\nu \in \bC$. 
Then $\cQ_X(F_\infty)
\leq e^{{\pi}|\mathrm{Im}(\mu)|/{2}}\cQ_{\Ad(w^{-1})X}(F)$ for 
$F\in I^\infty_{\mu,\nu}$ and $X\in U(\g_{\bC})$. 
\end{lem}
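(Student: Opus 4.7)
The plan is to reduce the inequality to the already-proved bound $|G_\infty|_K \leq e^{\pi|\mathrm{Im}(\mu)|/2}|G|_K$ of (\ref{eqn:Knorm_infty}) by showing that the operator $\rho(X)$ ``commutes past'' the $(\cdot)_\infty$ involution at the cost of twisting $X$ by $\Ad(w^{-1})$.

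First, I would establish the intertwining identity
\begin{align*}
\rho(X)\circ \rho(\tilde{w}) = \rho(\tilde{w})\circ \rho(\Ad(w^{-1})X) \qquad (X\in \cU(\g_\bC))
\end{align*}
on $C^\infty(\widetilde{G})$. For $X\in \g$ this is a direct computation: unwinding the definitions of the right regular action and using Lemma \ref{lem:univ_exp1} (ii) to rewrite $\widetilde{\exp}(tX)\tilde{w} = \tilde{w}\,\widetilde{\exp}(t\Ad(w^{-1})X)$ gives, for any $F\in C^\infty(\widetilde{G})$ and $\tilde{g}\in\widetilde{G}$,
\begin{align*}
(\rho(X)\rho(\tilde{w})F)(\tilde{g})
= \frac{d}{dt}\bigg|_{t=0} F(\tilde{g}\tilde{w}\,\widetilde{\exp}(t\Ad(w^{-1})X))
= (\rho(\tilde{w})\rho(\Ad(w^{-1})X)F)(\tilde{g}).
\end{align*}
The identity then extends from $\g$ to all of $\cU(\g_\bC)$ by induction on the degree, since $\Ad(w^{-1})$ is an algebra automorphism of $\cU(\g_\bC)$.

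Next, applying this identity to $F_\infty = e^{\pi \sI \mu/2}\rho(\tilde{w})F$ yields
\begin{align*}
\rho(X)F_\infty
= e^{\pi \sI \mu/2}\rho(\tilde{w})\rho(\Ad(w^{-1})X)F
= \bigl(\rho(\Ad(w^{-1})X)F\bigr)_\infty .
\end{align*}
Taking the $|\cdot|_K$-seminorm and invoking (\ref{eqn:Knorm_infty}) from Lemma \ref{lem:norm_K_to_N} applied to the smooth vector $\rho(\Ad(w^{-1})X)F\in I^\infty_{\mu,\nu}$ gives
\begin{align*}
\cQ_X(F_\infty)
= \bigl|\bigl(\rho(\Ad(w^{-1})X)F\bigr)_\infty \bigr|_K
\leq e^{\pi |\mathrm{Im}(\mu)|/2}\bigl|\rho(\Ad(w^{-1})X)F\bigr|_K
= e^{\pi |\mathrm{Im}(\mu)|/2}\cQ_{\Ad(w^{-1})X}(F),
\end{align*}
which is the claim.

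There is essentially no obstacle here: the only nontrivial ingredient is the intertwining identity, and that is immediate from Lemma \ref{lem:univ_exp1} (ii). The bookkeeping step to extend from $\g$ to $\cU(\g_\bC)$ is routine, using the fact that $\Ad(w^{-1})$ is multiplicative on $\cU(\g_\bC)$ and that $\rho$ is a Lie algebra (hence associative algebra) homomorphism.
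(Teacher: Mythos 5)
Your proof is correct and follows exactly the route the paper indicates: the paper's one-line proof cites (\ref{eqn:Knorm_infty}) and Lemma \ref{lem:univ_exp1} (ii), and your argument is the natural expansion of that — establish the intertwining identity $\rho(X)\circ\rho(\tilde{w})=\rho(\tilde{w})\circ\rho(\Ad(w^{-1})X)$ from Lemma \ref{lem:univ_exp1} (ii), conclude $\rho(X)F_\infty=(\rho(\Ad(w^{-1})X)F)_\infty$, and apply (\ref{eqn:Knorm_infty}).
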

\begin{proof}
The assertion follows immediately from 
(\ref{eqn:Knorm_infty}) and Lemma \ref{lem:univ_exp1} (ii). 
\end{proof}

\begin{lem}
\label{lem:seminorm_f_leq_F}
Let $\mu,\nu \in \bC$. 
For $i\in \bZ_{\geq 0}$, 
there is a constant $c_{\Delta,\nu,i}$ such that 
\begin{align*}
&\sup_{x\in \bR}\left|\frac{d^i}{dx^i}
(F(\tilde{w}\tilde{\ru}(-x))\Delta (x))\right|\leq 
c_{\Delta,\nu,i}\sum_{j=0}^i\cQ_{(E_+)^j}(F)&
&(F\in I_{\mu,\nu}^\infty ).
\end{align*}
\end{lem}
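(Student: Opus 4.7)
The plan is to combine Lemma \ref{lem:gact_fct_on_R} (i) with the Leibniz rule and the norm estimate (\ref{eqn:Knorm_to_N}) from Lemma \ref{lem:norm_K_to_N}. Since $\Delta$ is compactly supported in $\{|x|\leq 2\}$, the product $F(\tilde{w}\tilde{\ru}(-x))\Delta(x)$ is also supported in that set, so we only need to control the derivatives on $|x|\leq 2$.

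First, iterating Lemma \ref{lem:gact_fct_on_R} (i) gives the identity
\begin{align*}
\frac{d^j}{dx^j}F(\tilde{w}\tilde{\ru}(-x))
=(-1)^j\,(\rho(E_+)^jF)(\tilde{w}\tilde{\ru}(-x))
\end{align*}
for all $j\in \bZ_{\geq 0}$ and $F\in I_{\mu,\nu}^\infty$. Applying (\ref{eqn:Knorm_to_N}) with $u=2$ to the function $\rho(E_+)^jF\in I_{\mu,\nu}^\infty$ yields
\begin{align*}
\sup_{-2\leq x\leq 2}
\left|\frac{d^j}{dx^j}F(\tilde{w}\tilde{\ru}(-x))\right|
\leq 5^{|\mathrm{Re}(\nu)+1/2|}\,\cQ_{(E_+)^j}(F).
\end{align*}

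Next, I would expand $\frac{d^i}{dx^i}(F(\tilde{w}\tilde{\ru}(-x))\Delta(x))$ by the Leibniz rule:
\begin{align*}
\frac{d^i}{dx^i}\bigl(F(\tilde{w}\tilde{\ru}(-x))\Delta(x)\bigr)
=\sum_{j=0}^{i}\binom{i}{j}
\frac{d^j}{dx^j}F(\tilde{w}\tilde{\ru}(-x))\cdot \Delta^{(i-j)}(x).
\end{align*}
Since $\supp(\Delta)\subset \{|x|\leq 2\}$, the whole expression vanishes outside this set, and taking the sup together with the previous bound produces
\begin{align*}
\sup_{x\in \bR}\left|\frac{d^i}{dx^i}
(F(\tilde{w}\tilde{\ru}(-x))\Delta (x))\right|
\leq 5^{|\mathrm{Re}(\nu)+1/2|}
\sum_{j=0}^{i}\binom{i}{j}\,\sup_{x\in\bR}|\Delta^{(i-j)}(x)|\,
\cQ_{(E_+)^j}(F).
\end{align*}
Setting $c_{\Delta,\nu,i}=5^{|\mathrm{Re}(\nu)+1/2|}\max_{0\leq j\leq i}\binom{i}{j}\sup_{x\in\bR}|\Delta^{(i-j)}(x)|$ (which is finite because $\Delta\in C^\infty_0(\bR)$) then gives the claimed inequality.

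There is no real obstacle here; the proof is essentially bookkeeping. The only subtlety is making sure we apply (\ref{eqn:Knorm_to_N}) to $\rho(E_+)^jF$ rather than $F$, and using $u=2$ (coming from $\supp(\Delta)$) to make the constant $5^{|\mathrm{Re}(\nu)+1/2|}$ uniform in $j$.
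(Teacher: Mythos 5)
Your proof is correct and takes essentially the same route as the paper: iterate Lemma \ref{lem:gact_fct_on_R} (i) to express the $j$-th derivative via $\rho(E_+)^j$, expand with the Leibniz rule, and apply (\ref{eqn:Knorm_to_N}) with $u=2$ using $\supp(\Delta)\subset\{|x|\leq 2\}$. The only cosmetic difference is that you absorb the coefficients $\binom{i}{j}\sup|\Delta^{(i-j)}|$ into the constant via a max where the paper leaves them in a sum; both give the stated bound.
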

\begin{proof}
Let $F\in I^\infty_{\mu,\nu}$ and $i\in \bZ_{\geq 0}$. 
By Lemma \ref{lem:gact_fct_on_R} (i), we have 
\begin{align}
\nonumber 
&\frac{d^i}{dx^i}
(F(\tilde{w}\tilde{\ru}(-x))\Delta (x))
=\sum_{j=0}^i\binom{i}{j}(-1)^j(\rho (E_+)^jF)(\tilde{w}\tilde{\ru}(-x))
\Delta^{(i-j)}(x)
\end{align}
for $x\in \bR $. 
By (\ref{eqn:Knorm_to_N}) and 
$\supp (\Delta )\subset \{x \in \bR \mid |x|\leq 2\}$, we have 
\begin{align*}
\sup_{x\in \bR}\left|\frac{d^i}{dx^i}
(F(\tilde{w}\tilde{\ru}(-x))\Delta (x))\right|
\leq 
&\sum_{j=0}^i\binom{i}{j}\left(\sup_{x\in \bR}\Delta^{(i-j)}(x)\right)\\
&\times 5^{|\mathrm{Re}(\nu)+1/2|}\cQ_{(E_+)^j}(F).
\end{align*}
Hence, we obtain the assertion. 
\end{proof}

\begin{lem}
\label{lem:seminorm_F_leq_f}
Let $\mu,\nu \in \bC$ and $j\in \bZ_{\geq 0}$. 
For $X\in \cU_j(\g_{\bC})$ 
and $u>0$, there is a constant $c_{X,u}>0$ such that 
\begin{align*}
\begin{split}
&\cQ_X(\iota_{\mu,\nu}(f))\leq c_{X,u}\sum_{i=0}^{j}
\sup_{x\in \bR}\bigl|f^{(i)}(x)\bigr|\\
&(f\in C^\infty_0(\bR)\ \text{with}\ 
\supp (f)\subset \{x\in \bR \mid |x|\leq u\}).
\end{split}
\end{align*}
\end{lem}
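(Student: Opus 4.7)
The plan is to induct on $j$, using Lemma \ref{lem:gact_fct_on_R} (ii) to reduce the action of any $X \in \cU_j(\g_\bC)$ on $\iota_{\mu,\nu}(f)$ to $\iota_{\mu,\nu}$ of another compactly supported smooth function whose sup-norm is controlled by derivatives of $f$, and then applying estimate (\ref{eqn:Nnorm_K}) from Lemma \ref{lem:norm_K_to_N}.

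First, I would establish by induction on $j$ the following refined statement: for every $X \in \cU_j(\g_\bC)$ there exists a finite collection of constants $a_{X,k,i} \in \bC$ (depending on $X$, $\mu$, $\nu$ but not on $f$) with $0 \leq i \leq j$ and $0 \leq k \leq 2j$ such that
\[
\rho(X)\iota_{\mu,\nu}(f) \;=\; \iota_{\mu,\nu}(g_X), \qquad g_X(x) \;=\; \sum_{i,k} a_{X,k,i}\, x^{k} f^{(i)}(x).
\]
The base case $j=0$ is trivial since $\cU_0(\g_\bC) = \bC$. For the inductive step, it suffices to consider $X = Y \cdot X'$ with $Y \in \{H, E_+, E_-\}$ and $X' \in \cU_{j-1}(\g_\bC)$; then $\rho(X)\iota_{\mu,\nu}(f) = \rho(Y)\iota_{\mu,\nu}(g_{X'})$, and Lemma \ref{lem:gact_fct_on_R} (ii) applied to the polynomial-coefficient expression for $g_{X'}$ produces a function of the same form with $i$ increased by at most one. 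The three formulas for $\rho(E_+), \rho(H), \rho(E_-)$ all preserve the class of polynomial-coefficient differential expressions acting on $f$, so the induction closes.

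Next, observe that if $\supp(f) \subset \{|x| \leq u\}$, then every term $x^k f^{(i)}(x)$ is also supported in $\{|x| \leq u\}$, and on this set $|x^k| \leq u^k$. Therefore
\[
\sup_{x \in \bR}|g_X(x)| \;\leq\; \Bigl(\sum_{i,k}|a_{X,k,i}|\,u^k\Bigr) \sum_{i=0}^{j} \sup_{x\in \bR}|f^{(i)}(x)|,
\]
and $\supp(g_X) \subset \{|x| \leq u\}$. Applying (\ref{eqn:Nnorm_K}) to $g_X$ then yields
\[
\cQ_X(\iota_{\mu,\nu}(f)) \;=\; |\iota_{\mu,\nu}(g_X)|_K \;\leq\; e^{\pi|\mathrm{Im}(\mu)|}(1+u^2)^{|\mathrm{Re}(\nu)+1/2|}\sup_{x\in\bR}|g_X(x)|,
\]
which combined with the preceding bound gives the desired inequality with
\[
c_{X,u} \;=\; e^{\pi|\mathrm{Im}(\mu)|}(1+u^2)^{|\mathrm{Re}(\nu)+1/2|}\sum_{i,k}|a_{X,k,i}|\,u^k.
\]

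There is no real obstacle here; the only point requiring care is bookkeeping in the inductive step, namely checking that each of the three basis formulas in Lemma \ref{lem:gact_fct_on_R} (ii) sends a polynomial-coefficient differential expression in $f$ to another one of the same shape while raising the order of differentiation by at most one. Once that is verified, the estimate (\ref{eqn:Nnorm_K}) finishes the argument.
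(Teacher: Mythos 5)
Your proof is correct and follows essentially the same route as the paper's: reduce $\rho(X)\iota_{\mu,\nu}(f)$ to $\iota_{\mu,\nu}$ of a polynomial-coefficient differential expression in $f$ via Lemma \ref{lem:gact_fct_on_R} (ii), then bound the result by estimate (\ref{eqn:Nnorm_K}). The only difference is that you spell out the induction establishing the existence of the polynomial coefficients $p_{X,i}$ (with explicit degree bound $2j$), which the paper compresses into a single sentence.
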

\begin{proof}
Let $X\in \cU_j(\g_{\bC})$ and $u>0$. 
Since $\{H,E_+,E_-\}$ is a basis of $\g_{\bC}$, it follows 
from Lemma \ref{lem:gact_fct_on_R} (ii) that 
there are polynomial functions $p_{X,i}$ 
$(0\leq i\leq j)$ on $\bR$ such that  
\begin{align*}
&\rho (X)\iota_{\mu,\nu}(f)=\sum_{i=0}^{j}
\iota_{\mu,\nu}(p_{X,i}f^{(i)})&
&(f\in C^\infty_{0}(\bR)).
\end{align*}
Hence, for $f\in C^\infty_0(\bR)$ 
with $\supp (f)\subset \{x\in \bR \mid |x|\leq u\}$, 
we have 
\begin{align*}
\cQ_{X}(\iota_{\mu,\nu}(f))
\leq &\,e^{\pi |\mathrm{Im}(\mu )|}(1+u^2)^{|\mathrm{Re}(\nu)+1/2|}
\sum_{i=0}^{j}\left(\sup_{-u\leq x\leq u}|p_{X,i}(x)|\right)
\sup_{x\in \bR}|f^{(i)}(x)|
\end{align*}
by (\ref{eqn:Nnorm_K}). 
The assertion follows from this inequality. 
\end{proof}

\begin{proof}[Proof of Proposition \ref{prop:pair_to_functionl}]
First, we will prove the statement (i). 
Let $\lambda\in I_{\mu,\nu}^{-\infty}$ and $u>0$. 
Then there exist $c>0$, $m\in \bZ_{>0}$ and 
$X_1,X_2, \cdots ,X_m\in \cU (\g_{\bC})$ satisfying 
(\ref{eqn:cdn_conti_ps}). 
Take $j\in \bZ_{\geq 0}$ so that 
$X_1,X_2, \cdots ,X_m\in \cU_j(\g_{\bC})$. 
By Lemma \ref{lem:seminorm_F_leq_f}, there are 
constants $c_{X_i,u}>0$ $(1\leq i\leq m)$ such that  
\begin{align*}
|T^\lambda (f)|=|\lambda (\iota_{\mu,\nu}(f))|
\leq c\sum_{i=1}^m
\cQ_{X_i}(\iota_{\mu,\nu}(f))\leq 
\left(c\sum_{i=1}^mc_{X_i,u}\right)
\sum_{i=0}^{j}
\sup_{x\in \bR}\bigl|f^{(i)}(x)\bigr|&\\
(\text{$f\in C^\infty_0(\bR)$ with 
$\supp (f)\subset \{x\in \bR \mid |x|\leq u\}$})&.
\end{align*}
This implies $T^\lambda \in \cD'(\bR)$. 
By Lemma \ref{lem:seminorm_Finfty_leq_F}, we have 
\begin{align*}
&|\lambda_\infty (F)|=|\lambda (F_\infty )|
\leq c\sum_{i=1}^m
\cQ_{X_i}(F_\infty) 
\leq ce^{{\pi}|\mathrm{Im}(\mu)|/{2}}\sum_{i=1}^m
\cQ_{\Ad(w^{-1})X_i}(F)
\end{align*}
for $F\in I_{\mu,\nu}^\infty$. This implies 
$\lambda_\infty \in I_{\mu,\nu}^{-\infty}$. 
The equality $(\lambda_\infty)_\infty =\lambda$ follows from 
$(F_\infty)_\infty =F$ $(F\in I_{\mu,\nu}^\infty)$.

Next, we will prove the statement (ii). By Lemma \ref{lem:aimai_partitions}, 
we know that the definition (\ref{eqn:pair_to_fctl}) 
does not depend on the choice of a partition. 
Let $(T_1,T_2)\in \cA (J_{\mu,\nu})$. 
Then there are $m\in \bZ_{\geq 0}$ and $c>0$ such that 
\begin{align*}
|T_1(f)|\leq c\sum_{i=0}^{m}
\sup_{x\in \bR}\left|f^{(i)}(x)\right|,
\hspace{10mm} 
|T_2(f)|\leq c\sum_{i=0}^{m}
\sup_{x\in \bR}\left|f^{(i)}(x)\right|&\\
(f\in C^\infty_0(\bR)\ \text{with}\ 
\supp (f)\subset \{x\in \bR \mid |x|\leq 2\})&.
\end{align*}
By Lemmas \ref{lem:seminorm_Finfty_leq_F} and \ref{lem:seminorm_f_leq_F}, 
there are constants $c_{\Delta,\nu,i}>0$ $(1\leq i\leq j)$ 
such that 
\begin{align*}
|\Lambda (T_1,T_2)(F)|
&=|T_1(f_1)+T_2(f_2)|\leq 
c\sum_{i=0}^{m}\left(
\sup_{x\in \bR}\left|f_1^{(i)}(x)\right|
+\sup_{x\in \bR}\left|f_2^{(i)}(x)\right|\right)\\
&\leq 
c\sum_{i=0}^{m}
c_{\Delta,\nu,i}\sum_{j=0}^i
\left(\cQ_{(E_+)^j}(F)
+e^{{\pi}|\mathrm{Im}(\mu)|/{2}}
\cQ_{(-E_-)^j}(F)\right)
\end{align*}
for $F\in I^{\infty}_{\mu,\nu}$ 
with the partition $(f_1,f_2)$ of $F$ defined by 
(\ref{eqn:partition_example}). 
This implies 
$\Lambda (T_1,T_2) \in I_{\mu,\nu}^{-\infty}$. 

Finally, we will prove the statement (iii). 
By definition, we have 
\begin{align*}
&\Lambda (T^{\lambda },T^{\lambda_\infty})=\lambda,&
&T^{\Lambda (T_1,T_2)}=T_1,&
&(\Lambda (T_1,T_2))_\infty =\Lambda (T_2,T_1)
\end{align*}
for $\lambda \in I_{\mu,\nu}^{-\infty}$ and $(T_1,T_2)\in \cA (J_{\mu,\nu})$. 
The statement (iii) follows from the statement (i), (ii) and these equalities. 
\end{proof}

\subsection{The twisted Fourier transforms}
\label{subsec:AC_fourier}

In this subsection, 
we give proofs of 
Lemmas \ref{lem:pre_twist_Ftrans}, \ref{lem:twist_fourier_0}, 
and prepare some lemmas for the twisted Fourier transformation.

\begin{proof}[{Proof of Lemma \ref{lem:pre_twist_Ftrans}}]
Let $\mu \in \bC$, $y\in \bR^\times$ and $f\in C_0^\infty (\bR )$. 
By direct computation, for $\nu \in \bC$, we have 
$(f_{\mu,\nu,\infty})''=(D_{\nu}f)_{\mu,\nu +1,\infty }$, 
where $D_\nu$ is the differential operator defined by 
\begin{align}
\label{eqn:def_Dnu}
&(D_{\nu}f)(x)
=x^2f''(x)+(2\nu +2)\left\{2xf'(x)+(2\nu +1) f(x)\right\}.
\end{align}
By Lemma \ref{lem:fourier_diff}, we have 
$\displaystyle 
\cF(f_{\mu,\nu,\infty})(y)
=\frac{1}{(2\pi \sI y)^2}
\cF((D_{\nu}f)_{\mu,\nu +1,\infty})(y)$ 
for $\nu \in \bC$ such that $\mathrm{Re}(\nu)>0$. 
By repeated application of this equality, we have 
\begin{align}
\label{eqn:twist_fourier}
&\cF(f_{\mu,\nu,\infty})(y)
=\frac{1}{(2\pi \sI y)^{2m}}\cF 
((D_{\nu +m-1}D_{\nu +m-2}\cdots D_{\nu}f )_{\mu,\nu +m,\infty})(y)
\end{align}
for $m\in \bZ_{\geq 0}$ and $\nu \in \bC$ such that $\mathrm{Re}(\nu)>0$. 
This expression gives the holomorphic continuation of 
$\cF (f_{\mu,\nu,\infty})(y)$ to $\mathrm{Re}(\nu)>-m$. 
Since a non-negative integer $m$ can be chosen arbitrarily, 
we obtain the statement (i). 

By direct computation, 
for $\nu \in \bC$ such that $\mathrm{Re}(\nu)>0$, we have 
\begin{align}
\label{eqn:twist_fourier0}
&\cF(f_{\mu,\nu,\infty})(0)
=e^{\pi \sI \mu /2}\Phi_{1}(f;2\nu )
+e^{-\pi \sI \mu /2}\Phi_{-1}(f;2\nu ).
\end{align}
Hence, by the results in \S \ref{subsec:AC_local_zeta}, 
we obtain the statement (ii). 
\end{proof}

The twisted Fourier transform 
$\cF_{\mu,\nu ,\infty}(f)$ of $f\in C^\infty_0(\bR)$ is defined 
in \S \ref{subsec:Fexp_ILL} using 
the meromorphic continuation in Lemma \ref{lem:pre_twist_Ftrans}.

\begin{lem}
\label{lem:twist_fourier}
Let $\mu ,\nu \in \bC$ and $y\in \bR^\times $. 
For $u>0$ and $m\in \bZ_{\geq 0}$ such that $\mathrm{Re}(\nu)>-m$, 
we have 
\begin{align*}
|\cF_{\mu,\nu,\infty}(f)(y)|\leq 
\frac{e^{\pi |\mathrm{Im}(\mu)|/2}
u^{2\mathrm{Re}(\nu)+2m}
}{(\mathrm{Re}(\nu)+m)(2\pi |y|)^{2m}}
\sup_{x\in \bR}|(D_{\nu +m-1}D_{\nu +m-2}\cdots D_{\nu }f)(x)|&\\
(f\in C_0^\infty (\bR)\ \text{with}\ \supp (f)\subset 
\{x\in \bR \mid |x|\leq u\}),&
\end{align*}
where $D_\nu$ is the differential operator defined by (\ref{eqn:def_Dnu}). 
\end{lem}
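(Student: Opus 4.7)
The plan is to extend the identity (\ref{eqn:twist_fourier}) from the half-plane $\mathrm{Re}(\nu)>0$ to the half-plane $\mathrm{Re}(\nu)>-m$ by analytic continuation in $\nu$, and then bound the right-hand side directly as an absolutely convergent integral.

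First I would check that, for fixed $y\in\bR^\times$ and fixed $f\in C_0^\infty(\bR)$, both sides of (\ref{eqn:twist_fourier}) are entire functions of $\nu$. The left side, $\cF_{\mu,\nu,\infty}(f)(y)=\cF(f_{\mu,\nu,\infty})(y)$, is entire by Lemma \ref{lem:pre_twist_Ftrans}(i). For the right side, the key observation is that $g_\nu:=D_{\nu+m-1}D_{\nu+m-2}\cdots D_\nu f$ is a polynomial in $\nu$ with coefficients in $C_0^\infty(\bR)$, each supported in $\{|x|\leq u\}$. Writing $g_\nu=\sum_k \nu^k h_k$ with $h_k\in C_0^\infty(\bR)$, the right side of (\ref{eqn:twist_fourier}) becomes
\[
\frac{1}{(2\pi\sI y)^{2m}}\sum_k \nu^k\,\cF((h_k)_{\mu,\nu+m,\infty})(y),
\]
and each summand is entire in $\nu$ by Lemma \ref{lem:pre_twist_Ftrans}(i). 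Since the two sides of (\ref{eqn:twist_fourier}) agree on $\mathrm{Re}(\nu)>0$, they agree on all of $\bC$ by the identity theorem, and in particular on $\mathrm{Re}(\nu)>-m$.

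Next I would estimate the right side on $\mathrm{Re}(\nu)>-m$, where $\mathrm{Re}(\nu+m)>0$. Because each $D_{\nu+j}$ is a local differential operator, $\supp(g_\nu)\subset\supp(f)\subset\{|x|\leq u\}$, and hence $(g_\nu)_{\mu,\nu+m,\infty}$ is supported in $\{|x|\geq 1/u\}$ with pointwise bound
\[
|(g_\nu)_{\mu,\nu+m,\infty}(x)|\leq e^{\pi|\mathrm{Im}(\mu)|/2}\,|x|^{-2\mathrm{Re}(\nu+m)-1}\sup_{t\in\bR}|g_\nu(t)|.
\]
Bounding $|\cF((g_\nu)_{\mu,\nu+m,\infty})(y)|$ by the integral of its absolute value and using
\[
\int_{|x|\geq 1/u}|x|^{-2\mathrm{Re}(\nu+m)-1}\,dx=\frac{u^{2\mathrm{Re}(\nu)+2m}}{\mathrm{Re}(\nu)+m}
\]
(valid because $\mathrm{Re}(\nu+m)>0$), then dividing by $|(2\pi\sI y)^{2m}|=(2\pi|y|)^{2m}$ via the extended identity from the first step, yields the stated inequality. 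There is no serious obstacle here; the one point requiring care is that on the right of (\ref{eqn:twist_fourier}) the parameter $\nu$ appears both in the operator sequence $D_{\nu+m-1}\cdots D_\nu$ and in the twist $(\cdot)_{\mu,\nu+m,\infty}$, which is handled cleanly by expanding $g_\nu$ as a polynomial in $\nu$ with $\nu$-independent $C_0^\infty(\bR)$ coefficients and applying Lemma \ref{lem:pre_twist_Ftrans}(i) term by term.
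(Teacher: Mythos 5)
Your proposal is correct and takes essentially the same route as the paper: establish the absolute-integral bound for the twist $(\cdot)_{\mu,\nu+m,\infty}$ of a compactly supported function (giving the factor $u^{2\mathrm{Re}(\nu)+2m}/(\mathrm{Re}(\nu)+m)$), and transfer it to $\mathrm{Re}(\nu)>-m$ through the identity (\ref{eqn:twist_fourier}) divided by $(2\pi|y|)^{2m}$. The only difference is that you spell out the polynomial-in-$\nu$ expansion of $D_{\nu+m-1}\cdots D_\nu f$ to justify the analytic continuation step, which the paper leaves implicit in the proof of Lemma \ref{lem:pre_twist_Ftrans}.
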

\begin{proof}
Let $\mu \in \bC$, $y\in \bR^\times$, $u>0$ and 
$f\in C_0^\infty (\bR)$ with 
$\supp (f)\subset \{x\in \bR \mid |x|\leq u\}$. 
For $\nu \in \bC$ such that $\mathrm{Re}(\nu)>0$, 
we have 
\begin{align*}
&|\cF(f_{\mu,\nu,\infty})(y)|
=\left|\sum_{\varepsilon \in \{\pm 1\}}
e^{\pi \sI \varepsilon \mu /2}
\int_{1/u}^{\infty} t^{-2\nu -1}
f(\varepsilon /t){e}^{-2\pi \sI \varepsilon ty}\, dt\right| \\
&\leq 
2e^{\pi |\mathrm{Im}(\mu)|/2}
\left(\int_{1/u}^{\infty} t^{-2\mathrm{Re}(\nu )-1}
\, dt\right)\sup_{x\in \bR}|f(x)|
=\frac{e^{\pi |\mathrm{Im}(\mu)|/2}u^{2\mathrm{Re}(\nu)}}
{\mathrm{Re}(\nu)}\sup_{x\in \bR}|f(x)|.
\end{align*}
The assertion follows immediately from this inequality and 
(\ref{eqn:twist_fourier}). 
\end{proof}

\begin{proof}[{Proof of Lemma \ref{lem:twist_fourier_0}}]
Let $\mu,\nu \in \bC$. 
By (\ref{eqn:twist_fourier0}) and 
the results in \S \ref{subsec:dist_local_zeta}, 
we know that $C_0^\infty(\bR)\ni f\mapsto \cF_{\mu,\nu,\infty}(f)(0)\in \bC$ 
is a distribution on $\bR$. 
By Lemma \ref{lem:twist_fourier}, 
we know that $C_0^\infty(\bR)\ni f\mapsto \cF_{\mu,\nu,\infty}(f)(y)\in \bC$ 
is a distribution on $\bR$ for $y\in \bR^\times $. 
The equality (\ref{eqn:fourier_infty}) follows from 
the definition and 
$\delta^{(n)}(f)=0$ ($f\in C^\infty_0(\bR^\times )$). 
\end{proof}

For $t>0$ and a function $f$ on $\bR$ or $\bR^\times$, 
we define a function $f_{[t]}$ as in \S \ref{subsec:DS_FE}, 
that is, $f_{[t]}(x)=f(tx)$.

\begin{lem}
\label{lem:pre_pole001}
Let $\mu,\nu \in\bC$, $y\in \bR$, $t>0$ and $f\in C^\infty_0(\bR)$. 

\noindent (i) 
$\delta^{(n)}(f_{[t]})=t^n\delta^{(n)}(f)$ for $n\in \bZ_{\geq 0}$. 

\noindent (ii) 
$\cF_{\mu,\nu,\infty}(f_{[t^{-1}]})(y)
=t^{2\nu }\cF_{\mu,\nu,\infty}(f)(t^{-1}y)$ 
if either $2\nu \not\in \bZ_{\leq 0}$ or $y\neq 0$ holds. 

\noindent (iii) 
For $n\in \bZ_{\geq 0}$, we have 
\begin{align*}
\cF_{\mu,-n/2,\infty}(f_{[t^{-1}]})(0)
=&\,t^{-n}\cF_{\mu,-n/2,\infty}(f)(0)\\
&+(\sI)^n\frac{2\cos \bigl(\tfrac{\pi (n+\mu )}{2}\bigr)}{n!}
\delta^{(n)}(f)t^{-n}\log t.
\end{align*} 
\end{lem}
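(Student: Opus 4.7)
}

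The plan is to dispatch (i) by a direct chain-rule computation, derive (ii) from Lemma \ref{lem:rel_f_t} on the half-plane $\mathrm{Re}(\nu)>0$ and propagate it by the uniqueness of meromorphic continuation used to define $\cF_{\mu,\nu,\infty}$, and then prove (iii) by expanding the identity in (ii) as a Laurent series around $s=-n/2$.

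For (i), observe that $(f_{[t]})^{(n)}(x)=t^{n}f^{(n)}(tx)$, so evaluating at $x=0$ gives $(f_{[t]})^{(n)}(0)=t^{n}f^{(n)}(0)$, and therefore $\delta^{(n)}(f_{[t]})=(-1)^{n}t^{n}f^{(n)}(0)=t^{n}\delta^{(n)}(f)$. For (ii), first assume $\mathrm{Re}(\nu)>0$ so that the twisted Fourier transforms are honest integrals. Lemma \ref{lem:rel_f_t}(ii) gives $(f_{[t^{-1}]})_{\mu,\nu,\infty}=t^{2\nu+1}(f_{\mu,\nu,\infty})_{[t]}$, and combined with Lemma \ref{lem:rel_f_t}(i) this yields
\begin{align*}
\cF((f_{[t^{-1}]})_{\mu,\nu,\infty})(y)
=t^{2\nu+1}\cF((f_{\mu,\nu,\infty})_{[t]})(y)
=t^{2\nu}\cF(f_{\mu,\nu,\infty})(t^{-1}y).
\end{align*}
For $y\neq 0$, both sides are, by Lemma \ref{lem:pre_twist_Ftrans}(i), entire in $\nu$, so the identity extends to all $\nu\in\bC$. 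For $y=0$ with $-2\nu\notin\bZ_{\geq 0}$, both sides are holomorphic at $\nu$ by Lemma \ref{lem:pre_twist_Ftrans}(ii), and the identity persists by meromorphic continuation.

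For (iii), set $c=(\sI)^{n}\dfrac{2\cos(\pi(n+\mu)/2)}{n!}$ and $\epsilon=s+n/2$. The equality from the proof of (ii), valid for $\mathrm{Re}(s)>0$ and extended meromorphically to all $s$, reads $\cF((f_{[t^{-1}]})_{\mu,s,\infty})(0)=t^{2s}\cF(f_{\mu,s,\infty})(0)$. By Lemma \ref{lem:pre_twist_Ftrans}(ii) (with $n$ fixed) we have the Laurent expansion
\begin{align*}
\cF(f_{\mu,s,\infty})(0)=\frac{c\,\delta^{(n)}(f)}{2\epsilon}+\cF_{\mu,-n/2,\infty}(f)(0)+O(\epsilon)\qquad(\epsilon\to 0),
\end{align*}
and similarly for $f_{[t^{-1}]}$ with $\delta^{(n)}(f)$ replaced by $\delta^{(n)}(f_{[t^{-1}]})=t^{-n}\delta^{(n)}(f)$ by part (i). Using the Taylor expansion $t^{2s}=t^{-n}(1+2\epsilon\log t+O(\epsilon^{2}))$ and multiplying out,
\begin{align*}
\cF((f_{[t^{-1}]})_{\mu,s,\infty})(0)
=\frac{t^{-n}c\,\delta^{(n)}(f)}{2\epsilon}+t^{-n}\cF_{\mu,-n/2,\infty}(f)(0)+t^{-n}c\,\delta^{(n)}(f)\log t+O(\epsilon).
\end{align*}
Subtracting $\dfrac{c\,\delta^{(n)}(f_{[t^{-1}]})}{2\epsilon}=\dfrac{t^{-n}c\,\delta^{(n)}(f)}{2\epsilon}$ and letting $\epsilon\to 0$ yields the asserted identity.

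The computations are routine once the Laurent expansion is set up; the only subtlety is that the $\log t$ contribution must come from matching the simple pole of $\cF(f_{\mu,s,\infty})(0)$ at $s=-n/2$ against the linear term in the expansion of $t^{2s}$, and the main bookkeeping issue is merely to verify that the coefficient is exactly $c\,t^{-n}$, which the calculation above confirms.
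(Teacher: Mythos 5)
Your proposal is correct and follows essentially the same route as the paper: part (i) by the chain rule, part (ii) by combining Lemma \ref{lem:rel_f_t} with analytic continuation in $\nu$, and part (iii) by extracting the constant term of the Laurent expansion of $\cF(f_{\mu,s,\infty})(0)$ at $s=-n/2$ using the identity $\cF((f_{[t^{-1}]})_{\mu,s,\infty})(0)=t^{2s}\cF(f_{\mu,s,\infty})(0)$ from (ii) and $\delta^{(n)}(f_{[t^{-1}]})=t^{-n}\delta^{(n)}(f)$ from (i). The only cosmetic difference is in part (iii): the paper writes the difference of the two regularized limits as a single limit, algebraically reduces it (via (i) and (ii)) to the factor $\frac{t^{2s}-t^{-n}}{2s+n}$, and evaluates that by l'Hospital's rule, whereas you multiply the Laurent expansion of $\cF(f_{\mu,s,\infty})(0)$ by the Taylor expansion of $t^{2\epsilon}$ and read off the constant coefficient; both compute exactly the same finite part and both locate the $\log t$ as coming from the pole of $\cF(f_{\mu,s,\infty})(0)$ colliding with the linear term in $t^{2s}$.
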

\begin{proof}
We obtain the statement (i) by direct computation. 
By Lemma \ref{lem:rel_f_t}, if $\mathrm{Re}(\nu)>0$, we have 
\begin{align*}
\cF_{\mu,\nu,\infty}(f_{[t^{-1}]})(y)
&=\cF((f_{[t^{-1}]})_{\mu,\nu,\infty})(y)
=t^{2\nu +1}\cF((f_{\mu,\nu,\infty})_{[t]})(y)\\
&=t^{2\nu }\cF(f_{\mu,\nu,\infty})(t^{-1}y)
=t^{2\nu }\cF_{\mu,\nu,\infty}(f)(t^{-1}y).
\end{align*}
Hence, by the uniqueness of the analytic continuation 
as a function of $\nu$, we obtain the statement (ii). 
For $n\in \bZ_{\geq 0}$, we have 
\begin{align*}
&\cF_{\mu,-n/2,\infty}(f_{[t^{-1}]})(0)
-t^{-n}\cF_{\mu,-n/2,\infty}(f)(0)\\
&=\lim_{s \to -n/2}\Biggl\{
\left(\cF_{\mu,s ,\infty}(f_{[t^{-1}]})(0)
-(\sI)^n
\frac{2\cos \bigl(\tfrac{\pi (n+\mu )}{2}\bigr)}
{n!(2s +n)}
\delta^{(n)}(f_{[t^{-1}]})\right)\\
&\hphantom{=======}
-t^{2s }\left(\cF_{\mu,s ,\infty}(f)(0)
-(\sI)^n
\frac{2\cos \bigl(\tfrac{\pi (n+\mu )}{2}\bigr)}
{n!(2s +n)}
\delta^{(n)}(f)\right)\Biggr\}\\
&=(\sI)^n\frac{2\cos \bigl(\tfrac{\pi (n+\mu )}{2}\bigr)}
{n!}
\delta^{(n)}(f)
\lim_{s \to -n/2}\frac{t^{2s}-t^{-n}}{2s +n}.
\end{align*}
Here the second equality follows from the statements (i) and (ii). 
By l'Hospital's rule, we have 
$\displaystyle \lim_{s\to -n/2}\frac{t^{2s}-t^{-n}}{2s+n}
=t^{-n}\log t$, and obtain the statement (iii).
\end{proof}

\subsection{The Jacquet integrals and the delta distribution}
\label{subsec:Jacquet}

In this subsection, 
we introduce some properties of the distributions $\cJ_l$ and 
$\delta^{(m)}_\infty$ on $I_{\mu,\nu}^\infty $, 
which are defined by (\ref{eqn:jacquet_ext}) and 
(\ref{eqn:def_delta_infty}), respectively.

\begin{lem}
\label{lem:delta_functional}
Let $\mu,\nu \in \bC$ and $m\in \bZ_{\geq 0}$. 
Then $\delta^{(m)}_{\infty}=\Lambda (0,\delta^{(m)})$, that is, 
\begin{align}
\label{eqn:deltaG_partition}
&\delta^{(m)}_{\infty}(F)
=\delta^{(m)}(f_2)&
&(\,\text{$F\in I_{\mu,\nu}^\infty$ with a partition $(f_1,f_2)$}\,). 
\end{align}
Here $(0,\delta^{(m)})$ is regarded as 
an element of $\cA (J_{\mu,\nu})$. 
\end{lem}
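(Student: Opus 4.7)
The plan is to unwind both sides using the explicit description of $\delta^{(m)}_\infty$ via the vector field $E_+$ together with the formulas for partitions from (\ref{eqn:iota_characterization}) and (\ref{eqn:pre_infty_change}).

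First, I will use Lemma \ref{lem:gact_fct_on_R} (i) iteratively: since $(\rho(E_+)G)(\tilde{w}\tilde{\ru}(-x)) = -\frac{d}{dx}G(\tilde{w}\tilde{\ru}(-x))$ for any $G\in I^\infty_{\mu,\nu}$, applying it $m$ times gives
\[
(\rho(E_+)^m G)(\tilde{w}\tilde{\ru}(-x)) = (-1)^m \frac{d^m}{dx^m} G(\tilde{w}\tilde{\ru}(-x)).
\]
Evaluating at $x=0$ (where $\tilde{w}\tilde{\ru}(0)=\tilde{w}$) and taking $G=F_\infty$ shows that
\[
\delta^{(m)}_\infty(F) = (-1)^m \frac{d^m}{dx^m}\bigg|_{x=0} F_\infty(\tilde{w}\tilde{\ru}(-x)).
\]

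Next, suppose $(f_1,f_2)$ is a partition of $F$, so $F=\iota_{\mu,\nu}(f_1)+(\iota_{\mu,\nu}(f_2))_\infty$. Applying the involution $G\mapsto G_\infty$ and using $(G_\infty)_\infty = G$, we get $F_\infty = (\iota_{\mu,\nu}(f_1))_\infty + \iota_{\mu,\nu}(f_2)$. Evaluating at $\tilde{w}\tilde{\ru}(-x)$ by (\ref{eqn:iota_characterization}) and (\ref{eqn:pre_infty_change}) then yields
\[
F_\infty(\tilde{w}\tilde{\ru}(-x)) = (f_1)_{\mu,\nu,\infty}(x) + f_2(x) \qquad (x\in\bR^\times).
\]

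The crucial observation is that $f_1\in C^\infty_0(\bR)$ has compact support, say $\supp(f_1)\subset\{|y|\leq R\}$. Then $(f_1)_{\mu,\nu,\infty}(x)=J_{\mu,\nu}(x)f_1(-1/x)$ vanishes whenever $|-1/x|>R$, i.e.\ for $|x|<1/R$. Hence $(f_1)_{\mu,\nu,\infty}$ extends by $0$ to a smooth function vanishing identically in a neighborhood of the origin, so all of its derivatives at $0$ are zero. Substituting into the displayed formula for $\delta^{(m)}_\infty(F)$ gives
\[
\delta^{(m)}_\infty(F) = (-1)^m f_2^{(m)}(0) = \delta^{(m)}(f_2),
\]
which is precisely (\ref{eqn:deltaG_partition}). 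No serious obstacle is anticipated; the only subtle point is verifying that $(f_1)_{\mu,\nu,\infty}$ truly extends smoothly with vanishing jet at $0$, which follows at once from the compactness of $\supp(f_1)$.
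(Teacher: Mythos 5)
Your proof is correct and uses the same essential idea as the paper (the $f_1$ contribution dies because $\iota_{\mu,\nu}(f_1)$ vanishes where the derivatives are taken, while the $f_2$ contribution is computed directly). The only difference is presentational: you stay entirely in the line model by applying Lemma~\ref{lem:gact_fct_on_R}(i) to $G=F_\infty$ and differentiating $x\mapsto F_\infty(\tilde{w}\tilde{\ru}(-x))$ at $0$, observing that $(f_1)_{\mu,\nu,\infty}$ vanishes identically near $x=0$; the paper instead splits before differentiating and pushes $\rho(E_+)^m$ through the involution to become $\rho(-E_-)^m$ evaluated at the identity, using the vanishing of $\iota_{\mu,\nu}(f_1)$ near $(1_2,0)$ in $\widetilde{G}$. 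Your route avoids the $\mathrm{Ad}(w^{-1})E_+=-E_-$ computation and is slightly more direct, at the cost of having to note explicitly that $(f_1)_{\mu,\nu,\infty}$ extends smoothly by $0$ across the origin -- which you do correctly via the compactness of $\mathrm{supp}(f_1)$.
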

\begin{proof}
By (\ref{eqn:embedding1}), for $f\in C^\infty_0(\bR)$, 
there is a neighborhood of $(1_2,0)$ in $\widetilde{G}$ 
on which $\iota_{\mu,\nu}(f)$ is 
equal to $0$. 
Hence, by Lemmas \ref{lem:univ_exp1} and \ref{lem:gact_fct_on_R}, we have 
\begin{align*}
\delta^{(m)}_{\infty}(F)
&=(\rho (E_+)^{m}(\iota_{\mu,\nu}(f_1))_\infty)(\tilde{w})
+(\rho (E_+)^{m}\iota_{\mu,\nu}(f_2))(\tilde{w})\\
&=e^{-\pi \sI \mu /2}(\rho (-E_-)^{m}\iota_{\mu,\nu}(f_1))((1_2,0))
+(-1)^m\iota_{\mu,\nu}((f_2)^{(m)})(\tilde{w})\\
&=0+(-1)^m(f_2)^{(m)}(0)=\delta^{(m)}(f_2)
\end{align*}
for $F\in I_{\mu,\nu}^\infty$ with a partition $(f_1,f_2)$. 
\end{proof}

\begin{prop}
\label{prop:Jacquet_property}
Let $\mu,\nu \in \bC$, $l,t\in \bR$ and $F\in I^\infty_{\mu,\nu}$.  

\noindent (i) 
For $n\in \bZ_{\geq 0}$, we have 
\begin{align*}
&\delta^{(n)}_{\infty}(\rho (\tilde{\ru}(t))F)=\sum_{i=0}^n
\binom{n}{i}(-2\nu -n)_{n-i}(-t)^{n-i}\delta^{(i)}_{\infty}(F),
\end{align*}
where $(z)_i={\Gamma (z+i)}/{\Gamma (z)}=z(z+1)\cdots (z+i-1)$ is the 
Pochhammer symbol. 

\noindent (ii) 
If either $l\neq 0$ or $2\nu \not\in \bZ_{\leq 0}$ holds, 
we have 
$\cJ_l(\rho (\tilde{\ru}(t))F)=e^{2\pi \sI lt}\cJ_l(F)$. 

\noindent (iii) 
If $-2\nu =n$ with some $n\in \bZ_{\geq 0}$, 
we have 
\begin{align*}
&\cJ_0(\rho (\tilde{\ru}(t))F)
=\cJ_0(F)+(\sI)^n\sum_{i=0}^{n-1}
\frac{2\cos \bigl(\frac{\pi (n+\mu )}{2}\bigr)}{i!(n-i)}
(-t)^{n-i}\delta^{(i)}_{\infty}(F). 
\end{align*}
In particular, if $\nu =0$, we have 
$\cJ_0(\rho (\tilde{\ru}(t))F)=\cJ_0(F)$. 
\end{prop}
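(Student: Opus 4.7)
The plan is to prove (i) by a direct Bruhat-coordinate computation and to deduce (ii) and (iii) via Proposition \ref{prop:pair_to_functionl} together with Lemma \ref{lem:partition_exist}, reducing to checking the identity on $F=\iota_{\mu,\nu}(f)$ and $F=(\iota_{\mu,\nu}(f))_\infty$ for $f\in C^\infty_0(\bR)$ separately, and then using meromorphic continuation in $\nu$ to isolate the correction term of (iii).

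For (i), I start from $\delta^{(n)}_\infty(F)=(-1)^n(f_\infty)^{(n)}(0)$ with $f_\infty(x):=F_\infty(\tilde{w}\tilde{\ru}(-x))$, which follows from (\ref{eqn:def_delta_infty}) and Lemma \ref{lem:gact_fct_on_R} (i). Since $(\rho(\tilde{\ru}(t))F)_\infty=\rho(\tilde{\overline{\ru}}(-t))F_\infty$ by Lemma \ref{lem:NA_act_iota_infty} (ii), a Bruhat computation using (\ref{eqn:op_tilde_G}) and (\ref{eqn:Bruhat_g}) identifies, for $|x|$ small, $\tilde{w}\tilde{\ru}(-x)\tilde{\overline{\ru}}(-t)=\tilde{\ru}(t/(1+xt))\tilde{\ra}(1/(1+xt)^2)\tilde{w}\tilde{\ru}(-x/(1+xt))$, whence $(\rho(\tilde{\ru}(t))F)_\infty(\tilde{w}\tilde{\ru}(-x))=(1+xt)^{-2\nu-1}f_\infty(x/(1+xt))$. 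Applying Lemma \ref{lem:F_lambda_abcd} with $a=d=1,b=0,c=t,s=-2\nu-1$ to compute the $n$-th derivative at $x=0$ and rewriting $(-1)^if_\infty^{(i)}(0)=\delta^{(i)}_\infty(F)$ yields the Pochhammer-symbol formula of (i).

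For (ii) and (iii), by Proposition \ref{prop:pair_to_functionl} and Lemma \ref{lem:partition_exist} it suffices to verify the identity on $F=\iota_{\mu,\nu}(f)$ and $F=(\iota_{\mu,\nu}(f))_\infty$. On the first subspace, Lemma \ref{lem:NA_act_iota_infty} (i) gives $\rho(\tilde{\ru}(t))\iota_{\mu,\nu}(f)=\iota_{\mu,\nu}(\ms_{-t}f)$, and with the partition $(\ms_{-t}f,0)$ we immediately obtain $\cJ_l=\cF(\ms_{-t}f)(l)=e^{2\pi\sI lt}\cF(f)(l)$, with no restriction on $l$ or $\nu$. For $F=(\iota_{\mu,\nu}(f))_\infty$, set $G:=\rho(\tilde{\overline{\ru}}(-t))\iota_{\mu,\nu}(f)$; the same Bruhat calculation as in (i) gives $G_\infty(\tilde{w}\tilde{\ru}(-x))=f_{\mu,\nu,\infty}(x-t)$, and the standard partition of Lemma \ref{lem:partition_exist} reads $h_1^{(\nu)}(x)=f_{\mu,\nu,\infty}(x-t)\Delta(x)$ and, near $x=0$, $h_2^{(\nu)}(x)=(1+xt)^{-2\nu-1}f(x/(1+xt))\Delta(x)$. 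For $\mathrm{Re}(\nu)>0$, all integrals converge absolutely, and the change of variable $y=x-t$ combined with the identity $h_1^{(\nu)}+(h_2^{(\nu)})_{\mu,\nu,\infty}=\ms_{-t}f_{\mu,\nu,\infty}$ yields $\cJ_l(G_\infty)=e^{2\pi\sI lt}\cF(f_{\mu,\nu,\infty})(l)=e^{2\pi\sI lt}\cJ_l(F)$. For $l\neq 0$, both sides are entire functions of $\nu$ (for fixed $f,t$) by Lemma \ref{lem:pre_twist_Ftrans} (i), so the identity persists to all $\nu$ by analytic continuation, proving (ii).

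The main difficulty is the exceptional case (iii), $l=0$ and $-2\nu=n\in\bZ_{\geq 0}$, where the regularized definition of $\cF_{\mu,\nu,\infty}(\cdot)(0)$ produces the correction. As meromorphic functions of $\nu$, the identity $\cF(f_{\mu,\nu,\infty})(0)=\cF(h_1^{(\nu)})(0)+\cF((h_2^{(\nu)})_{\mu,\nu,\infty})(0)$ valid on $\mathrm{Re}(\nu)>0$ extends by analytic continuation. At $\nu=-n/2$, $\cF(h_1^{(\nu)})(0)$ is entire while the other two terms each have a simple pole. By Lemma \ref{lem:F_lambda_abcd} applied to $h_2^{(\nu)}$ one computes $\delta^{(n)}(h_2^{(\nu)})=\sum_{i=0}^n\binom{n}{i}(-t)^{n-i}(-2\nu-n)_{n-i}\delta^{(i)}(f)$, and the Pochhammer factor $(-2\nu-n)_{n-i}$ vanishes to first order at $\nu=-n/2$ for $i<n$; writing $\epsilon=\nu+n/2$ and expanding $(-2\epsilon)_{n-i}=-2\epsilon(n-i-1)!+O(\epsilon^2)$, one obtains $\lim_{\nu\to-n/2}\frac{\delta^{(n)}(h_2^{(\nu)})-\delta^{(n)}(f)}{2\nu+n}=-\sum_{i=0}^{n-1}\frac{n!}{i!(n-i)}(-t)^{n-i}\delta^{(i)}(f)$. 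Matching $(2\nu+n)^0$ Laurent coefficients on both sides of the meromorphic identity, with $c_n=(\sI)^n\cdot 2\cos(\pi(n+\mu)/2)/n!$, gives $\cJ_0(\rho(\tilde{\ru}(t))F)-\cJ_0(F)=c_n\sum_{i=0}^{n-1}\frac{n!}{i!(n-i)}(-t)^{n-i}\delta^{(i)}(f)$, which via Lemma \ref{lem:delta_functional} ($\delta^{(i)}(f)=\delta^{(i)}_\infty(F)$ for $F=(\iota_{\mu,\nu}(f))_\infty$) is precisely the formula in (iii); in particular for $\nu=0$ the sum is empty, recovering $\cJ_0(\rho(\tilde{\ru}(t))F)=\cJ_0(F)$. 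This careful Laurent-expansion bookkeeping at the exceptional point is the technical crux of the proof.
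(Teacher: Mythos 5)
Your proof is correct and takes essentially the same route as the paper: the Bruhat-coordinate computation via Lemma \ref{lem:F_lambda_abcd} for (i), analytic continuation in the spectral parameter for (ii), and extraction of the constant Laurent coefficient at $\nu=-n/2$ for (iii). The paper organizes (ii)--(iii) around a fixed partition $(f_1,f_2)$ and the one-parameter family $F^{[s]}=\iota_{\mu,s}(f_1)+(\iota_{\mu,s}(f_2))_\infty$, continuing in $s$, while you split $F$ over $\iota_{\mu,\nu}(C_0^\infty(\bR))$ and its $\infty$-image and continue in $\nu$ on each summand; these are interchangeable ways to set up the same continuation, and your Laurent bookkeeping at $\nu=-n/2$ reproduces exactly the paper's limit in (iii). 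One small omission: after treating $l\neq 0$ you declare (ii) proved, but (ii) also asserts the identity for $l=0$ whenever $2\nu\notin\bZ_{\leq 0}$; that case follows by the same continuation, since for $l=0$ both sides of your identity are holomorphic in $\nu$ on $\bC\smallsetminus\frac{1}{2}\bZ_{\leq 0}$ and agree on $\mathrm{Re}(\nu)>0$, but it should be stated explicitly.
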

\begin{proof}
Since $
\displaystyle 
\tilde{w}\tilde{\ru}(-x)\tilde{\overline{\ru}}(-t)
=\tilde{\ru}\!\left(\frac{t}{1+tx}\right)
\tilde{\ra}\!\left(\frac{1}{(1+tx)^2}\right)
\tilde{w}\tilde{\ru}\!\left(\frac{-x}{1+tx}\right)$ for 
$x\in \bR$ such that $|x|$ is sufficiently small, 
by Lemma \ref{lem:gact_fct_on_R} (i), we have 
\begin{align*}
\delta^{(n)}_{\infty}(\rho (\tilde{\ru}(t))F)
=&(\rho (E_+)^{n}(\rho (\tilde{\ru}(t))F)_\infty)(\tilde{w})
=(\rho (E_+)^{n}\rho (\tilde{\overline{\ru}}(-t))F_\infty)(\tilde{w})\\
=&\!\left.(-1)^n\frac{d^n}{dx^n}\right|_{x=0}\!F_\infty (
\tilde{w}\tilde{\ru}(-x)\tilde{\overline{\ru}}(-t))\\
=&\!\left.(-1)^n
\frac{d^n}{dx^n}\right|_{x=0}\!
(1+tx)^{-2\nu -1}F_\infty \!\left(\tilde{w}
\tilde{\ru}\!\left(\frac{-x}{1+tx}\right)\!\right)
\end{align*}
for $n\in \bZ_{\geq 0}$. 
Hence, by Lemma \ref{lem:F_lambda_abcd} 
with $a=d=1$, $b=0$, $c=t$, $s=-2\nu -1$ and 
$f(x)=F_\infty (\tilde{w}\tilde{\ru}(-x))$, 
we obtain the statement (i). 

Let $(f_1,f_2)$ be a partition of $F$. 
We set 
$F^{[s]}=\iota_{\mu,s}(f_1)+(\iota_{\mu,s}(f_2))_\infty \in I^\infty_{\mu,s}$ 
$(s\in \bC)$. 
We take a partition $(f_1^{[s]},f_2^{[s]})$ of 
$\rho (\tilde{\ru}(t))F^{[s]}$ as in Lemma \ref{lem:partition_exist}, 
that is, 
\begin{align*}
&f_1^{[s]}(x)=(\rho (\tilde{\ru}(t))F^{[s]})
(\tilde{w}\tilde{\ru}(-x))\Delta (x),&
&f_2^{[s]}(x)=(\rho (\tilde{\ru}(t))F^{[s]})_\infty 
(\tilde{w}\tilde{\ru}(-x)){\Delta}(x)
\end{align*}
for $x\in \bR$. We consider the Jacquet integrals
\begin{align}
\label{eqn:pf_Jproperty001}
&\cJ_{l}(F^{[s]})=\cF (f_1)(l)+\cF_{\mu,s,\infty}(f_2)(l),\\
\label{eqn:pf_Jproperty002}
&\cJ_{l}(\rho (\tilde{\ru}(t))F^{[s]})
=\cF (f_1^{[s]})(l)+\cF_{\mu,s,\infty}(f_2^{[s]})(l). 
\end{align}
It is easy to see that the right hand sides of 
(\ref{eqn:pf_Jproperty001}) and (\ref{eqn:pf_Jproperty002}) 
are both holomorphic on $\bC$ if $l\neq 0$, and on 
$\bC \smallsetminus \frac{1}{2}\bZ_{\leq 0}$ if $l=0$, 
as functions of $s$. 
If $\mathrm{Re}(s)>0$, we have 
\begin{align*}
\cJ_l(\rho (\tilde{\ru}(t))F^{[s]})=
\int_{-\infty}^\infty F^{[s]}(\tilde{w}\tilde{\ru}(-x+t))
e^{2\pi \sI lx}dx
=e^{2\pi \sI lt}\cJ_l(F^{[s]}),
\end{align*}
where the second equality follows from the substitution 
$x\to x+t$. 
By the uniqueness of the analytic continuation as a function of $s$, 
we have 
\[
\cJ_l(\rho (\tilde{\ru}(t))F^{[s]})
=e^{2\pi \sI lt}\cJ_l(F^{[s]})
\]
if either $l\neq 0$ or 
$2s \not\in \bZ_{\leq 0}$ holds. Since $F^{[\nu]}=F$, 
we obtain the statement (ii). 

Assume $-2\nu =n$ with some $n\in \bZ_{\geq 0}$. 
By (\ref{eqn:jacquet_ext}) and (\ref{eqn:deltaG_partition}), we have 
\begin{align*}
&\cJ_0(\rho (\tilde{\ru}(t))F)\\
&=\!\lim_{s\to -n/2}\!\left(
\cJ_{0}(\rho (\tilde{\ru}(t))F^{[s]})
-(\sI)^n\frac{2\cos \bigl(\frac{\pi (n+\mu )}{2}\bigr)}
{n!(2s+n)}\delta^{(n)}_{\infty}(\rho (\tilde{\ru}(t))F^{[s]})\right)\!,\\
&\cJ_0(F)=\!\lim_{s\to -n/2}\!\left(
\cJ_{0}(F^{[s]})-(\sI)^n
\frac{2\cos \bigl(\frac{\pi (n+\mu )}{2}\bigr)}
{n!(2s+n)}\delta^{(n)}_{\infty}(F^{[s]})\right)\!.
\end{align*}
The statement (iii) follows from the statement (i), (ii) 
and these expressions.
\end{proof}

\begin{prop}
\label{prop:act_A_Jac_int}
Let $\mu ,\nu\in \bC$, $y\in \bR_{>0}$, $l\in \bR$ and 
$F\in I_{\mu,\nu}^\infty$. \\[1mm]
(i) $\delta^{(m)}_\infty (\rho (\tilde{\ra}(y))F)
=y^{\nu +m+1/2}\delta^{(m)}_\infty (F)$ for $m\in \bZ_{\geq 0}$. \\[1mm]
(ii) $\cJ_{l}(\rho (\tilde{\ra}(y))F)
=y^{-\nu +1/2}\cJ_{ly}(F)$ 
if either $-2\nu \not\in \bZ_{\geq 0}$ or $l\neq 0$ holds. \\[1mm]
(iii) If $-2\nu =n$ with some $n\in \bZ_{\geq 0}$, 
then we have 
\begin{align*}
&\cJ_{0}(\rho (\tilde{\ra}(y))F)
=y^{\frac{n+1}{2}}\cJ_{0}(F)
-(\sI)^n\frac{2\cos \bigl(\tfrac{\pi (n+\mu )}{2}\bigr)}{n!}
\delta^{(n)}_\infty (F)y^{\frac{n+1}{2}}\log y.
\end{align*}
\end{prop}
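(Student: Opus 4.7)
For part (i), I would move the action $\rho(\tilde{\ra}(y))$ past the $\infty$-involution and the differential operator. By Lemma \ref{lem:NA_act_iota_infty}(ii), $(\rho(\tilde{\ra}(y))F)_\infty=\rho(\tilde{\ra}(1/y))F_\infty$, so
\[
\delta^{(m)}_\infty(\rho(\tilde{\ra}(y))F)
=\bigl(\rho(E_+)^{m}\rho(\tilde{\ra}(1/y))F_\infty\bigr)(\tilde{w}).
\]
Using $\rho(X)\rho(\tilde{h})=\rho(\tilde{h})\rho(\Ad(\tilde{h}^{-1})X)$ together with $\Ad(\ra(y))E_+=yE_+$ (direct $2\times 2$ computation), this equals $y^{m}\bigl(\rho(E_+)^{m}F_\infty\bigr)(\tilde{w}\tilde{\ra}(1/y))$. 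Since $\rho(E_+)^{m}F_\infty\in I^{\infty}_{\mu,\nu}$ and $\tilde{w}\tilde{\ra}(1/y)=\tilde{\ra}(y)\tilde{w}$, the defining left-$\widetilde{P}$ equivariance yields the factor $y^{\nu+1/2}$, giving the claim.

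For parts (ii) and (iii), the plan is to choose a partition $(f_1,f_2)$ of $F$ and compute a partition of $\rho(\tilde{\ra}(y))F$ explicitly. Applying Lemma \ref{lem:NA_act_iota_infty}(i) to $f_1$ and both parts of Lemma \ref{lem:NA_act_iota_infty} to $(\iota_{\mu,\nu}(f_2))_\infty$, one finds
\[
\rho(\tilde{\ra}(y))F
=y^{-\nu-\tfrac{1}{2}}\iota_{\mu,\nu}\bigl((f_1)_{[y^{-1}]}\bigr)
+y^{\nu+\tfrac{1}{2}}\bigl(\iota_{\mu,\nu}((f_2)_{[y]})\bigr)_\infty,
\]
so $(y^{-\nu-1/2}(f_1)_{[y^{-1}]},\,y^{\nu+1/2}(f_2)_{[y]})$ is a partition of $\rho(\tilde{\ra}(y))F$. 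Substituting this into the defining formula (\ref{eqn:jacquet_ext}) for $\cJ_l$ reduces everything to two scaling identities: $\cF((f_1)_{[y^{-1}]})(l)=y\,\cF(f_1)(yl)$ from Lemma \ref{lem:rel_f_t}(i), and $\cF_{\mu,\nu,\infty}((f_2)_{[y]})(l)=y^{-2\nu}\cF_{\mu,\nu,\infty}(f_2)(yl)$ from Lemma \ref{lem:pre_pole001}(ii) (with $t=y^{-1}$). Collecting powers of $y$ gives the common factor $y^{-\nu+1/2}$ and delivers part (ii).

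For part (iii), where $-2\nu=n\in\bZ_{\geq 0}$ and $l=0$, Lemma \ref{lem:pre_pole001}(ii) must be replaced by Lemma \ref{lem:pre_pole001}(iii), which contributes the anomalous log term. Namely, with $t=y^{-1}$,
\[
\cF_{\mu,-n/2,\infty}\bigl((f_2)_{[y]}\bigr)(0)
=y^{n}\cF_{\mu,-n/2,\infty}(f_2)(0)
-(\sI)^{n}\frac{2\cos\!\bigl(\tfrac{\pi(n+\mu)}{2}\bigr)}{n!}\,\delta^{(n)}(f_2)\,y^{n}\log y.
\]
Multiplying by the prefactor $y^{\nu+1/2}=y^{(1-n)/2}$ and adding $y^{-\nu+1/2}\cF(f_1)(0)=y^{(n+1)/2}\cF(f_1)(0)$ gives the claim, after identifying $\delta^{(n)}(f_2)=\delta^{(n)}_\infty(F)$ via Lemma \ref{lem:delta_functional}.

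The only non-routine point is the bookkeeping in case (iii): one must verify that the $\log$ term comes out of the twisted Fourier scaling with exactly the right sign and coefficient (in particular that $\log y^{-1}=-\log y$ produces the minus sign in the statement), and that the partition-dependent quantity $\delta^{(n)}(f_2)$ is really intrinsic, which is exactly the content of Lemma \ref{lem:delta_functional}. Everything else is direct power-of-$y$ counting.
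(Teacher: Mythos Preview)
Your proof is correct and, for parts (ii) and (iii), follows the paper's argument verbatim: build a partition of $\rho(\tilde{\ra}(y))F$ from one of $F$ via Lemma~\ref{lem:NA_act_iota_infty}, plug it into (\ref{eqn:jacquet_ext}), and apply the scaling identities of Lemmas~\ref{lem:rel_f_t} and~\ref{lem:pre_pole001}. For part (i) the paper stays with the same partition and uses the characterization $\delta^{(m)}_\infty=\Lambda(0,\delta^{(m)})$ from Lemma~\ref{lem:delta_functional} together with $\delta^{(m)}((f_2)_{[y]})=y^{m}\delta^{(m)}(f_2)$ from Lemma~\ref{lem:pre_pole001}(i), whereas you compute directly from the definition (\ref{eqn:def_delta_infty}) via $\Ad(\ra(y))E_+=yE_+$ and left $\widetilde{P}$-equivariance; both routes are equally short.
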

\begin{proof}
Let $(f_1,f_2)$ be a partition of $F\in I_{\mu,\nu}^\infty$. 
By Lemma \ref{lem:NA_act_iota_infty},  We have 
\begin{align*}
\rho (\tilde{\ra}(y))F&=
\rho (\tilde{\ra}(y))(\iota_{\mu,\nu}(f_1)+(\iota_{\mu,\nu}(f_2))_\infty )\\
&=
y^{-\nu -1/2}\iota_{\mu,\nu}((f_1)_{[y^{-1}]})
+y^{\nu +1/2}(\iota_{\mu,\nu}((f_2)_{[y]}))_\infty .
\end{align*}
This implies that $(y^{-\nu -1/2}(f_1)_{[y^{-1}]}, 
y^{\nu +1/2}(f_2)_{[y]})$ is a partition of $\rho (\tilde{\ra}(y))F$. 
Hence, by (\ref{eqn:deltaG_partition}) and (\ref{eqn:jacquet_ext}), 
we have 
\begin{align*}
&\delta^{(m)}_\infty (\rho (\tilde{\ra}(y))F)
=y^{\nu +1/2}\delta^{(m)}((f_2)_{[y]}),\\
&\cJ_{l}(\rho (\tilde{\ra}(y))F)
=y^{-\nu -1/2}\cF ((f_1)_{[y^{-1}]})(l)
+y^{\nu +1/2}\cF_{\mu,\nu,\infty}((f_2)_{[y]})(l).
\end{align*}
Applying Lemmas \ref{lem:rel_f_t}, \ref{lem:pre_pole001}, 
and comparing with (\ref{eqn:deltaG_partition}) and (\ref{eqn:jacquet_ext}), 
we obtain the assertion. 
\end{proof}

\subsection{The Fourier expansions}
\label{subsec:Jacquet_integral}

In this subsection, 
we give proofs of Propositions \ref{prop:QAD_Fourier} 
and \ref{prop:Fexp_functional2}.

\begin{proof}[Proof of Proposition \ref{prop:QAD_Fourier}]
Let $L$ be a shifted lattice in $\bR$. Let $\mu,\nu \in \bC$. 

First, let $\alpha \in {\mathfrak{M}} (L)$, 
$\beta \in {\mathfrak{N}}(\bZ_{\geq 0})$, 
and we will prove that 
$\lambda_{\alpha,\beta}\in (I^{-\infty}_{\mu ,\nu})_{L}^{\rm quasi}$ 
and $T^{\lambda_{\alpha,\beta}}=T_\alpha$. 
We define a $\bC$-linear map 
$(T_\alpha)_{\mu,\nu,\infty}\colon C_0^\infty (\bR)\to \bC$ by 
\begin{align}
\label{eqn:def_twist_Talpha}
&(T_\alpha)_{\mu,\nu,\infty}(f)=\sum_{l\in L}\alpha (l)
\cF_{\mu,\nu,\infty}(f)(l)&&(f\in C_0^\infty (\bR)). 
\end{align}
By Lemmas \ref{lem:twist_fourier_0} and \ref{lem:twist_fourier}, 
the right-hand side of (\ref{eqn:def_twist_Talpha}) 
converges absolutely, 
and defines a distribution on $\bR$. 
By Lemma \ref{lem:twist_fourier_0}, we have 
\begin{align}
\label{eqn:twist_dist}
&(T_\alpha)_{\mu,\nu,\infty}(f)
=\sum_{l\in L}\alpha (l)\cF (f_{\mu,\nu,\infty})(l)
=T_\alpha (f_{\mu,\nu,\infty})&
&(f\in C^\infty_0(\bR^\times )).
\end{align} 
This equality implies 
$(T_\alpha ,(T_\alpha)_{\mu,\nu,\infty})\in \cA (J_{\mu,\nu})$. 
Moreover, by (\ref{eqn:jacquet_ext}), we have 
\begin{align*}
&\Lambda (T_\alpha ,(T_\alpha)_{\mu,\nu,\infty})(F)
=\sum_{l\in L}\alpha (l)\cJ_l(F)&
&(F\in I_{\mu,\nu}^\infty).
\end{align*}
By this equality and Lemma \ref{lem:delta_functional}, 
we have 
\begin{align}
\label{eqn:pf_QAD_Fourier002}
\lambda_{\alpha,\beta}
=\Lambda \left(T_\alpha ,\, 
{\textstyle (T_\alpha)_{\mu,\nu,\infty}
+\sum_{m=0}^\infty \beta (m)\delta^{(m)}}\right).
\end{align} 
This implies 
$\lambda_{\alpha,\beta}\in (I^{-\infty}_{\mu ,\nu})_{L}^{\rm quasi}$ 
and $T^{\lambda_{\alpha,\beta}}=T_\alpha$ 
by Proposition \ref{prop:pair_to_functionl}. 

Next, we will prove that 
(\ref{eqn:QAD_Fexp_map}) is bijective. 
Let $\lambda \in (I^{-\infty}_{\mu ,\nu})_{L}^{\rm quasi}$. 
By Proposition \ref{prop:periodic_dist}, there is a unique 
$\alpha \in {\mathfrak{M}} (L)$ such that $T^\lambda =T_\alpha$. 
Hence, because of (\ref{eqn:pf_QAD_Fourier002}) and 
Proposition \ref{prop:pair_to_functionl}, it suffices to show that 
there is a unique $\beta \in \mathfrak{N}(\bZ_{\geq 0})$ satisfying 
$T^{\lambda_\infty} 
=(T_\alpha )_{\mu,\nu,\infty}
+\sum_{m=0}^\infty \beta (m)\delta^{(m)}$. 
By Proposition \ref{prop:pair_to_functionl} and 
(\ref{eqn:twist_dist}), we have 
\begin{align*}
&T^{\lambda_\infty} (f)=T^\lambda (f_{\mu,\nu,\infty })
=T_\alpha (f_{\mu,\nu,\infty })
=(T_\alpha )_{\mu,\nu,\infty}(f)&
&(f\in C^\infty_0(\bR^\times )).
\end{align*}
Hence, $T^{\lambda_\infty} -(T_\alpha )_{\mu,\nu,\infty}$ is 
a distribution on $\bR$ whose support is contained in $\{0\}$, 
and there is a unique $\beta \in \mathfrak{N}(\bZ_{\geq 0})$ 
satisfying $T^{\lambda_\infty} -(T_\alpha )_{\mu,\nu,\infty}
=\sum_{m=0}^\infty \beta (m)\delta^{(m)}$ 
by \cite[Theorem 3.2.1]{Friedlander_001}.  
\end{proof}

Let $\mu,\nu \in \bC$. 
For $m\in \bZ_{\geq 0}$, 
we set $F_{m}=(\iota_{\mu ,\nu }(\Delta_{m}))_\infty 
\in I_{\mu,\nu}^\infty$ with 
\begin{align}
\label{eqn:def_Delta_m}
&\Delta_{m}(x)=x^m\Delta (x)&&(x\in \bR), 
\end{align}
where  
$\Delta$ is the function defined in \S \ref{subsec:test_ftn}. 
Proposition \ref{prop:Fexp_functional2} follows immediately from 
Proposition \ref{prop:QAD_Fourier} and 
the following lemma. 
\begin{lem}
Let $L$ be a shifted lattice in $\bR$. 
Let $\mu ,\nu \in \bC$, 
$t\in L^\vee$ and $F\in I^{\infty}_{\mu,\nu}$. 
Let $\alpha \in \mathfrak{M}(L)$ and 
$\beta \in \mathfrak{N}(\bZ_{\geq 0})$. 

\noindent (i) 
Assume $-2\nu \not\in \bZ_{>0}$. Then 
\begin{align*}
\lambda_{\alpha,\beta} (\rho (\tilde{\ru}(t))F)
=&\,\omega_L(t)\lambda_{\alpha,\beta} (F)+(1-\omega_L(t))
\sum_{m=0}^\infty \beta (m)\delta^{(m)}_{\infty}(F)\\
&+\sum_{m=1}^\infty \beta (m)
\left(\sum_{i=0}^{m-1}
\binom{m}{i}(-2\nu -m)_{m-i}(-t)^{m-i}\delta^{(i)}_{\infty}(F)\right).
\end{align*}
In particular, 
\begin{align*}
\lambda_{\alpha,\beta} (\rho (\tilde{\ru}(t))F_{0})
=&\omega_L(t)\lambda_{\alpha,\beta} (F_0)
+(1-\omega_L(t))\beta (0)\\
&+\sum_{m=1}^\infty \beta (m)(-2\nu -m)_{m}(-t)^{m}.
\end{align*}
\noindent (ii) 
Assume $-2\nu =n$ with some $n\in \bZ_{>0}$. 
Then 
\begin{align*}
\lambda_{\alpha,\beta} (\rho (\tilde{\ru}(t))F)
=&\,\omega_L(t)\lambda_{\alpha,\beta} (F)+(1-\omega_L(t))
\sum_{m=0}^\infty \beta (m)\delta^{(m)}_{\infty}(F)\\
&+(\sI)^n\alpha (0)\sum_{i=0}^{n-1}
\frac{2\cos \bigl(\frac{\pi (n+\mu )}{2}\bigr)}
{i!(n-i)}(-t)^{n-i}\delta^{(i)}_{\infty}(F)\\
&+\sum_{m=1}^{n-1}\beta (m)
\left(\sum_{i=0}^{m-1}\binom{m}{i}\frac{(n-i-1)!}{(n-m-1)!}
(-t)^{m-i}\delta^{(i)}_{\infty}(F)\right)\\
&+\sum_{m=n+1}^\infty \beta (m)
\left(\sum_{i=n}^{m-1}\binom{m}{i}
\frac{(m-n)!}{(i-n)!}t^{m-i}\delta^{(i)}_{\infty}(F)\right).
\end{align*}
Here we understand $\alpha (0)=0$ if $0\not\in L$. 
In particular, 
\begin{align*}
\lambda_{\alpha,\beta} (\rho (\tilde{\ru}(t))F_0)=
&\,\omega_L(t)\lambda_{\alpha,\beta} (F_0)
+(1-\omega_L(t))\beta (0)\\
&+\alpha (0)
\frac{2\cos \bigl(\frac{\pi (n+\mu )}{2}\bigr)}{n(\sI)^n}t^{n}
+\sum_{m=1}^{n-1}
\frac{\beta (m)(n-1)!}{(n-m-1)!}
(-t)^{m},\\
\lambda_{\alpha,\beta}(\rho (\tilde{\ru}(t))F_n)
=&\,\omega_L(t)\lambda_{\alpha,\beta}(F_n)
+(-1)^n(1-\omega_L(t))\beta (n)n!\\
&+\sum_{m=n+1}^\infty 
(-1)^n\beta (m)m!\,t^{m-n}.
\end{align*}
\end{lem}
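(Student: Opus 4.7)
The plan is to compute $\lambda_{\alpha,\beta}(\rho(\tilde{\ru}(t))F)$ term by term, using the transformation formulas for $\cJ_l$ and $\delta^{(m)}_\infty$ supplied by Proposition~\ref{prop:Jacquet_property}. For every $0\neq l\in L$, part~(ii) of that proposition applies unconditionally and gives $\cJ_l(\rho(\tilde{\ru}(t))F)=e^{2\pi\sI lt}\cJ_l(F)=\omega_L(t)\cJ_l(F)$. For $l=0$ the term is only relevant when $0\in L$, in which case $\omega_L$ is trivial; in case (i) either part~(ii) applies (when $-2\nu\notin\bZ_{\geq 0}$) or the $n=0$ specialisation of part~(iii) does (when $\nu=0$), and both yield $\cJ_0(\rho(\tilde{\ru}(t))F)=\cJ_0(F)=\omega_L(t)\cJ_0(F)$. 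On the delta side, part~(i) gives
\[
\delta^{(m)}_\infty(\rho(\tilde{\ru}(t))F)=\delta^{(m)}_\infty(F)+\sum_{i=0}^{m-1}\binom{m}{i}(-2\nu-m)_{m-i}(-t)^{m-i}\delta^{(i)}_\infty(F),
\]
in which the $i=m$ contribution has been isolated. Summing and rewriting $\sum_{m}\beta(m)\delta^{(m)}_\infty(F)=\omega_L(t)\sum_{m}\beta(m)\delta^{(m)}_\infty(F)+(1-\omega_L(t))\sum_{m}\beta(m)\delta^{(m)}_\infty(F)$ produces the formula of case~(i) directly.

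Case~(ii), where $-2\nu=n>0$, differs from case~(i) only in the treatment of $\cJ_0$: Proposition~\ref{prop:Jacquet_property}~(iii) now contributes the extra term $\alpha(0)(\sI)^n\sum_{i=0}^{n-1}\frac{2\cos(\pi(n+\mu)/2)}{i!(n-i)}(-t)^{n-i}\delta^{(i)}_\infty(F)$, and the Pochhammer factor $(-2\nu-m)_{m-i}=(n-m)_{m-i}$ in the $\delta$-expansion must be unfolded by sub-cases. For $1\leq m\leq n-1$ every factor in the product is a positive integer and the Pochhammer symbol equals $(n-i-1)!/(n-m-1)!$. When $m=n$ the product is $(0)_{n-i}$, which contains $0$ as a factor and therefore vanishes for every $i\leq n-1$. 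For $m\geq n+1$ the product vanishes unless $i\geq n$ (otherwise $0$ still appears as a factor), and in the surviving range $n\leq i\leq m-1$ all factors are negative, giving $(-1)^{m-i}(m-n)!/(i-n)!$, which absorbs into $(-t)^{m-i}$ to produce the $t^{m-i}(m-n)!/(i-n)!$ appearing in the lemma. Splitting the sum over $m$ at $m=n$ accordingly yields the three $\beta$-sums displayed in case~(ii); the main risk of error is precisely this Pochhammer bookkeeping and the associated sign tracking.

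The ``in particular'' formulas are obtained by evaluating the general ones at $F=F_m=(\iota_{\mu,\nu}(\Delta_m))_\infty$, where $\Delta_m(x)=x^m\Delta(x)$. Since $(0,\Delta_m)$ is a partition of $F_m$, Lemma~\ref{lem:delta_functional} gives $\delta^{(j)}_\infty(F_m)=\delta^{(j)}(\Delta_m)=(-1)^j\Delta_m^{(j)}(0)$, and since $\Delta\equiv 1$ on a neighbourhood of $0$, the Leibniz rule yields $\Delta_m^{(j)}(0)=\binom{j}{m}m!\,\Delta^{(j-m)}(0)=m!\,\delta_{j,m}$. Consequently $\delta^{(j)}_\infty(F_0)=\delta_{j,0}$ and $\delta^{(j)}_\infty(F_n)=(-1)^n n!\,\delta_{j,n}$. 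Plugging these into the general formulas retains only the $i=0$ contribution (for $F_0$) or the $i=n$ contribution (for $F_n$); a short simplification using the identities $(\sI)^n(-t)^n=t^n/(\sI)^n$ and $\binom{m}{n}(m-n)!\,n!=m!$ then reproduces the stated expressions.
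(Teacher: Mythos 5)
Your proof is correct and takes essentially the same route as the paper, whose proof is a one‑sentence citation of Proposition \ref{prop:Jacquet_property} together with the case‑breakdown of $(n-m)_{m-i}$ and the evaluation $\delta^{(j)}_{\infty}(F_m)=(-1)^m m!\,\delta_{jm}$; you have simply written out the bookkeeping (including the $\nu=0$ subcase of (i), the sign tracking in the Pochhammer unfolding, and the substitutions $F=F_0$, $F=F_n$) that the paper leaves implicit.
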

\begin{proof}
This lemma follows from Proposition \ref{prop:Jacquet_property} 
and the equalities 
\begin{align*}
&(n-m)_{m-i}=\left\{\begin{array}{ll}
\dfrac{(n-i-1)!}{(n-m-1)!}&\text{if}\ i\leq m<n,\\[8pt]
(-1)^{m-i}\dfrac{(m-n)!}{(i-n)!}&\text{if}\ n\leq i\leq m,\\
0&\text{if}\ i<n\leq m
\end{array}\right.
\end{align*}
and $\displaystyle 
\delta^{(n)}_{\infty}(F_{m})=\left\{\begin{array}{ll}
(-1)^mm!&\text{if}\ n=m,\\
0&\text{if}\ n\neq m
\end{array}\right.$ for $m,n,i\in \bZ_{\geq 0}$. 
\end{proof}

\subsection{The explicit calculation of the Jacquet integrals}
\label{subsec:Jac_int_special}

In this subsection, we give proofs of Theorem \ref{thm:QAD_Fexp} (i) 
and Proposition \ref{prop:poisson} (ii). 
Proposition \ref{prop:poisson} (ii) is an immediate consequence of 
Propositions \ref{prop:Jacquet_property}, \ref{prop:act_A_Jac_int} 
and the following proposition.

\begin{prop}
\label{prop:Jac_int_Ktype}
Let $\mu,\nu \in \bC$ and $\kappa \in \mu +2\bZ$. 
Let $F_{\nu,\kappa}$ be an element of 
$I_{\mu,\nu}^\infty$ defined by (\ref{eqn:def_F_nu_kappa}). 

\noindent (i) Let $l\in \bR$. Then 
\begin{align*}
&e^{\pi \sI \kappa /2}\delta^{(0)}_{\infty}(F_{\nu,\kappa})
=(-1)^{\frac{\mu -\kappa }{2}},\\
&e^{\pi \sI \kappa /2}\cJ_{l}(F_{\nu,\kappa})
=\left\{\begin{array}{l}
\displaystyle 
\frac{2^{1-2\nu }\pi \Gamma (2\nu )}
{\Gamma \bigl(\tfrac{2\nu +1-\kappa}{2}\bigr)
\Gamma \bigl(\tfrac{2\nu +1+\kappa }{2}\bigr)}
\quad \text{if } l=0 \text{ and } -2\nu \not\in \bZ_{\geq 0}, \\[4mm]
\displaystyle 
\frac{\pi^{\nu +\frac{1}{2}}|l|^{\nu -\frac{1}{2}}
}{\Gamma \bigl(\frac{2\nu +1+\sgn (l) \kappa }{2}\bigr)}
W_{\sgn (l)\frac{\kappa }{2},\nu }(4\pi |l|)
\quad \text{if } l\neq 0.
\end{array}\right.
\end{align*}
\noindent (ii) Assume $-2\nu =n$ with some $n\in \bZ_{\geq 0}$. 
Then 
\begin{align*}
&e^{\pi \sI \kappa /2}\delta^{(n)}_{\infty}(F_{-n/2,\kappa})
=(-1)^{\frac{\mu -\kappa }{2}}\mathbf{d}(n,\kappa ),\\
&e^{\pi \sI \kappa /2}\cJ_{0}(F_{-n/2,\kappa})\\
&=\left\{\begin{array}{ll}
2\mathbf{j} (\kappa )\cos \bigl(\tfrac{\pi \kappa }{2}\bigr)
&\text{if } n=0 \text{ and } \mu -1 \not\in 2\bZ,\\[2mm]
\displaystyle 
(-1)^{\frac{|\kappa|-n-1}{2}}
\frac{2^{n}\pi \,\bigl(\tfrac{|\kappa|+n-1}{2}\bigr)!}
{n!\, \bigl(\tfrac{|\kappa|-n-1}{2}\bigr)!}
&\text{if } |\kappa | -n+1 \in 2\bZ_{>0},\\[2mm]
0&\text{if } |\kappa | -n+1 \in 2\bZ_{\leq 0},
\end{array}\right.
\end{align*}
where $\mathbf{d}(n,\kappa )$ and $\mathbf{j} (\kappa )$ 
are defined by (\ref{eqn:def_d_n_kappa}) and (\ref{eqn:def_j_kappa}), 
respectively. 
\end{prop}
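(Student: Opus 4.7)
First, the plan is to reduce both parts of the proposition to the single explicit function $h_{\nu,\kappa}(x):=F_{\nu,\kappa}(\tilde{w}\tilde{\ru}(-x))$. Reading off the Iwasawa decomposition (\ref{eqn:Iwasawa_g}) of $\tilde{w}\tilde{\ru}(-x)$ gives $\widetilde{A}$-coordinate $1/(1+x^2)$ and $\widetilde{K}$-coordinate $-\arg(\sI-x)$, so (\ref{eqn:def_F_nu_kappa}) yields
$$
h_{\nu,\kappa}(x)=(1+x^2)^{(\kappa-2\nu-1)/2}(\sI-x)^{-\kappa}
=e^{-\pi\sI\kappa/2}(1+\sI x)^{-\alpha}(1-\sI x)^{-\beta},
$$
with $\alpha=(2\nu+1+\kappa)/2,\ \beta=(2\nu+1-\kappa)/2$ (principal branches; use $\sI-x=\sI(1+\sI x)$ and $1+x^2=(1+\sI x)(1-\sI x)$). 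A short calculation in $\widetilde{G}$ via (\ref{eqn:op_tilde_G}) shows $(\rho(\tilde{w})F_{\nu,\kappa})(\tilde{g})=e^{-\pi\sI\kappa/2}F_{\nu,\kappa}(\tilde{g})$, hence $F_{\nu,\kappa,\infty}=(-1)^{(\mu-\kappa)/2}F_{\nu,\kappa}$. For $\mathrm{Re}(\nu)>0$ the Jacquet integral converges absolutely as $\cJ_l(F_{\nu,\kappa})=\int h_{\nu,\kappa}(x)e^{2\pi\sI lx}\,dx$; the extension to general $\nu$ is built into the definition of $\cJ_l$ in (\ref{eqn:jacquet_ext}) via Lemma~\ref{lem:pre_twist_Ftrans}, applied to the partition $(h_{\nu,\kappa}\Delta,(-1)^{(\mu-\kappa)/2}h_{\nu,\kappa}\Delta)$ of $F_{\nu,\kappa}$.

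For the $l=0$ case of (i), the classical Beta identity $\int_{-\infty}^{\infty}(1+\sI x)^{-\alpha}(1-\sI x)^{-\beta}\,dx=2^{2-\alpha-\beta}\pi\Gamma(\alpha+\beta-1)/[\Gamma(\alpha)\Gamma(\beta)]$ (valid for $\mathrm{Re}(\alpha+\beta)>1$) with $\alpha+\beta=2\nu+1$ yields the first formula. For $l\ne 0$, I would shift the contour to $\bR+\sgn(l)\sI$, collapse onto the branch cut of $(1-\sgn(l)\sI x)^{?}$ emerging vertically from $\sgn(l)\sI$ (whose discontinuity is a sine factor), and identify the surviving Euler-type integral with Whittaker's integral representation
$W_{\kappa/2,\nu}(z)=\Gamma(\nu-\tfrac{\kappa}{2}+\tfrac12)^{-1}z^{\nu+1/2}e^{-z/2}\int_{0}^{\infty}e^{-zt}t^{\nu-\kappa/2-1/2}(1+t)^{\nu+\kappa/2-1/2}\,dt$ at $z=4\pi|l|$ (invoking the symmetry $W_{\kappa,\mu}=W_{\kappa,-\mu}$ as needed); the reflection identity $\sin(\pi\alpha)\Gamma(\tfrac12-\nu-\sgn(l)\tfrac{\kappa}{2})=\pi/\Gamma(\tfrac{2\nu+1+\sgn(l)\kappa}{2})$ then collapses the combined prefactor into the Whittaker coefficient claimed in~(i).

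The singular case $\nu=-n/2$ of $\cJ_0$ in~(ii) is handled via the regularisation built into (\ref{eqn:jacquet_ext}): I would Laurent-expand $\tfrac{2^{1-2\nu}\pi\Gamma(2\nu)}{\Gamma(\alpha)\Gamma(\beta)}$ at $2\nu=-n$, subtract the pole $(\sI)^n\tfrac{2\cos(\pi(n+\mu)/2)}{n!(2\nu+n)}\delta^{(n)}_\infty(F_{\nu,\kappa})$, and read off the finite part. The three sub-cases in (ii) correspond to which of $\Gamma((1-n\pm\kappa)/2)$ have poles at $\nu=-n/2$: when $|\kappa|-n+1\in 2\bZ_{>0}$, exactly one denominator Gamma has a simple pole which cancels that of $\Gamma(2\nu)$ while the condition $|\kappa|+n\in 2\bZ_{\geq 0}+1$ forces $\cos(\pi(n+\mu)/2)=0$ so that the subtraction vanishes, and a residue computation via the reflection formula yields the middle expression; when $|\kappa|-n+1\in 2\bZ_{\leq 0}$, both denominator Gammas have simple poles and the generic formula vanishes to first order in $2\nu+n$ (the subtraction again vanishing), giving $0$; and when $n=0$ with $\mu-1\notin 2\bZ$, neither denominator vanishes and the finite part of $\Gamma(2\nu)/[\Gamma(\tfrac{1+\kappa}{2})\Gamma(\tfrac{1-\kappa}{2})]$ at $\nu=0$---extracted from $\Gamma(2\nu)=\tfrac{1}{2\nu}-\gamma+O(\nu)$, the reflection $\Gamma(\tfrac{1+\kappa}{2})\Gamma(\tfrac{1-\kappa}{2})=\pi/\cos(\tfrac{\pi\kappa}{2})$, the identity $\cos(\tfrac{\pi\mu}{2})(-1)^{(\mu-\kappa)/2}=\cos(\tfrac{\pi\kappa}{2})$ (which matches the subtracted pole), and the digamma series $\psi(\tfrac{1+\kappa}{2})+\psi(\tfrac{1-\kappa}{2})$ combined with $\sum_{k\ge 0}(\tfrac{1}{k+1}-\tfrac{2}{2k+1})=-2\log 2$---simplifies precisely to $2\mathbf{j}(\kappa)\cos(\tfrac{\pi\kappa}{2})$; reconciling this $\psi$-function expansion with the regularised series defining $\mathbf{j}(\kappa)$ in (\ref{eqn:def_j_kappa}) is the main technical step.

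Finally, Lemma~\ref{lem:gact_fct_on_R}(i) together with $F_{\nu,\kappa,\infty}=(-1)^{(\mu-\kappa)/2}F_{\nu,\kappa}$ reduces the computation to $\delta^{(m)}_\infty(F_{\nu,\kappa})=(-1)^m(-1)^{(\mu-\kappa)/2}h_{\nu,\kappa}^{(m)}(0)$; the $m=0$ case of~(i) is immediate from $h_{\nu,\kappa}(0)=e^{-\pi\sI\kappa/2}$. For $\nu=-n/2$ and $m=n$, substituting $h_{-n/2,\kappa}(x)=e^{-\pi\sI\kappa/2}(1+\sI x)^{(n-1-\kappa)/2}(1-\sI x)^{(n-1+\kappa)/2}$ and expanding by Leibniz at $x=0$ reduces $e^{\pi\sI\kappa/2}\delta^{(n)}_\infty(F_{-n/2,\kappa})/(-1)^{(\mu-\kappa)/2}$ to the finite sum $\sI^n\sum_{k=0}^{n}(-1)^k\binom{n}{k}[\tfrac{n-1-\kappa}{2}]_k[\tfrac{n-1+\kappa}{2}]_{n-k}$ (with $[z]_k$ the falling factorial); the factorisation $(1+u)^{a}(1-u)^{b}=(1-u^2)^{a}(1-u)^{b-a}$ applied with $a+b=n-1$ and $b-a=\kappa$ (so $u=\sI x$) then gives the $u^n$-coefficient as the claimed product $\prod_{j=0}^{n-1}(\kappa+n-1-2j)=\mathbf{d}(n,\kappa)/\sI^n$, completing the proof.
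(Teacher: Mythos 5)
Your proposal follows essentially the same route as the paper: reduce everything to the explicit line-model function $h_{\nu,\kappa}(x)=F_{\nu,\kappa}(\tilde{w}\tilde{\ru}(-x))$, compute the integral for $\mathrm{Re}(\nu)>0$, and continue analytically in $\nu$ (with the built-in regularisation at $\nu=-n/2$). Your reformulation $h_{\nu,\kappa}(x)=e^{-\pi\sI\kappa/2}(1+\sI x)^{-\alpha}(1-\sI x)^{-\beta}$, the observation $(F_{\nu,\kappa})_\infty=(-1)^{(\mu-\kappa)/2}F_{\nu,\kappa}$, the beta-integral for $l=0$, and the Laurent/l'Hospital treatment of the $\nu=-n/2$ limits (including the vanishing of the subtracted pole when $\cos(\pi(n+\mu)/2)=0$) all match what the paper does. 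Where you diverge is cosmetic: the paper simply cites Maass \cite[Ch.\ IV, \S 3]{Maass_003} for the $l\neq 0$ integral, whereas you sketch a contour-shift onto the branch cut and an identification with a Euler-type integral representation of $W_{\kappa/2,\nu}$; and the paper dismisses the computation of $\frac{d^n}{dx^n}\big|_{x=0}(1-\sI x)^{(n-1+\kappa)/2}(1+\sI x)^{(n-1-\kappa)/2}$ as ``direct'', whereas you exhibit a Leibniz sum. Both of your extra steps are plausible but underdeveloped as written: the contour-shift argument is only gestured at, and your claim that the factorisation $(1+u)^a(1-u)^b=(1-u^2)^a(1-u)^{b-a}$ ``gives the $u^n$-coefficient as the claimed product'' is not immediate---that factorisation produces a double sum $\sum_{2k+j=n}\binom{a}{k}\binom{\kappa}{j}(-1)^{k+j}$, and collapsing it to $\tfrac{(-1)^n}{n!}\prod_{j=0}^{n-1}(\kappa+n-1-2j)$ (equivalently $\tfrac{(-2)^n}{n!}[b]_n$ when $a+b=n-1$) requires a separate Vandermonde-type identity or a first-order ODE argument, which you do not supply. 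Neither of these is a structural gap---the formulas are correct and the paper itself is terse at both points---but if you want a self-contained proof you should either cite the standard Fourier transform of $(1-\sI x)^{-\beta}(1+\sI x)^{-\alpha}$ (as the paper does via Maass) or flesh out the residue/branch-cut computation, and you should prove the derivative identity rather than attribute it to the factorisation alone.
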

\begin{proof}
The equality 
$e^{\pi \sI \kappa /2}\delta^{(0)}_{\infty}(F_{\nu,\kappa})
=(-1)^{\frac{\mu -\kappa }{2}}$ 
follows from the definition. 
We take a partition $(f_{(\nu,\kappa),1},f_{(\nu,\kappa),2})$ of 
$F_{\nu,\kappa}$ as in Lemma \ref{lem:partition_exist}, 
that is, 
\begin{align*}
&f_{(\nu,\kappa),1}(x)=F_{\nu,\kappa}(\tilde{w}\tilde{\ru}(-x))\Delta (x),&
&f_{(\nu,\kappa),2}(x)
=(F_{\nu,\kappa})_\infty (\tilde{w}\tilde{\ru}(-x))\Delta (x),
\end{align*}
where $\Delta$ is the function defined in \S \ref{subsec:test_ftn}. 
Then we have 
\begin{align*}
\cJ_{l}(F_{\nu,\kappa})=\cF (f_{(\nu,\kappa),1})(l)+
\cF_{\mu,\nu,\infty}(f_{(\nu,\kappa),2})(l).
\end{align*}
By this expression, 
we know that $\cJ_{l}(F_{\nu,\kappa})$ is a holomorphic function 
of $\nu$ on $\bC\smallsetminus \frac{1}{2}\bZ_{\leq 0}$ if $l=0$, 
and on $\bC$ if $l\neq 0$. 
Because of the uniqueness of the analytic continuation, 
in order to prove the equality for 
$e^{\pi \sI \kappa /2}\cJ_{l}(F_{\nu,\kappa})$ 
in the statement (i), 
it suffices to show the case of $\mathrm{Re}(\nu)>0$. 
When $\mathrm{Re}(\nu )>0$, we have 
\begin{align*}
&e^{\pi \sI \kappa /2}\cJ_{l}(F_{\nu,\kappa})
=e^{\pi \sI \kappa /2}\int_{-\infty}^\infty 
F_{\nu,\kappa}(\tilde{w}\tilde{\ru}(-x))
e^{2\pi \sI lx}dx\\
&\hspace{2cm}
=e^{\pi \sI \kappa /2}\int_{-\infty}^\infty 
(1+x^2)^{-\nu -1/2}e^{-\sI \kappa \arg (-x+\sI )}e^{2\pi \sI lx}dx\\
&\hspace{2cm}
=\int_{-\infty}^\infty 
(1-\sI x)^{-\nu -\frac{1-\kappa}{2}}(1+\sI x)^{-\nu -\frac{1+\kappa }{2} }
e^{2\pi \sI lx}dx.
\end{align*}
The last integral is calculated by 
Maass \cite[Chapter IV, \S 3]{Maass_003}, 
and we obtain the equality for 
$e^{\pi \sI \kappa /2}\cJ_{l}(F_{\nu,\kappa})$ in 
the statement (i).

Assume $-2\nu =n$ with some $n\in \bZ_{\geq 0}$. 
By direct computation, we have 
\begin{align*}
&e^{\pi \sI \kappa /2}\delta^{(n)}_{\infty}(F_{-n/2,\kappa})
=e^{\pi \sI \kappa /2}(-1)^{\frac{\mu -\kappa}{2}}
(\rho (E_+)^{n}F_{-n/2,\kappa })(\tilde{w})\\
&\hspace{10mm}=e^{\pi \sI \kappa /2}(-1)^{n+\frac{\mu -\kappa }{2}}
\left.\frac{d^n}{dx^n}\right|_{x=0}
F_{-n/2,\kappa }(\tilde{w}\tilde{\ru}(-x))\\
&\hspace{10mm}=(-1)^{n+\frac{\mu -\kappa }{2}}
\left.\frac{d^n}{dx^n}\right|_{x=0}
(1-\sI x)^{\frac{\kappa +n-1}{2}}
(1+\sI x)^{-\frac{\kappa -n+1}{2}}\\
&\hspace{10mm}=(-1)^{\frac{\mu -\kappa }{2}}\mathbf{d}(n,\kappa ).
\end{align*}
The equality for $e^{\pi \sI \kappa /2}\cJ_{0}(F_{-n/2,\kappa})$ 
in the case of $\mu +n-1 \in 2\bZ$ follows from 
the statement (i). We have 
\begin{align*}
&e^{\pi \sI \kappa /2}\cJ_0(F_{0,\kappa })
=e^{\pi \sI \kappa /2}\lim_{s\to 0}\!\left(
\cJ_{0}(F_{s,\kappa})-
\frac{\cos \bigl(\frac{\pi \mu }{2}\bigr)}{s}
\delta^{(0)}_{\infty}(F_{s,\kappa})\right)\\
&\hspace{10mm}=
\lim_{s\to 0}\frac{1}{s}\left(
\frac{2^{-2s}\pi \Gamma (2s+1)}
{\Gamma \bigl(\tfrac{2s +1-\kappa}{2}\bigr)
\Gamma \bigl(\tfrac{2s +1+\kappa }{2}\bigr)}
-\frac{\pi }{\Gamma \bigl(\tfrac{1-\kappa}{2}\bigr)
\Gamma \bigl(\tfrac{1+\kappa }{2}\bigr)}
\right)\\
&\hspace{10mm}=
\frac{\pi }{\Gamma \bigl(\tfrac{1-\kappa}{2}\bigr)
\Gamma \bigl(\tfrac{1+\kappa }{2}\bigr)}
\left(-2\log 2+2\Gamma'(1)
-\frac{\Gamma'\bigl(\tfrac{1-\kappa}{2}\bigr)}
{\Gamma \bigl(\tfrac{1-\kappa}{2}\bigr)}
-\frac{\Gamma'\bigl(\tfrac{1+\kappa}{2}\bigr)}
{\Gamma \bigl(\tfrac{1+\kappa}{2}\bigr)}\right)\\
&\hspace{10mm}=
2\mathbf{j} (\kappa )\cos \bigl(\tfrac{\pi \kappa }{2}\bigr).
\end{align*}
Here the second equality follows from the statement (i), 
the functional equation $\Gamma (s+1)=s\Gamma (s)$ and 
Euler's reflection formula
\begin{align}
\label{eqn:Gamma_reflection}
\Gamma (s)\Gamma (1-s)=\frac{\pi}{\sin (\pi s)}
=\frac{\pi}{\cos \bigl(\pi \bigl(s-\tfrac{1}{2}\bigr)\bigr)}.
\end{align} 
The third equality follows from l'Hospital's rule, 
and the fourth equality follows from 
the expansion 
\[
\frac{\Gamma'(s)}{\Gamma (s)}
=\Gamma'(1)-\sum_{i=0}^\infty 
\left(\frac{1}{s+i}-\frac{1}{i+1}\right) 
\]
and (\ref{eqn:Gamma_reflection}). 
Hence, we obtain the statement (ii). 
\end{proof}

Let us prove Theorem \ref{thm:QAD_Fexp} (i). 
Let $L_1$ and $L_2$ be two shifted lattices in $\bR$. 
Let $\mu,\nu \in \bC$. 
Let $\lambda \in (I^{-\infty}_{\mu ,\nu})_{L_1,L_2}^{\rm quasi}$. 
By Proposition \ref{prop:QAD_Fourier}, 
there are unique $(\alpha_i,\beta_i)\in \mathfrak{M}(L_i)\times 
{\mathfrak{N}}(\bZ_{\geq 0})$ $(i=1,2)$ 
such that $\lambda =\lambda_{\alpha_1,\beta_1}$ and 
$(\lambda_{\alpha_1,\beta_1})_\infty =\lambda_{\alpha_2,\beta_2}$. 
If $\lambda \in (I^{-\infty}_{\mu ,\nu})_{L_1,L_2}$, 
then we have $(\alpha_i,\beta_i)\in \mathfrak{M}(L_i)_{\mu,\nu}^0\times 
{\mathfrak{N}}(S_{\nu}(L_i))$ $(i=1,2)$ by 
Proposition \ref{prop:Fexp_functional2}. 
Hence, our proof is completed by the following lemma. 

\begin{lem}
\label{lem:b0_vanish_Fexp}
Let $L_1$ be a lattice in $\bR$. 
Let $L_2$ be a shifted lattice in $\bR$. 
Let $n\in \bZ_{>0}$ and $\mu \in 1-n+2\bZ$. 
Let $(\alpha_1,\beta_1)\in \mathfrak{M}(L_1)\times 
{\mathfrak{N}}(S_{-n/2}(L_1))$. 
Assume $\lambda_{\alpha_1,\beta_1}
\in (I^{-\infty}_{\mu ,-n/2})_{L_1,L_2}$ and 
$(n>1$ or $0\not\in L_2)$. 
Then we have $\beta_1(0)=0$. 
\end{lem}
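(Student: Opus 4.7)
The plan is to express $\beta_1(0)$ as the residue of $\xi_\pm(\alpha_2;s)$ at $s=1$ and then kill it using the global functional equation at the reflection point $s=n$, exploiting the parity hypothesis $\mu\in 1-n+2\bZ$. To set up the companion pair, note that $\lambda_{\alpha_1,\beta_1}\in (I^{-\infty}_{\mu,-n/2})_{L_1,L_2}$ gives $(\lambda_{\alpha_1,\beta_1})_\infty\in (I^{-\infty}_{\mu,-n/2})_{L_2}$, so Propositions \ref{prop:QAD_Fourier} and \ref{prop:Fexp_functional2} produce a unique $(\alpha_2,\beta_2)\in \mathfrak{M}(L_2)\times \mathfrak{N}(S_{-n/2}(L_2))$ with $(\lambda_{\alpha_1,\beta_1})_\infty=\lambda_{\alpha_2,\beta_2}$; in particular $\beta_2$ is supported on $\{0,n\}$ when $0\in L_2$ and vanishes otherwise. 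Because $\mu+n-1\in 2\bZ$ makes $\cos(\pi(n\mp\mu)/2)=0$, applying Proposition \ref{prop:poles_DS} to $(\lambda_{\alpha_2,\beta_2})_\infty=\lambda_{\alpha_1,\beta_1}$ (with the indices $1,2$ swapped) shows that $\xi_\pm(\alpha_2;s)$ has at most simple poles at $s=1$ and $s=n+1$, and that the residue at $s=1$ equals $\beta_1(0)$ for both sign choices.

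Next I will verify that $\Xi_\pm(\alpha_1;n)$ is finite. Applying Proposition \ref{prop:poles_DS} directly, the only possible poles of $\xi_\pm(\alpha_1;s)$ lie in $\{1,n+1\}$, contributed by $\beta_2(0),\beta_2(n),\alpha_2(0)$. If $0\notin L_2$, all three vanish and $\xi_\pm(\alpha_1;\cdot)$ is entire; if $0\in L_2$, the hypothesis $n>1$ ensures $n\notin\{1,n+1\}$. Either way $\xi_\pm(\alpha_1;n)$, and therefore $\Xi_\pm(\alpha_1;n)=(2\pi)^{-n}\Gamma(n)\xi_\pm(\alpha_1;n)$, is finite.

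The decisive step will be to evaluate at $s=n$ the functional equation
\[
{\mathrm{E}}(s)\binom{\Xi_+(\alpha_1;s)}{\Xi_-(\alpha_1;s)}=\Sigma_\mu\,{\mathrm{E}}(n+1-s)\binom{\Xi_+(\alpha_2;n+1-s)}{\Xi_-(\alpha_2;n+1-s)}
\]
from Theorem \ref{thm:DS}(i) (applicable because $(T_{\alpha_1},T_{\alpha_2})\in\cA(L_1,L_2;J_{\mu,-n/2})$ by Proposition \ref{prop:pair_to_functionl}). I will pair both sides with the row vector $(e^{-\pi\sI\mu/2},\,e^{\pi\sI\mu/2})$, whose crucial property is $(e^{-\pi\sI\mu/2},\,e^{\pi\sI\mu/2})\,\Sigma_\mu=(1,1)$. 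The left side becomes $2\cos(\pi(s-\mu)/2)\Xi_+(\alpha_1;s)+2\cos(\pi(s+\mu)/2)\Xi_-(\alpha_1;s)$, which vanishes at $s=n$ because the parity hypothesis forces $(n\pm\mu)/2\in\tfrac{1}{2}+\bZ$ while $\Xi_\pm(\alpha_1;n)$ is finite. The right side becomes $2\cos(\pi\tilde s/2)\bigl(\Xi_+(\alpha_2;\tilde s)+\Xi_-(\alpha_2;\tilde s)\bigr)$ with $\tilde s=n+1-s$; expanding $\Xi_+(\alpha_2;\tilde s)+\Xi_-(\alpha_2;\tilde s)=\tfrac{\beta_1(0)/\pi}{\tilde s-1}+O(1)$ at $\tilde s=1$ and using $\cos(\pi\tilde s/2)=-\tfrac{\pi}{2}(\tilde s-1)+O((\tilde s-1)^3)$ to cancel the simple pole, I obtain the finite value $-\beta_1(0)$ at $s=n$. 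The identity $0=-\beta_1(0)$ now forces $\beta_1(0)=0$.

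The main obstacle will be identifying and executing the cancellation at $s=n$: a naive evaluation fails because ${\mathrm{E}}(1)$ is singular with kernel $\bC\binom{1}{1}$ and $\Xi_\pm(\alpha_2;\tilde s)$ has a pole at $\tilde s=1$, so the $\binom{1}{1}$-component (the row-sum) of the right-hand side must be computed as a limit. Finding the covector $(e^{-\pi\sI\mu/2},\,e^{\pi\sI\mu/2})$ that simultaneously reduces $\Sigma_\mu$ to the row-sum direction on one side and triggers the parity vanishing of $\cos(\pi(n\pm\mu)/2)$ on the other is the coincidence that powers the argument.
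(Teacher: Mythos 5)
Your proof is correct, and it takes a genuinely different route from the paper. The paper's own proof is a short, direct representation-theoretic computation: it tests $\lambda_{\alpha_1,\beta_1}$ against the specific vector $F_{-n/2,\kappa}$ with $|\kappa|\leq n-1$, uses the Iwasawa decomposition of $\tilde{\overline{\ru}}(t)$ together with Propositions~\ref{prop:Jacquet_property}, \ref{prop:act_A_Jac_int}, \ref{prop:Jac_int_Ktype} to derive $e^{\pi\sI\mu/2}\lambda_{\alpha_1,\beta_1}(\rho(\tilde{\overline{\ru}}(t))F_{-n/2,\kappa})=(1+t^2)^{(n-1)/2}e^{-\sI\kappa\arg(1+\sI t)}\beta_1(0)$, and then compares this against the $\widetilde{\overline{U}}$-equivariance forced by membership in $(I^{-\infty}_{\mu,-n/2})_{L_1,L_2}$: for $t\in L_2^\vee$ the left-hand side must equal $\omega_{L_2}(-t)\beta_1(0)$, and the growth of $(1+t^2)^{(n-1)/2}$ when $n>1$, or the nontriviality of $\omega_{L_2}$ when $0\notin L_2$, kills $\beta_1(0)$. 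You instead translate everything into Dirichlet-series language: you pin down $\underset{s=1}{\mathrm{Res}}\,\xi_\pm(\alpha_2;s)=\beta_1(0)$ via Proposition~\ref{prop:poles_DS} with indices swapped (the parity hypothesis $\mu\in 1-n+2\bZ$ killing the $\cos$ terms), verify $\Xi_\pm(\alpha_1;n)$ is finite from the same proposition under the hypothesis ($n>1$ or $0\notin L_2$), and then read $\beta_1(0)=0$ off the functional equation of Theorem~\ref{thm:DS}(i) at the reflection point $s=n$ by pairing with the covector $(e^{-\pi\sI\mu/2},e^{\pi\sI\mu/2})$, which simultaneously trivializes $\Sigma_\mu$ and triggers the parity vanishing of $\cos(\pi(n\mp\mu)/2)$; the limit computation at the singular point $\tilde{s}=1$ (simple zero of $\cos(\pi\tilde s/2)$ against the at-most-simple pole of $\Xi_\pm(\alpha_2;\tilde s)$) is carried out correctly and gives $-\beta_1(0)$. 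There is no circularity: Theorem~\ref{thm:DS} and Propositions~\ref{prop:QAD_Fourier}, \ref{prop:Fexp_functional2}, \ref{prop:poles_DS} do not rely on this lemma. Your route is longer and replaces one body of machinery (explicit Jacquet integrals on $\widetilde{K}$-types) with another (the global functional equation and residue bookkeeping), but it is interesting in that it shows how the same obstruction manifests analytically at the central point $s=n$ of the functional equation. The paper's proof is more direct and self-contained within \S 5.5.
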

\begin{proof}
Let $\kappa \in \mu +2\bZ$ such that $|\kappa |\leq n-1$. 
Let $F_{-n/2,\kappa}$ be the function in 
$I_{\mu,-n/2}^\infty$ defined by (\ref{eqn:def_F_nu_kappa}). 
By Propositions \ref{prop:Jacquet_property}, \ref{prop:act_A_Jac_int}, 
\ref{prop:Jac_int_Ktype} and 
\[
\tilde{\overline{\ru}}(t)=
\tilde{\ru}\!\left(\frac{t}{t^2+1}\right)
\tilde{\ra}\!\left(\frac{1}{t^2+1}\right)
\tilde{\rk}(-\arg (1+\sI t)), 
\]
we have 
$e^{\pi \sI \mu /2}
\lambda_{\alpha_1,\beta_1}(\rho (\tilde{\overline{\ru}}(t))F_{-n/2, \kappa })
=(1+t^2)^{\frac{n-1}{2}}e^{-\sI \kappa \arg (1+\sI t)}\beta_1 (0)$ 
for $t\in \bR$. 
By this equality and the definition of 
$(I^{-\infty}_{\mu ,-n/2})_{L_1,L_2}$, 
we have 
\begin{align*}
\omega_{L_2}(-t)\beta_1 (0)
&=e^{\pi \sI \mu /2}\omega_{L_2}(-t)
\lambda_{\alpha_1,\beta_1}(F_{-n/2, \kappa })\\
&=e^{\pi \sI \mu /2}\lambda_{\alpha_1,\beta_1}
(\rho (\tilde{\overline{\ru}}(t))F_{-n/2, \kappa })\\
&=(1+t^2)^{\frac{n-1}{2}}e^{-\sI \kappa \arg (1+\sI t)}\beta_1 (0)
\end{align*}
for $t\in L_2^\vee $. Hence, we have $\beta_1(0)=0$ by the assumption 
$(n>1$ or $0\not\in L_2)$. 
\end{proof}

\subsection{Poles of the Dirichlet series}
\label{subsec:pole_DS}

In this subsection, 
we give a proof of Proposition \ref{prop:poles_DS}. 
Proposition \ref{prop:poles_DS} follows from 
Euler's reflection formula (\ref{eqn:Gamma_reflection}) 
of the Gamma function, and Proposition \ref{prop:pre_pole005} below. 

\begin{prop}
\label{prop:pre_pole005}
Let $L_1$ and $L_2$ be two shifted lattices in $\bR$. 
Let $\mu,\nu \in \bC$. Let 
$(\alpha_i,\beta_i)\in \mathfrak{M}(L_i)\times 
{\mathfrak{N}}(\bZ_{\geq 0})$ $(i=1,2)$ 
such that $(\lambda_{\alpha_1,\beta_1})_\infty =\lambda_{\alpha_2,\beta_2}$. 
We understand that $\alpha_2(0)=0$ if $0\not\in L_2$, 
and double signs are in the same order. 

\noindent (i) 
The functions 
\begin{align}
\label{eqn:DS_entire_2nd}
\sin (\pi s)
 \Xi_\pm (\alpha_{1};s)
-\frac{\cos \bigl(\tfrac{\pi (2\nu \mp \mu )}{2}\bigr)\alpha_{2}(0)}
{s+2\nu -1}\! 
\end{align}
are entire. 

\noindent (ii) Let $m\in \bZ_{\geq 0}$, and assume $-2\nu \neq m$. 
Then the functions 
\begin{align*}
&\Xi_\pm (\alpha_1;s)-\frac{(\pm \sI)^{m}m!\beta_2(m)}{2\pi (s-m-1)}
\end{align*}
are holomorphic at $s=m+1$.

\noindent (iii) Let $m\in \bZ_{\geq 0}$, and assume $-2\nu =m$. 
Then the functions 
\begin{align*}
&\Xi_\pm (\alpha_{1};s)
+\frac{\pi \sin \bigl(\tfrac{\pi (m\mp \mu )}{2}\bigr)\alpha_{2}(0)
-(\pm \sI)^{m}m!\beta_2(m)}{2\pi (s-m-1)}
+\frac{\cos \bigl(\tfrac{\pi (m\mp \mu )}{2}\bigr)\alpha_2(0)}{\pi (s-m-1)^2}
\end{align*}
are holomorphic at $s=m+1$. 
\end{prop}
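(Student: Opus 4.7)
Part (i) follows immediately from condition [D1] of Theorem \ref{thm:DS}(i), applied to $(T_{\alpha_1},T_{\alpha_2})\in\cA(L_1,L_2;J_{\mu,\nu})$; this pair is available since $T^{\lambda_{\alpha_1,\beta_1}}=T_{\alpha_1}$ and $T^{(\lambda_{\alpha_1,\beta_1})_\infty}=T^{\lambda_{\alpha_2,\beta_2}}=T_{\alpha_2}$ by Propositions \ref{prop:pair_to_functionl} and \ref{prop:QAD_Fourier}. From [D1], the two row-wise combinations of $\mathrm{E}(s)(\Xi_+,\Xi_-)^{\mathrm{T}}$ plus the stated correction terms are entire; adding and subtracting them produces entire expressions for $2\cos(\pi s/2)(\Xi_++\Xi_-)$ and $2\sI\sin(\pi s/2)(\Xi_+-\Xi_-)$ modulo simple poles at $s=0$ and $s=-2\nu+1$. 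Multiplying by $\sin(\pi s/2)$ and $\cos(\pi s/2)/\sI$ respectively produces the factor $\sin(\pi s)=2\sin(\pi s/2)\cos(\pi s/2)$, and after recombining I obtain that $\sin(\pi s)\Xi_\pm(\alpha_1;s)-\alpha_2(0)\sin(\pi(s\pm\mu)/2)/(s+2\nu-1)$ is entire. Since $\sin(\pi(s\pm\mu)/2)-\cos(\pi(2\nu\mp\mu)/2)$ vanishes at $s=-2\nu+1$, this yields part (i).

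For parts (ii) and (iii), the starting point is the distributional relation
\[
T_{\alpha_1}(f)=(T_{\alpha_2})_{\mu,\nu,\infty}(f)+\sum_{m=0}^{\infty}\beta_2(m)(-1)^m f^{(m)}(0),\qquad f\in C_0^\infty(\bR),
\]
which follows from $(\lambda_{\alpha_1,\beta_1})_\infty=\lambda_{\alpha_2,\beta_2}$ combined with the Fourier-expansion formula in the proof of Proposition \ref{prop:QAD_Fourier} and with (\ref{eqn:twist_dist}). Substituting the dilation $f_{[t^{-1}]}$ for $f$ and applying Lemma \ref{lem:rel_f_t} and Lemma \ref{lem:pre_pole001}(ii) (inserting the $\log t$ correction of Lemma \ref{lem:pre_pole001}(iii) exactly when $-2\nu=n\in\bZ_{\geq 0}$) gives a theta-type identity that refines (\ref{eqn:pf_zeta_FE_001}) by an extra $\sum_m\beta_2(m)(-1)^m t^{-m-1}f^{(m)}(0)$. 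Integrating against $t^s\,dt/t$ on $(0,1)$ then extends Lemma \ref{lem:AC_global_zeta} to a meromorphic identity whose only singularities beyond those of Lemma \ref{lem:AC_global_zeta} are simple poles of $Z(\alpha_1,\cF(f);s)$ at $s=m+1$ with residue $(-1)^m\beta_2(m)f^{(m)}(0)$, plus an additional double-pole term at $s=n+1$ of the form $-\alpha_2(0)(\sI)^n\cdot 2(-1)^n\cos(\pi(n+\mu)/2)f^{(n)}(0)/(n!(s-n-1)^2)$ when $-2\nu=n\in\bZ_{\geq 0}$.

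To extract the residues of $\Xi_\pm(\alpha_1;s)$ themselves, I pair this with the dual expression coming from Lemma \ref{lem:zeta_converge} and Proposition \ref{prop:LFE},
\[
Z(\alpha_1,\cF(f);s)=\Phi_1(f;-s+1)A(s)+\Phi_{-1}(f;-s+1)B(s),
\]
where $A(s)=e^{\pi\sI s/2}\Xi_+(\alpha_1;s)+e^{-\pi\sI s/2}\Xi_-(\alpha_1;s)$ and $B(s)=e^{-\pi\sI s/2}\Xi_+(\alpha_1;s)+e^{\pi\sI s/2}\Xi_-(\alpha_1;s)$; by [D1] these are meromorphic with poles only at $s=0$ and $s=-2\nu+1$. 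Since $\Phi_\varepsilon(f;-s+1)$ has a simple pole at $s=m+1$ with residue $-\varepsilon^m f^{(m)}(0)/m!$ (cf.\ \S\ref{subsec:AC_local_zeta}), matching the $(s-m-1)^{-1}$ coefficients on both sides and choosing $f$ with $f^{(m)}(0)\neq 0$ gives, in case (ii), the identity $A(m+1)+(-1)^m B(m+1)=(-1)^{m+1}m!\beta_2(m)$. The regularity of $A$ and $B$ at $s=m+1$ first forces the parity constraint $\mathrm{Res}_{s=m+1}\Xi_+(\alpha_1;s)=(-1)^m\mathrm{Res}_{s=m+1}\Xi_-(\alpha_1;s)$; expanding $A(m+1),B(m+1)$ in terms of these residues then yields the clean formula $\mathrm{Res}_{s=m+1}\Xi_\pm(\alpha_1;s)=(\pm\sI)^m m!\beta_2(m)/(2\pi)$ of part (ii). Part (iii) is obtained by the same strategy extended to the $(s-m-1)^{-2}$ Laurent coefficient, where now $A,B$ themselves have simple poles at $s=m+1=-2\nu+1$ and the logarithmic double pole from Lemma \ref{lem:pre_pole001}(iii) contributes. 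The main obstacle throughout is the rank drop $\det\mathrm{E}(m+1)=2\sI\sin(\pi(m+1))=0$, which prevents direct inversion to isolate $\Xi_+$ and $\Xi_-$; this is circumvented by working with the combinations $A,B$ and the parity relation, and the final symmetric form in the statement is obtained using the identities $(-1)^m\cos(\pi(m+\mu)/2)=\cos(\pi(m-\mu)/2)$ and $(-1)^m\sin(\pi(m+\mu)/2)=-\sin(\pi(m-\mu)/2)$.
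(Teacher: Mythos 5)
Your proof is correct and follows essentially the same route as the paper's: for (i), a matrix manipulation of condition [D1]; for (ii)--(iii), the comparison of two meromorphic expressions for $Z(\alpha_1,\cF(f);s)$, one from the refined theta identity incorporating the $\sum_m\beta_2(m)\delta^{(m)}$ term (your rederivation of the paper's Lemma \ref{lem:pre_pole003}) and the other from the local functional equation $Z(\alpha_1,\cF(f);s)=\Phi_1(f;-s+1)A(s)+\Phi_{-1}(f;-s+1)B(s)$, plus the rank-drop observation at integer $s$ and the resulting parity constraint on residues. The one place you diverge in bookkeeping is the choice of test function: the paper works with $\Delta_m(x)=x^m\Delta(x)$, for which $\Phi_{-1}(\Delta_m;s)=(-1)^m\Phi_1(\Delta_m;s)$ and $\Phi_1(\Delta_m;-s+1)+(s-m-1)^{-1}$ is entire, so the non-principal Laurent coefficients of $\Xi_\pm$ never appear; you keep a generic $f$ with $f^{(m)}(0)\neq 0$ and instead verify by expanding $e^{\pm\pi\sI s/2}$ in powers of $s-m-1$ that the combination $A(m+1)+(-1)^m B(m+1)$ depends only on the residues $r_\pm$ (the coefficients of the constant terms $c_\pm$ cancel), then solve with $r_+=(-1)^m r_-$. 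Both routes are valid and yield the same two linear equations for $r_\pm$; your derivation of (ii) checks out. For (iii) you give a sketch rather than a full argument; there the Laurent matching must go one order deeper since $A,B$ themselves acquire simple poles at $s=m+1=-2\nu+1$ and the double-pole coefficients of $\Xi_\pm$ enter alongside the residues, and the paper's $\Delta_m$ choice makes this step noticeably cleaner. Nothing in your plan is wrong, but (iii) would need to be carried out to be a complete proof.
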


In order to prove this proposition, 
we prepare some lemmas. 

\begin{lem}
\label{lem:pre_pole003}
We use the notation in Proposition \ref{prop:pre_pole005}. 
Take $r\in \bR$ so that $\alpha_{1}\in 
\mathfrak{M}_r({L}_1)$. Let $f\in C^\infty_0(\bR)$. 
Then, for $s\in \bC$ such that $\mathrm{Re}(s)>
\max \{r+1,0\}$, we have 
\begin{align*}
&Z(\alpha_{1},\cF (f);s)
=
Z_+(\alpha_{1},\cF (f);s)
+Z_+(\alpha_{2},\cF_{\mu,\nu,\infty}(f);-s-2\nu +1)\\
&\hspace{5mm}
-\frac{\alpha_{1}(0)\cF (f)(0)}{s}
+\frac{\alpha_{2}(0)\cF_{\mu,\nu,\infty}(f)(0)}{s+2\nu -1}
+\sum_{m=0}^\infty 
\frac{\beta_2(m)\delta^{(m)}(f)}{s-m-1}\\
&\hspace{5mm}
-\left\{\!\begin{array}{ll}\displaystyle 
(\sI)^n\alpha_2(0)
\frac{2\cos \bigl(\tfrac{\pi (n+\mu )}{2}\bigr)}
{n!(s-n-1)^2}\delta^{(n)}(f)
&\displaystyle \text{if}\ -2\nu =n\ 
\text{with some}\ n\in \bZ_{\geq 0},\\[2mm]
0&\text{otherwise}.
\end{array}
\!\right.\!
\end{align*}
\end{lem}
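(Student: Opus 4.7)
The plan is to extend the proof of Lemma \ref{lem:AC_global_zeta} from $f\in C^\infty_0(\bR^\times)$ to $f\in C^\infty_0(\bR)$ by carefully tracking the contributions at $0$, which produce the $\delta^{(m)}$ and (in the degenerate case $-2\nu\in\bZ_{\geq 0}$) the logarithmic terms.

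First I would extract a distributional identity on $\bR$ from the hypothesis. Applying Proposition \ref{prop:pair_to_functionl} to $\lambda = \lambda_{\alpha_2,\beta_2}$ and using the expression (\ref{eqn:pf_QAD_Fourier002}) for $\lambda_{\alpha,\beta}$ derived in the proof of Proposition \ref{prop:QAD_Fourier}, the equality $\lambda_{\alpha_2,\beta_2} = (\lambda_{\alpha_1,\beta_1})_\infty$ becomes, after taking $T^\bullet$ of both sides, the identity of distributions
\begin{align*}
T_{\alpha_1}(f) = (T_{\alpha_2})_{\mu,\nu,\infty}(f) + \sum_{m=0}^\infty \beta_2(m)\delta^{(m)}(f) \qquad (f\in C^\infty_0(\bR)),
\end{align*}
where $(T_{\alpha_2})_{\mu,\nu,\infty}$ is the distribution defined by (\ref{eqn:def_twist_Talpha}). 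Note that on $C^\infty_0(\bR^\times)$ this reduces, via (\ref{eqn:twist_dist}), to the original summation formula (\ref{eqn:autom_pair_def}) used in Lemma \ref{lem:AC_global_zeta}, so the new identity is the correct extension.

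Next I would apply this identity with $f$ replaced by the rescaling $f_{[t^{-1}]}$ for $t>0$. Lemma \ref{lem:rel_f_t}(i) converts $\vartheta(\alpha_1,\cF(f);t)=\sum_{0\neq l\in L_1}\alpha_1(l)\cF(f)(tl)$ into $t^{-1}T_{\alpha_1}(f_{[t^{-1}]}) - t^{-1}\alpha_1(0)\cF(f_{[t^{-1}]})(0)$, i.e. $t^{-1}T_{\alpha_1}(f_{[t^{-1}]})-\alpha_1(0)\cF(f)(0)$. Substituting the distributional identity above and using Lemma \ref{lem:pre_pole001}(i), (ii) to transform each $\cF_{\mu,\nu,\infty}(f_{[t^{-1}]})(l)$ with $l\neq 0$ and each $\delta^{(m)}(f_{[t^{-1}]})$ back to evaluations at $f$, while peeling off the $l=0$ summand in $L_2$, yields the modified Poisson-type identity
\begin{align*}
\vartheta(\alpha_1,\cF(f);t) &= t^{2\nu-1}\vartheta(\alpha_2,\cF_{\mu,\nu,\infty}(f);t^{-1}) - \alpha_1(0)\cF(f)(0) \\
&\quad + t^{2\nu-1}\alpha_2(0)\cF_{\mu,\nu,\infty}(f)(0) + \sum_{m=0}^\infty \beta_2(m)\delta^{(m)}(f)\,t^{-m-1} + R(t),
\end{align*}
where the remainder $R(t)$ is zero unless $-2\nu = n\in\bZ_{\geq 0}$, in which case Lemma \ref{lem:pre_pole001}(iii) contributes a single $t^{-n-1}\log t$ term proportional to $(\sI)^n\alpha_2(0)\tfrac{2\cos(\pi(n+\mu)/2)}{n!}\delta^{(n)}(f)$ from the $l=0$ summand.

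Finally I would split $Z(\alpha_1,\cF(f);s)=Z_+(\alpha_1,\cF(f);s)+Z_-(\alpha_1,\cF(f);s)$ in the range $\mathrm{Re}(s)>\max\{r+1,0\}$ where both sides converge (by Lemma \ref{lem:par_zeta_converge}), substitute the identity above into $Z_-$, and evaluate each piece via the elementary integrals
\begin{align*}
\int_0^1 t^{s-m-2}\,dt = \frac{1}{s-m-1}, \qquad \int_0^1 t^{s-n-2}\log t\,dt = -\frac{1}{(s-n-1)^2},
\end{align*}
together with the substitution $t\mapsto t^{-1}$ which converts $\int_0^1 t^{2\nu-1}\vartheta(\alpha_2,\cF_{\mu,\nu,\infty}(f);t^{-1})t^{s-1}\,dt$ into $Z_+(\alpha_2,\cF_{\mu,\nu,\infty}(f);-s-2\nu+1)$. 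Routine rearrangement then produces the displayed formula. The only delicate step is the case $-2\nu = n$, where careful bookkeeping is required to confirm that the logarithmic contribution $R(t)$ integrates precisely to the stated double-pole term $-(\sI)^n\alpha_2(0)\tfrac{2\cos(\pi(n+\mu)/2)}{n!(s-n-1)^2}\delta^{(n)}(f)$, while the simple-pole term $\tfrac{\alpha_2(0)\cF_{\mu,\nu,\infty}(f)(0)}{s+2\nu-1}=\tfrac{\alpha_2(0)\cF_{\mu,\nu,\infty}(f)(0)}{s-n-1}$ retains its usual form (with $\cF_{\mu,\nu,\infty}(f)(0)$ understood via the meromorphic-continuation definition from \S\ref{subsec:Fexp_ILL}). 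This case-analysis and the verification that the convergence allows the term-by-term integration are the only nontrivial checks.
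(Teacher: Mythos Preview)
Your proposal is correct and follows essentially the same route as the paper. The paper's own proof is very brief: it writes down the identity
\[
\sum_{l\in L_1}\alpha_1(l)\cF(f_{[t^{-1}]})(l)=\sum_{l\in L_2}\alpha_2(l)\cF_{\mu,\nu,\infty}(f_{[t^{-1}]})(l)+\sum_{m=0}^\infty\beta_2(m)\delta^{(m)}(f_{[t^{-1}]})
\]
directly from $\lambda_{\alpha_1,\beta_1}(\iota_{\mu,\nu}(f_{[t^{-1}]}))=\lambda_{\alpha_2,\beta_2}((\iota_{\mu,\nu}(f_{[t^{-1}]}))_\infty)$, applies Lemmas~\ref{lem:rel_f_t}(i) and~\ref{lem:pre_pole001} to rewrite it in terms of $\vartheta$-functions, and then simply says the assertion follows as in the proof of Lemma~\ref{lem:AC_global_zeta}. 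Your version derives the same distributional identity (in the equivalent form $T_{\alpha_1}=(T_{\alpha_2})_{\mu,\nu,\infty}+\sum_m\beta_2(m)\delta^{(m)}$, using $(\lambda_{\alpha_2,\beta_2})_\infty=\lambda_{\alpha_1,\beta_1}$), and then spells out the $Z_-$ computation that the paper leaves implicit, including the two elementary integrals and the sign check for the logarithmic term. One small point: the term-by-term integrals $\int_0^1 t^{s-m-2}\,dt$ and $\int_0^1 t^{s-n-2}\log t\,dt$ require $\mathrm{Re}(s)$ larger than $\max\{r+1,0\}$ in general, so the identity is first established for $\mathrm{Re}(s)$ sufficiently large and then extended by analytic continuation of both sides; this is exactly what the paper does in Lemma~\ref{lem:AC_global_zeta} and is implicit in the phrase ``similar to the proof of Lemma~\ref{lem:AC_global_zeta}.''
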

\begin{proof}
By the equality $\lambda_{\alpha_1,\beta_1}(\iota_{\mu,\nu}(f_{[t^{-1}]}))
=\lambda_{\alpha_2,\beta_2}((\iota_{\mu,\nu}(f_{[t^{-1}]}))_\infty )$, 
we have 
\begin{align*}
&\sum_{l\in L_1}\alpha_1(l)\cF (f_{[t^{-1}]})(l)
=\sum_{l\in L_2}\alpha_2(l)\cF_{\mu,\nu,\infty} (f_{[t^{-1}]})(l)
+\sum_{m=0}^\infty \beta_2(m)\delta^{(m)}(f_{[t^{-1}]})
\end{align*}
for $t>0$. 
Applying Lemmas \ref{lem:rel_f_t} (i) and \ref{lem:pre_pole001}, 
we have 
\begin{align*}
\sum_{l\in {L}_1}\alpha_{1}(l)\cF (f)(tl)
&=t^{2\nu -1}\sum_{l\in {L}_2}\alpha_{2}(l)
\cF_{\mu,\nu,\infty} (f )(t^{-1}l)
+\sum_{m=0}^\infty \beta_2(m)\delta^{(m)}(f)t^{-m-1}\\
&
+\left\{\!\begin{array}{l}\displaystyle 
(\sI)^n\alpha_2(0)\frac{2\cos \bigl(\tfrac{\pi (n+\mu )}{2}\bigr)}{n!}
\delta^{(n)}(f)t^{-n-1}\log t\\[3mm]
\hspace{33mm}
\displaystyle \text{if}\ -2\nu =n\ 
\text{with some}\ n\in \bZ_{\geq 0},\\[3mm]
0\hspace{31.2mm}\text{otherwise}
\end{array}
\!\right.\!
\end{align*}
for $t>0$. 
Using this equality instead of Lemma \ref{lem:Po_sum_modoki}, 
we obtain the assertion similar to 
the proof of Lemma \ref{lem:AC_global_zeta}. 
\end{proof}

For $m\in \bZ_{\geq 0}$, 
let $\Delta_m$ be the function on $\bR$ defined by (\ref{eqn:def_Delta_m}). 

\begin{lem}
\label{lem:pre_pole004}
Let $m\in \bZ_{\geq 0}$. Then 
\begin{align}
\label{eqn:prepole4_001}
\Phi_{1}(\Delta_m;-s+1)+\frac{1}{s-m-1}\ \text{is entire, }
\end{align}
and $\Delta_m$ satisfies the following equalities 
\begin{align}
\label{eqn:prepole4_002}
&\delta^{(n)}(\Delta_m)=\left\{\begin{array}{ll}
(-1)^mm!&\text{if}\ m=n,\\
0&\text{otherwise}
\end{array}\right.\hspace{28mm}(n \in \bZ_{\geq 0} ),\\
\label{eqn:prepole4_003}
&\Phi_{-1}(\Delta_m;s)=(-1)^m\Phi_{1}(\Delta_m;s),\\ 
\label{eqn:prepole4_005}
\begin{split}
&\cF_{\mu,-m/2,\infty}(\Delta_m)(0)
=2(\sI )^m\cos \bigl(\tfrac{\pi (\mu -m)}{2}\bigr)\\
&\hspace{42pt}
\times \lim_{s\to m+1}\left(\Phi_{1}(\Delta_m;-s+1)+\frac{1}{s -m-1}\right)
\hspace{10mm}(\mu \in \bC ).
\end{split}
\end{align}
\end{lem}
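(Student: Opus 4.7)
The plan is to exploit that $\Delta(x)\equiv 1$ on $\{|x|\le 1/2\}$, so $\Delta_m(x)=x^m$ on a neighborhood of $0$, together with the evenness $\Delta(-x)=\Delta(x)$. The former shows the Taylor expansion of $\Delta_m$ at the origin consists of a single degree-$m$ term, so $\Delta_m^{(n)}(0)=m!$ when $n=m$ and $0$ otherwise, which is (\ref{eqn:prepole4_002}). The latter, via the substitution $t\mapsto-t$ in $\Phi_{-1}(\Delta_m;s)=\int_0^\infty\Delta_m(-t)t^{s-1}\,dt=(-1)^m\int_0^\infty\Delta_m(t)t^{s-1}\,dt$, gives (\ref{eqn:prepole4_003}) directly.

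For (\ref{eqn:prepole4_001}), I invoke the explicit expression (\ref{eqn:AC_Phi}) for the analytic continuation of the local zeta function: $\Phi_1(\Delta_m;z)$ is meromorphic on $\bC$ with at most simple poles at $z=-n$ ($n\in\bZ_{\geq 0}$) of residue $\frac{(-1)^n\delta^{(n)}(\Delta_m)}{n!}$. By (\ref{eqn:prepole4_002}), this residue equals $1$ when $n=m$ and vanishes otherwise. Substituting $z=-s+1$, the unique pole of $\Phi_1(\Delta_m;-s+1)$ sits at $s=m+1$ with residue $-1$, which is precisely cancelled by $\frac{1}{s-m-1}$, yielding the claimed entireness.

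For (\ref{eqn:prepole4_005}), I start from (\ref{eqn:twist_fourier0}), valid for $\mathrm{Re}(s)>0$ and extended to all $s\in\bC$ by the meromorphic continuation of $\Phi_{\pm 1}$, to write
\begin{align*}
\cF((\Delta_m)_{\mu,s,\infty})(0)
=e^{\pi\sI\mu/2}\Phi_1(\Delta_m;2s)+e^{-\pi\sI\mu/2}\Phi_{-1}(\Delta_m;2s).
\end{align*}
Combining (\ref{eqn:prepole4_003}) with the identity $e^{\pi\sI\mu/2}+(-1)^me^{-\pi\sI\mu/2}=2\sI^m\cos\bigl(\tfrac{\pi(\mu-m)}{2}\bigr)$ collapses the right-hand side to $2\sI^m\cos\bigl(\tfrac{\pi(\mu-m)}{2}\bigr)\Phi_1(\Delta_m;2s)$. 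Using additionally the sign relation $\cos\bigl(\tfrac{\pi(m+\mu)}{2}\bigr)=(-1)^m\cos\bigl(\tfrac{\pi(\mu-m)}{2}\bigr)$ together with $\delta^{(m)}(\Delta_m)=(-1)^mm!$, the correction term $(\sI)^m\tfrac{2\cos(\pi(m+\mu)/2)}{m!(2s+m)}\delta^{(m)}(\Delta_m)$ appearing in the definition of $\cF_{\mu,-m/2,\infty}(\Delta_m)(0)$ simplifies to $\tfrac{2\sI^m\cos(\pi(\mu-m)/2)}{2s+m}$. Taking $s\to-m/2$ and rewriting the resulting limit through the change of variable $z=2s=-s'+1$ (so $z=-m$ corresponds to $s'=m+1$ and $\tfrac{1}{z+m}$ becomes $-\tfrac{1}{s'-m-1}$) produces (\ref{eqn:prepole4_005}).

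The only step demanding real care is this last one: bookkeeping the four $\cos$ and sign factors so that the pole subtraction prescribed by the definition of $\cF_{\mu,-m/2,\infty}$ lines up with the residue identified in the proof of (\ref{eqn:prepole4_001}). Once the identity $e^{\pi\sI\mu/2}+(-1)^me^{-\pi\sI\mu/2}=2\sI^m\cos(\pi(\mu-m)/2)$ and the relation between $\cos(\pi(m+\mu)/2)$ and $\cos(\pi(\mu-m)/2)$ are in hand, the remaining manipulations are mechanical.
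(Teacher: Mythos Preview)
Your proof is correct and follows exactly the route the paper intends: the paper's own proof is the single sentence ``The assertion follows from the definition and the results in \S \ref{subsec:AC_local_zeta},'' and your argument is precisely the unpacking of those references—using (\ref{eqn:AC_Phi}) for the pole structure, (\ref{eqn:twist_fourier0}) for $\cF((\Delta_m)_{\mu,s,\infty})(0)$, and the definition of $\cF_{\mu,-m/2,\infty}$—together with the evenness of $\Delta$ and $\Delta\equiv 1$ near $0$. The sign bookkeeping in the last step is handled correctly.
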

\begin{proof}
The assertion follows from the definition and 
the results in \S \ref{subsec:AC_local_zeta}. 
\end{proof}

\begin{proof}[Proof of Proposition \ref{prop:pre_pole005}]
By Propositions \ref{prop:pair_to_functionl} and 
\ref{prop:QAD_Fourier}, we have 
$(T_{\alpha_1},T_{\alpha_2})\in \cA (L_1,L_2;J_{\mu,\nu})$. 
Multiplying by 
\[
\frac{1}{2\sI }\left(\begin{array}{cc}
e^{\pi \sI s/2}&-e^{-\pi \sI s/2}\\
-e^{-\pi \sI s/2}&e^{\pi \sI s/2}
\end{array}\right)
\]
from the left and subtracting some entire function, 
the entire $\bC^2$-valued function (\ref{eqn:DS_entire}) 
becomes a $\bC^2$-valued function whose entries are 
(\ref{eqn:DS_entire_2nd}). 
Hence, we obtain the statement (i). 

Let $m\in \bZ_{\geq 0}$. Multiplying by 
$(e^{-\pi \sI (m+1)/2},e^{\pi \sI (m+1)/2})$ from the left, 
the entire function (\ref{eqn:DS_entire}) becomes 
\begin{align*}
&2\cos \bigl(\tfrac{\pi (s-m-1)}{2}\bigr) 
(\Xi_+(\alpha_{1};s)-(-1)^{m}\Xi_-(\alpha_{1};s))\\
&+2\cos \bigl(\tfrac{\pi (m+1)}{2}\bigr)\frac{\alpha_{1}(0)}{s}
-2\sin \bigl(\tfrac{\pi (\mu -m)}{2}\bigr)\frac{\alpha_{2}(0)}{s+2\nu -1}.
\end{align*}
This implies that  
\begin{align}
&\Xi_+(\alpha_{1};s)-(-1)^{m}\Xi_-(\alpha_{1};s)
-\sin \bigl(\tfrac{\pi (\mu -m)}{2}\bigr)\frac{\alpha_{2}(0)}{s+2\nu -1}
\label{eqn:pf_prepole5_001}
\end{align}
is holomorphic at $s=m+1$. Hence, 
in order to prove the statements (ii) and (iii), 
it suffices to give the principal part of 
$\Xi_+(\alpha_{1};s)+(-1)^{m}\Xi_-(\alpha_{1};s)$ at $s=m+1$. 
We have 
\begin{align*}
&\nonumber 
Z(\alpha_{1},\cF (\Delta_m);s)
=\left(\,\Phi_{1}(\Delta_m;-s+1),\,\Phi_{-1}(\Delta_m;-s+1)\,\right)
{\mathrm{E}}(s)
\left(\begin{array}{c}
\Xi_+(\alpha_{1} ;s)\\
\Xi_-(\alpha_{1} ;s)
\end{array}\right)\\
&=-2(\sI)^{m}\sin \bigl(\tfrac{\pi (s-m-1)}{2}\bigr)\Phi_{1}(\Delta_m;-s+1)
(\Xi_+(\alpha_1;s)+(-1)^{m}\Xi_-(\alpha_1;s)).
\end{align*}
Here the first equality follows from Lemma \ref{lem:zeta_converge} and 
Proposition \ref{prop:LFE}, 
and the second equality follows from (\ref{eqn:prepole4_003}). 
On the other hand, we have 
\begin{align*}
&\nonumber 
Z(\alpha_{1},\cF (\Delta_m);s)=
Z_+(\alpha_{1},\cF (\Delta_m);s)
+Z_+(\alpha_{2},\cF_{\mu,\nu,\infty}(\Delta_m);-s-2\nu +1)\\
&\nonumber \hphantom{=====}
-\frac{\alpha_{1}(0)\cF (\Delta_m)(0)}{s}
+\frac{\alpha_{2}(0)\cF_{\mu,\nu,\infty}(\Delta_m)(0)}{s+2\nu -1}
+\frac{(-1)^mm!\beta_2(m)}{s-m-1}\\
&\hphantom{=====}
-\left\{\!\begin{array}{ll}\displaystyle 
2(\sI)^{m}\cos \bigl(\tfrac{\pi (\mu -m)}{2}\bigr)\frac{\alpha_2(0)}
{(s-m-1)^2}
&\displaystyle \text{if}\ -2\nu =m,\\[5pt]
0&\text{otherwise}
\end{array}
\!\right.\!
\end{align*}
by Lemma \ref{lem:pre_pole003} and (\ref{eqn:prepole4_002}). 
By these equalities, we find that 
\begin{align}
\label{eqn:pf_prepole5_002}
\begin{split}
&2\sin \bigl(\tfrac{\pi (s-m-1)}{2}\bigr)\Phi_{1}(\Delta_m;-s+1)
(\Xi_+(\alpha_1;s)+(-1)^{m}\Xi_-(\alpha_1;s))\\
&+\frac{\alpha_{2}(0)\cF_{\mu,\nu,\infty}(\Delta_m)(0)}{(\sI)^{m}(s+2\nu -1)}
+\frac{(\sI)^{m}m!\beta_2(m)}{s-m-1}\\
&-\left\{\!\begin{array}{ll}\displaystyle 
2\cos \bigl(\tfrac{\pi (\mu -m)}{2}\bigr)\frac{\alpha_2(0)}
{(s-m-1)^2}
&\displaystyle \text{if}\ -2\nu =m,\\[5pt]
0&\text{otherwise}
\end{array}
\!\right.\!
\end{split}
\end{align}
is holomorphic at $s=m+1$. 
By (\ref{eqn:pf_prepole5_002}) and 
(\ref{eqn:prepole4_001}), if $2\nu \neq -m$, 
we know that 
\begin{align*}
&\Xi_+(\alpha_1;s)+(-1)^{m}\Xi_-(\alpha_1;s)
-\frac{(\sI)^{m}m!\beta_2(m)}{\pi (s-m-1)}
\end{align*}
is holomorphic at $s=m+1$. 
This completes the proof of the statement (ii). 

Assume $-2\nu =m$. By (\ref{eqn:prepole4_001}) and (\ref{eqn:prepole4_005}), 
we find that 
\begin{align}
\label{eqn:pf_prepole5_004}
\begin{split}
&\frac{\alpha_2(0)\cF_{\mu,-m/2,\infty}(\Delta_m)(0)}{(\sI)^{m}(s-m-1)}
-\frac{2\cos \bigl(\tfrac{\pi (\mu -m)}{2}\bigr)\alpha_2(0)}{(s-m-1)^2}\\
&-\left(\frac{2\sin \bigl(\tfrac{\pi (s-m-1)}{2}\bigr)}{\pi (s-m-1)}\right) 
\frac{2\cos \bigl(\tfrac{\pi (\mu -m)}{2}\bigr)\alpha_2(0)}{s-m-1}
\Phi_{1}(\Delta_m;-s+1)
\end{split}
\end{align}
is holomorphic at $s=m+1$. 
Subtracting (\ref{eqn:pf_prepole5_004}) from (\ref{eqn:pf_prepole5_002}), 
we get a function 
\begin{align*}
&2\sin \bigl(\tfrac{\pi (s-m-1)}{2}\bigr)\Phi_{1}(\Delta_m;-s+1)\\
&\times \biggl(\Xi_+(\alpha_1;s)+(-1)^{m}\Xi_-(\alpha_1;s)
+\frac{2\cos \bigl(\tfrac{\pi (\mu -m)}{2}\bigr)\alpha_2(0)}{\pi (s-m-1)^2}
\biggr)+\frac{(\sI)^{m}m!\beta_2(m)}{s-m-1},
\end{align*}
which is holomorphic at $s=m+1$. 
Hence, by (\ref{eqn:prepole4_001}), we know that 
\begin{align*}
&\Xi_+(\alpha_1;s)+(-1)^{m}\Xi_-(\alpha_1;s)
+\frac{2\cos \bigl(\tfrac{\pi (\mu -m)}{2}\bigr)\alpha_2(0)}{\pi (s-m-1)^2}
-\frac{(\sI)^{m}m!\beta_2(m)}{\pi (s-m-1)}
\end{align*}
is holomorphic at $s=m+1$. This completes the proof of the statement (iii). 
\end{proof}

\section{Automorphic distributions and Maass forms}
\label{sec:Maass_forms}

\subsection{Moderate growth functions on $\widetilde{G}$}
\label{subsec:mg_ftn_G}

In this subsection, 
we give a proof of Proposition \ref{prop:Maass_subspace}. 
Let $\kappa \in \bC$. 
Let $C^\infty (\widetilde{G}/\widetilde{K};\kappa )$ be the subspace 
of $C^\infty (\widetilde{G})$ consisting of all functions $F$ such that 
\begin{align}
\label{eqn:def_CG_over_K}
&F(\tilde{g}\tilde{\rk}(\theta))=F(\tilde{g})e^{\sI \kappa \theta }&
&(\tilde{g}\in \widetilde{G},\ \theta \in \bR).
\end{align}
We define a $\bC$-linear map 
$C^\infty (\widetilde{G}/\widetilde{K};\kappa )\ni F\mapsto 
\phi_F\in C^\infty (\gH)$ by 
\begin{align}
\label{eqn:def_phiF}
&\phi_F (z)=F(\tilde{\ru}(x)\tilde{\ra}(y))&
&(z=x+\sI y\in \gH ). 
\end{align}
By the Iwasawa decomposition (\ref{eqn:Iwasawa_g}), 
we know that this map is bijective, and 
the inverse map $C^\infty (\gH)\ni \phi \mapsto F_\phi \in 
C^\infty (\widetilde{G}/\widetilde{K};\kappa )$ of this map is given by 
\begin{align}
\label{eqn:def_Fphi}
&F_\phi (\tilde{g})=\phi (g\sI)e^{\sI \kappa \theta}&
&(\tilde{g}=(g,\theta )\in \widetilde{G}). 
\end{align}

Let $\| \cdot \|$ be the euclidean norm on $M_2(\bR )\simeq \bR^4$, that is,  
\begin{align*}
&\|g\|=\sqrt{a^2+b^2+c^2+d^2}&
&\left(\ g=\left(\begin{array}{cc}
a&b\\c&d\end{array}\right)\in M_2(\bR )\ \right).
\end{align*}
Then it is well-known and easy to show that 
\begin{align}
\label{eqn:norm_multiply}
&\|g_1g_2\|\leq \|g_1\|\hspace{0.2mm}\|g_2\|&&(g_1,g_2\in M_2(\bR)),&\\
\label{eqn:norm_cpt_inv}
&\|kg\|=\|gk\|=\|g\|&&(k \in K,\ g\in M_2(\bR )).
\end{align}
Let {\normalfont [M1]} and {\normalfont [M2]} be the conditions 
for $\phi \in C^\infty (\gH)$ in \S \ref{subsec:poisson} and 
\S \ref{subsec:Maass_forms}, respectively. 
We consider the following condition 
{\normalfont [M3]} for $F\in C^\infty (\widetilde{G})$: 
\begin{description}
\item[{\normalfont [M3]}]
There are $c,r>0$ such that 
$|F({}^s\!g)|\leq c\|g\|^r$ ($g\in G$). 
\end{description}
Let $\Omega_\kappa $ be the hyperbolic Laplacian 
of weight $\kappa$ on $\gH$ defined by (\ref{eqn:def_Omega}). 
Let $\cC$ be the Casimir element 
\begin{align*}
\cC =\frac{1}{4}(H^2+2E_+E_-+2E_-E_+),
\end{align*}
which generates the center of $\cU(\g_\bC )$ 
as a $\bC$-algebra.

\begin{lem}
\label{lem:autom_GtoH_001}
Retain the notation. 
Let $F\in C^\infty (\widetilde{G}/\widetilde{K};\kappa )$. 
\vspace{1mm}

\noindent (i) We have 
$F({}^s\!g \tilde{\ru}(x)\tilde{\ra}(y))
=(\phi_F \big|_\kappa g)(z)$ for $g \in G$ and $z=x+\sI y\in \gH$.\\[1mm]
\noindent (ii) For $\nu \in \bC$, we have 
$\Omega_\kappa \phi_F 
=\left(\tfrac{1}{4}-\nu^2\right)\phi_F$ if and only if 
$\rho (\cC )F=\left(\nu^2-\tfrac{1}{4}\right)F$. \\[1mm]
\noindent (iii) 
The function 
$\phi =\phi_F$ satisfies {\normalfont [M1]} if and only if 
$F$ satisfies {\normalfont [M3]}.\\[1mm]
\noindent (iv) 
The function 
$\phi =\phi_F$ satisfies {\normalfont [M2]} if $F$ 
satisfies {\normalfont [M3]}.
\end{lem}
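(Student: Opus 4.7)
The plan is to establish each of the four parts by reducing to computations on $G$ via the Iwasawa decomposition. For part (i), I would multiply out ${}^s\!g \cdot \tilde{\ru}(x)\tilde{\ra}(y) \in \widetilde{G}$ using (\ref{eqn:op_tilde_G}), obtaining the pair $(g\ru(x)\ra(y),\, \theta_0)$ with $\theta_0 = -\arg J(g, \sI) - \arg(J(g, z)/J(g, \sI))$, where $z = x + \sI y$. The key observation is that $\mathrm{Im}\, J(g, w) = c\,\mathrm{Im}(w)$ for $g = \bigl(\begin{smallmatrix}a&b\\c&d\end{smallmatrix}\bigr)$, so $J(g, \sI)$ and $J(g, z)$ lie in the same open half-plane (or are both reals of the same sign when $c = 0$); hence $\arg(J(g, z)/J(g, \sI)) = \arg J(g, z) - \arg J(g, \sI)$ as an exact equality of principal arguments, giving $\theta_0 = -\arg J(g, z)$ on the nose. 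Applying (\ref{eqn:Iwasawa_g}) and (\ref{eqn:def_CG_over_K}) then yields $F({}^s\!g\,\tilde{\ru}(x)\tilde{\ra}(y)) = \phi_F(gz)\,e^{-\sI \kappa \arg J(g, z)} = (\phi_F\big|_\kappa g)(z)$.

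Part (ii) is the standard identification of the Casimir with minus the weight-$\kappa$ hyperbolic Laplacian. Writing $\tilde{g} = \tilde{\ru}(x)\tilde{\ra}(y)\tilde{\rk}(\theta)$ and $F(\tilde{g}) = \phi_F(x + \sI y) e^{\sI \kappa \theta}$ via Iwasawa, I would compute $\rho(H), \rho(E_+), \rho(E_-)$ as first-order differential operators in $(x, y, \theta)$ by using Lemma \ref{lem:univ_exp1}(iii)--(iv) to expand $\tilde{g}\widetilde{\exp}(tX)$ to first order in $t$ and then re-Iwasawa-decompose. Combining via $\cC = \tfrac{1}{4}H^2 + \tfrac{1}{2}(E_+E_- + E_-E_+)$ and using that $\theta$-derivatives act as multiplication by $\sI \kappa$ on such functions, a direct computation yields $\rho(\cC) F_\phi = F_{-\Omega_\kappa \phi}$, from which the claimed equivalence of eigen-equations (with $\nu^2 - \tfrac{1}{4} = -(\tfrac{1}{4} - \nu^2)$) follows.

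For parts (iii) and (iv), the common tool is the identity $\|g\|^2 = (|g\sI|^2 + 1)/\mathrm{Im}(g\sI)$, obtained by writing $g = \ru(x)\ra(y)k$ with $k \in K$ via Iwasawa and applying (\ref{eqn:norm_cpt_inv}) to reduce to $\|\ru(x)\ra(y)\|^2 = y + x^2/y + 1/y$. For (iii), since $|F({}^s\!g)| = |\phi_F(g\sI)|\,e^{\mathrm{Im}(\kappa)\arg J(g,\sI)}$ and the exponential factor is bounded uniformly in $g$ (because $\arg J(g,\sI) \in (-\pi, \pi]$), the conditions [M1] and [M3] are equivalent up to doubling of the exponent. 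For (iv), part (i) gives $(\phi_F\big|_\kappa \gamma)(z) = F({}^s\!\gamma\,\tilde{\ru}(x)\tilde{\ra}(y))$; since $J(\gamma \ru(x)\ra(y), \sI) = J(\gamma, z)\cdot y^{-1/2}$ is a positive scalar multiple of $J(\gamma, z)$, this element coincides with ${}^s\!(\gamma \ru(x)\ra(y))$, so [M3] combined with (\ref{eqn:norm_multiply}) gives $|(\phi_F\big|_\kappa \gamma)(z)| \leq c\|\gamma\|^r ((|z|^2 + 1)/y)^{r/2}$. A direct check shows $(|z|^2 + 1)/y = O(y)$ on $\gD$ (using $|x| \leq 1/2$ and $y \geq \sqrt{3}/2$), which yields the required polynomial growth in $\mathrm{Im}(z)$.

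The most delicate step is verifying the principal-argument identity in (i) as an \emph{exact} equality of real numbers rather than merely modulo $2\pi$; without this half-plane argument, the equality in (i) would fail for non-integer $\kappa$. Once this is in place, the remaining parts are routine Iwasawa bookkeeping combined with the standard Casimir--Laplacian correspondence, with (ii) being the most computational but entirely mechanical.
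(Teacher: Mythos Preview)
Your proposal is correct and follows essentially the same route as the paper: Iwasawa decomposition for (i), the Casimir/Laplacian identification for (ii), the identity $\|\ru(x)\ra(y)\|^2=(|z|^2+1)/y$ for (iii), and the norm submultiplicativity bound on $\gD$ for (iv). Your treatment is in fact slightly more careful than the paper's in two places---the half-plane argument guaranteeing the exact principal-argument identity in (i), and the verification that ${}^s\!\gamma\,\tilde{\ru}(x)\tilde{\ra}(y)={}^s\!(\gamma\ru(x)\ra(y))$ in (iv)---both of which the paper leaves implicit.
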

\begin{proof}
Our proof is based on the Iwasawa decomposition 
$\widetilde{G}=\widetilde{U}\widetilde{A}\widetilde{K}$. 
The statement (i) follows from the equality 
\[
{}^s\!g \tilde{\ru}(x)\tilde{\ra}(y)
=\tilde{\ru}(\mathrm{Re}(gz))
\tilde{\ra}(\mathrm{Im}(gz))\tilde{\rk}(-\arg J(g,z)) 
\]
for $g \in G$ and $z=x+\sI y\in \gH$. 
The statement (ii) follows from 
\begin{align*}
&(\rho (\cC )F)(\tilde{\ru}(x)\tilde{\ra}(y))
=-(\Omega_\kappa \phi_F)(z)&&(z=x+\sI y\in \gH) 
\end{align*}
and $\rho (\tilde{k})\circ \rho (\cC )
=\rho (\cC )\circ \rho (\tilde{k})$ $(k\in K)$. 
The statement (iii) follows from (\ref{eqn:norm_cpt_inv}) 
and the equality $\|{\ru}(x){\ra}(y)\|^2
=(|z|^2+1)/y$ for $z=x+\sI y\in \gH$. 
By (\ref{eqn:norm_multiply}), we have 
\begin{align*}
\|\gamma {\ru}(x){\ra}(y)\|
&\leq \|\gamma \| \,
\|{\ru}(x)\| \,
\|{\ra}(y)\|=\|\gamma \|\sqrt{2+x^2}\sqrt{y+y^{-1}}\\
&\leq \|\gamma \|\sqrt{2+\frac{1}{4}}\,\sqrt{y+\frac{4}{3}y}
=\frac{\|\gamma \|\sqrt{21}}{2}\,y^{1/2}
=\frac{\|\gamma \|\sqrt{21}}{2}\,\mathrm{Im}(z)^{1/2}
\end{align*}
for $\gamma \in SL(2,\bZ)$ and $z=x+\sI y \in \gD$. 
By this inequality and the statement (i), 
we obtain the statement (iv). 
\end{proof}

Let $\cM_\nu (\gH ;\kappa )$ be the subspace 
of $C^\infty (\gH )$ defined in \S \ref{subsec:poisson}. 
Let $\cM_\nu (\widetilde{G}/\widetilde{K};\kappa )$ be a subspace 
of $C^\infty (\widetilde{G}/\widetilde{K};\kappa )$ 
consisting of all functions $F$ satisfying {\normalfont [M3]} and 
$\rho (\cC ) F
=\left(\nu^2-\tfrac{1}{4}\right)F$. 
Then we obtain the following corollary of Lemma \ref{lem:autom_GtoH_001}. 
\begin{cor}
\label{cor:autom_G_to_H}
Let $\kappa \in \bC$. 
A bijective $\bC$-linear map from 
$\cM_{\nu }(\widetilde{G}/\widetilde{K};\kappa)$ to 
$\cM_{\nu }(\gH ;\kappa )$ is given by 
$F\mapsto \phi_F$, 
and its inverse map is given by $\phi \mapsto F_\phi $. 
\end{cor}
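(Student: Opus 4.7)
The plan is to derive Corollary \ref{cor:autom_G_to_H} as a direct restriction of the bijection already established at the level of ambient spaces. Indeed, the map $F\mapsto \phi_F$ defined by (\ref{eqn:def_phiF}) is already asserted to be a bijective $\bC$-linear map from $C^\infty(\widetilde{G}/\widetilde{K};\kappa)$ to $C^\infty(\gH)$ with inverse $\phi\mapsto F_\phi$ given by (\ref{eqn:def_Fphi}); this is immediate from the Iwasawa decomposition $\widetilde{G}=\widetilde{U}\widetilde{A}\widetilde{K}$ combined with (\ref{eqn:def_CG_over_K}). So the task reduces to showing that this bijection and its inverse restrict to the subspaces $\cM_\nu(\widetilde{G}/\widetilde{K};\kappa)$ and $\cM_\nu(\gH;\kappa)$.

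First I would take $F\in \cM_\nu(\widetilde{G}/\widetilde{K};\kappa)$. By definition $F$ satisfies condition \textnormal{[M3]} and $\rho(\cC)F=(\nu^2-\tfrac14)F$. Lemma \ref{lem:autom_GtoH_001} (ii) then converts the Casimir eigenvalue equation into $\Omega_\kappa \phi_F=(\tfrac14-\nu^2)\phi_F$, and Lemma \ref{lem:autom_GtoH_001} (iii) converts \textnormal{[M3]} into \textnormal{[M1]}, so $\phi_F\in \cM_\nu(\gH;\kappa)$.

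Conversely, for $\phi\in \cM_\nu(\gH;\kappa)$ one verifies $F_\phi\in C^\infty(\widetilde{G}/\widetilde{K};\kappa)$ directly from (\ref{eqn:def_Fphi}). Since $\phi_{F_\phi}=\phi$, applying the ``if'' directions of Lemma \ref{lem:autom_GtoH_001} (ii) and (iii) respectively yields $\rho(\cC)F_\phi=(\nu^2-\tfrac14)F_\phi$ and condition \textnormal{[M3]} for $F_\phi$, hence $F_\phi\in \cM_\nu(\widetilde{G}/\widetilde{K};\kappa)$. The identities $\phi_{F_\phi}=\phi$ and $F_{\phi_F}=F$ already hold on the ambient spaces, so the restricted maps are mutual inverses, completing the proof.

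There is no real obstacle here: every ingredient is packaged in Lemma \ref{lem:autom_GtoH_001}, and the corollary is essentially a bookkeeping statement. The only point that deserves a moment of care is that parts (ii) and (iii) of Lemma \ref{lem:autom_GtoH_001} are phrased as biconditionals, which is exactly what is needed to pass the eigenvalue property and the growth property in both directions simultaneously.
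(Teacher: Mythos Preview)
Your proof is correct and follows exactly the approach the paper intends: the corollary is stated immediately after Lemma \ref{lem:autom_GtoH_001} with no separate proof, as it is a direct consequence of parts (ii) and (iii) of that lemma together with the ambient bijection $C^\infty(\widetilde{G}/\widetilde{K};\kappa)\simeq C^\infty(\gH)$. You have simply spelled out the details the paper leaves implicit.
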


Let us prove Proposition \ref{prop:Maass_subspace}. 
We assume $\kappa \in \bR$ until the end of this subsection. 
Let $\Gamma $ be a cofinite subgroup of $SL(2,\bZ)$ 
such that $-1_2\in \Gamma$. 
Let $v$ be a multiplier system on $\Gamma$ 
of weight $\kappa $. 
Proposition \ref{prop:Maass_subspace} follows immediately from 
Lemma \ref{lem:autom_GtoH_001} 
and the following lemma.

\begin{lem}
\label{lem:autom_GtoH_002}
Retain the notation. 
Let $\phi \in C^\infty (\gH)$ satisfying {\normalfont [M2]} and the equality 
$\phi \big|_\kappa \gamma =v(\gamma )\phi$ \ $(\gamma \in \Gamma )$. 
Then $F=F_\phi$ satisfies {\normalfont [M3]}. 
\end{lem}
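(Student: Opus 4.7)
My plan is to reduce the desired bound $|F({}^s g)| = |\phi(g\sI)| \leq c\|g\|^r$ to an application of {\normalfont [M2]} at a finite list of cusps, together with an elementary estimate $\mathrm{Im}(\gamma_0 g \sI) \leq \|g\|^2$ for a suitable $\gamma_0 \in SL(2,\bZ)$ depending on $g$.

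Since $\Gamma$ is cofinite in $SL(2,\bZ)$, the index $[SL(2,\bZ):\Gamma]$ is finite, so I first fix a finite set $\gamma_1,\dots,\gamma_n \in SL(2,\bZ)$ of representatives for $\Gamma \backslash SL(2,\bZ)$. For each $g \in G$, the standard fundamental domain property of $\gD$ yields some $\gamma_0 \in SL(2,\bZ)$ with $\gamma_0 g\sI \in \gD$, and we may write $\gamma_0 = \gamma \gamma_j$ with $\gamma \in \Gamma$ and $j \in \{1,\dots,n\}$. Because $\kappa \in \bR$ and the multiplier $v$ is unimodular, the identity $\phi|_\kappa \gamma = v(\gamma)\phi$ implies $|\phi(\gamma z)| = |\phi(z)|$ for all $\gamma \in \Gamma$, $z \in \gH$. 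Using this, and the reality of $\kappa$ once more to discard the unimodular cocycle factor attached to $\gamma_j^{-1}$, I obtain
\[
|\phi(g\sI)| = |\phi(\gamma_j^{-1}(\gamma_0 g\sI))| = \bigl|(\phi|_\kappa \gamma_j^{-1})(\gamma_0 g\sI)\bigr|.
\]
Applying {\normalfont [M2]} to each of the finitely many $\gamma_j^{-1} \in SL(2,\bZ)$ produces constants $c_j, r_j > 0$ with $|(\phi|_\kappa \gamma_j^{-1})(w)| \leq c_j \mathrm{Im}(w)^{r_j}$ for $w \in \gD$. Setting $c' = \max_j c_j$ and $r' = \max_j r_j$, this gives $|\phi(g\sI)| \leq c'\,\mathrm{Im}(\gamma_0 g\sI)^{r'}$.

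Finally, I will convert this into a polynomial bound in $\|g\|$. Writing $\gamma_0 = \left(\begin{smallmatrix}a_0&b_0\\c_0&d_0\end{smallmatrix}\right)$ and $g\sI = x_g + \sI y_g$ (so that $y_g = 1/(c^2+d^2) \geq \|g\|^{-2}$), the cocycle formula gives $\mathrm{Im}(\gamma_0 g\sI) = y_g/|J(\gamma_0,g\sI)|^2$ with $|J(\gamma_0,g\sI)|^2 = (c_0 x_g + d_0)^2 + c_0^2 y_g^2 \geq c_0^2 y_g^2$. If $c_0 \neq 0$, then $|c_0| \geq 1$ yields $\mathrm{Im}(\gamma_0 g\sI) \leq 1/(c_0^2 y_g) \leq 1/y_g \leq \|g\|^2$; if $c_0 = 0$, then $\gamma_0 = \pm \ru(b_0)$ and $\mathrm{Im}(\gamma_0 g\sI) = y_g \leq 1 \leq \|g\|^2$. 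Either way $\mathrm{Im}(\gamma_0 g\sI) \leq \|g\|^2$, which combined with the previous paragraph produces {\normalfont [M3]} with $r = 2r'$. I do not anticipate a substantial obstacle; the main care required is in tracking the unimodular cocycle and multiplier factors, which disappear upon taking moduli precisely because $\kappa$ is real and $v$ is a unitary multiplier system.
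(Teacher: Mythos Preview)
Your approach is essentially the paper's: both choose coset representatives for $\Gamma\backslash SL(2,\bZ)$, move the point into $\gD$ via some element of $SL(2,\bZ)$, invoke {\normalfont [M2]} on the finitely many representatives, and bound the resulting imaginary part by $\|g\|^2$. The paper works on the group side (writing $g = \gamma\,\ru(x_0)\ra(y_0)k$ with $\gamma\in SL(2,\bZ)$ and reading off $\|g\|^2 \geq (a^2+c^2)y_0 \geq y_0$ from the first column of $\gamma\,\ru(x_0)\ra(y_0)$), while you work directly on $\gH$; these are the same argument.

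There is, however, one false step in your case $c_0 = 0$: the claim ``$y_g \leq 1$'' is not true in general. For $g = \ra(y)$ with $y>1$ one has $g\sI = y\sI$, so $y_g = y > 1$, and $\gamma_0 = 1_2$ already places $g\sI$ in $\gD$. What you actually need, $y_g \leq \|g\|^2$, still holds: writing $g = \left(\begin{smallmatrix}a&b\\c&d\end{smallmatrix}\right)$, the identity $(a^2+b^2)(c^2+d^2) = (ac+bd)^2 + (ad-bc)^2 \geq 1$ gives
\[
y_g = \frac{1}{c^2+d^2} \leq a^2+b^2 \leq \|g\|^2.
\]
With this correction in place your argument is complete.
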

\begin{proof}
Let $\{\gamma_{i}\}_{i=1}^d$ be 
a complete system of representatives of $\Gamma \backslash SL(2,\bZ)$. 
By the condition {\normalfont [M2]}, there are $c_0,r_0>0$ such that 
\begin{align}
\label{eqn:pf_GtoH_lem2_001}
&|(\phi \big|_\kappa \gamma_i )(z)|\leq c_0y^{r_0}&
&(z=x+\sI y\in \gD,\ 1\leq i\leq d). 
\end{align}
Let $\tilde{g}\in \widetilde{G}$. 
Since $\gD$ is the closure of the fundamental domain of 
$SL(2,\bZ) \backslash \gH 
\simeq SL(2,\bZ) \backslash G/K$, we can take 
$\displaystyle \gamma =\left(\begin{array}{cc}a&b\\ c&d\end{array}\right)
\in SL(2,\bZ)$, $z_0=x_0+\sI y_0\in \gD$ and $\theta \in \bR$ 
so that $\tilde{g}={}^s\!\gamma \tilde{\ru}(x_0)
\tilde{\ra}(y_0)\tilde{\rk}(\theta )$. 
By Lemma \ref{lem:autom_GtoH_001} (i), we have 
\begin{align}
\label{eqn:pf_GtoH_lem2_002}
|F_\phi (\tilde{g})|=
|F_\phi ({}^s\!\gamma \tilde{\ru}(x_0)
\tilde{\ra}(y_0))e^{\sI \kappa \theta }|
=|(\phi \big|_\kappa \gamma )(z_0)|.
\end{align}
By (\ref{eqn:norm_cpt_inv}) and 
$\gamma \ru (x_0)\ra (y_0)
=\left(\begin{array}{cc}a\sqrt{y_0}&(ax_0+b)/\sqrt{y_0}\\ 
c\sqrt{y_0}&(cx_0+d)/\sqrt{y_0}\end{array}\right)$, we have 
\begin{align}
\label{eqn:pf_GtoH_lem2_003}
\|g\|=\|\gamma \ru (x_0)\ra (y_0)\|
\geq \sqrt{(a\sqrt{y_0})^2+(c\sqrt{y_0})^2}=\sqrt{(a^2+c^2)y_0}
\geq \sqrt{y_0}. 
\end{align}
We take $\gamma_0 \in \Gamma$ and $1\leq j\leq d$ 
so that $\gamma =\gamma_0\gamma_j$. 
Since $\phi \big|_\kappa \gamma_0 =v(\gamma_0 )\phi$, 
we have 
\begin{align}
\label{eqn:pf_GtoH_lem2_004}
&(\phi \big|_\kappa \gamma )(z)
=e^{\sI \kappa t}((\phi \big|_\kappa \gamma_0 )\big|_\kappa \gamma_j)(z)
=e^{\sI \kappa t}v(\gamma_0)(\phi \big|_\kappa \gamma_j)(z)&
&(z\in \gH )
\end{align}
with $t=\arg J(\gamma_0, \gamma_jz)+\arg J(\gamma_j,z)
-\arg J(\gamma ,z)$. 
By (\ref{eqn:pf_GtoH_lem2_001}), (\ref{eqn:pf_GtoH_lem2_002}), 
(\ref{eqn:pf_GtoH_lem2_003}) and (\ref{eqn:pf_GtoH_lem2_004}), 
we have 
\begin{align*}
&|F_\phi (\tilde{g})|
=|(\phi \big|_\kappa \gamma )(z_0)|
=|(\phi \big|_\kappa \gamma_j)(z_0)|
\leq c_0y_0^{r_0}
\leq c_0\|g\|^{2r_0}, 
\end{align*}
and this completes the proof.  
\end{proof}

\subsection{Representations of moderate growth} 
\label{subsec:ps_mod_growth}

In this subsection, we give a proof of Proposition \ref{prop:poisson} (i).

\begin{lem}
\label{lem:pf_mg_001}
Let $\mu,\nu \in \bC$. 
For $\tilde{g}=(g,\theta )\in \widetilde{G}$ and $F\in I_{\mu,\nu}$, we have 
\begin{align*}
&|F(\tilde{g})|\leq 
e^{-\mathrm{Im}(\mu )(\theta +\arg J(g,\sI))}
\|g\|^{|2\mathrm{Re}(\nu )+1|}|F|_K.
\end{align*}
\end{lem}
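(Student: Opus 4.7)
The plan is to start from the explicit formula (\ref{eqn:ps_red_section}), which already expresses $F(\tilde{g})$ as the product of an exponential factor, an imaginary-part factor, and a value of $F$ on $\widetilde{K}$. Taking absolute values turns the exponential into $e^{-\mathrm{Im}(\mu)(\theta+\arg J(g,\sI))}$ and the imaginary-part factor into $\mathrm{Im}(g\sI)^{\mathrm{Re}(\nu)+1/2}$, while $|F(\tilde{\rk}(-\arg J(g,\sI)))|$ is bounded by $|F|_K$ by definition. The entire lemma then reduces to the purely matrix-theoretic inequality
\[
\mathrm{Im}(g\sI)^{\mathrm{Re}(\nu)+1/2}\le \|g\|^{|2\mathrm{Re}(\nu)+1|}\qquad (g\in G).
\]

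To prove this inequality, I would write $g=\left(\begin{smallmatrix}a&b\\c&d\end{smallmatrix}\right)$, use $\mathrm{Im}(g\sI)=(c^2+d^2)^{-1}$, and establish the two-sided bound
\[
\|g\|^{-2}\le \mathrm{Im}(g\sI)\le \|g\|^{2}.
\]
The right-hand inequality is immediate from $c^2+d^2\le \|g\|^2$. The left-hand inequality uses the identity $(a^2+b^2)(c^2+d^2)=(ac+bd)^2+(ad-bc)^2\ge 1$ (since $\det g=1$), which gives $\|g\|^2(c^2+d^2)\ge (c^2+d^2)^2+1\ge 1$, hence $\mathrm{Im}(g\sI)\le \|g\|^2$; combined with $c^2+d^2\le \|g\|^2$, this yields both bounds. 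Raising to the power $\mathrm{Re}(\nu)+1/2$ and choosing the appropriate side according to the sign of $\mathrm{Re}(\nu)+1/2$ gives the desired inequality in the unified form $\|g\|^{|2\mathrm{Re}(\nu)+1|}$.

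There is essentially no obstacle here: the statement is a one-line consequence of the $\widetilde{U}\widetilde{A}\widetilde{K}$-expansion (\ref{eqn:ps_red_section}) plus the elementary bound above. The only (very minor) technical point is handling the two signs of $\mathrm{Re}(\nu)+1/2$ simultaneously, which is why both inequalities $\mathrm{Im}(g\sI)\le \|g\|^2$ and $\mathrm{Im}(g\sI)\ge \|g\|^{-2}$ need to be recorded.
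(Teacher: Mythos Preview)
Your argument is correct and essentially the same as the paper's: both reduce to the two-sided bound $\|g\|^{-2}\le \mathrm{Im}(g\sI)\le \|g\|^{2}$ and then invoke (\ref{eqn:ps_red_section}); the only difference is that the paper obtains this bound via $K$-invariance of $\|\cdot\|$ and the Iwasawa form $\|g\|^2=y+y^{-1}+x^2y^{-1}\ge y+y^{-1}$, whereas you compute directly with matrix entries. One small expositional slip: you have the labels ``left-hand'' and ``right-hand'' swapped---$c^2+d^2\le\|g\|^2$ gives the lower bound $\mathrm{Im}(g\sI)\ge\|g\|^{-2}$, while the Lagrange identity gives the upper bound---but the mathematics is sound.
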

\begin{proof}
Let $\tilde{g}=(g,\theta )\in \widetilde{G}$ and 
set $x=\mathrm{Re}(g\sI)$ and $y=\mathrm{Im}(g\sI)$. 
By (\ref{eqn:Iwasawa_g}), we have 
$\tilde{g}=\tilde{\ru}(x)\tilde{\ra}(y)\tilde{\rk}(\theta)$. 
Hence, we have 
\begin{align*}
\|g\|=\|\ru (x)\ra (y)\|
=\sqrt{y+y^{-1}+x^2y^{-1}}
\geq \sqrt{y+y^{-1}}
\end{align*}
by (\ref{eqn:norm_cpt_inv}). 
The assertion follows immediately form this inequality 
and (\ref{eqn:ps_red_section}). 
\end{proof}

For $j\in \bZ_{\geq 0}$, let $\cU_j(\g_\bC)$ be 
the space defined in \S \ref{subsec:pf_PFprop}, 
that is, the subspace of $\cU(\g_\bC)$ spanned by the products of 
$j$ or less elements of $\g_{\bC}$.

\begin{lem}
\label{lem:pf_mg_002}
For $j\in \bZ_{\geq 0}$ and $X\in \cU_j(\g_{\bC})$, 
there are positive integer $m$, 
$Y_1,Y_2,\cdots ,Y_m\in \cU_j(\g_{\bC})$ and polynomial functions 
$p_i$ of degree at most $2j$ on $M_2(\bR )\simeq \bR^4$ 
$(1\leq i\leq m)$ such that 
\begin{align*}
&\rho (X)\circ \rho (\tilde{g})
=\sum_{i=1}^m
p_{i}(g)\rho (\tilde{g})\circ \rho (Y_i)
&(\tilde{g}=(g,\theta )\in \widetilde{G}).
\end{align*}
\end{lem}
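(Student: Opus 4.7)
The plan is to deduce the lemma from the standard commutation formula between right translation and the action of a Lie algebra element, followed by a straightforward induction on $j$.

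First, I would establish the identity
\[
\rho(X)\circ \rho(\tilde{g})=\rho(\tilde{g})\circ \rho(\Ad(g^{-1})X)
\]
for $X\in \g$ and $\tilde{g}=(g,\theta)\in \widetilde{G}$. This is immediate from Lemma~\ref{lem:univ_exp1}(ii) applied to $\tilde{g}^{-1}\widetilde{\exp}(tX)\tilde{g}$, together with the definition of $\rho(X)$ as the derivative along $t\mapsto \widetilde{\exp}(tX)$. Extending $\Ad(g^{-1})$ to an algebra automorphism of $\cU(\g_\bC)$ in the usual way, the identity extends to
\[
\rho(X)\circ \rho(\tilde{g})=\rho(\tilde{g})\circ \rho(\Ad(g^{-1})X) \qquad (X\in \cU(\g_\bC)),
\]
and the adjoint action preserves the filtration $\cU_j(\g_\bC)$.

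Next, I would analyse how $\Ad(g^{-1})$ acts in the basis $\{H,E_+,E_-\}$. Writing $g=\bigl(\begin{smallmatrix}a&b\\c&d\end{smallmatrix}\bigr)\in G$ with $ad-bc=1$, the inverse $g^{-1}=\bigl(\begin{smallmatrix}d&-b\\-c&a\end{smallmatrix}\bigr)$ has entries that are polynomials of degree $1$ in the entries of $g$. Hence, for each $X_0\in\{H,E_+,E_-\}$, the matrix $\Ad(g^{-1})X_0 = g^{-1}X_0g$ has entries that are polynomials of degree at most $2$ in $a,b,c,d$, and expressing this matrix in the basis $\{H,E_+,E_-\}$ yields coefficients that are polynomials of degree at most $2$. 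This gives the lemma for $j=1$.

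For general $j$, I would fix the PBW basis of $\cU_j(\g_\bC)$ consisting of monomials $H^{i_1}E_+^{i_2}E_-^{i_3}$ with $i_1+i_2+i_3\leq j$. Since $\Ad(g^{-1})$ is an algebra homomorphism, the image of such a monomial factors as the product of $\Ad(g^{-1})H$, $\Ad(g^{-1})E_+$ and $\Ad(g^{-1})E_-$ taken $i_1$, $i_2$ and $i_3$ times respectively. Each factor is a linear combination of $H,E_+,E_-$ with coefficients that are polynomials of degree at most $2$, so their product, when re-expanded in the PBW basis (using the commutation relations in $\g_\bC$, which do not increase the polynomial degree in the entries of $g$), is a combination of PBW monomials of degree at most $j$ whose coefficients are polynomials of degree at most $2(i_1+i_2+i_3)\leq 2j$. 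Collecting these PBW monomials as the $Y_1,\ldots,Y_m$ and the corresponding polynomials as $p_1,\ldots,p_m$ gives the desired decomposition.

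The only delicate point, rather than a true obstacle, is verifying that the PBW re-expansion of the product does not inflate either the order in $\cU(\g_\bC)$ or the polynomial degree in $g$; this is ensured by the fact that $\Ad(g^{-1})$ preserves the filtration of $\cU(\g_\bC)$ and acts on its associated graded algebra (canonically isomorphic to $\Sym(\g_\bC)$) via the symmetric power of its action on $\g_\bC$, which multiplies polynomial degrees additively.
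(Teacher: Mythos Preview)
Your proof is correct and follows essentially the same approach as the paper: both derive the key identity $\rho(X)\circ\rho(\tilde{g})=\rho(\tilde{g})\circ\rho(\Ad(g^{-1})X)$ from Lemma~\ref{lem:univ_exp1}(ii), then observe that $\Ad(g^{-1})$ acts on the basis $\{H,E_+,E_-\}$ with coefficients that are polynomials of degree at most $2$ in the entries of $g$, and conclude by multiplicativity. The paper simply writes out the three explicit formulas for $\Ad(g^{-1})H$, $\Ad(g^{-1})E_+$, $\Ad(g^{-1})E_-$ and leaves the passage to general $j$ implicit, whereas you spell out the PBW and filtration argument; no substantive difference.
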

\begin{proof}
Let $\tilde{g}=(g,\theta )\in G$ with 
$g=\left(\begin{array}{cc}
a&b\\ c&d
\end{array}\right)$. 
By Lemma \ref{lem:univ_exp1} (ii), we have 
\begin{align*}
&\rho (X)\circ \rho (\tilde{g})=
\rho (\tilde{g})\circ \rho (\Ad (g^{-1})X)&
&(X\in \cU (\g_{\bC})).
\end{align*}
By this equality and the formulas 
\begin{align*}
&\Ad (g^{-1})H=(ad+bc)H+2bdE_+-2acE_-,\\
&\Ad (g^{-1})E_+=cdH+d^2E_+-c^2E_-,\\
&\Ad (g^{-1})E_-=-abH-b^2E_++a^2E_-
\end{align*}
on the basis $\{H,E_+,E_-\}$ of $\g_{\bC}$, 
we obtain the assertion. 
\end{proof}

\begin{prop}
\label{prop:seminorm_mg}
Let $\mu,\nu \in \bC$. 
For $j\in \bZ_{\geq 0}$ and $X\in \cU_j(\g_{\bC})$, 
there are $c>0$, $m\in \bZ_{>0}$ and 
$Y_1, Y_2,\cdots ,Y_m\in \cU_j(\g_{\bC})$ such that 
\begin{align}
&\cQ_{X}(\rho ({}^s\!g)F)\leq 
c\|g\|^{|2\mathrm{Re}(\nu)+1|+2j}
\sum_{i=1}^m\cQ_{Y_i}(F)&
&(g\in G,\ F\in I^\infty_{\mu,\nu}). 
\end{align}
\end{prop}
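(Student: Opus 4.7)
The proof rests on the two immediately preceding lemmas together with a control of the central ambiguity in the covering group. The plan is as follows. By Lemma \ref{lem:pf_mg_002}, one has
\begin{align*}
\rho (X)\circ \rho ({}^s\!g)=\sum_{i=1}^m p_i(g)\,\rho ({}^s\!g)\circ \rho (Y_i),
\end{align*}
where $Y_i\in \cU_j(\g_\bC)$ and the $p_i$ are polynomial functions of degree at most $2j$ on $M_2(\bR)$. Evaluating at ${}^s\!k$ for arbitrary $k\in K$ gives
\begin{align*}
(\rho (X)\rho ({}^s\!g)F)({}^s\!k)=\sum_{i=1}^m p_i(g)\,(\rho (Y_i)F)({}^s\!k\cdot {}^s\!g),
\end{align*}
so it suffices to estimate each of these summands and take the supremum over $k\in K$.

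For the polynomial factor, since every $g\in SL(2,\bR)$ satisfies $\|g\|\geq \sqrt{2}$ and each coordinate of $g$ has absolute value at most $\|g\|$, there exist constants $c_i>0$ with $|p_i(g)|\leq c_i\,\|g\|^{2j}$. For the function-value factor, writing ${}^s\!k\cdot {}^s\!g=(kg,\theta_{k,g})\in \widetilde{G}$, Lemma \ref{lem:pf_mg_001} applied to $\rho (Y_i)F$ yields
\begin{align*}
|(\rho (Y_i)F)({}^s\!k\cdot {}^s\!g)|\leq e^{-\mathrm{Im}(\mu)(\theta_{k,g}+\arg J(kg,\sI))}\|kg\|^{|2\mathrm{Re}(\nu)+1|}\cQ_{Y_i}(F),
\end{align*}
and $\|kg\|=\|g\|$ by (\ref{eqn:norm_cpt_inv}). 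Combining these estimates, summing over $i$, and taking $\sup_{k\in K}$ produces the desired bound in $\|g\|^{|2\mathrm{Re}(\nu)+1|+2j}$, \emph{provided} the exponential factor is bounded uniformly in $k\in K$ and $g\in G$.

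The only nontrivial step is to control that exponential. From the group law (\ref{eqn:op_tilde_G}) one reads off
\begin{align*}
\theta_{k,g}=-\arg J(k,\sI)-\arg J(g,\sI)-\arg\!\tfrac{J(k,g\sI)}{J(k,\sI)}.
\end{align*}
The cocycle relation $J(kg,\sI)=J(k,g\sI)J(g,\sI)$ gives $\arg J(kg,\sI)=\arg J(k,g\sI)+\arg J(g,\sI)+2\pi m_1$ with $m_1\in\{-1,0,1\}$, since both summands lie in $(-\pi,\pi]$ and the result must be normalized back into $(-\pi,\pi]$. The same mechanism gives $\arg(J(k,g\sI)/J(k,\sI))=\arg J(k,g\sI)-\arg J(k,\sI)+2\pi m_2$ with $m_2\in\{-1,0,1\}$. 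Substituting, all $\arg$-terms cancel and one obtains $\theta_{k,g}+\arg J(kg,\sI)=2\pi(m_1-m_2)\in[-4\pi,4\pi]$. Consequently the exponential factor is bounded by $e^{4\pi|\mathrm{Im}(\mu)|}$, a constant depending only on $\mu$, which may be absorbed into $c$ to finish the proof.
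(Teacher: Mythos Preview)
Your proof is correct and follows essentially the same route as the paper: combine Lemma~\ref{lem:pf_mg_002} to pass $X$ across $\rho({}^s\!g)$, and Lemma~\ref{lem:pf_mg_001} together with $\|kg\|=\|g\|$ to bound the resulting terms. The paper organizes the argument by first recording the $j=0$ case as the inequality $|\rho({}^s\!g)F|_K\leq e^{3\pi|\mathrm{Im}(\mu)|}\|g\|^{|2\mathrm{Re}(\nu)+1|}|F|_K$ and then invoking Lemma~\ref{lem:pf_mg_002}, whereas you do both steps at once and make the angle bookkeeping explicit (arriving at $e^{4\pi|\mathrm{Im}(\mu)|}$ rather than $e^{3\pi|\mathrm{Im}(\mu)|}$, which is immaterial).
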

\begin{proof}
By Lemma \ref{lem:pf_mg_001} and (\ref{eqn:norm_cpt_inv}), 
we have 
\begin{align*}
&|\rho ({}^s\!g)F({}^s\!k)|
=|F({}^s\!k\,{}^s\!g)|
\leq 
e^{3\pi |\mathrm{Im}(\mu )|}
\|g\|^{|2\mathrm{Re}(\nu )+1|}|F|_K&
&(k\in K,\ g\in G).
\end{align*}
This implies that $|\rho ({}^s\!g)F|_K\leq 
e^{3\pi |\mathrm{Im}(\mu )|}
\|g\|^{|2\mathrm{Re}(\nu )+1|}|F|_K$ $(g\in G)$. 
The assertion follows from this inequality and Lemma \ref{lem:pf_mg_002}. 
\end{proof}

Because of (\ref{eqn:cdn_conti_ps}), we obtain the following as 
a corollary of Proposition \ref{prop:seminorm_mg}. 
\begin{cor}
\label{cor:mg_conti_functional}
Let $\mu,\nu \in \bC$. 
For $\lambda \in I^{-\infty}_{\mu,\nu }$, 
there are $j\in \bZ_{\geq 0}$, $m\in \bZ_{>0}$, 
$X_1,X_2,\cdots ,X_m\in \cU_j(\g_{\bC})$ 
and $c>0$ such that 
\begin{align*}
&\lambda (\rho ({}^s\!g)F)\leq c\|g\|^{|2\mathrm{Re}(\nu)+1|+2j}
\sum_{i=1}^m\cQ_{X_i}(F)&
&(g \in G,\ F\in I^\infty_{\mu,\nu}).
\end{align*}
\end{cor}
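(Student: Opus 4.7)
The plan is to combine the continuity criterion (\ref{eqn:cdn_conti_ps}) with Proposition \ref{prop:seminorm_mg} in a direct way; no new input is needed. First I would invoke (\ref{eqn:cdn_conti_ps}) for the distribution $\lambda$ to produce $c'>0$, $m'\in \bZ_{>0}$ and $X'_1,\ldots,X'_{m'}\in \cU(\g_{\bC})$ such that
\begin{align*}
|\lambda(F')|\leq c'\sum_{i=1}^{m'}\cQ_{X'_i}(F')\qquad (F'\in I^{\infty}_{\mu,\nu}).
\end{align*}
Pick $j\in \bZ_{\geq 0}$ large enough that all $X'_i$ lie in $\cU_j(\g_{\bC})$ (possible since $\cU(\g_{\bC})=\bigcup_j \cU_j(\g_{\bC})$ and $m'$ is finite).

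Next, for $g\in G$ and $F\in I^{\infty}_{\mu,\nu}$, I would substitute $F'=\rho({}^s\!g)F$, and then apply Proposition \ref{prop:seminorm_mg} separately to each of the finitely many seminorms $\cQ_{X'_i}(\rho({}^s\!g)F)$. This yields, for each $i$, a finite collection $Y_{i,1},\ldots,Y_{i,m_i}\in \cU_j(\g_{\bC})$ and a constant $c_i>0$ such that
\begin{align*}
\cQ_{X'_i}(\rho({}^s\!g)F)\leq c_i\|g\|^{|2\mathrm{Re}(\nu)+1|+2j}\sum_{k=1}^{m_i}\cQ_{Y_{i,k}}(F).
\end{align*}
Summing over $i$ and taking $\{X_1,\ldots,X_m\}$ to be the union $\bigcup_i\{Y_{i,1},\ldots,Y_{i,m_i}\}$ (with a compatible constant $c=c'\max_i c_i\cdot \#\text{(union)}$, or just $c=c'\sum_i c_i$ after relabeling), I obtain the desired estimate.

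There is no real obstacle here: the statement is literally the composition of the two preceding assertions, with the only bookkeeping being to fix a common degree $j$ that dominates all the $X'_i$ produced by the continuity bound on $\lambda$, so that the common exponent $|2\mathrm{Re}(\nu)+1|+2j$ in Proposition \ref{prop:seminorm_mg} applies uniformly to every term. The choice of $j$ is the only substantive step; everything else is routine aggregation of finitely many constants and seminorms.
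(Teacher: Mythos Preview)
Your proposal is correct and follows exactly the route the paper takes: the paper simply states that the corollary follows from the continuity criterion (\ref{eqn:cdn_conti_ps}) combined with Proposition \ref{prop:seminorm_mg}, and your argument just spells out this composition with the obvious bookkeeping (choosing a common filtration degree $j$ and aggregating the constants).
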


Let us prove Proposition \ref{prop:poisson} (i). 
Let $\mu ,\nu \in \bC$ and $\kappa \in \mu +2\bZ$. 
For a distribution $\lambda $ on $I^{\infty}_{\mu,\nu}$, 
we define a function 
$\cP_{\nu,\kappa}^{\widetilde{G}}(\lambda)$ on $\widetilde{G}$ by 
\begin{align*}
&\cP_{\nu,\kappa}^{\widetilde{G}}(\lambda)(\tilde{g})
=e^{\pi \sI \kappa /2}
\lambda \bigl(\rho (\tilde{g})F_{\nu,\kappa}\bigr)&
&(\tilde{g}\in \widetilde{G}),
\end{align*}
where $F_{\nu,\kappa}$ is an element of $I_{\mu,\nu}^\infty$ defined by 
(\ref{eqn:def_F_nu_kappa}). Since 
\begin{align*}
&\cP_{\nu,\kappa}(\lambda)(z)
=\phi_{\cP_{\nu,\kappa}^{\widetilde{G}}(\lambda)}(z)
=\cP_{\nu,\kappa}^{\widetilde{G}}(\lambda)(\tilde{\ru}(x)\tilde{\ra}(y))&
&(z=x+\sI y\in \gH ),
\end{align*}
Proposition \ref{prop:poisson} (i) follows immediately from 
Lemma \ref{lem:autom_GtoH_001} (i), 
Corollary \ref{cor:autom_G_to_H} and 
Proposition \ref{prop:dist_to_form_G} below.

\begin{prop}
\label{prop:dist_to_form_G}
Let $\mu,\nu \in \bC$, and $\kappa \in \mu +2\bZ$. 
Let $\lambda $ be a distribution on $I^{\infty}_{\mu,\nu}$. 
Then we have $\cP_{\nu,\kappa}^{\widetilde{G}}(\lambda)
\in \cM_\nu (\widetilde{G}/\widetilde{K};\kappa )$. 
\end{prop}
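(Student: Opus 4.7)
The plan is to verify that $\cP^{\widetilde{G}}_{\nu,\kappa}(\lambda)$ belongs to $\cM_\nu(\widetilde{G}/\widetilde{K};\kappa)$ by checking the three defining conditions in turn: right $\widetilde{K}$-equivariance with character $e^{\sI\kappa\theta}$, smoothness, the Casimir eigenvalue equation, and the moderate growth condition [M3].

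The right $\widetilde{K}$-equivariance is a direct computation. From the definition (\ref{eqn:def_F_nu_kappa}), for $\tilde{g}=(g,\theta_g)\in\widetilde{G}$ and $\theta\in\bR$, one has $F_{\nu,\kappa}(\tilde{g}\tilde{\rk}(\theta))=\mathrm{Im}(g\sI)^{\nu+1/2}e^{\sI\kappa(\theta_g+\theta)}=e^{\sI\kappa\theta}F_{\nu,\kappa}(\tilde{g})$, so $\rho(\tilde{\rk}(\theta))F_{\nu,\kappa}=e^{\sI\kappa\theta}F_{\nu,\kappa}$, and therefore (\ref{eqn:def_CG_over_K}) holds for $\cP^{\widetilde{G}}_{\nu,\kappa}(\lambda)$.

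For smoothness and the Casimir equation, I would first establish the formula $\rho(X)\cP^{\widetilde{G}}_{\nu,\kappa}(\lambda)(\tilde{g})=e^{\pi\sI\kappa/2}\lambda(\rho(\tilde{g})\rho(X)F_{\nu,\kappa})$ for every $X\in\cU(\g_\bC)$, by induction on the degree of $X$. The base step is to show that $\tilde{g}\mapsto\lambda(\rho(\tilde{g})F_{\nu,\kappa})$ is continuous (using the joint continuity of $\rho$ on the Fréchet space $I^{\infty}_{\mu,\nu}$ together with the continuity bound (\ref{eqn:cdn_conti_ps}) for $\lambda$), and to pass the derivative $\frac{d}{dt}\big|_{t=0}$ through $\lambda$ via the convergence $t^{-1}(\rho(\widetilde{\exp}(tX))F_{\nu,\kappa}-F_{\nu,\kappa})\to\rho(X)F_{\nu,\kappa}$ in the topology of $I^{\infty}_{\mu,\nu}$, which is a standard consequence of $(\rho,I^{\infty}_{\mu,\nu})$ being a smooth representation. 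Iteration then gives smoothness of $\cP^{\widetilde{G}}_{\nu,\kappa}(\lambda)$. For the Casimir eigenvalue, a direct computation using Lemma \ref{lem:gact_fct_on_R} (or the characterization (\ref{eqn:iota_iff}) applied to a partition of $F_{\nu,\kappa}$) shows that $\rho(\cC)F_{\nu,\kappa}=(\nu^2-\tfrac14)F_{\nu,\kappa}$, whence the displayed formula immediately yields $\rho(\cC)\cP^{\widetilde{G}}_{\nu,\kappa}(\lambda)=(\nu^2-\tfrac14)\cP^{\widetilde{G}}_{\nu,\kappa}(\lambda)$.

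Finally, condition [M3] is a direct application of Corollary \ref{cor:mg_conti_functional} to the fixed vector $F=F_{\nu,\kappa}$: this gives $j,m$, elements $X_1,\ldots,X_m\in\cU_j(\g_\bC)$, and $c>0$ such that $|\lambda(\rho({}^s\!g)F_{\nu,\kappa})|\leq c\|g\|^{|2\mathrm{Re}(\nu)+1|+2j}\sum_{i=1}^m\cQ_{X_i}(F_{\nu,\kappa})$ for all $g\in G$; since the seminorms of the fixed smooth vector $F_{\nu,\kappa}$ are finite constants, this yields the polynomial bound required by [M3]. The main obstacle is the smoothness step: one must carefully justify differentiation under $\lambda$, which relies on the Fréchet structure of $I^{\infty}_{\mu,\nu}$ and the continuity of the action $\widetilde{G}\times I^{\infty}_{\mu,\nu}\to I^{\infty}_{\mu,\nu}$; the remaining verifications are essentially formal once this is in place.
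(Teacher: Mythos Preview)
Your proposal is correct and follows essentially the same route as the paper: verify the $\widetilde{K}$-equivariance of $F_{\nu,\kappa}$, establish smoothness and the Casimir eigenvalue via the formula $\rho(X)\cP^{\widetilde{G}}_{\nu,\kappa}(\lambda)(\tilde{g})=e^{\pi\sI\kappa/2}\lambda(\rho(\tilde{g})\rho(X)F_{\nu,\kappa})$, and obtain {\normalfont [M3]} from Corollary~\ref{cor:mg_conti_functional}. The only notable difference is in how the Casimir eigenvalue is checked: rather than computing $\rho(\cC)F_{\nu,\kappa}$ through Lemma~\ref{lem:gact_fct_on_R} and a partition, the paper observes that $F_{\nu,\kappa}$ itself lies in $C^\infty(\widetilde{G}/\widetilde{K};\kappa)$ with $\phi_{F_{\nu,\kappa}}(z)=y^{\nu+1/2}$, and then applies Lemma~\ref{lem:autom_GtoH_001}~(ii) together with the trivial computation $\Omega_\kappa\, y^{\nu+1/2}=(\tfrac14-\nu^2)\,y^{\nu+1/2}$, which is cleaner than going through the Lie algebra formulas on a partition.
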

\begin{proof}
By the definition (\ref{eqn:def_F_nu_kappa}) of $F_{\nu,\kappa}$, 
the function $F_{\nu,\kappa}$ is smooth and satisfies 
\begin{align}
\label{eqn:poisson2}
&\rho (\tilde{\rk}(\theta ))F_{\nu,\kappa }
=e^{\sI \kappa \theta }F_{\nu,\kappa }&
&(\theta \in \bR ).
\end{align}
This implies that 
$\cP_{\nu,\kappa}^{\widetilde{G}}(\lambda)
\in C^\infty (\widetilde{G}/\widetilde{K};\kappa )$. 
By direct computation, we have 
\begin{align*}
(\Omega_\kappa \phi_{F_{\nu,\kappa}})(z)
&=\left(-y^2\left(\frac{\partial^2}{\partial x^2}+
\frac{\partial^2}{\partial y^2}\right)
+\sI \kappa y\frac{\partial}{\partial x}\right)y^{\nu +\frac{1}{2}}
=\bigl(\tfrac{1}{4}-\nu^2\bigr) \phi_{F_{\nu,\kappa}}(z)
\end{align*}
with $\phi_{F_{\nu,\kappa}}(z)=F_{\nu,\kappa}(\tilde{\ru}(x)\tilde{\ra}(y))
=y^{\nu +\frac{1}{2}}$ and $z=x+\sI y\in \gH $. 
By Lemma \ref{lem:autom_GtoH_001} (ii), we have 
$\rho (\cC )F_{\nu,\kappa }=(\nu^2-\tfrac{1}{4})F_{\nu,\kappa }$. 
This implies 
\[
\rho (\cC )\cP_{\nu,\kappa}^{\widetilde{G}}(\lambda)
=(\nu^2-\tfrac{1}{4})\cP_{\nu,\kappa}^{\widetilde{G}}(\lambda). 
\]
By Corollary \ref{cor:mg_conti_functional}, 
the function $F=\cP_{\nu,\kappa}^{\widetilde{G}}(\lambda)$ 
satisfies {\normalfont [M3]}. 
\end{proof}

\subsection{Twists of automorphic distributions}
\label{subsec:proof_weil}

In this subsection, we give a proof of Theorem \ref{thm:Weil_converse}. 
Contrast to the previous paper \cite{preMSSU}, 
our method here is simplified and does not depend on 
the explicit expressions of multiplier systems.

Let $\mu\in \bR$, $\nu \in \bC$ and $N\in \bZ_{>1}$. 
Let $\kappa \in \mu +2\bZ$. 
Let $v$ be a multiplier system on $\Gamma_0(N)$ of weight $\kappa$. 
Let $L=u+\bZ$ and $\hat{L}=N^{-1}(\hat{u}+\bZ )$ with 
$0\leq u,\hat{u}<1$ determined by 
$v(\ru(1))=e^{2\pi \sI u}$ and 
$v(\overline{\ru}(-N))=e^{2\pi \sI \hat{u}}$. 
Let $\bP_N$ be a subset of the set of 
positive odd prime integers not dividing $N$, 
such that $\bP_N\cap \{am+b\mid m\in \bZ\}\neq \emptyset $ 
for any integers $a$, $b$ coprime to each other. 
Theorem \ref{thm:Weil_converse} follows immediately from 
Theorems \ref{thm:QAD_Fexp}, \ref{thm:QAD_DS}, 
Propositions \ref{prop:poisson} (ii), \ref{prop:dist_to_form}, 
and Lemmas \ref{lem:weil1}, \ref{lem:weil2} below.

\begin{lem}
\label{lem:weil1}
Retain the notation. \\[1mm]
\noindent (i) Let $\lambda \in 
(I_{\mu,\nu}^{-\infty})^{\widetilde{\Gamma}_0(N),\tilde{\chi}_v}$. 
Then we have 
\begin{align}
\label{eqn:cdn_IGammaN_01}
&\lambda (\rho ({}^s\!\gamma)F)=v(\gamma )\lambda (F)&
&\left(\gamma=\left(\begin{array}{cc}
a&b\\
Nc&d
\end{array}\right),\ F\in I_{\mu,\nu}^\infty \right) 
\end{align}
for any $a,b,c,d\in\bZ$ such that $d>0$ and $ad-Nbc=1$. \\[1mm]
\noindent (ii) Let $\lambda \in (I_{\mu,\nu}^{-\infty})_{L,\hat{L}}$, and 
assume that (\ref{eqn:cdn_IGammaN_01}) holds 
for any $a,b,c,d\in\bZ$ such that $d\in \bP_N$ and $ad-Nbc=1$. 
Then we have $\lambda \in 
(I_{\mu,\nu}^{-\infty})^{\widetilde{\Gamma}_0(N),\tilde{\chi}_v}$. 
\end{lem}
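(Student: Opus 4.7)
The plan is that part (i) is a direct unfolding of definitions: since ${}^s\!\gamma=(\gamma,-\arg J(\gamma,\sI))$, formula (\ref{eqn:def_char_mult_sys}) immediately gives $\tilde{\chi}_v({}^s\!\gamma)=v(\gamma)\cdot e^{\sI\kappa\cdot 0}=v(\gamma)$, so (\ref{eqn:cdn_IGammaN_01}) is the defining relation of $(I^{-\infty}_{\mu,\nu})^{\widetilde{\Gamma}_0(N),\tilde{\chi}_v}$ evaluated on $\tilde{\gamma}={}^s\!\gamma$.

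For part (ii), I would first note that $\widetilde{\Gamma}_0(N)=\{{}^s\!\gamma\,\tilde{\rk}(m\pi)\mid \gamma\in\Gamma_0(N),\ m\in\bZ\}$ and that $\rho(\tilde{\rk}(m\pi))F=e^{\pi m\sI\mu}F$ agrees with $\tilde{\chi}_v(\tilde{\rk}(m\pi))=e^{\pi m\sI\kappa}$ thanks to $\kappa\in\mu+2\bZ$; this reduces the problem to verifying $\lambda(\rho({}^s\!\gamma)F)=v(\gamma)\lambda(F)$ for every $\gamma\in\Gamma_0(N)$. For $\gamma=\left(\begin{smallmatrix}a&b\\Nc&d\end{smallmatrix}\right)$ with $c\ne 0$, the relation $ad-Nbc=1$ forces $\gcd(d,Nc)=1$, so the defining property of $\bP_N$ supplies $k\in\bZ$ with $d':=d+Nck\in\bP_N$. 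Setting $\gamma':=\gamma\,\ru(k)$, I would unpack (\ref{eqn:op_tilde_G}) using the identity $J(\gamma',-k+\sI)=d+Nc\sI=J(\gamma,\sI)$: because $d+Nc\sI$ and $d'+Nc\sI$ share the same nonzero imaginary part $Nc$, they lie in a common open half plane, so the principal-argument subtraction $\arg(J(\gamma',\ru(-k)\sI)/J(\gamma',\sI))=\arg J(\gamma',\ru(-k)\sI)-\arg J(\gamma',\sI)$ holds without any $2\pi$-correction, yielding the clean identity ${}^s\!\gamma={}^s\!\gamma'\cdot\tilde{\ru}(-k)$ in $\widetilde{G}$. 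Chaining the hypothesis on $\gamma'$ with the $L$-automorphy $\lambda(\rho(\tilde{\ru}(-k))F)=\omega_L(-k)\lambda(F)$, the equality $\omega_L(-k)=e^{-2\pi\sI uk}=v(\ru(-k))$, and the trivial multiplier cocycle $v(\gamma'\,\ru(-k))=v(\gamma')v(\ru(-k))$ (valid because $J(\ru(-k),z)\equiv 1$), one arrives at $\lambda(\rho({}^s\!\gamma)F)=v(\gamma)\lambda(F)$.

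For the residual case $c=0$ one has $\gamma=\epsilon\,\ru(\epsilon b)$ with $\epsilon\in\{\pm 1\}$. The subcase $\epsilon=+1$ is immediate from $\lambda\in(I^{-\infty}_{\mu,\nu})_L$ together with ${}^s\!\ru(b)=\tilde{\ru}(b)$. For $\epsilon=-1$ I would factor $\gamma=\overline{\ru}(N)\cdot\gamma_2$ with $\gamma_2:=\overline{\ru}(-N)\gamma=\left(\begin{smallmatrix}-1&b\\N&-Nb-1\end{smallmatrix}\right)$; the assumption $N>1$ forces $Nb+1\ne 0$, so both $\overline{\ru}(N)$ and $\gamma_2$ have nonzero lower-left entry and are handled by the previous paragraph. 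A short $\arg$-range check, using $\arg J(\gamma_2,\sI)\in(0,\pi)$ and $\arg(1+N\sI)\in(0,\pi/2)$, confirms that no branch corrections appear in either the $\widetilde{G}$-product or the multiplier cocycle, so ${}^s\!\overline{\ru}(N)\cdot{}^s\!\gamma_2={}^s\!\gamma$ and $v(\overline{\ru}(N))v(\gamma_2)=v(\gamma)$ exactly, whence $\lambda(\rho({}^s\!\gamma)F)=v(\gamma)\lambda(F)$.

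The main obstacle throughout (ii) is bookkeeping the $2\pi$-phase ambiguity in $\widetilde{G}$: in general ${}^s\!(\gamma_1\gamma_2)$ differs from ${}^s\!\gamma_1\cdot{}^s\!\gamma_2$ by some $\tilde{\rk}(-2\pi n)$, under which $\rho$ contributes $e^{-2\pi n\sI\mu}$ while the multiplier cocycle for $v$ contributes $e^{2\pi q\sI\kappa}$, and their cancellation reduces (after invoking $\kappa-\mu\in 2\bZ$) to $p\mu\in\bZ$ for a winding integer $p\in\{-1,0,1\}$\,---\,a condition that fails for generic real $\mu\notin\bZ$. The strategy succeeds by selecting the specific factorizations above so that all relevant $J$-values remain in a common open half plane, forcing $p=0$ and collapsing both cocycle corrections simultaneously.
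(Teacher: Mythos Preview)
Your proof is correct and follows the same core idea as the paper: for $c\ne 0$, right-translate by $\ru(k)$ so that the $(2,2)$-entry lands in $\bP_N$, then chain the hypothesis with the $L$-automorphy. The paper compresses this by asserting that every $\tilde{\gamma}\in\widetilde{\Gamma}_0(N)$ factors as ${}^s\!\gamma'\,\tilde{\ru}(m_1)\,\tilde{\rk}(m_2\pi)$ with $d'\in\bP_N$, absorbing any $2\pi$-phase discrepancy into the central factor $\tilde{\rk}(m_2\pi)$ rather than verifying, as you do, that the phase correction vanishes. Your explicit treatment of the $c=0$ case actually plugs a small oversight in the paper's argument: no such factorization exists when the lower-left entry of $\varpi(\tilde{\gamma})$ is zero (one always gets $d'=\pm 1\notin\bP_N$), so the paper is tacitly relying on those elements already being of the form $\tilde{\ru}(m)\tilde{\rk}(m'\pi)$ and hence covered by $(I^{-\infty}_{\mu,\nu})_{L,\hat L}$ together with (\ref{eqn:wt_compatibility}). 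Your route through $\overline{\ru}(N)\cdot\gamma_2$ for $\gamma=-\ru(-b)$ is correct but heavier than necessary: since ${}^s\!(-\ru(-b))=\tilde{\rk}(-\pi)\,\tilde{\ru}(-b)$, the center action (\ref{eqn:wt_compatibility}) and the $L$-automorphy give $\lambda(\rho({}^s\!\gamma)F)=e^{-\pi\sI\mu}\omega_L(-b)\lambda(F)=v(-1_2)v(\ru(-b))\lambda(F)=v(\gamma)\lambda(F)$ directly.
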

\begin{proof}
The statement (i) follows from the definition 
of $(I_{\mu,\nu}^{-\infty})^{\widetilde{\Gamma}_0(N),\tilde{\chi}_v}$. 
Since 
\begin{align*}
&\left(\begin{array}{cc}
a&b\\
Nc&d
\end{array}\right) \ru (m)
=\left(\begin{array}{cc}
a&am+b\\
Nc&Ncm+d
\end{array}\right)&
&(a,b,c,d,m\in \bZ ), 
\end{align*}
we note that, for any element $\tilde{\gamma}$ of $\widetilde{\Gamma}_0(N)$, 
there are some $a,b,c,d,m_1,m_2\in \bZ$ such that 
$d\in \bP_N$, $ad-Nbc=1$ and 
\begin{align*}
&\tilde{\gamma}=
\sideset{^s\!}{}{\mathop{\!\left(\begin{array}{cc}
a&b\\
Nc&d
\end{array}\right)}}
\tilde{\ru}(m_1)\,\tilde{\rk}(m_2\pi ). 
\end{align*}
Hence, the statement (ii) follows from (\ref{eqn:wt_compatibility}) and 
the definition of $(I_{\mu,\nu}^{-\infty})_{L,\hat{L}}$. 
\end{proof}

\begin{lem}
\label{lem:weil2}
Retain the notation, and let $d\in \bZ_{>0}$ coprime to $N$. 
Let $(\alpha ,\beta )\in \mathfrak{M}(L)_{\mu,\nu}^0\times 
{\mathfrak{N}}(S_{\nu}(L))$ 
and $(\hat{\alpha},\hat{\beta})\in \mathfrak{M}(\hat{L})_{\mu,\nu}^0\times 
{\mathfrak{N}}(S_{\nu}(\hat{L}))$ 
such that $(\lambda_{\alpha ,\beta })_\infty 
=\lambda_{\hat{\alpha},\hat{\beta}}$. 
For a Dirichlet character $\psi $ modulo $d$, 
we take $\alpha_{\psi}$, $\beta_\psi$, $\hat{\alpha}_{\psi,v}$ and 
$\hat{\beta}_{\psi,v}$ as in \S \ref{subsec:Auto_Dist_group}. 
Then $\lambda =\lambda_{\alpha ,\beta }$ satisfies 
(\ref{eqn:cdn_IGammaN_01}) for any $a,b,c\in\bZ$ such that $ad-Nbc=1$, 
if and only if, the equality 
\[
(\lambda_{\alpha_\psi,\beta_\psi})_\infty 
=\lambda_{\hat{\alpha}_{\psi,v},\hat{\beta}_{\psi,v}}
\]
holds for any Dirichlet character $\psi $ modulo $d$. 
\end{lem}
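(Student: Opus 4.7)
The plan is to reduce the family of equations (\ref{eqn:cdn_IGammaN_01}) to one indexed by $c \in (\bZ/d\bZ)^\times$ and then apply Fourier inversion on this finite abelian group. For each residue class $c$ coprime to $d$, I will fix representatives $a_c, b_c \in \bZ$ with $a_c d - N b_c c = 1$ and set $\gamma_c = \bigl(\begin{smallmatrix} a_c & b_c \\ Nc & d \end{smallmatrix}\bigr)$. Every $\gamma = \bigl(\begin{smallmatrix} a & b \\ Nc & d \end{smallmatrix}\bigr)$ with $ad-Nbc=1$ arises from some $\gamma_c$ by left-multiplication by $\ru(m)$ (shifting $(a,b)\mapsto(a+Ncm,b+dm)$) and right-multiplication by $\overline{\ru}(Nk)$ (shifting the integer representative of $c$); using $\lambda_{\alpha,\beta} \in (I^{-\infty}_{\mu,\nu})_{L,\hat{L}}$, Lemma \ref{lem:NA_act_iota_infty}, (\ref{eqn:wt_compatibility}), and the multiplier cocycle (\ref{eqn:cdn_multiplier}), I will verify that (\ref{eqn:cdn_IGammaN_01}) for all $(a,b,c)$ is equivalent to its holding for the family $\{\gamma_c\}_c$.

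By orthogonality of characters on $(\bZ/d\bZ)^\times$, the family
\[
E_c(F) := \lambda_{\alpha,\beta}(\rho({}^s\gamma_c)F) - v(\gamma_c)\lambda_{\alpha,\beta}(F), \qquad c \in (\bZ/d\bZ)^\times,
\]
vanishes identically (for all $F\in I^\infty_{\mu,\nu}$) if and only if $\sum_c \overline{\psi(c)}E_c(F)=0$ for every Dirichlet character $\psi$ modulo $d$ and all $F$. The main computation is then to apply the Bruhat decomposition (\ref{eqn:Bruhat_g}) to $\gamma_c$: since $Nc\neq 0$,
\[
{}^s\gamma_c = \tilde{\ru}(a_c/(Nc))\,\tilde{\ra}((Nc)^{-2})\,\tilde{\rk}(m_c\pi)\,\tilde{w}\,\tilde{\ru}(d/(Nc))
\]
for some integer $m_c$ coming from the principal-argument normalization. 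Using (\ref{eqn:wt_compatibility}), the involution $F\mapsto F_\infty$, the identity $\tilde w\tilde{\ru}(t)\tilde w^{-1}=\tilde{\overline{\ru}}(-t)$ and $\tilde w^2=\tilde{\rk}(-\pi)$, the first term of $E_c(F)$ rewrites as
\[
e^{\pi(m_c-\tfrac{1}{2})\sI\mu}\,\lambda_{\hat\alpha,\hat\beta}\bigl(\rho(\tilde{\overline{\ru}}(-a_c/(Nc)))\,\rho(\tilde{\ra}((Nc)^2))\,\rho(\tilde{\ru}(d/(Nc)))\,F\bigr),
\]
where the hypothesis $(\lambda_{\alpha,\beta})_\infty=\lambda_{\hat\alpha,\hat\beta}$ is used in passing from $\lambda$ to $\lambda_\infty$.

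I would then expand both sides using the Fourier expansions of $\lambda_{\alpha,\beta}$ and $\lambda_{\hat\alpha,\hat\beta}$ together with the transformation laws of Propositions \ref{prop:Jacquet_property} and \ref{prop:act_A_Jac_int}: the action of $\tilde{\ru}(d/(Nc))$ produces phases $e^{2\pi\sI l d/(Nc)}$ on each $\cJ_l$, the action of $\tilde{\ra}((Nc)^2)$ rescales the lattice $\hat L$ to $(Nc)^{-2}\hat L$ with a $(Nc)^{2\nu-1}$ factor, and the action of $\tilde{\overline{\ru}}(-a_c/(Nc))$ on $F_\infty$-data produces a further phase involving $a_c$. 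Summing against $\overline{\psi(c)}$ over $c$ and using $a_cd\equiv 1\pmod{Nc}$ to pass between $d$- and $a_c$-phases assembles the $c$-dependent phases into precisely $\tau_\psi(l-u)$ and $\hat{\tau}_{\psi,v}(Nl-\hat u)$; after dividing by a common non-vanishing prefactor, the resulting identity is $(\lambda_{\alpha_\psi,\beta_\psi})_\infty(F) = \lambda_{\hat{\alpha}_{\psi,v},\hat{\beta}_{\psi,v}}(F)$, which is the claim of the lemma.

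The hard part will be the boundary/residue bookkeeping. When $0\in L$ or $0\in\hat L$, the zeroth Fourier coefficients contribute extra terms governed by Propositions \ref{prop:Jacquet_property}(iii) and \ref{prop:act_A_Jac_int}(iii), and by the logarithmic discrepancy in Lemma \ref{lem:pre_pole001}(iii) when $-2\nu\in\bZ_{\geq 0}$; pushing these through the $\psi$-twist is exactly what produces the $\log d$ correction in the definition of $\hat\beta_{\psi,v}(0)$ at $\nu=0$ and the $d^{2\nu-1}$ factor in $\hat\alpha_{\psi,v}$. Matching the phase $e^{\pi m_c\sI\mu}$ with $v(\gamma_c)$ so that the resulting sum over $c$ assembles into $\hat{\tau}_{\psi,v}$ exactly as defined in (\ref{eqn:def_char_mult_sum}), with the factor $\overline{\psi(-N)}$ absorbing the $-N$ in the lower-left of $\gamma_c$, is the most delicate combinatorial step.
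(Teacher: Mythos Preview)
Your reduction to a family $\{\gamma_c\}$ indexed by $(\bZ/d\bZ)^\times$ and the Fourier-inversion step are fine, but the decomposition you apply to $\gamma_c$ is the wrong one, and this derails the computation. The Bruhat decomposition (\ref{eqn:Bruhat_g}) factors $\gamma_c$ through its $(2,1)$-entry $Nc$, producing the dilation $\tilde{\ra}((Nc)^{-2})$. Since $c$ is your summation variable, this dilation is different for each term: by Proposition~\ref{prop:act_A_Jac_int} the Jacquet integrals on the $\lambda_\infty$ side land at points that scale by $(Nc)^{\pm 2}$, not by a single fixed factor, so the sum over $c$ cannot be rewritten as a Fourier expansion on the fixed lattice $d^{-2}\hat{L}$ that the definition of $\hat\alpha_{\psi,v}$ requires. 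Likewise the ``phases $e^{2\pi\sI l d/(Nc)}$'' you describe have $Nc$ in the denominator and do not assemble into $\tau_\psi$ or $\hat\tau_{\psi,v}$, whose definitions (see (\ref{eqn:def_char_mult_sum})) are exponential sums with the \emph{fixed} denominator $d$. A second, related problem: in your expression $\lambda_{\hat\alpha,\hat\beta}(\rho(\tilde{\overline{\ru}}(\cdot))\rho(\tilde{\ra}(\cdot))\rho(\tilde{\ru}(\cdot))F)$ the outermost factor is $\tilde{\overline{\ru}}$, for which Proposition~\ref{prop:Jacquet_property} gives no simple phase law on $\cJ_l$.

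The fix is to use the decomposition through the fixed $(2,2)$-entry $d>0$:
\[
{}^s\gamma \;=\; \tilde{\ru}(b/d)\,\tilde{\ra}(d^{-2})\,\tilde{\overline{\ru}}(Nc/d),
\]
which one checks directly in $\widetilde{G}$ (no extra $\tilde{\rk}(m\pi)$ appears). After the substitution $\rho(\tilde{\ra}(d^{-2})\tilde{\overline{\ru}}(Nc/d))F\to F$, equation (\ref{eqn:cdn_IGammaN_01}) becomes $\lambda(\rho(\tilde{\ru}(b/d))F)=v(\gamma)\,\lambda(\rho(\tilde{\overline{\ru}}(-Nc/d)\tilde{\ra}(d^{2}))F)$; now apply $\lambda=\!(\lambda_\infty)_\infty$ and Lemma~\ref{lem:NA_act_iota_infty}(ii) to the right-hand side so that the outermost factor becomes $\tilde{\ru}(Nc/d)$ acting on $F_\infty$. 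Both sides then expand via Propositions~\ref{prop:Jacquet_property} and \ref{prop:act_A_Jac_int} with the single fixed dilation $d^{-2}$, and the phases $e^{2\pi\sI(l-u)b/d}$ and $e^{2\pi\sI(Nlc-ub)/d}$ have $d$ in the denominator. Summing against $\psi(b)=\overline{\psi(-Nc)}$ over $b\bmod d$ (equivalently $c\bmod d$) then produces exactly $\tau_\psi(l-u)$ and $\overline{\psi(-N)}\,\hat\tau_{\psi,v}(Nl-\hat u)$, matching the definitions of $\alpha_\psi,\beta_\psi,\hat\alpha_{\psi,v},\hat\beta_{\psi,v}$; the converse follows by averaging over $\psi$. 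The $\log d$ term you anticipated does arise from Proposition~\ref{prop:act_A_Jac_int}(iii) in the $\nu=0$ case, but only once you use the $d$-decomposition.
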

\begin{proof}
Let $a,b,c\in\bZ$ such that $ad-Nbc=1$, and 
we set $\displaystyle \gamma=\left(\begin{array}{cc}
a&b\\
Nc&d
\end{array}\right)$. Since  
${}^s\!\gamma 
=\tilde{\ru}(b/d)\tilde{\ra}(d^{-2})\tilde{\overline{\ru}}(Nc/d)$, 
by the substitution 
$\rho \bigl(\tilde{\ra}(d^{-2})\tilde{\overline{\ru}}(Nc/d)\bigr)F\to F$, 
the equality (\ref{eqn:cdn_IGammaN_01}) becomes 
\begin{align}
\label{eqn:cdn_IGammaN_02}
&\lambda (\rho (\tilde{\ru}(b/d))F)=
v(\gamma )\lambda \bigl(\rho \bigl(\tilde{\overline{\ru}}(-Nc/d)
\tilde{\ra}(d^{2})\bigr)F\bigr)&
&(F\in I_{\mu,\nu}^\infty ).
\end{align}
Multiplying the both sides by $e^{-2\pi \sI ub/d}$, 
and applying the equality 
$\lambda (F)=\lambda_\infty (F_\infty )$ ($F\in I_{\mu,\nu}^\infty$) with 
Lemma \ref{lem:NA_act_iota_infty} (ii) to the right-hand side, 
the equality (\ref{eqn:cdn_IGammaN_02}) becomes 
\begin{align}
\label{eqn:cdn_IGammaN_03}
\begin{split}
&e^{-2\pi \sI ub/d}\lambda (\rho (\tilde{\ru}(b/d))F)\\
&=v(\gamma )e^{-2\pi \sI ub/d}
\lambda_\infty \bigl(\rho \bigl(\tilde{\ru}(Nc/d)
\tilde{\ra}(d^{-2})\bigr)F_\infty \bigr)
\hspace{10mm}
(F\in I_{\mu,\nu}^\infty ).
\end{split}
\end{align}
If $\lambda =\lambda_{\alpha,\beta}$, then 
$\lambda_\infty=\lambda_{\hat{\alpha},\hat{\beta}}$ and 
the equality (\ref{eqn:cdn_IGammaN_03}) becomes 
\begin{align}
\label{eqn:cdn_IGammaN_04}
\begin{split}
&\sum_{l\in L }e^{2\pi \sI (l-u)b/d}\alpha (l)\cJ_l(F)
+\sum_{m\in S_{\nu}(L)}
e^{-2\pi \sI ub/d}\beta (m)\delta^{(m)}_{\infty}(F)\\
&=
\sum_{l\in \hat{L}}v(\gamma )e^{2\pi \sI (Nlc-ub)/d}d^{2\nu -1}
\hat{\alpha}(l)\cJ_{l/d^2}(F_\infty )\\[1mm]
&\hspace{3mm}
+\left\{\!\begin{array}{ll}
4\cos \bigl(\tfrac{\pi \mu }{2}\bigr)
v(\gamma )e^{-2\pi \sI ub/d}
\hat{\alpha}(0)
\delta^{(0)}_\infty (F_\infty)d^{-1}\log d
&\text{if $0\in \hat{L}$ and $\nu =0$},\\[1mm]
0&\text{otherwise}.
\end{array}\!\right.\\[1mm]
&\hspace{3mm}
+\sum_{m\in S_{\nu}(\hat{L})}
v(\gamma )e^{-2\pi \sI ub/d}\hat{\beta}(m)
d^{-2\nu -2m-1}\delta^{(m)}_{\infty}(F_\infty)
\hspace{17mm}
(F\in I_{\mu,\nu}^\infty )
\end{split}
\end{align}
by Propositions \ref{prop:Jacquet_property} and \ref{prop:act_A_Jac_int}. 
Multiplying by the respective sides of 
\[
\psi (b)=\overline{\psi (-Nc)}, 
\]
and taking the sum over all integers $b$ in 
a reduced residue system modulo $d$ (here $c$ ranges over 
a reduced residue system modulo $d$, since $ad-Nbc=1$), 
we have 
\begin{align}
\label{eqn:cdn_IGammaN_05}
\begin{split}
&\sum_{l\in L }\alpha_\psi (l)\cJ_l(F)
+\sum_{m\in S_{\nu}(L)}\beta_\psi (m)\delta^{(m)}_{\infty}(F)\\
&=
\sum_{l\in \hat{L}}
\hat{\alpha}_{\psi,v}(l)
\cJ_{l/d^2}(F_\infty )
+\sum_{m\in S_{\nu}(\hat{L})}
\hat{\beta}_{\psi ,v}(m)\delta^{(m)}_{\infty}(F_\infty )
\hspace{7mm}
(F\in I_{\mu,\nu}^\infty )
\end{split}
\end{align}
for a Dirichlet character $\psi $ modulo $d$. 
Conversely, multiplying by the respective sides of 
\[
\frac{1}{\# (\bZ /d\bZ )^\times }\overline{\psi (b)}
=\frac{1}{\# (\bZ /d\bZ )^\times }\psi (-Nc), 
\]
and taking the sum over all Dirichlet character $\psi $ modulo $d$, 
we obtain (\ref{eqn:cdn_IGammaN_04}) from (\ref{eqn:cdn_IGammaN_05}). 
Here $\# (\bZ /d\bZ )^\times $ is the cardinality of $(\bZ /d\bZ )^\times$, 
and we use the orthogonality relation 
\begin{align*}
&\frac{1}{\# (\bZ /d\bZ )^\times }
\sum_{\psi}\overline{\psi (m_1)}\psi (m_2)
=\left\{\begin{array}{ll}
1&\text{if $m_1\equiv m_2\mod d$},\\
0&\text{otherwise}
\end{array}\right.&
&(m_1,m_2\in \bZ),
\end{align*}
where the sum $\sum_{\psi}$ is 
over all Dirichlet character $\psi $ modulo $d$. 
Therefore, we know that 
$\lambda =\lambda_{\alpha,\beta}$ satisfies 
(\ref{eqn:cdn_IGammaN_01}) 
for any $a,b,c\in\bZ$ such that $ad-Nbc=1$, 
if and only if, (\ref{eqn:cdn_IGammaN_05}) holds 
for any Dirichlet character $\psi $ modulo $d$. 
\end{proof}


\def\cprime{$'$}


\end{document}